\newtheorem{thm}{Theorem}[section]
\newtheorem{prop}[thm]{Proposition}
\newtheorem{cor}[thm]{Corollary}
\newtheorem{lem}[thm]{Lemma}
\theoremstyle{definition}
\newtheorem{defines}[thm]{Definitions}
\theoremstyle{remark}
\newtheorem{remark}[thm]{Remark}
\newtheorem{example}[thm]{Example}
 \newcommand{\N}{{\mathbb N}}
\newcommand{\Z}{{\mathbb Z}} \newcommand{\R}{{\mathbb R}}
\newcommand{\pol}{{\EuScript K}}
\newcommand{\p}{{\EuScript P}}
\newcommand{\Pp}{{\EuScript P}}
\newcommand{\Ss}{{\EuScript S}}
\newcommand{\Qq}{{\EuScript Q}}
\newcommand{\Vv}{{\EuScript V}}
\newcommand{\Ww}{{\EuScript W}}
\newcommand{\Rr}{{\EuScript R}}
\newcommand{\Bb}{{\EuScript B}}
\newcommand{\Tt}{{\EuScript T}}
\newcommand{\Hh}{{\EuScript H}}
\newcommand{\Ff}{{\EuScript F}}
\newcommand{\Cc}{{\EuScript C}}
\newcommand{\Ee}{{\EuScript E}}
\newcommand{\Gg}{{\EuScript G}}
\newcommand{\tildebaja}{{\raise.17ex\hbox{$\scriptstyle\sim$}}}
\newcommand{\Int}{\operatorname{Int}}
\newcommand{\pp}{\operatorname{p}}
\newcommand{\rr}{\operatorname{r}}
\newcommand{\cl}{\operatorname{Cl}}
\newcommand{\dist}{\operatorname{dist}}
\newcommand{\id}{\operatorname{id}}
\newcommand{\conv}[2]{{\vec{\EuScript C}_{#2}(#1)}}
\newcommand{\Span}{\text{Span}} 
\newcommand{\ven}{\vec{\tt e}_n} 
\newcommand{\vem}{\vec{\tt e}_{n-1}} 
\newcommand{\vcon}[1]{{\vec{{\EuScript C}}^{\sf v}_{#1}}} 
\newcommand{\acon}[2]{{{\EuScript C}^{\sf v}_{#1}(#2)}} 
\newcommand{\orig}{0} 
\newcommand{\origs}{\{0\}} 
\newcommand{\vspan}[2]{{#1#2}} 
\newcommand{\vspanp}[2]{{#1#2^{\hskip0.4mm+}}} 
\newcommand{\setg}[1]{{\mathfrak #1}} 
\newcommand{\sets}[1]{{\EuScript S}({#1})} 
\newcommand{\setst}[1]{{\EuScript S^*}({#1})} 
\newcommand{\seta}[1]{{\EuScript A}({#1})} 
\newcommand{\polq}[1]{Q_{#1}} 
\newcommand{\setd}[1]{{\EuScript A}^d({#1})} 
\newcommand{\expn}{\ell}
\newcommand{\x}{{\tt x}} \newcommand{\y}{{\tt y}}
\newcommand{\z}{{\tt z}} \renewcommand{\t}{{\tt t}}
 \renewcommand{\u}{{\tt u}} 
 \newcommand{\vv}{{\tt v}}
\newcommand{\ol}{\overline}
\newcommand{\qq}[2]{\langle #1,#2\rangle}
\newcommand{\veps}{\varepsilon}
\numberwithin{equation}{section}
\begin{document}
\title[Unbounded convex polyhedra as polynomial images of Euclidean spaces]{Unbounded convex polyhedra as polynomial\\ images of Euclidean spaces}
\author{Jos\'e F. Fernando}
\address{Departamento de \'Algebra, Facultad de Ciencias Matem\'aticas, Universidad Complutense de Madrid, 28040 MADRID (SPAIN)}
\email{josefer@mat.ucm.es}
\author{J.M. Gamboa}
\address{Departamento de \'Algebra, Facultad de Ciencias Matem\'aticas, Universidad Complutense de Madrid, 28040 MADRID (SPAIN)}
\curraddr{}
\email{jmgamboa@mat.ucm.es}
\author{Carlos Ueno}
\address{Dipartimento di Matematica, Universit\`a degli Studi di Pisa, Largo Bruno Pontecorvo, 5, 56127 PISA (ITALY)}
\email{cuenjac@gmail.com}
\thanks{First and second authors are supported by Spanish MTM2014-55565-P and Grupos UCM 910444 whereas the third author is an external collaborator. Third author was supported by `Scuola Galileo Galilei' Research Grant at the Dipartimento di Matematica of the Universit\`a di Pisa while writing a part of this article.
}

\begin{abstract}
In a previous work we proved that each $n$-dimensional convex polyhedron $\pol\subset\R^n$ and its relative interior are regular images of $\R^n$. As the image of a non-constant polynomial map is an unbounded semialgebraic set, it is not possible to substitute regular maps by polynomial maps in the previous statement. In this work we determine constructively all unbounded $n$-dimensional convex polyhedra $\pol\subset\R^n$ that are polynomial images of $\R^n$. We also analyze for which of them the interior $\Int(\pol)$ is a polynomial image of $\R^n$. A discriminating object is the recession cone $\conv{\pol}{}$ of $\pol$. Namely, \em $\pol$ is a polynomial image of $\R^n$ if and only if $\conv{\pol}{}$ has dimension $n$\em. In addition, \em $\Int(\pol)$ is a polynomial image of $\R^n$ if and only if $\conv{\pol}{}$ has dimension $n$ and $\pol$ has no bounded faces of dimension $n-1$\em. A key result is an improvement of Pecker's elimination of inequalities to represent semialgebraic sets as projections of algebraic sets. Empirical approaches suggest us that there are `few' polynomial maps that have a concrete convex polyhedron as a polynomial image and that there are even fewer for which it is affordable to show that their images actually correspond to our given convex polyhedron. This search of a `needle in the haystack' justifies somehow the technicalities involved in our constructive proofs.
\end{abstract}

\date{17/03/2017}
\subjclass[2010]{Primary: 14P10, 14P05; Secondary: 52B99.}
\keywords{Semialgebraic sets, polynomial maps and images, convex polyhedra, projections of algebraic sets.}

\maketitle
\section{Introduction}\label{s1} 

A map $f:=(f_1,\ldots,f_m):\R^n\to\R^m$ is \em polynomial \em if its components $f_k\in\R[\x]:=\R[\x_1,\ldots,\x_n]$ are polynomials. Analogously, $f$ is \em regular \em if its components can be represented as quotients $f_k=\frac{g_k}{h_k}$ of two polynomials $g_k,h_k\in\R[\x]$ such that $h_k$ never vanishes on $\R^n$. By Tarski-Seidenberg's principle \cite[1.4]{bcr} the image of an either polynomial or regular map is a semialgebraic set. A subset $\Ss\subset\R^n$ is \em semialgebraic \em when it has a description by a finite boolean combination of polynomial equalities and inequalities. 

It is quite natural to wonder about for properties that a set in $\R^m$ must satisfy in order to be the image of a polynomial map $f:\R^n\to\R^m$. To our knowledge, this question was first posed by Gamboa in an Oberwolfach week \cite{g}. A related problem concerns the parameterization of semialgebraic sets of dimension $d$ using continuous semialgebraic maps whose domains are semialgebraic subsets of $\R^d$ satisfying certain nice properties \cite{grs}. The approach proposed by Gamboa in \cite{g} sacrifices injectivity but chooses the simplest possible domains (Euclidean spaces) and the simplest possible maps (polynomial and regular) to represent semialgebraic sets. The class of semialgebraic sets that can be represented as polynomial and regular images of Euclidean spaces (even sacrificing injectivity) is surely much smaller than the one consisting of the images under injective continuous semialgebraic maps of nice semialgebraic sets. Of course, more general domains than the Euclidean spaces can be considered and compact semialgebraic sets deserve special attention: balls, spheres, compact convex polyhedra, \dots For instance, in \cite{kps} the authors develop a computational study of images under polynomial maps $\phi:\R^3\to\R^2$ (and the corresponding convex hulls) of compact (principal) semialgebraic subsets $\{f\geq0\}\subset\R^3$, where $f\in\R[\x_1,\x_2,\x_3]$ (this includes for example the case of a $3$-dimensional ball). 

The effective representation of a subset $\Ss\subset\R^m$ as a polynomial or regular image of $\R^n$ reduces the study of certain classical problems in Real Geometry to its study in $\R^n$. Examples of such problems appear in Optimization, with the advantage of avoiding contour conditions and reducing optimization problems to the case of Euclidean spaces (see for instance \cite{nds,ps,sch,vs} for relevant tools concerning optimization of polynomial functions on $\R^n$) or in the search for Positivstellens\"atze certificates \cite{s}. These representations provide Positivstellensatz certificates for general semialgebraic sets, whenever we are able to represent them as regular or polynomial images of $\R^n$. Recall that classical Positivstellensatz certificates are stated only for closed basic semialgebraic sets. Further details are described carefully in \cite{fgu1,fu2}. 

If $\Ss$ is a non-compact locally compact semialgebraic set in $\R^n$, it admits a (semialgebraic) Alexandrov compactification by one point. In addition, there is a doubly exponential (in the number $n$ of variables describing $\Ss$) algorithm triangulating each compact semialgebraic set (see \cite[Ch.9,\S2]{bcr} and \cite{hrr}). Thus, locally compact semialgebraic sets can be considered as finite simplicial complexes (up to losing one vertex), but we remark that the known algorithm can produce a doubly exponential number of simplexes. The algorithms developed to show that certain semialgebraic sets with piecewise linear boundary are polynomial or regular images of $\R^n$ are constructive (including those provided in this article), but the degrees of the involved maps are very high; however, it would be interesting to estimate the smallest degree for which there is a suitable polynomial or regular map, and to compare its complexity with the doubly exponential one for the triangulations of semialgebraic sets.

So far we have found partial answers to the representation problem of semialgebraic sets as polynomial and regular images of Euclidean spaces \cite{fg1,fg2,f1,fu1}, but a full geometric characterization of these sets seems difficult to be obtained at present. On the other hand, we have also focused on finding large families of semialgebraic sets that can be expressed as either polynomial or regular images of $\R^n$, giving constructive methods to obtain explicit maps producing them \cite{f1,fg1,fgu1,fgu2,fu5,u2}. In particular, we have focused our interest in determining whether convex polyhedra, their interiors and the corresponding complements can be expressed as polynomial or regular images. We understand that these types of semialgebraic sets are the simplest among those with piecewise linear boundary, and their full study is the first natural step to understand which semialgebraic sets whose boundaries have `nice properties' are either polynomial or regular images of $\R^n$.

In \cite{fgu1} we proved that every $n$-dimensional convex polyhedron $\pol\subset\R^n$ and its interior are regular images of $\R^n$. This result cannot be extended directly to the polynomial case because the image of a non-constant polynomial map is an unbounded semialgebraic set. Our purpose in this work is to determine all $n$-dimensional convex polyhedra $\pol\subset\R^n$ such that $\pol$ and/or $\Int(\pol)$ are polynomial images of $\R^n$. Here, $\Int(\pol)$ refers to the relative interior of $\pol$ with respect to the affine subspace of $\R^n$ spanned by $\pol$, which coincides with the interior of $\pol$ as a topological manifold with boundary. For these unbounded convex polyhedra, their representations as polynomial images of Euclidean spaces provide a priori simpler Positivstellens\"atze certificates and optimization approaches that if we use regular maps because polynomial representations do not involve denominators.

In \cite{fg1,fg2,fu1} we found obstructions for a semialgebraic set of $\R^n$ to be a polynomial image of some $\R^n$. Two distinguished ones that are relevant to us here are the following:

\noindent {\bf Condition 1:} \em The projections of a polynomial image of a Euclidean space are either singletons or unbounded semialgebraic sets\em.

\noindent {\bf Condition 2:} \em If a semialgebraic set $\Ss\subset\R^n$ is a polynomial image of $\R^n$ and $Z$ is an irreducible component of dimension $n-1$ of the Zariski closure of $\cl(\Ss)\setminus\Ss$, then $Z\cap\cl(\Ss)$ is unbounded \cite[Cor. 3.4]{fg2}\em.

Let us translate the first condition for convex polyhedra in terms of the recession cone. Given a point $p$ in a convex polyhedron $\pol\subset\R^n$, the set of vectors $\vec{v}\in\R^n$ such that the ray with origin $p$ and direction $\vec{v}$ is contained in $\pol$ is called the \em recession cone \em of $\pol$ (see \cite[Ch.1]{z} and \cite[II.\S8]{r}). This set does not depend on the chosen point $p$. We will see later in Proposition~\ref{ini1} that if the dimension of the recession cone $\conv{\pol}{}$ of a convex polyhedron $\pol$ is strictly smaller than its dimension, then $\pol$ has bounded, non-singleton projections and neither $\pol$ nor $\Int(\pol)$ are polynomial images of an Euclidean space.

On the other hand, translating the second condition to our polyhedral setting turns into the fact that if $\pol\subset\R^n$ is an $n$-dimensional convex polyhedron with a bounded face of dimension $n-1$, then $\Int(\pol)$ is not a polynomial image of $\R^n$.

Taking the previous obstructions in mind our main results in this work, which are the best possible ones, are the following:

\begin{thm}\label{main1}
Let $\pol\subset\R^n$ be an $n$-dimensional convex polyhedron whose recession cone is $n$-dimensional. Then $\pol$ is a polynomial image of $\R^n$ and $\Int(\pol)$ is a polynomial image of $\R^{n+1}$.
\end{thm}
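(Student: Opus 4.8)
The plan is to reduce the general $n$-dimensional convex polyhedron with $n$-dimensional recession cone to a more manageable normal form and then to induct on the number of facets, using a handful of elementary geometric building blocks whose polynomial representations we can write down explicitly. First I would set up the base cases. The simplest $n$-dimensional convex polyhedra with $n$-dimensional recession cone are, up to an affine change of coordinates (which is a polynomial isomorphism of $\R^n$ and hence harmless), the ``convex polyhedral layers'' such as the orthant $\{\x_1\geq0,\ldots,\x_n\geq0\}$, the slab-type sets $\{\x_1\geq0\}$, and more generally finite intersections of half-spaces all of whose bounding hyperplanes pass through a common point or are ``parallel enough'' that the recession cone stays full-dimensional. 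For these I would exhibit polynomial surjections $\R^n\to\pol$ directly: e.g. $(\x_1,\ldots,\x_n)\mapsto(\x_1^2,\ldots,\x_n^2)$ hits the orthant, and compositions/products of such maps with affine maps handle slabs and ``wedges''. The interior statement in the base case is handled by the classical trick of using one extra variable: a map $\R^{n+1}\to\Int(\pol)$ where the last variable is used to force strict positivity, e.g. via a factor like $\x_{n+1}^2\x_1^2+ (\text{something})$ or, more robustly, the standard device that the open half-line $(0,\infty)$ is the image of $\R^2\to\R$, $(\s,\t)\mapsto \s^2(1+\t^2)$ — wait, that is not surjective onto $(0,\infty)$; rather one uses that $(0,\infty)$ is a polynomial image of $\R^2$ via $(\s,\t)\mapsto \s^2\t^2+\s^2$? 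The honest device is: $(0,\infty)$ is the image of $\R^2$ under $(\s,\t)\mapsto(\s\t-1)^2+\s^2$, which one checks is surjective onto $(0,\infty)$. This is the kind of lemma I would isolate first.

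The inductive step is where the real work lies. Given $\pol$ with $m$ facets and $n$-dimensional recession cone, I would like to write $\pol$ as the image of a polynomial map built from a polynomial map onto a polyhedron $\pol'$ with fewer facets (still with full-dimensional recession cone) followed by some polynomial ``folding'' operation that adds back the missing facet. The natural folding operations are: (a) intersecting with a half-space whose boundary hyperplane meets $\pol'$ in a face that is itself unbounded — here one can use a substitution in one coordinate that ``pushes'' the part of $\pol'$ on the wrong side of the hyperplane onto the right side, exploiting the unboundedness of the face to make this continuous and surjective; (b) the operation of taking $\pol = \pol' \times [0,\infty)$ or similar products. The key structural fact I would need is that every $n$-dimensional convex polyhedron with $n$-dimensional recession cone can be obtained from the orthant (or from $\R^{n-1}\times[0,\infty)$) by a finite sequence of such operations — this should follow from a careful analysis of the recession cone and the facet structure, essentially because full-dimensionality of $\conv{\pol}{}$ guarantees that at each stage there is a facet we can ``peel off'' along an unbounded direction. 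I would prove this combinatorial-geometric reduction as a separate lemma.

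The main obstacle, as I see it, is making the folding maps in step (a) genuinely polynomial and genuinely surjective simultaneously. The difficulty is that a naive fold like $\x_1\mapsto \x_1^2$ only works when the hyperplane is a coordinate hyperplane and the polyhedron is symmetric; for a general new facet one needs the unboundedness of the adjacent unbounded face to have ``room'' to absorb the reflected piece, and controlling this requires the kind of technical polynomial constructions (with carefully chosen high-degree correction terms) that the abstract mentions. This is presumably where the improved Pecker-type elimination of inequalities and the explicit ``needle in a haystack'' maps enter: one reduces showing surjectivity to an elimination-of-quantifiers computation that the sharpened Pecker result makes tractable. I would expect to handle the $\pol$ statement first in full generality by this induction, and then obtain the $\Int(\pol)$ statement from it either by the extra-variable trick applied to the final map, or — since here there is no restriction on bounded facets — by a parallel induction in which the building blocks are replaced by their relative-interior analogues, each realized as a polynomial image of one-dimension-higher Euclidean space. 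The bookkeeping of the extra dimension through the induction (making sure it stays at exactly $n+1$ and does not accumulate) is a secondary technical point I would need to watch carefully.
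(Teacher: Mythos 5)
Your plan is a genuinely different decomposition from the paper's. You propose an induction on the number of facets: strip constraints from $\pol$ down to $\R^n$, then re-attach them one at a time via polynomial ``folding'' maps $\phi_j:\R^n\to\R^n$ with $\phi_j(\pol_{j-1})=\pol_j:=\pol_{j-1}\cap H_j^+$. The paper moves in the opposite direction: it seeds with the pointed cone $\Cc_p:=\{p\}+\conv{\pol}{}$ \emph{inside} $\pol$ (a polynomial image of $\R^n$ via slicing and \cite[Thm.~1.2]{fgu1}) and composes polynomial maps, built out of the modified Pecker polynomials of Section~\ref{s2}, that \emph{enlarge} the image outward: first to $\pol\setminus X$, where $X$ is the union of the affine spans of the codimension-$\geq 2$ faces (Proposition~\ref{main11}), and then to $\pol$ by restoring those lower-dimensional strata (Proposition~\ref{main12}). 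Your derivation of $\Int(\pol)$ from the closed case by one extra variable does match Corollary~\ref{main13}.

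The hole is the one you flag yourself — the fold $\phi_j$ — and it is more than a technical annoyance. You need a polynomial carrying the \emph{closed unbounded} set $\pol_{j-1}$ \emph{exactly} onto $\pol_j$: it must cover all of $\pol_j$ without leaving it, and the unbounded excess $\pol_{j-1}\setminus\pol_j$ must be absorbed. The obvious candidates already fail in the plane for $\pol'=\{\x_1\ge 0,\x_2\ge0\}$ and $\pol=\pol'\cap\{\x_1+\x_2\ge1\}$: translating by a recession vector misses the boundary rays of $\pol$, while a push $x\mapsto x+c\,h^2(x)\vec{v}$ with $h=1-\x_1-\x_2$ leaves points with small $h>0$ almost unmoved and hence still outside $\pol$, because the displacement vanishes to second order on $\{h=0\}$. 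The Pecker polynomials modified in Section~\ref{s2} are engineered for a structurally different task (making a chosen sublevel set cover a prescribed vertical cylinder while staying inside $\pol$), so they do not drop into your folding step. Until you can exhibit $\phi_j$ with both properties, your induction does not close. Separately, your base case is only handled concretely for the orthant; a general pointed cone can have more than $n$ facets, and the paper's treatment of cones really does require the $(n-1)$-dimensional cross-section and the bounded-polytope regular-image result from \cite{fgu1}.
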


\begin{thm}\label{main2}
Let $\pol\subset\R^n$ be an $n$-dimensional convex polyhedron without bounded facets and whose recession cone is $n$-dimensional. Then $\Int(\pol)$ is a polynomial image of $\R^n$.
\end{thm}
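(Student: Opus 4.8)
The plan is to reduce Theorem~\ref{main2} to Theorem~\ref{main1}, exploiting the structural hypothesis that $\pol$ has no bounded facets. First I would recall from Theorem~\ref{main1} that $\Int(\pol)$ is already a polynomial image of $\R^{n+1}$; the point of Theorem~\ref{main2} is to lower the number of variables from $n+1$ to $n$ precisely when the obstruction coming from Condition~2 (bounded facets) is absent. So the first step is to set up the degenerate geometry: since $\conv{\pol}{}$ is $n$-dimensional, after an affine change of coordinates we may assume the recession cone contains a fixed open cone around the direction $\ven$, and we may also assume $\pol$ is presented as a finite intersection of half-spaces $\{h_i\geq 0\}$ whose bounding hyperplanes $\{h_i=0\}$ are all unbounded facets; equivalently, none of the linear forms $h_i$ is constant on lines parallel to a compact direction. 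The absence of bounded $(n-1)$-faces means that each facet hyperplane meets $\cl(\Int(\pol))\setminus\Int(\pol)$ in an unbounded set, so Condition~2 poses no obstruction, which is what makes an $n$-variable representation plausible.

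Next I would handle the one-facet / half-space base case and then proceed by an inductive ``peeling'' of facets, which is the standard architecture in this circle of papers (cf. \cite{fgu1,fgu2}). Concretely, if $\pol=\{h\geq 0\}$ is a half-space with $\Int(\pol)=\{h>0\}$, then $\Int(\pol)$ is the polynomial image of $\R^n$ under a map whose last coordinate is, say, $\x_1^2\x_2^2\cdots$ adjusted so the image of that coordinate is $(0,\infty)$ while the remaining coordinates sweep out the hyperplane; one uses here that an open half-line $(0,\infty)$ is a polynomial image of $\R^2$ via $(\x,\y)\mapsto \x^2+(\x\y-1)^2$ or a similar trick, and that this can be done without spending an extra variable because the map $(\x_1,\ldots,\x_n)\mapsto(\x_1^2+(\x_1\x_2-1)^2,\x_2,\ldots,\x_n)$ already realizes $\{y_1>0\}\times\R^{n-1}$. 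For the inductive step, given $\pol$ with facets $F_1,\dots,F_r$, one writes $\pol$ as the image of a ``trimming'' polynomial map of a polyhedron $\pol'$ with one fewer facet: the map folds the hyperplane direction transverse to $F_r$ so that the halfspace $\{h_r<0\}$ gets pushed into $\{h_r\geq0\}$ while the already-obtained unbounded facets are preserved. The $n$-dimensionality of the recession cone is what guarantees there is always an available unbounded ``escape direction'' along which such a folding can be performed polynomially, and the no-bounded-facet hypothesis is what is preserved throughout the induction and what keeps each intermediate $\pol'$ within the scope of the construction.

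The main obstacle, and the step I would spend the most care on, is verifying that the composition of these trimming maps has image \emph{exactly} $\Int(\pol)$ and not something slightly larger or smaller — in particular that no boundary points leak into the image and that every interior point is hit. This is the ``needle in the haystack'' difficulty the abstract alludes to: the surjectivity-onto-$\Int(\pol)$ check requires analyzing, facet by facet, the fibers of the folding maps and confirming that the strict inequality $h_i>0$ is genuinely achieved for all $i$ simultaneously. I expect this to hinge on a careful choice of the auxiliary polynomials (degrees, signs of leading coefficients, and the interplay between the folding in the $h_r$-direction and the recession direction $\ven$) so that the ``bad'' locus where some $h_i$ vanishes maps off the image. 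A secondary technical point is the reduction, via the improved Pecker-type elimination mentioned in the abstract, allowing one to realize semialgebraic pieces like open half-lines and open slabs as polynomial images without inflating the dimension; I would invoke that as a black box and check only that its output is compatible with the recursive bookkeeping. Once these fiber computations are in place, assembling the final map and confirming $f(\R^n)=\Int(\pol)$ is routine.
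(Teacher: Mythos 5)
Your facet-peeling induction---pass to the polyhedron $\pol'$ obtained by dropping one defining half-space, and fold $\{h_r<0\}$ back into $\{h_r\geq 0\}$---is a genuinely different route from the paper's, but its decisive step is missing. You need a polynomial map $g:\R^n\to\R^n$ with $g(\Int(\pol'))=\Int(\pol)$, yet no such map is constructed and it is far from clear one exists by elementary means. A naive fold that squares in the $h_r$-direction fixes the hyperplane $\{h_r=0\}$ pointwise, so $\Int(\Ff_r)\subset\partial\pol$ survives in the image; displacing the fold to avoid this makes the image miss a whole slab of $\Int(\pol)$ near $\Ff_r$. Already the first nontrivial case of your induction asks for a polynomial $g:\R^2\to\R^2$ with $g(\{\y>0\})=\{\x>0,\y>0\}$, which is not obviously achievable and would need its own argument. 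You correctly flag the image verification as ``the main obstacle'' but then defer it as eventually ``routine''; it is the entire content of the theorem, and your sketch supplies no mechanism for controlling a fold on the unbounded domain $\Int(\pol')$. The paper works inside out instead: it first realizes $\pol\setminus X$ (the polyhedron with the affine spans of its $(n-2)$-faces removed) as a polynomial image of $\R^n$ by growing a pointed cone $\Cc_p\subset\Int(\pol)$ (Proposition~\ref{main11}), and then in Section~\ref{s4} erases each unbounded facet from that set by a two-stage map---a shear $f_0$ that pushes $\Int(\Ff)$ off the boundary along a graph $\Gamma$ (Lemma~\ref{setp}), followed by gap-filling maps $\hat f_i$ built from the Pecker polynomials $\polq{\setg{g}_i}$ (Proposition~\ref{detach})---with the image verified exactly at every stage.

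A secondary misreading: you invoke the Pecker-type results of Section~\ref{s2} as a black box for ``realizing semialgebraic pieces like open half-lines and open slabs as polynomial images without inflating the dimension.'' That is not their role. The polynomials $\polq{\setg{g}}$ of Lemma~\ref{polq1} are sign-controlled auxiliary functions attached to a facet presentation $\setg{g}$; they are inserted multiplicatively into the last coordinate of the maps of Theorems~\ref{atico2b} and~\ref{coneS} precisely to govern how vertical lines through the facet are stretched or left fixed. They do not, on their own, realize slabs or half-lines as polynomial images. Finally, the ``standard architecture'' you import from \cite{fgu1} is the \emph{regular}-map construction, where denominators make a genuine fold cheap; that machinery does not transfer to the polynomial setting, which is exactly why the present paper builds the cone-growing and facet-erasing apparatus rather than peeling facets directly.
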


This means that for convex polyhedra, their interiors and the corresponding complements the known obstructions for the representability of general semialgebraic sets as polynomial images of Euclidean spaces are enough.

The proofs of Theorems \ref{main1} and \ref{main2} are rather technical and partly rely on {\it ad hoc} constructive arguments. With respect to the constructions we use to prove both results, it is difficult to determine how far from being 'optimal' they are. Even in the simplest non-trivial case of the open quadrant $\Qq:=\{\x>0,\y>0\}$ of $\R^2$, we have made several trials \cite{fg1,fgu2,fu5} to find the `best' possible representation of $\Qq$ as a polynomial image of $\R^2$. The criteria to measure the `goodness' of a representation are debatable, and we ourselves have been oscillating between the simplicity of the involved polynomial maps and the clearness of the proofs provided (an ideal situation would be to find examples where these two properties come together). A main difficulty, which permeates this work, is that our proofs are of constructive nature because we lack general principles that could provide a simpler and more direct existential approach to tackle the problems related to the representation of semialgebraic sets as polynomial images of Euclidean spaces. We point out here some obstacles that quickly arise when confronting them:
\begin{itemize}
\item The rigidity of polynomial maps hinders their manipulation in order to obtain the desired image sets. 
\item It is difficult to compute the image of an arbitrary polynomial map and, as far as we know, there are not feasible algorithms to achieve this.
\item The family of polynomial images do not behave nicely with respect to the usual set-theoretic operations or geometric constructions. 
\end{itemize}
We enlighten the latter fact with some examples.

\begin{example}[Convex hull of a polynomial image of $\R^n$] 
{\em The convex hull of a polynomial image of $\R^n$ needs not be a polynomial image of $\R^n$}. The semialgebraic set 
$$
\Ss:=\{\y>(\x+1)^2(\x-1)^2\}\subset\R^2 
$$
is a polynomial image of $\R^2$. Indeed, the upper half-plane $\Hh:=\{\y>0\}\subset\R^2$ is a polynomial image of $\R^2$ by \cite[Ex. 1.4, (iv)]{fg1} whereas $\Ss$ is the image of $\Hh$ via the polynomial map $\Hh\to\Ss,\ (x,y)\mapsto (x,y+(\x+1)^2(\x-1)^2)$.

The convex hull of $\Ss$ is the semialgebraic set 
$$
\Cc:=\{\y>(\x+1)^2(\x-1)^2\}\cup\{\y>0,-1<\x<1\},
$$
which is not a polynomial image of $\R^2$ by \cite[Thm. 3.8]{fg2}.
\end{example}

\begin{example}[Minkowski sum of polynomial images of $\R^n$] 
{\em The Minkowski sum $\Ss+\Tt$ of two polynomial images $\Ss$ and $\Tt$ of $\R^n$ needs not to be a polynomial image of $\R^n$}. We take the semialgebraic subsets 
$$
\Ss:=\{\x\ge 0,\y\ge 0,\x+\y\ge 1\}\quad\text{and}\quad\Tt:=\{\x>0,\y>0\}
$$
of $\R^2$, both of which are polynomial images of $\R^2$ by \cite[Thm. 5.1]{fg2} and \cite[Thm. 1.7]{fg1}. Their Minkowski sum is 
$$
\Ss+\Tt:=\{\x>0,\y>0,\x+\y> 1\},
$$
which is not a polynomial image of $\R^2$ by \cite[Cor. 3.4]{fg2}.
\end{example}

\begin{example}[Connected intersection of polynomial images of $\R^n$] 
{\em If the intersection $\Ss\cap\Tt$ of two polynomial images $\Ss$ and $\Tt$ of $\R^n$ is connected, then $\Ss\cap\Tt$ is not in general a polynomial image of $\R^n$}. The semialgebraic subsets $\Ss:=\{\x\le 1\}$ and $\Tt:=\{\x\ge-1\}$ of $\R^2$ are polynomial images of $\R^2$ whereas their intersection $\Ss\cap\Tt$, which is connected, is not a polynomial image of $\R^2$ because it does not satisfy Condition 1 above. Observe that $\Ss$, $\Tt$ and $\Ss\cap\Tt$ are convex semialgebraic sets.
\end{example}

\begin{example}[Connected union of polynomial images of $\R^n$] 
{\em If the union $\Ss\cup\Tt$ of two polynomial images $\Ss$ and $\Tt$ of $\R^n$ is connected, then $\Ss\cup\Tt$ is not in general a polynomial image of $\R^n$}. The semialgebraic subsets $\Ss:=\{\x\ge 0,\y\ge\x^2\}$ and $\Tt:=\{\y\ge0,\x\ge\y^2\}$ of $\R^2$ are polynomial images of $\R^2$ whereas their union $\Ss\cup\Tt$, which is connected, is not a polynomial image of $\R^2$ by \cite[Thm. 1.1]{fu1}. In fact, $\Ss$ and $\Tt$ are convex sets, but their union $\Ss\cup\Tt$ is not. 
\end{example}

We suspect that with the current knowledge it is difficult (or even plausibly impossible) to find two convex semialgebraic sets which are polynomial images of $\R^n$ whose union is convex but not a polynomial image of $\R^n$. The reason is the following: \em if two convex semialgebraic sets $\Ss$ and $\Tt$ satisfy all known obstructions to be polynomial images of $\R^n$ and their union $\Ss\cup\Tt$ is convex, then such union also satisfies all those known obstructions\em. So we have no known `a priori' tools to find such an example. In this regard, it would be relevant \em to determine whether the union of two convex polynomial images of $\R^n$ is also a polynomial image of $\R^n$ whenever such union is a convex set\em. A result of this nature will definitely help to determine all convex semialgebraic sets that are polynomial images of $\R^n$. However, at present we feel far from achieving this goal.

If we restrict our attention to the family ${\mathfrak F}$ of $n$-dimensional closed convex semialgebraic subsets of $\R^n$ with piecewise linear boundary that are polynomial images of $\R^n$, then $\Ss$ and $\Tt$ are by Theorem \ref{main1} $n$-dimensional convex polyhedra whose recession cone has dimension $n$. If the union $\Ss\cup\Tt$ is convex, then $\Ss\cup\Tt$ is again an $n$-dimensional convex polyhedron with recession cone of dimension $n$, so it is a polynomial image of $\R^n$ by Theorem \ref{main1}. Analogously, if we are interested in the family ${\mathfrak G}$ of $n$-dimensional open convex semialgebraic subsets of $\R^n$ with piecewise linear boundary that are polynomial images of $\R^n$, then $\Ss$ and $\Tt$ are, by Theorem \ref{main2}, $n$-dimensional convex polyhedra without bounded facets and whose recession cone has dimension $n$. If the union $\Ss\cup\Tt$ is convex, then $\Ss\cup\Tt$ is again an $n$-dimensional convex polyhedron without bounded facets and whose recession cone has dimension $n$. By Theorem \ref{main2} this union is a polynomial image of $\R^n$. 

In both cases above the result arises `a posteriori' because the union, if convex, of sets of either the family ${\mathfrak F}$ or ${\mathfrak G}$ is again a set of the family ${\mathfrak F}$ or ${\mathfrak G}$. We guess it is really difficult to develop a general strategy to prove `a priori' (without knowing the characterizations provided by Theorems \ref{main1} and \ref{main2}) that the union, if convex, of two convex semialgebraic sets with piecewise linear boundary that are polynomial images of $\R^n$ is again a polynomial image of $\R^n$. 

In order to circumvent these difficulties we have developed alternative strategies that rely on some constructions introduced in Pecker's work \cite{p}. The Tarski--Seidenberg principle on elimination of quantifiers can be also restated geometrically by saying that the projection of a semialgebraic set is again semialgebraic. An alternative converse problem, to find an algebraic set in $\R^{n+k}$ whose projection is a given semialgebraic subset of $\R^n$, is known as the \em problem of eliminating inequalities\em. Motzkin proved in \cite{m2} that this problem always has a solution for $k=1$. However, his solution is rather complicated and is generally a reducible algebraic set. In another direction Andradas--Gamboa proved in \cite{ag1,ag2} that if $\Ss\subset\R^n$ is a closed semialgebraic set whose Zariski closure is irreducible, then $\Ss$ is the projection of an irreducible algebraic set in some $\R^{n+k}$. In \cite{p} Pecker gives some improvements on both results: for the first one by finding a construction of an algebraic set in $\R^{n+1}$ that projects onto the given semialgebraic subset of $\R^n$, far simpler than the original construction of Motzkin; for the second one by characterizing the semialgebraic sets in $\R^n$ which are projections of a real variety in $\R^{n+1}$. In Section~\ref{s2} we modify Pecker's polynomials introduced in \cite[\S2]{p} to take advantage of them in order to prove both Theorems \ref{main1} and \ref{main2}.

To ease the presentation of the full picture of what is known \cite{fgu1,fu1,fu2,fu3,fu4,u2} about the representation of semialgebraic sets with piecewise linear boundary as either polynomial or regular images of some euclidean space $\R^m$ we introduce the following two invariants. Given a semialgebraic set $\Ss\subset\R^m$, we define
\begin{equation*}
\begin{split}
\pp(\Ss):&=\inf\{n\geq1:\exists \ f:\R^n\to\R^m\ \text{polynomial such that}\ f(\R^n)=\Ss\},\\
\rr(\Ss):&=\inf\{n\geq1:\exists \ f:\R^n\to\R^m\ \text{regular such that}\ f(\R^n)=\Ss\}.
\end{split}
\end{equation*}
The condition $\pp(\Ss):=+\infty$ expresses the non-representability of $\Ss$ as a polynomial image of some $\R^n$ whereas $\rr(\Ss):=+\infty$ has the analogous meaning for regular maps. The values of these invariants for the families of convex polyhedra and their complements are shown in Table~\ref{tabla}. Here, $\pol\subset\R^n$ represents an $n$-dimensional convex polyhedron and its complement $\Ss:=\R^n\setminus\pol$ is assumed to be connected. In addition, we write $\ol{S}:=\R^n\setminus\Int(\pol)$.
\begin{table}[ht]
$$
\renewcommand*{\arraystretch}{1.3}
\begin{array}{|c|c|c|c|c|c|c|c|c|c|}
\hline 
&\multicolumn{2}{c|}{\text{$\pol$ bounded}}&\multicolumn{4}{c|}{\text{$\pol$ unbounded}}\\[2pt] 
\cline{2-7} 
&n=1&n\geq2&n=1&\multicolumn{3}{c|}{n\geq2}\\[2pt] 
\hline
{\rm r}(\pol)&1&\multirow{2}{*}{$n$}&1&\multicolumn{3}{c|}{\multirow{2}{*}{$n$}}\\[2pt] 
\cline{1-2}\cline{4-4}
{\rm r}(\Int(\pol))&2&&2&\multicolumn{3}{c|}{}\\[2pt] 
\cline{1-1}\cline{2-7}
{\rm p}(\pol)&\multicolumn{2}{c|}{\multirow{2}{*}{$+\infty$}}&1&\multicolumn{3}{c|}{n, +\infty\ (\ast)}\\[2pt] 
\cline{1-1}\cline{4-7}
{\rm p}(\Int(\pol))&\multicolumn{2}{c|}{}&2&\multicolumn{3}{c|}{n, n+1, +\infty\ (\star)}\\[2pt] 
\cline{1-7}
{\rm r}(\Ss)&\multirow{4}{*}{$+\infty$}&\multirow{4}{*}{$n$}&2&\multicolumn{3}{c|}{\multirow{4}{*}{$n$}}\\[2pt] 
\cline{1-1}\cline{4-4}
{\rm r}(\ol{\Ss})&&&1&\multicolumn{3}{c|}{}\\[2pt] 
\cline{1-1}\cline{4-4}
{\rm p}(\Ss)&&&2&\multicolumn{3}{c|}{}\\[2pt] 
\cline{1-1}\cline{4-4} 
{\rm p}(\ol{\Ss})&&&1&\multicolumn{3}{c|}{}\\[2pt] 
\hline
\end{array}
$$
\caption{Full picture}\label{tabla}
\end{table}

Let us explain the (marked) cases in Table~\ref{tabla} which follow from this work:
\begin{itemize}
\item[$(\ast)$] $(n,\,+\infty)$: An $n$-dimensional convex polyhedron $\pol\subset\R^n$ has ${\rm p}(\pol)=n$ if and only if its recession cone $\conv{\pol}{}$ has dimension $n$. Otherwise, ${\rm p}(\pol)=+\infty$.
\item[$(\star)$] $(n,n+1,\,+\infty)$: If the recession cone $\conv{\pol}{}$ of an $n$-dimensional convex polyhedron $\pol$ has dimension $<n$, then ${\rm p}(\Int(\pol))=+\infty$. Otherwise, if $\pol$ has bounded facets, ${\rm p}(\Int(\pol))=n+1$ and if $\pol$ has no bounded facets, ${\rm p}(\Int(\pol))=n$.
\end{itemize}

\noindent{\bf Structure of the article.} The article is organized as follows. In Section~\ref{s1b} we introduce some basic notions, notations and tools that will be employed along the article. In Section~\ref{s2} we analyze further properties of Pecker's polynomials and we introduce some variations that fit the situation we need. In Section~\ref{s3} we prove Theorem~\ref{main1} whereas Theorem~\ref{main2} is proved in Section~\ref{s4}. We end this article with an appendix that collects some useful inequalities for positive real numbers.
 

\section{Preliminaries and basic tools}\label{s1b}
We proceed first to establish some basic concepts, notations and results. This section can be considered as a sort of toolkit, where diverse techniques and auxiliary tools that will be needed later are introduced.

\subsection{Basic notation} Points in the Euclidean space $\R^n$ are denoted with the letters $x$, $y$, $z$, $p$, $q$, \dots and vectors by $\vec{v}$, $\vec{w}$, \dots Given two points $p,q\in\R^n$, $\overrightarrow{pq}$ represents the vector from $p$ to $q$ and $\overline{pq}$ the segment joining them. Given an affine subspace $W\subset\R^n$, we use an overlying arrow $\vec{W}$ to refer to the corresponding linear subspace. This notation is extended in the following way: Given a finite union of affine subspaces $X:=X_1\cup\cdots\cup X_r$, we will denote $\vec{X}$ the union of the linear subspaces $\vec{X}_i$, so that
$$
\vec{X}:=\vec{X}_1\cup\cdots\cup\vec{X}_r.
$$
The vectors of the standard basis of $\R^n$ are denoted $\vec{\tt e}_i=(0,\dots,0,\overset{(i)}{1},0,\dots,0)$ for $i=1,\ldots,n$.

An \em affine hyperplane \em of $\R^n$ will usually be written as $H:=\{h=0\}$ using a non-zero linear equation $h$. It determines two \emph{closed half-spaces}
$$
H^+:=\{h\geq0\}\quad\text{and}\quad H^-:=\{h\leq0\}.
$$
In fact, these half-spaces depend on the linear equation $h$ chosen to define $H$. Whenever needed, we will clearly state the orientation that is being considered. 

An affine subspace $W$ of $\R^n$ is called \em vertical \em if $\vec{W}$ contains the vector $\ven$. Otherwise, we say that $W$ is \em non-vertical\em. In general, whenever an affine object or map is denoted with a symbol, we will use an overlying arrow on it to refer to its linear counterpart.

Given a set $X\subset\R^n$ and a set of vectors $\vec{V}\subset\R^n$, we define
$$
X+\vec{V}:=\{x+\vec{v}:\ x\in X, \vec{v}\in \vec{V}\}\subset\R^n.
$$
Whenever $X$ and $\vec{V}$ are convex sets, the set $X+\vec{V}$ is also convex. Given a set $X\subset\R^n$ and a vector $\vec{v}\in\R^n$, the \em cylinder of base $X$ in the direction $\vec{v}$ \em is defined as
$$
\vspan{X}{\vec{v}}:=\{x+\lambda\vec{v}:\,x\in X,\lambda\in\R\},
$$
and the \em positive cylinder of base $X$ in the direction $\vec{v}$ \em as
$$
\vspanp{X}{\vec{v}}:=\{x+\lambda\vec{v}:\,x\in X,\lambda\ge 0\}.
$$
We will use analogous notations $\vspan{\vec{X}}{\vec{v}}$ and $\vspanp{\vec{X}}{\vec{v}}$ when $\vec{X}$ is a set of vectors instead of a subset of $\R^n$. As special cases, the line through the point $p$ with direction $\vec{v}$ is written as $\vspan{p}{\vec{v}}$, whereas the ray with origin at $p$ and direction $\vec{v}$ is written as $\vspanp{p}{\vec{v}}$. Given $X_1,\ldots,X_m\subset\R^n$, we denote $\Span(X_1,\dots,X_m)$ the affine span of their union $\bigcup_{i=1}^mX_i$. 

\subsection{Convex polyhedra and recession cone}

A subset $\pol\subset\R^n$ is a \emph{convex polyhedron} if it can be described as a finite intersection of closed half-spaces. The dimension $\dim(\pol)$ of $\pol$ is the dimension of the smallest affine subspace of $\R^n$ that contains $\pol$ and $\Int(\pol)$ represents the relative interior of $\pol$ with respect to this subspace. If $\pol$ has non-empty interior there exists a unique minimal family $\{H_1,\ldots,H_m\}$ of affine hyperplanes such that $\pol=\bigcap_{i=1}^mH_i^+$. The \em facets \em or $(n-1)$-\em faces \em of $\pol$ are the intersections $\Ff_i:=H_i\cap\pol$ for $1\leq i\leq m$. Each facet $\Ff_i:=H_i\cap\bigcap_{j=1}^mH_j^+$ is a convex polyhedron contained in $H_i$. For $0\leq j\leq n-2$ we define inductively the $j$-\em faces \em of $\pol$ as the facets of the $(j+1)$-faces of $\pol$, which are again convex polyhedra. The $0$-faces are the \em vertices \em of $\pol$ and the $1$-faces are the \em edges \em of $\pol$. A face $\Ee$ of $\pol$ is \em vertical \em if the affine subspace of $\R^n$ spanned by $\Ee$ is vertical. Otherwise, we say that $\Ee$ is \em non-vertical\em. A convex polyhedron is \em non-degenerate \em if it has at least one vertex. Otherwise, it is called \em degenerate\em. For a detailed study of the main properties of convex sets we refer the reader to \cite{ber1,r,z}.

We associate to each convex polyhedron $\pol\subset\R^n$ its \em recession cone\em, see \cite[Ch.1]{z} and \cite[II.\S8]{r}. Fix a point $p\in\pol$ and denote $\conv{\pol}{}:=\{\vec{v}\in\R^n:\,\vspanp{p}{\vec{v}}\subset\pol\}$. Then \em $\conv{\pol}{}$ is a convex cone and it does not depend on the choice of $p$\em. The set $\conv{\pol}{}$ is called the \em recession cone \em of $\pol$. It holds $\conv{\pol}{}=\{\vec{\orig}\}$ if and only if $\pol$ is bounded. The recession cone of $\pol:=\bigcap_{i=1}^rH_i^+$ is 
$$
\conv{\pol}{}=\bigcap_{i=1}^r\conv{H_i^+}{}=\bigcap_{i=1}^r\vec{H_i}^+=\Big\{\sum_{i=1}^s\lambda_i\vec{v}_i:\ \lambda_i\geq0\Big\} 
$$
where the non-zero vectors $\vec{v}_1,\ldots,\vec{v}_s$ span the lines containing the unbounded edges of $\pol$. If a non-zero vector $\vec{v}\in\Int(\conv{\pol}{})$, then $\pol$ does not have facets parallel to $\vec{v}$.

If $\pol$ is non-degenerate we may write $\pol=\pol_0+\conv{\pol}{}$ where $\pol_0$ is the convex hull of the set of vertices of $\pol$. If $\p\subset\R^n$ is a non-degenerate convex polyhedron and $k\geq1$, then $\conv{\R^k\times\p}{}=\R^k\times\conv{\p}{}$. 

Recall that each degenerate convex polyhedron can be written as the product of a non-degenerate convex polyhedron times an Euclidean space. Besides, a convex polyhedron is degenerate if and only if it contains a line or, equivalently, if its recession cone contains a line. Consequently a convex polyhedron $\pol$ is non-degenerate if and only if all its faces are non-degenerate polyhedra.

The next result justifies the fact that the recession cone of a polyhedron plays an important role when we are trying to express it as a polynomial image of $\R^n$:

\begin{prop}\label{ini1}
If the dimension of the recession cone $\conv{\pol}{}$ of an $n$-dimensional convex polyhedron $\pol\subset\R^n$ is strictly smaller than $n$, then both $\pol$ and $\Int(\pol)$ have bounded non-singleton projections. Consequently, under the previous hypotheses both $\pol$ and $\Int(\pol)$ are not polynomial images of $\R^m$ for each $m\geq1$.
\end{prop}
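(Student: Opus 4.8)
The plan is to exhibit, for such a $\pol$, a non-zero linear form $\psi\colon\R^n\to\R$ whose image $\psi(\pol)$ is a bounded interval of positive length. Since $\psi(\Int(\pol))\subseteq\psi(\pol)$ is then also bounded and a polynomial function $\R^m\to\R$ has image either a point (when constant) or an unbounded set (when non-constant, by restricting it to a line on which it is not constant), this immediately yields both assertions; equivalently, one invokes Condition~1 directly.

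First I would record that $\pol=K+\conv{\pol}{}$ for some \emph{compact} set $K\subset\R^n$. When $\pol$ is non-degenerate this is the decomposition $\pol=\pol_0+\conv{\pol}{}$ recalled above, with $K=\pol_0$ the convex hull of the (finitely many) vertices. When $\pol$ is degenerate it is, after a linear change of coordinates, a product $\pol'\times\R^k$ with $\pol'$ non-degenerate and $\conv{\pol}{}=\conv{\pol'}{}\times\R^k$; writing $\pol'=\pol'_0+\conv{\pol'}{}$ gives $\pol=(\pol'_0\times\{0\})+\conv{\pol}{}$, so again $\pol=K+\conv{\pol}{}$ with $K=\pol'_0\times\{0\}$ compact. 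A linear change of coordinates affects none of the properties at stake, so we may argue in these coordinates.

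Next, since $\dim(\conv{\pol}{})<n$ the linear span $W:=\Span(\conv{\pol}{})$ is a proper linear subspace of $\R^n$, hence there is a non-zero linear form $\psi$ with $\psi|_W\equiv0$; in particular $\psi(\conv{\pol}{})=\{0\}$, as $\conv{\pol}{}$ is a cone contained in $W$. Therefore $\psi(\pol)=\psi(K)+\psi(\conv{\pol}{})=\psi(K)$ is bounded, being the continuous image of the compact set $K$, and $\psi(\Int(\pol))\subseteq\psi(\pol)$ is bounded as well. Finally, neither image is a singleton: were $\psi$ constant on $\pol$ (resp.\ on $\Int(\pol)$), then $\pol$ (resp.\ $\Int(\pol)$) would lie in an affine hyperplane, contradicting $\dim(\pol)=n$ --- recall $\Int(\pol)$ is non-empty and $n$-dimensional precisely because $\pol$ is. After an orthogonal change of coordinates $\psi$ becomes a coordinate projection, so both $\pol$ and $\Int(\pol)$ have bounded non-singleton projections, and the consequence about polynomial images follows as indicated above.

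The argument is elementary; the only points needing attention are the uniform treatment of degenerate polyhedra, handled via the product structure recalled in the preliminaries, and the verification that the ``non-singleton'' property passes to the interior, handled by full-dimensionality of $\Int(\pol)$. I do not anticipate any genuine obstacle.
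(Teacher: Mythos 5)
Your proof is correct and follows essentially the same route as the paper's: decompose $\pol$ as a Minkowski sum of a compact piece and the recession cone, then project along a direction annihilating the cone. The only cosmetic differences are that the paper splits the decomposition argument into non-degenerate/degenerate cases explicitly and enlarges $\pol_0$ to an $n$-dimensional compact polytope $\pol_0'$ to get the non-singleton property, whereas you keep $K=\pol_0$ (resp.\ $\pol'_0\times\{0\}$) and instead derive non-constancy of $\psi$ directly from $\dim(\pol)=n$; both are valid and equivalent in substance.
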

\begin{proof}
We may assume $\conv{\pol}{}$ is contained in the hyperplane $\{\x_n=0\}$. Consider the projection $\eta:\R^n\to\R,\ x:=(x_1,\ldots,x_n)\mapsto x_n$. Suppose first that $\pol$ is non-degenerate. As $\dim(\pol)=n$, we can choose a set of points $\Ww:=\{p_1,\dots, p_k\}\subset\pol$ that contains all the vertices of $\pol$ and spans the whole space $\R^n$. Then $\pol=\pol_0'+\conv{\pol}{}$ where $\pol_0'$ is the convex hull of $\Ww$. As $\pol_0'$ is a compact polyhedron and has dimension $n$, the projection $\eta(\pol_0')$ is a non-trivial bounded interval. We have $\eta(\pol)=\eta(\pol_0')+\vec{\eta}(\conv{\pol}{})=\eta(\pol_0')$ because $\vec{\eta}(\conv{\pol}{})=\{\vec{0}\}$. Consequently, both $\eta(\pol)$ and $\eta(\Int(\pol))$ are bounded non-trivial intervals. 

Assume next that $\pol$ is degenerate and suppose $\pol=\R^k\times\pol'$ where $1\leq k<n$ and $\pol'\subset\R^{n-k}$ is a non-degenerate convex polyhedron of $\R^{n-k}$. Choose the notation $(\x_{k+1},\ldots,\x_n)$ for the coordinates of $\R^{n-k}$. As $\conv{\pol}{}=\R^k\times\conv{\pol'}{}$, we may assume $\conv{\pol'}{}\subset\{\x_n=0\}$. Let $\tau:\R^n\to\R^{n-k}$ denote the projection onto the last $n-k$ coordinates and let $\bar{\eta}:\R^{n-k}\to\R$ denote the projection onto the last coordinate, so that $\eta=\bar{\eta}\circ\tau$. We have
$$
\eta(\pol)=\eta(\R^k\times\pol')=(\bar{\eta}\circ\tau)(\R^k\times\pol')=\bar{\eta}(\pol').
$$ 
By the non-degenerate case $\bar{\eta}(\pol')$ and $\bar{\eta}(\Int(\pol'))$ are bounded intervals, as required. 
\end{proof}

Other results that follow from the use of the recession cone are the following.

\begin{lem}\label{dist}
Let $\pol\subset\R^n$ be a convex polyhedron and let $H:=\{h=0\}$ be a hyperplane of $\R^n$ such that $\pol\subset\{h>0\}$. Then $\dist(\pol,H)=\dist(p_0,H)$ for each point $p_0$ contained in one of the faces of $\pol$ of minimal dimension and in addition $\pol\subset\{h>\frac{h(p_0)}{2}\}$. 
\end{lem}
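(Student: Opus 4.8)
The plan is to reduce the statement to a one–variable analysis of the affine function $h$ on $\pol$. Write $h(x)=\langle\vec{a},x\rangle+b$ with $\vec{a}=\vec{h}\neq\vec{\orig}$ its linear part, so that $\dist(x,H)=|h(x)|/\|\vec{a}\|=h(x)/\|\vec{a}\|$ for every $x\in\pol$ (the last equality because $h>0$ on $\pol$). Since $\dist(\pol,H)=\inf_{x\in\pol}\dist(x,H)$, it suffices to show that $m:=\inf_{x\in\pol}h(x)$ is finite, strictly positive, and attained at every point of \emph{some} face $\Ff_0$ of $\pol$ of minimal dimension: then for every $p_0\in\Ff_0$ we get $\dist(\pol,H)=m/\|\vec{a}\|=h(p_0)/\|\vec{a}\|=\dist(p_0,H)$, and since each $x\in\pol$ satisfies $h(x)\ge m=h(p_0)>h(p_0)/2>0$ we also obtain $\pol\subset\{h>\frac{h(p_0)}{2}\}$.

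First I would pin down the sign of $\vec{h}$ on the recession cone. If $\vec{h}(\vec{v})<0$ for some $\vec{v}\in\conv{\pol}{}$, then for any $p\in\pol$ the ray $\vspanp{p}{\vec{v}}\subset\pol$ would satisfy $h(p+\lambda\vec{v})=h(p)+\lambda\vec{h}(\vec{v})\to-\infty$, contradicting $\pol\subset\{h>0\}$; hence $\vec{h}\ge 0$ on $\conv{\pol}{}$. Applying this to both $\vec{v}$ and $-\vec{v}$ when $\vec{v}$ lies in the lineality space $\vec{L}:=\conv{\pol}{}\cap(-\conv{\pol}{})$ shows $\vec{h}|_{\vec{L}}\equiv 0$; consequently $h$ is constant along every coset of $\vec{L}$, in particular on every minimal face of $\pol$ (each of these being a translate of $\vec{L}$).

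Next, in the non-degenerate case $\vec{L}=\{\vec{\orig}\}$ I would use $\pol=\pol_0+\conv{\pol}{}$, where $\pol_0$ is the compact convex hull of the vertices of $\pol$: for $x=x_0+\vec{v}$ with $x_0\in\pol_0$ and $\vec{v}\in\conv{\pol}{}$ one has $h(x)=h(x_0)+\vec{h}(\vec{v})\ge h(x_0)$, so $m=\min_{\pol_0}h$, which, $h$ being affine on the polytope $\pol_0$, is attained at a vertex $p_0$ of $\pol$; thus $\Ff_0=\{p_0\}$. In the general case write $\pol=\vec{L}+\pol'$ with $\pol'$ non-degenerate, as recalled in \S\ref{s1b}; the previous case applied to $\pol'$ yields a vertex $p_0'$ of $\pol'$ with $\inf_{\pol'}h=h(p_0')$, and since $\vec{h}|_{\vec{L}}\equiv 0$ we get $h(\ell+m')=h(m')\ge h(p_0')$ for all $\ell\in\vec{L}$ and $m'\in\pol'$. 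Hence $m=h(p_0')$, attained on $\Ff_0:=\vec{L}+\{p_0'\}$, which is a face of $\pol$ of minimal dimension: a supporting hyperplane of $\pol'$ meeting $\pol'$ only at $p_0'$ lifts, by adjoining the directions of $\vec{L}$, to a supporting hyperplane of $\pol$ whose trace on $\pol$ is exactly $\vec{L}+\{p_0'\}$.

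I do not expect a genuine obstacle: the statement is a soft consequence of the structure of convex polyhedra. The only points requiring a little care are the identification of $\vec{L}+\{p_0'\}$ as an honest minimal face of $\pol$ and the fact that the infimum $m$ is really attained — both handled by the decompositions $\pol=\pol_0+\conv{\pol}{}$ and $\pol=\vec{L}+\pol'$ together with the compactness of the relevant polytope of vertices — plus the (harmless) observation that the assertion is to be read for a suitable minimal face, not for all of them.
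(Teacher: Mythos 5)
Your proof is correct and takes essentially the same route as the paper's: split into the non-degenerate case, where $\pol=\pol_0+\conv{\pol}{}$ and the nonnegativity of $\vec{h}$ on the recession cone forces $\inf_{\pol}h$ to be attained at a vertex, and the degenerate case, which reduces to the former after factoring off the lineality space (the paper writes $\pol=\pol'\times\R^k$ rather than $\pol=\vec{L}+\pol'$, but this is the same decomposition). The extra explicitness about $\vec{h}\vert_{\vec{L}}\equiv 0$ and the verification that $\vec{L}+\{p_0'\}$ is an honest minimal face are points the paper handles with a one-line remark, so the two arguments match in substance.
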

\begin{proof}
Assume first that $\pol$ is a non-degenerate convex polyhedron and write $\pol=\pol_0+\conv{\pol}{}$ where $\pol_0$ is the convex hull of the set $\Vv$ of vertices of $\pol$. As $\pol\subset\{h>0\}$, then $\mu:=\min\{h(p):\ p\in\Vv\}>0$ and $\vec{h}(\vec{v})\geq0$ for all $\vec{v}\in\conv{\pol}{}$. Observe that $h(q)\geq\mu$ for all $q\in\pol$ and $\dist(\pol,H)=\dist(p_0,H)$ where $p_0\in\Vv$ is a vertex such that $h(p_0)=\mu$. In addition, $\pol\subset\{h\geq h(p_0)\}\subset\{h>\frac{h(p_0)}{2}\}$. As the convex polyhedron $\pol$ is non-degenerate, $\{p_0\}$ is a face of $\pol$ of minimal dimension.

If $\pol$ is degenerate, we assume $\pol=\pol'\times\R^k$ where $\pol'\subset\R^{n-k}$ is a non-degenerate polyhedron. As $\pol\cap H=\varnothing$, we have $H=H'\times\R^k$ where $H':=\{h=0\}$ is a hyperplane of $\R^{n-k}$ and $n-k\geq1$. We abuse notation using the fact that the linear form $h$ only depends on the first $n-k$ variables. Applying the non-degenerate case to $\pol', H'$ and $h$ we find a vertex $q_0$ of $\pol'$ such that $\dist(\pol',H')=\dist(q_0,H')$. Observe that $\Ee:=\{q_0\}\times\R^k$ is a face of $\pol$ of minimal dimension and $h(p)=h(q_0,0)=h(q_0)$ for each $p\in\Ee$. The statement now follows straightforwardly.
\end{proof}

\begin{cor}\label{dist2}
Let $\pol\subset\R^n$ be a convex polyhedron and let $H_1:=\{h_1=0\}$ and $H_2:=\{h_2=0\}$ be hyperplanes of $\R^n$. Suppose that $\pol\cap H_1\subset\{h_2>0\}$. Then there exists $\veps>0$ such that $\pol\cap\{-\veps\leq h_1\leq\veps\}\subset\{h_2>0\}$.
\end{cor}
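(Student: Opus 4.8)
The plan is to reduce the whole statement to a single application of Lemma~\ref{dist}. The key device is the auxiliary set $\Rr:=\pol\cap\{h_2\le0\}$, which is again a convex polyhedron (a finite intersection of closed half-spaces), possibly empty. Since asking for $\veps>0$ with $\pol\cap\{-\veps\le h_1\le\veps\}\subset\{h_2>0\}$ is the same as asking that the slab $\{-\veps\le h_1\le\veps\}$ be disjoint from $\Rr$, the statement is equivalent to the assertion that $\Rr$ stays at a positive distance from the hyperplane $H_1$.

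First I would dispose of the trivial case $\Rr=\varnothing$: then $\pol\subset\{h_2>0\}$ and any $\veps$, say $\veps:=1$, works. So assume $\Rr\ne\varnothing$. The hypothesis $\pol\cap H_1\subset\{h_2>0\}$ says precisely that $\Rr\cap H_1=\varnothing$, i.e.\ the linear form $h_1$ has no zero on $\Rr$; being $\Rr$ convex, hence connected, $h_1$ has constant sign on $\Rr$, and after possibly replacing $h_1$ by $-h_1$ (which alters neither $H_1$ nor the symmetric slab $\{-\veps\le h_1\le\veps\}$) we may assume $\Rr\subset\{h_1>0\}$.

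At this point I would invoke Lemma~\ref{dist} with the polyhedron $\Rr$ and the hyperplane $H_1=\{h_1=0\}$: there is a point $p_0$ in a face of $\Rr$ of minimal dimension with $\Rr\subset\{h_1>\frac{h_1(p_0)}{2}\}$. Taking $\veps:=\frac{h_1(p_0)}{2}>0$ we get $\Rr\cap\{h_1\le\veps\}=\varnothing$, and a fortiori $\Rr\cap\{-\veps\le h_1\le\veps\}=\varnothing$, which unwinds to $\pol\cap\{-\veps\le h_1\le\veps\}\subset\{h_2>0\}$, as wanted.

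The only genuinely delicate point, and the place where the polyhedral hypothesis matters, is that $\Rr$ may well be unbounded, and an arbitrary closed set contained in an open half-space can approach the bounding hyperplane asymptotically; what rules this out is exactly the decomposition $\Rr=\Rr_0+\conv{\Rr}{}$ (with $\Rr_0$ the convex hull of the vertices) underlying the proof of Lemma~\ref{dist}, which forces a uniform positive lower bound for $h_1$ on $\Rr$. Beyond bookkeeping the two sign cases for $h_1$ on $\Rr$, I do not anticipate further obstacles.
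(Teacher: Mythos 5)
Your proof is correct and follows essentially the same route as the paper's: both set $\Rr:=\pol\cap\{h_2\le0\}$, reduce to $\Rr\subset\{h_1>0\}$, and apply Lemma~\ref{dist} to get the quantitative bound on $h_1$. You are a bit more explicit about the empty case and about why one sign of $h_1$ can be assumed, but the argument is the same.
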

\begin{proof}
Define $\p:=\pol\cap\{h_2\leq0\}$. As $\pol\cap H_1\cap\{h_2\leq0\}=\varnothing$, we may assume $\p\subset\{h_1>0\}$. By Lemma~\ref{dist} there exists $\veps>0$ such that $\p\subset\{h_1>\veps\}$. Thus, $\pol\cap\{-\veps\leq h_1\leq\veps\}\subset\{h_2>0\}$, as required.
\end{proof}

\subsection{Vertical cones and convex polyhedra}

Along the article we will make frequent use of one particular direction in $\R^n$, the one given by the vector $\vec{\tt e}_{n}=(0,\dots,0,1)$. Set $x':=(x_1,\dots,x_{n-1})\in\R^{n-1}$ so that a point in $\R^n\equiv\R^{n-1}\times\R$ is written as $x:=(x',x_n)$. The \em vertical cone of radius $\delta>0$ \em is defined as 
$$
\vcon{\delta}:=\{(v',v_n)\in\R^n:\ \|v'\|\leq\delta v_n\}.
$$
Given a set $A\subset\R^n$ we define the \em vertical cone of radius $\delta>0$ over $A$ \em as
$$
\acon{\delta}{A}:=A+\vcon{\delta}=\{x+\vec{v}:\ x\in A,\ \vec{v}\in\vcon{\delta}\}.
$$
If $A$ is a convex set, then $\acon{\delta}{A}$ is also a convex set.

We establish now some results relating vertical cones and unbounded polyhedra.

\begin{lem}\label{prop:conpol}
Let $\pol\subset\R^n$ be a convex polyhedron such that $\ven\in\Int(\conv{\pol}{})$. Then there exists $\delta>0$ such that for each $p\in\R^n$ the inclusion $\acon{\delta}{\{p\}}\setminus\{p\}\subset\{p\}+\Int(\conv{\pol}{})$ holds.
\end{lem}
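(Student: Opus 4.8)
The statement asserts: if $\ven\in\Int(\conv{\pol}{})$, there is a single $\delta>0$ (uniform in $p$) such that the truncated vertical cone $\acon{\delta}{\{p\}}\setminus\{p\}$ is contained in the open cone $\{p\}+\Int(\conv{\pol}{})$. Since everything in sight is translation-invariant — the vertical cone $\vcon{\delta}$ is a cone with apex at $\orig$, and $\conv{\pol}{}$ does not depend on the base point — the statement reduces immediately to the case $p=\orig$: it suffices to find $\delta>0$ with $\vcon{\delta}\setminus\{\orig\}\subset\Int(\conv{\pol}{})$. So the plan is: first reduce to the apex-at-origin statement by translation, then prove $\vcon{\delta}\setminus\{\orig\}\subset\Int(\conv{\pol}{})$ for $\delta$ small enough.

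For the reduced statement I would argue by compactness. Write $\conv{\pol}{}=\bigcap_{i=1}^r\vec{H_i}^+$ as in the discussion preceding the lemma, with $\vec{H_i}=\{\ell_i=0\}$ for suitable linear forms $\ell_i$ normalized so that $\conv{\pol}{}=\{\ell_i\geq0:\ i=1,\dots,r\}$. The hypothesis $\ven\in\Int(\conv{\pol}{})$ says $\ell_i(\ven)>0$ for every $i$; equivalently, writing $\ell_i(v',v_n)=\langle\vec{a_i}',v'\rangle+b_iv_n$, we have $b_i=\ell_i(\ven)>0$ for all $i$. Now take any nonzero $\vec{v}=(v',v_n)\in\vcon{\delta}$; by definition $\|v'\|\leq\delta v_n$, and in particular $v_n>0$ (if $v_n=0$ then $v'=\vec{0}$ too). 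Then
$$
\ell_i(\vec{v})=\langle\vec{a_i}',v'\rangle+b_iv_n\geq b_iv_n-\|\vec{a_i}'\|\,\|v'\|\geq b_iv_n-\delta\|\vec{a_i}'\|v_n=(b_i-\delta\|\vec{a_i}'\|)v_n.
$$
Choosing $\delta>0$ with $\delta<\min_i\frac{b_i}{\|\vec{a_i}'\|+1}$ (the finiteness of the index set is what makes a single $\delta$ work) forces $\ell_i(\vec{v})>0$ for all $i$, hence $\vec{v}\in\Int(\conv{\pol}{})$. This handles the non-degenerate case, and in fact the same representation $\conv{\pol}{}=\bigcap\{\ell_i\geq0\}$ is available for degenerate $\pol$ as well, since the degenerate case only means some of these half-spaces come in opposite pairs; the argument is unaffected, so no separate case analysis is really needed.

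The only genuine subtlety — and the step I would be most careful about — is the passage from "$\vec{v}$ lies in every defining open half-space $\{\ell_i>0\}$" to "$\vec{v}\in\Int(\conv{\pol}{})$". When $\conv{\pol}{}$ is full-dimensional and the $\ell_i$ are its irredundant facet inequalities this is standard, but $\conv{\pol}{}$ need not be full-dimensional a priori. To be safe I would instead phrase the conclusion intrinsically: since $\ven\in\Int(\conv{\pol}{})$ and $\conv{\pol}{}$ is a convex cone, $\Int(\conv{\pol}{})$ is an open convex cone containing $\ven$; one checks directly that $\vec{v}/v_n$ lies within distance $\delta$ (in the $\R^{n-1}$ sense) of $\ven$, so for $\delta$ smaller than the radius of a ball around $\ven$ contained in $\Int(\conv{\pol}{})$ — which exists precisely because $\ven$ is an interior point — we get $\vec{v}/v_n\in\Int(\conv{\pol}{})$, whence $\vec{v}\in\Int(\conv{\pol}{})$ by the cone property. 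Either route works; the half-space computation is more explicit and gives the cleaner constant, so I would present that one and only fall back on the intrinsic argument if full-dimensionality causes bookkeeping trouble. Finally, translating back by $p$ yields $\acon{\delta}{\{p\}}\setminus\{p\}=\{p\}+(\vcon{\delta}\setminus\{\orig\})\subset\{p\}+\Int(\conv{\pol}{})$ for every $p\in\R^n$, as required.
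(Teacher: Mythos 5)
Your proof is correct, and it in fact contains two arguments: your ``fallback'' intrinsic argument (ball of radius $\delta$ around $\ven$ inside $\Int(\conv{\pol}{})$, rescale $\vec{v}$ by $1/v_n$ to land in that ball, then use the cone property) is exactly the paper's proof. Your primary argument goes a different route: it pulls in the explicit half-space description $\conv{\pol}{}=\bigcap_i\{\ell_i\geq 0\}$ and bounds $\ell_i(\vec{v})\geq(b_i-\delta\|\vec{a}'_i\|)v_n$, choosing $\delta<\min_i b_i/(\|\vec{a}'_i\|+1)$; this is just as elementary and has the minor advantage of producing a concrete $\delta$ from the inequality data, whereas the ball argument only asserts existence. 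One caution: your claim that $\ven\in\Int(\conv{\pol}{})$ forces $\ell_i(\ven)>0$ for \emph{every} $i$ in the fixed representation $\conv{\pol}{}=\bigcap_i\{\ell_i\geq 0\}$ is legitimate only when $\Int$ is the full $\R^n$-interior (equivalently $\conv{\pol}{}$ is $n$-dimensional); if $\conv{\pol}{}$ were lower-dimensional some $\ell_i$ could vanish identically on it. You flag this subtlety yourself, and in fact the lemma's conclusion is false unless $\conv{\pol}{}$ is $n$-dimensional (a full-dimensional truncated cone $\vcon{\delta}\setminus\origs$ cannot sit inside a lower-dimensional set), so the hypothesis must be read that way; the paper's ball argument makes the same tacit assumption. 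Your parenthetical remark that degeneracy of $\pol$ corresponds to half-spaces appearing in opposite pairs is not accurate as stated (a single half-space $\{\x_n\geq 0\}$ is already degenerate), but this is an aside that does not affect the proof.
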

\begin{proof}
As $\ven:=(0,\ldots,0,1)\in\Int(\conv{\pol}{})$, there exists $\delta>0$ such that the ball $\Bb(\ven,\delta)$ of center $\ven$ and radius $\delta>0$ is contained in $\Int(\conv{\pol}{})$. As $\conv{\pol}{}$ is a cone with vertex $0$,
$$
\vcon{\delta}\setminus\origs\subset\{\lambda \vec{v}:\ \vec{v}\in\Bb(\ven,\delta),\ \lambda>0\}\subset\Int(\conv{\pol}{}).
$$ 
From this inclusion readily follows that $\acon{\delta}{\{p\}}\setminus\{p\}\subset\{p\}+\Int(\conv{\pol}{})$ for each $p\in\pol$.
\end{proof}

\begin{prop}\label{conpols}
Let $\pol\subset\R^n$ be a non-degenerate unbounded convex polyhedron. Assume $\pol\subset\{\x_n\ge 0\}$, the intersection $\Ee:=\{\x_n=0\}\cap\pol$ is a face of $\pol$ and the vector $\ven\in\Int(\conv{\pol}{})$. Then there exist positive numbers $\delta<\Delta$ such that $\acon{\delta}{\Ee}\subset\pol\subset\acon{\Delta}{\Ee}$.
\end{prop}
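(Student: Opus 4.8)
The plan is to produce the two inclusions separately, exploiting the structure $\pol=\pol_0+\conv{\pol}{}$ of a non-degenerate convex polyhedron, where $\pol_0$ is the (compact) convex hull of the vertices of $\pol$.

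For the inclusion $\acon{\delta}{\Ee}\subset\pol$: First I would apply Lemma~\ref{prop:conpol} to the hypothesis $\ven\in\Int(\conv{\pol}{})$, obtaining a $\delta>0$ such that $\acon{\delta}{\{p\}}\setminus\{p\}\subset\{p\}+\Int(\conv{\pol}{})$ for every $p\in\R^n$. Taking $p$ to range over $\Ee$ and using that $\Ee\subset\pol$ together with $\pol+\conv{\pol}{}=\pol$, one gets $\acon{\delta}{\Ee}=\bigcup_{p\in\Ee}\acon{\delta}{\{p\}}\subset\pol$, since each $\acon{\delta}{\{p\}}\subset\{p\}+\conv{\pol}{}\subset\pol$. (A small point: $\Ee$ being a face of $\pol$ and $\ven\in\Int(\conv{\pol}{})$ guarantees $\Ee$ is where $\pol$ touches $\{\x_n=0\}$, but this direction really only needs $\Ee\subset\pol$.)

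For the inclusion $\pol\subset\acon{\Delta}{\Ee}$, which I expect to be the main obstacle: the idea is that every point $x=(x',x_n)\in\pol$ can be reached from some point of $\Ee$ by a vector lying in a wide enough vertical cone. Write $x=q+\vec w$ with $q\in\pol_0$ and $\vec w\in\conv{\pol}{}$. Since $\ven\in\Int(\conv{\pol}{})$, the projection of $\pol$ to $\Ee$ along $\ven$ should send $\pol$ into $\Ee$; more precisely, for $x\in\pol$ the downward ray $x-\lambda\ven$ ($\lambda\ge0$) hits $\{\x_n=0\}$ at a point $\bar x$, and I claim $\bar x\in\Ee$: indeed $\bar x\in\pol$ would follow if the segment stays in $\pol$, which needs an argument — here one uses that $\Ee$ is a face and $\pol\subset\{\x_n\ge0\}$, so the lowest point of $\pol$ on the vertical line through $x$ lies on $\{\x_n=0\}\cap\pol=\Ee$. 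Granting $\bar x\in\Ee$, we have $x=\bar x+x_n\ven\in\Ee+\vcon{\Delta}$ for any $\Delta\ge0$, which would give $\pol\subset\acon{0^+}{\Ee}$ — too strong and in fact false, so the vertical-line projection need not land in $\Ee$ when $\pol$ has unbounded "width." The correct approach: decompose $x=q+\vec w$ with $q\in\pol_0$, $\vec w\in\conv{\pol}{}$; bound $\|q'\|$ by $\diam(\pol_0)=:R$ since $\pol_0$ is compact; handle the vertex part by noting that each vertex $v$ of $\pol$ lies in $\Ee+\vcon{\Delta_0}{}$ for a suitable $\Delta_0$ because $\pol$ is non-degenerate and $\ven\in\Int(\conv{\pol}{})$ forces the vertices' "horizontal displacement from $\Ee$" to be bounded; and handle $\vec w$ by observing $\conv{\pol}{}\subset\vcon{\Delta_1}$ for some $\Delta_1$ (its generating vectors $\vec v_1,\dots,\vec v_s$ each have positive last coordinate since $\ven\in\Int(\conv{\pol}{})$ means $\pol$ has no facet parallel to $\ven$, hence no unbounded edge direction is horizontal, so each $\vec v_i\in\vcon{\delta_i}$ for some $\delta_i$, and a cone is closed under nonnegative combinations of vectors in a common vertical cone up to enlarging the radius — this last closure uses the elementary inequality that a sum of vectors each with $\|v_i'\|\le\delta (v_i)_n$ satisfies $\|\sum v_i'\|\le\delta\sum(v_i)_n$). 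Combining, $x'=q'+\vec w'$ has $\|x'\|\le R+\Delta_1 w_n\le R+\Delta_1 x_n$, and then choosing $\bar x:=(q'_{\Ee},0)\in\Ee$ a nearest point of $\Ee$, one writes $x=\bar x+(x-\bar x)$ and estimates $\|x'-\bar x'\|\le R'+\Delta_1 x_n$ while the last coordinate of $x-\bar x$ is $x_n$; taking $\Delta$ large enough that $R'+\Delta_1 x_n\le\Delta x_n$ whenever $x_n$ is bounded below away from $0$ — and separately treating the slab $\{0\le x_n\le c\}\cap\pol$, which is compact, hence at bounded distance from $\Ee$ — yields the uniform $\Delta$.

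The technical heart is thus the uniform control near $\Ee$ (the slab $\{\x_n\le c\}$) combined with the linear growth of the "horizontal spread" of $\pol$ in $x_n$ coming from $\conv{\pol}{}\subset\vcon{\Delta_1}$; the compactness of $\pol_0$ and of the slab does the rest. The appendix of inequalities for positive reals is presumably exactly what one invokes to package the estimate $\|x'-\bar x'\|\le\Delta x_n$ cleanly. I would organize the write-up as: (1) reduce to $x=q+\vec w$; (2) show $\conv{\pol}{}\subset\vcon{\Delta_1}$; (3) bound the slab $\pol\cap\{\x_n\le c\}$ by compactness; (4) glue, choosing $\Delta:=\max$ of the slab constant and $\Delta_1$ plus a correction.
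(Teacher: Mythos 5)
Your argument for the first inclusion $\acon{\delta}{\Ee}\subset\pol$ is exactly the paper's: invoke Lemma~\ref{prop:conpol}, take the union over $p\in\Ee$, and use $\Ee+\conv{\pol}{}\subset\pol$. That part is fine.

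The second inclusion $\pol\subset\acon{\Delta}{\Ee}$ is where there is a genuine gap. You assert that $\conv{\pol}{}\subset\vcon{\Delta_1}$ for some $\Delta_1$, justified by the chain ``$\ven\in\Int(\conv{\pol}{})$ $\Rightarrow$ no facet is parallel to $\ven$ $\Rightarrow$ no unbounded edge direction is horizontal $\Rightarrow$ each generator $\vec v_i$ has $(\vec v_i)_n>0$.'' The middle implication is false: two non-vertical facets can meet in a horizontal edge, so $\pol$ may have unbounded edges with horizontal direction even though it has no vertical facets. Concretely, take $n=3$ and let $\pol$ be the pointed cone generated by $(1,0,1),(-1,0,1),(0,1,1),(0,-1,1),(0,1,0)$. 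Then $\pol$ is non-degenerate, $\pol\subset\{\x_3\ge0\}$, $\Ee=\pol\cap\{\x_3=0\}$ is the ray through $(0,1,0)$ (an edge, so a face), and $\ven\in\Int(\conv{\pol}{})$ — but $(0,1,0)\in\conv{\pol}{}$ has zero last coordinate and lies in no $\vcon{\Delta_1}$, so $\conv{\pol}{}\not\subset\vcon{\Delta_1}$ for any $\Delta_1$. This is exactly the situation in which $\Ee$ is unbounded (equivalently $\conv{\Ee}{}\neq\{\orig\}$), which the hypotheses do not exclude, and your estimate $\|x'\|\leq R+\Delta_1 x_n$ breaks down.

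The fix, and the route the paper takes, is to keep track of both pieces of the recession cone of $\acon{\Delta}{\Ee}$, which contains $\conv{\Ee}{}+\vcon{\Delta}$, not just $\vcon{\Delta}$. Normalizing $\orig\in\Int(\Ee)$, split the generators $\vec v_1,\dots,\vec v_s$ of $\conv{\pol}{}$ into those with $(\vec v_i)_n>0$ and those with $(\vec v_i)_n=0$; the latter lie in $\conv{\Ee}{}$, so their rays through $\orig$ already sit inside $\Ee\subset\acon{k}{\Ee}$, while for the former one simply takes $k$ large enough that $\vec v_i\in\vcon{k}$ — and since there are only finitely many generators and finitely many vertices (all in $\Ee\cup\{\x_n>0\}=\bigcup_k\acon{k}{\Ee}$), a single $k_0$ works. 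Convexity of $\acon{k_0}{\Ee}$ then gives $\pol=\pol_0+\conv{\pol}{}\subset\acon{k_0}{\Ee}$. This also dispenses with your auxiliary slab/compactness step, which is not needed once the horizontal part of $\conv{\pol}{}$ is absorbed by $\conv{\Ee}{}$ rather than forced into the vertical cone.
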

\begin{proof}
By Lemma~\ref{prop:conpol} we can choose $\delta>0$ such that $\acon{\delta}{\{p\}}\subset\{p\}+\conv{\pol}{}$ for each $p\in\Ee$, so that the inclusions $\acon{\delta}{\Ee}\subset\Ee+\conv{\pol}{}\subset\pol$ hold.

We prove next $\pol\subset\acon{\Delta}{\Ee}$ for $\Delta$ large enough. We may assume that $\orig\in\Int(\Ee)$. Observe first that for each $p\in\Ee$ we have $\{p\}\cup\{\x_n>0\}=\bigcup_{k\in\N}\acon{k}{\{p\}}$, so 
$$
\Ee\cup\{\x_n>0\}=\bigcup_{k\in\N}\acon{k}{\Ee}.
$$
Write $\conv{\pol}{}=\{\sum_{i=1}^s\lambda_i\vec{v}_i:\ \lambda_i\ge 0\}$ where the non-zero vectors $\vec{v}_1,\ldots,\vec{v}_s$ span the lines spanned by the unbounded edges of $\pol$. We may assume that the last coordinate of $\vec{v}_i$ is positive for $i=1,\ldots,r$ and identically zero for $\vec{v}_i$ with $i=r+1,\ldots,s$. Consequently, $\conv{\Ee}{}=\{\sum_{i=r+1}^s\lambda_i\vec{v}_i:\ \lambda_i\ge 0\}$. Pick $k_0\geq\delta$ such that: 
\begin{itemize}
\item[(1)] All the vertices of $\pol$ are contained in $\acon{k_0}{\Ee}$.
\item[(2)] The rays $\vspanp{\orig}{\vec{v}_i}\subset\acon{k_0}{\Ee}$ for $i=1,\ldots,s$. 
\end{itemize}
As $\acon{k_0}{\Ee}$ is convex (because $\Ee$ is convex) and $\origs+\conv{\pol}{}$ is the convex hull of the rays $\vspanp{\orig}{\vec{v}_i}$ for $i=1,\ldots,s$, we deduce that both sets $\origs+\conv{\pol}{}$ and the convex hull $\pol_0$ of the vertices of $\pol$ are contained in $\acon{k_0}{\Ee}$. Consequently,
$$ 
\pol=\pol_0+\conv{\pol}{}\subset\acon{k_0}{\Ee}
$$
and taking $\Delta:=k_0$ we have $\pol\subset\acon{\Delta}{\Ee}$, as required.
\end{proof}

\subsection{Projections of affine subspaces and convex polyhedra}

Given a hyperplane $H\subset\R^n$ and a vector $\vec{v}\in\R^n\setminus\vec{H}$, we denote by $\pi_{\vec{v}}:\R^n\to H$ the projection onto $H$ with direction $\vec{v}$. For each $X\subset\R^n$, the set $\pi_{\vec{v}}^{-1}(\pi_{\vec{v}}(X))$ coincides with $\vspan{X}{\vec{v}}$, so it does not depend on the chosen projection hyperplane $H$ but only on the vector $\vec{v}$. Write $x':=(x_1,\ldots,x_{n-1})$ and $x:=(x',x_n)$. We use often the vertical projection $\pi_{\ven}:\R^n\to\R^n,\ (x',x_n)\mapsto (x',0)$ onto the coordinate hyperplane $\{\x_n=0\}$ and we reserve the notation $\pi_n$ for this particular projection. 

\begin{prop}\label{proj}
Let $\pol\subset\{\x_n\geq0\}\subset\R^n$ be an unbounded convex polyhedron whose recession cone $\conv{\pol}{}$ has dimension $n$ and assume $\ven\in\Int(\conv{\pol}{})$. Then the restriction
$\rho:=\pi_n|_{\partial\pol}:\partial\pol\to\R^{n-1}\times\origs$ defines a semialgebraic homeomorphism.
\end{prop}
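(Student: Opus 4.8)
\emph{Proof proposal.} Since $\pol$ is $n$-dimensional and closed, $\partial\pol=\pol\setminus\Int(\pol)$, and the plan is to exhibit $\partial\pol$ as the graph of a convex, continuous, semialgebraic function $t_1:\R^{n-1}\to\R$ with epigraph $\pol$; then $\rho$ is automatically a semialgebraic homeomorphism whose inverse is $(x',0)\mapsto (x',t_1(x'))$. Throughout I would use the two elementary facts $\pol+\conv{\pol}{}=\pol$ and $\pol+\Int(\conv{\pol}{})\subset\Int(\pol)$; the latter holds because if $\vec{w}\in\Int(\conv{\pol}{})$ and $\Bb(\vec{w},\veps)\subset\conv{\pol}{}$, then $\Bb(p+\vec{w},\veps)=p+\Bb(\vec{w},\veps)\subset p+\conv{\pol}{}\subset\pol$ for each $p\in\pol$.

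First I would check that \emph{every} vertical line meets $\pol$. By Lemma~\ref{prop:conpol} there is $\delta>0$ with $\vcon{\delta}\subset\conv{\pol}{}$, so fixing any $p_0:=(p_0',p_{0,n})\in\pol$ we get $\acon{\delta}{\{p_0\}}=\{p_0\}+\vcon{\delta}\subset\pol+\conv{\pol}{}=\pol$; as $\acon{\delta}{\{p_0\}}$ contains $(x',p_{0,n}+t)$ whenever $t\geq\|x'-p_0'\|/\delta$, the vertical line $L_{x'}:=\{x'\}\times\R$ meets $\pol$ for every $x'\in\R^{n-1}$. Now $L_{x'}\cap\pol$ is a nonempty closed convex subset of $L_{x'}$, bounded below because $\pol\subset\{\x_n\geq0\}$, and invariant under adding $s\ven$ for $s\geq0$ because $\ven\in\conv{\pol}{}$; hence $L_{x'}\cap\pol=\{x'\}\times[t_1(x'),+\infty)$ for a unique $t_1(x')\geq0$. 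In particular $\pol=\{(x',t):t\geq t_1(x')\}=\operatorname{epi}(t_1)$.

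Next I would identify $\partial\pol$ with the graph of $t_1$. The point $(x',t_1(x'))$ lies in $\partial\pol$ by minimality of $t_1(x')$, since every ball around it contains points $(x',t_1(x')-\veps)\notin\pol$; and for $t>t_1(x')$ one has $(x',t)=(x',t_1(x'))+(t-t_1(x'))\ven\in\pol+\Int(\conv{\pol}{})\subset\Int(\pol)$, using now $\ven\in\Int(\conv{\pol}{})$. Thus $L_{x'}\cap\partial\pol=\{(x',t_1(x'))\}$ for each $x'$, which simultaneously gives surjectivity and injectivity of $\rho$ and shows $\rho^{-1}$ is $(x',0)\mapsto(x',t_1(x'))$. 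Finally, since $\pol=\operatorname{epi}(t_1)$ is convex, $t_1$ is a real-valued convex function on the open set $\R^{n-1}$, hence continuous (indeed locally Lipschitz; see e.g. \cite{r}), so $\rho^{-1}$ is continuous; and $t_1$ is semialgebraic since its graph is the image of the semialgebraic set $\partial\pol$ under a linear coordinate permutation. Therefore $\rho$ is a bijective bicontinuous semialgebraic map, i.e. a semialgebraic homeomorphism.

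The genuinely delicate points are the bookkeeping that pins $L_{x'}\cap\pol$ down to a closed upward ray (where the hypotheses $\ven\in\Int(\conv{\pol}{})$ and $\pol\subset\{\x_n\geq0\}$ are \emph{both} essential: the former also forces surjectivity, as the example $\{\x_1\geq0,\x_2\geq0\}\subset\R^2$ shows when $\ven$ is only on the boundary of the recession cone) and the appeal to continuity of finite convex functions for $\rho^{-1}$; everything else is routine. An alternative for the continuity of $\rho^{-1}$ would be to prove that $\rho$ is proper, but the epigraph description is cleaner and yields semialgebraicity of the inverse at no extra cost.
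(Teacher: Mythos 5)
Your proof is correct, but it follows a genuinely different route from the paper's. The paper argues in three separate steps: surjectivity via an {\it ad hoc} vector construction (choosing $\vec w=\ven+\veps\overrightarrow{yx}\in\conv{\pol}{}$ and finding a point of the ray $\vspanp{x}{\ven}$ inside $\pol$), injectivity by a case analysis showing that a vertical ray from a boundary point cannot remain in $\partial\pol$ without producing a contradiction, and then continuity of $\rho^{-1}$ by proving $\rho$ is a closed map, which is reduced to the observation that each restriction $\pi_n|_H$ to the hyperplane $H$ spanned by a facet is an affine bijection (since $\ven\notin\vec H$). You instead package the whole thing as the epigraph of a single convex function $t_1:\R^{n-1}\to\R$: you invoke Lemma~\ref{prop:conpol} to see that every vertical line meets $\pol$ (a cleaner way to get the same fact the paper's vector construction yields), then note that $L_{x'}\cap\pol$ is a closed upward ray so that $\pol=\operatorname{epi}(t_1)$, identify $\partial\pol$ with the graph of $t_1$ using $\pol+\Int(\conv{\pol}{})\subset\Int(\pol)$, and finish with the classical fact that a finite convex function on $\R^{n-1}$ is continuous. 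What the paper's route buys is that it never mentions convex function theory and stays entirely inside the polyhedral/facet picture already set up; what yours buys is a more conceptual inverse $(x',0)\mapsto(x',t_1(x'))$ and a one-line continuity argument, at the cost of citing a (standard, but external) regularity theorem for convex functions. Both are complete and correct; yours is arguably the tidier exposition, and it makes the role of each hypothesis ($\ven\in\Int(\conv{\pol}{})$ for surjectivity and injectivity, $\pol\subset\{\x_n\geq0\}$ for lower-boundedness of the vertical slices) especially transparent.
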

\begin{proof}
We prove first: \em $\rho$ is surjective\em. 

Pick a point $x:=(x',0)\in\R^{n-1}\times\{0\}$ and consider the ray $\vspanp{x}{\ven}$. Choose now $y\in\pol$. As $\ven\in\Int(\conv{\pol}{})$ and $\conv{\pol}{}$ has dimension $n$, there exists $\veps>0$ such that $\vec{w}:=\ven+\veps\overrightarrow{yx}
\in\conv{\pol}{}$. The ray $\vspanp{y}{\vec{w}}\subset\pol$ and 
$$
z:=y+\frac{1}{\veps}\vec{w}=x+\frac{1}{\veps}\ven\in\vspanp{y}{\vec{w}}\cap\vspanp{x}{\ven}\subset\pol\cap\vspanp{x}{\ven}.
$$
Consequently, $\vspanp{z}{\ven}\subset\pol\cap\vspanp{x}{\ven}\subset\{\x_n\geq0\}$, so there exists a point $p\in\partial\pol\cap\vspanp{x}{\ven}$, which satisfies $\pi_n(p)=x$. In addition, $\vspanp{x}{\ven}\cap\pol=\vspanp{p}{\ven}$.

We show next: \em $\rho$ is injective\em. It is enough to show: \em for each $x:=(x',0)\in\R^n$ the intersection $\vspan{x}{\ven}\cap\partial\pol$ is a singleton\em. 

We have already proved that $\vspan{x}{\ven}\cap\pol=\vspanp{p}{\ven}$ for some $p\in\partial\pol$. If the ray $\vspanp{p}{\ven}$ meets $\partial\pol$ in a point $y\neq p$, then either $\pol\cap\vspanp{p}{\ven}$ is a bounded interval or $\pol\cap\vspanp{p}{\ven}\subset\partial\pol$. As both situations are impossible because $\ven\in\Int(\conv{\pol}{})$, we conclude $\Int(\vspanp{p}{\ven})\subset\Int(\pol)$, so $\rho^{-1}(\rho(p))=\{p\}$.

To prove that $\rho$ is a homeomorphism, it is enough to check that it is a closed map and in fact it is sufficient that the restriction $\rho|_{\Ff}$ is a closed map for each facet $\Ff$ of $\pol$. Let $H$ be the hyperplane spanned by $\Ff$ and let us check that $\pi_n|_H$ is a closed map. As $\ven\in\Int(\conv{\pol}{})\subset\R^n\setminus\vec{H}$, the restriction $\pi_n|_H$ is an affine bijection and in particular a closed map, as required.
\end{proof}

Let us consider now a set $X\subset\R^n$ and a projection $\pi_{\vec{v}}:\R^n\to H$. The set $\pi_{\vec{v}}^{-1}(\pi_{\vec{v}}(X))=X\vec{v}$ contains $X$. If we consider now finitely many vectors $\vec{v}_1,\dots,\vec{v}_s$, the set $X':=\bigcap_{i=1}^sX\vec{v}_i$ also contains $X$. It seems natural to wonder under which conditions can we assert that $X'=X$. When $X$ is a finite union of affine subspaces of dimension $\le n-2$ we have the following result.

\begin{prop}\label{proj2}
Let $X:=\bigcup_{i=1}^mX_i\subset\R^n$ be a finite union of affine subspaces $X_i$ such that $d:=\dim(X)\le n-2$ and $X_i\nsubseteq X_j$ if $i\neq j$. Let $\Omega$ be a non-empty open subset of $\R^n\setminus\origs$. Then there exist finitely many vectors $\vec{v}_1,\ldots,\vec{v}_s\in\Omega$ such that $\bigcap_{i=1}^sX\vec{v}_i=X$. Besides, we can choose these vectors so that $\vec{v}_i\notin\bigcup_{j=1}^{i-1}
\vec{X}\vec{v}_j$ for $i=1,\dots,s$.
\end{prop}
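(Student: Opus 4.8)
The plan is to proceed by induction on the number of "bad" projections needed, building the vectors $\vec v_1, \dots, \vec v_s$ one at a time so that each new vector kills at least one offending "extra" affine subspace appearing in the intersection computed so far, while staying in $\Omega$ and off the previously spanned subspaces. The starting observation is a structural one: for a single affine subspace $X_i$ and a vector $\vec v \notin \vec X_i$, the cylinder $\vspan{X_i}{\vec v}$ is an affine subspace of dimension $\dim(X_i)+1$ containing $X_i$; and for a finite union $X = \bigcup_i X_i$, the cylinder $\vspan{X}{\vec v} = \bigcup_i \vspan{X_i}{\vec v}$ is again a finite union of affine subspaces (of dimension $\le d+1 \le n-1$ for vectors $\vec v$ not lying in any $\vec X_i$). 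Hence $\bigcap_{j=1}^{t}\vspan{X}{\vec v_j}$ is, at every stage, a finite union of affine subspaces each containing some piece of $X$; I would first reduce to the irreducible-piece bookkeeping: an irreducible component $Y$ of $\bigcap_{j=1}^t \vspan{X}{\vec v_j}$ either is contained in $X$ (in which case it is actually one of the $X_i$, since $X$ is a union of the $X_i$ and $X_i \nsubseteq X_j$) or it is an "extra" subspace of dimension between $d+1$ and $n-1$ that we must eventually eliminate.

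**Next I would set up the inductive step.** Suppose after choosing $\vec v_1, \dots, \vec v_t$ the intersection $Z_t := \bigcap_{j=1}^t \vspan{X}{\vec v_j}$ strictly contains $X$; pick an irreducible component $Y$ of $Z_t$ with $Y \nsubseteq X$, so $e := \dim(Y) \ge d+1$, hence $e \ge d+1$ and in particular $e \ge 1$ but more usefully $\dim Y > \dim X$. The key point is that we can choose $\vec v_{t+1} \in \Omega$ with $\vec v_{t+1} \notin \bigcup_{j=1}^{t}\vec X\vec v_j$ (possible since the latter is a finite union of proper affine — indeed linear — subspaces of $\R^n$, hence its complement meets the nonempty open set $\Omega$) such that $\vspan{X}{\vec v_{t+1}}$ does not contain $Y$. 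For this, it is enough that for each $i$ the cylinder $\vspan{X_i}{\vec v_{t+1}}$ fails to contain $Y$: since $\dim \vspan{X_i}{\vec v_{t+1}} = \dim X_i + 1 \le d+1 \le e$, containment $Y \subseteq \vspan{X_i}{\vec v_{t+1}}$ would force equality of dimensions and $Y$ parallel to $\vspan{\vec X_i}{\vec v_{t+1}}$; the set of directions $\vec v$ forcing such a coincidence is contained in a finite union of proper linear subspaces (determined by $\vec X_i$ and the finitely many possible $Y$ at this stage — note $Y$ itself is determined by the earlier choices), so a generic $\vec v_{t+1}\in\Omega$ avoids all of them. Then $Y \nsubseteq \vspan{X}{\vec v_{t+1}}$, so $Y \cap \vspan{X}{\vec v_{t+1}}$ is a proper closed subset of $Y$, and consequently $Z_{t+1} = Z_t \cap \vspan{X}{\vec v_{t+1}}$ has strictly fewer "extra" irreducible components of top dimension among those it inherits, or at least the component $Y$ is replaced by lower-dimensional pieces; a dimension-plus-cardinality induction on the finite set of irreducible components of $Z_t$ not contained in $X$ then terminates, yielding $s$ with $Z_s = X$.

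**The main obstacle** — and the step deserving the most care — is making precise the claim that a generic choice of $\vec v_{t+1}$ simultaneously (i) lands in $\Omega$, (ii) avoids $\bigcup_{j<t+1}\vec X \vec v_j$, and (iii) makes $\vspan{X}{\vec v_{t+1}}$ miss every currently-offending component $Y$, and that this process genuinely decreases a well-ordered complexity measure. The subtlety is that the components $Y$ of $Z_t$ depend on the already-chosen $\vec v_1,\dots,\vec v_t$, so the "bad" set of directions for step $t+1$ is defined only after the earlier steps; one must argue that, given that finite list of components, condition (iii) excludes only a finite union of proper affine subspaces of $\R^n$, whence its complement still meets the nonempty open $\Omega \setminus \bigcup_{j\le t}\vec X\vec v_j$. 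I would phrase this via the observation that $Y \subseteq \vspan{X_i}{\vec v}$ implies $\vec Y \subseteq \vec X_i + \R\vec v$, which (when $\dim \vec Y > \dim \vec X_i$) forces $\vec v \in \vec Y + \vec X_i$ if $\dim\vec Y = \dim\vec X_i + 1$ and is outright impossible if $\dim \vec Y > \dim \vec X_i + 1$; either way the excluded $\vec v$ lie in a finite union of proper subspaces. Once this genericity bookkeeping is nailed down, the termination of the induction (each step strictly lowers the lexicographically-ordered pair (top dimension of an extra component, number of extra components of that dimension)) is routine, and the final clause $\vec v_i \notin \bigcup_{j=1}^{i-1}\vec X\vec v_j$ is guaranteed by construction.
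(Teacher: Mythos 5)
The overall architecture (build the $\vec v_j$ one at a time, keeping each new $\vec v_{t+1}$ in $\Omega$ and off $\bigcup_{j\le t}\vec X\vec v_j$, and argue that a generic choice cuts the extra components of $\bigcap_{j\le t}X\vec v_j$) is the paper's; but the technical heart of your argument has a gap.

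You assert that every irreducible component $Y$ of $Z_t:=\bigcap_{j=1}^t X\vec v_j$ with $Y\nsubseteq X$ has $\dim Y\ge d+1$, and you lean on this to conclude that $Y\subseteq X_i\vec v$ forces $\dim Y=\dim X_i+1$ and hence $\vec v\in\vec Y$ (a proper linear subspace). That dimension lower bound is false. The $X_i$ are allowed to have different dimensions, and already $X\vec v_1=\bigcup_i X_i\vec v_1$ has extra components $X_j\vec v_1$ of dimension $\dim X_j+1$, which can be anything between $1$ and $d+1$; note $X_j\vec v_1\nsubseteq X$, for otherwise $X_j\subseteq X_j\vec v_1\subseteq X_k$ for some $k\ne j$, contradicting the hypothesis. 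For $t\ge 2$ the components are intersections $\bigcap_j X_{i_j}\vec v_j$ over varying indices $i_j$, so extra pieces of arbitrarily small dimension (even points) appear. For such a $Y$ with $\dim Y\le\dim X_i$, your necessary condition $\vec Y\subseteq\vec X_i+\R\vec v$ is vacuous (for instance whenever $\vec Y\subseteq\vec X_i$), so it does not confine the bad directions to a proper Zariski-closed set, and the genericity claim you need at each step is unjustified.

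The fix is to switch from a linear (direction) constraint to an affine one via a witness point, which is exactly what the paper does. Pick $q\in Y\setminus X$; then $q\notin X_i$ for each $i$, and $Y\subseteq X_i\vec v$ forces $q\in X_i\vec v$, i.e. $\vec v\in\overrightarrow{\Span(q,X_i)}$, a linear subspace of dimension at most $\dim X_i+1\le d+1\le n-1$. Excluding the finite union $\bigcup_i\overrightarrow{\Span(q,X_i)}$ over witness points $q$ (one in each currently offending component) yields a generic $\vec v_{t+1}\in\Omega\setminus\bigcup_{j\le t}\vec X\vec v_j$ for which every offending component is properly cut. As a side benefit, this argument handles all offending components simultaneously and makes the maximal dimension of the extra part drop by at least one at every step, which is how the paper gets the bound $s\le d+3$; your lexicographic complexity measure also terminates but much more slowly and, as written, rests on the incorrect dimension claim.
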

\begin{proof}
As $X=\bigcup_{i=1}^mX_i$ and each $X_i$ is an affine subspace of $\R^n$ with $X_i\nsubseteq X_j$ if $i\neq j$, the affine subspaces $X_1, \dots, X_m$ are the irreducible components of $X$ as an algebraic subset of $\R^n$. Given $\vec{v}\in\R^n\setminus\{\vec{0}\}$, the set $X\vec{v}$ is also a finite union of affine subspaces of $\R^n$. For each irreducible component $X_i$ of $X$ the set $X_i\vec{v}$ is an affine subspace that either coincides with $X_i$ or has dimension $\dim(X_i)+1$ and contains $X_i$. If $p\in X_i\vec{v}\setminus X_i$, then $X_i\vec{v}=\Span(p,X_i)$. Set $\vec{X}:=\bigcup_{i=1}^m\vec{X}_i$. 

For $p\in\R^n\setminus X$ define $[p,X]:=\bigcup_{i=1}^m\Span(p,X_i)$. The set $\overrightarrow{[p,X]}$ denotes the union of the linear subspaces $\overrightarrow{\Span(p,X_i)}$ associated to the affine subspaces $\Span(p,X_i)$. We have $\dim([p,X])=\dim(\overrightarrow{[p,X]})\leq d+1$ and $p\notin X\vec{v}$ for each vector $\vec{v}\in\R^n\setminus\overrightarrow{[p,X]}$.

Pick $\vec{v}_1\in\Omega$ and let $Y_1^1,\dots,Y_1^s$ be the irreducible components of $Y_1:=X\vec{v}_1$. If each $Y_1^i\subset X$, we are done. Otherwise, assume $Y_1^1,\dots,Y_1^r$ are the irreducible components of $Y_1$ not contained in $X$ and pick $p_i\in Y_1^i\setminus X$ for $i=1,\ldots,r$. As $T_1:=\bigcup_{i=1}^r[p_i,X]$ is a finite union of affine subspaces of $\R^n$ whose dimensions are strictly smaller than $n$, there exists $\vec{v}_2\in\Omega\setminus(\vec{T}_1\cup\vec{Y}_1)$. We have $p_i\notin Y_2:=X\vec{v}_2$ for $i=1,\ldots,r$. Let $Z$ be an irreducible component of $Y_1\cap Y_2$ that is not contained in $X$. As $Z\subset Y_1$, there exists an irreducible component $Y_1^{i}$ of $Y_1$ not contained in $X$ such that $Z\subset Y_1^i$. In addition, $Z\subsetneq Y_1^i$ because $p_i\in Y_1^i\setminus Z$. Consequently, $\dim(Z)<\dim(Y_1^i)$ because $Z$ and $Y_1^i$ are affine subspaces. Thus, the dimension of every irreducible component of $Y_1\cap Y_2$ not contained in $X$ is strictly smaller than the dimension of some irreducible component of $Y_1$ that is not contained in $X$. We conclude $\dim((Y_1\cap Y_2)\setminus X)<\dim(Y_1\setminus X)$.

Next, for each irreducible component $Y_{12}^j$ of $Y_1\cap Y_2$ not contained in $X$ (and indexed with $j=1,\ldots,\ell$) we choose a point $q_j\in Y_{12}^j\setminus X$ and consider the set $T_2:=\bigcup_{j=1}^{\ell}[q_j,X]$. There exists $\vec{v}_3\in\Omega\setminus(\vec{T}_2\cup \vec{Y}_1\cup \vec{Y}_2)$ and we have $q_j\notin Y_3:=X\vec{v}_3$ for $j=1,\ldots,\ell$. The dimension of each irreducible component of $Y_1\cap Y_2\cap Y_3$ not contained in $X$ is strictly smaller than the dimension of some irreducible component of $Y_1\cap Y_2$ that is not contained in $X$. Again, this implies $\dim((Y_1\cap Y_2\cap Y_3)\setminus X)<\dim((Y_1\cap Y_2)\setminus X)$.

We repeat the process $s\leq d+3\leq n+1$ times to find $\vec{v}_1,\dots,\vec{v}_{s}\in\Omega$ such that 
$$
\vec{v}_i\notin \vec{Y}_1\cup\cdots\cup\vec{Y}_{i-1}
=\bigcup_{j=1}^{i-1}\vec{X}\vec{v}_j
$$ 
for $i=1,\ldots,s$ and all irreducible components of $\bigcap_{i=1}^sY_i:=\bigcap_{i=1}^sX
\vec{v}_i$ are contained in $X$. This holds because in each step $\dim((\bigcap_{i=1}^k Y_i)\setminus X)<\dim((\bigcap_{i=1}^{k-1} Y_i)\setminus X)$ for $k\geq2$. Consequently, $\bigcap_{i=1}^sX\vec{v}_i=X$, as required.
\end{proof}

\subsection{Separating hyperplanes in convex polyhedra}

Given two semialgebraic sets $\Ss_1,\Ss_2\subset\R^n$, we say that a hyperplane $B:=\{b=0\}\subset\R^n$ \em separates $\Ss_1$ and $\Ss_2$ \em if the semialgebraic sets $\Ss_i$ lie in the different half-spaces $\{b\geq0\}$ and $\{b\leq0\}$ determined by $B$ and $B\cap\Ss_i\subset\Ss_1\cap\Ss_2$ for $i=1,2$. Consequently, $\Ss_1\cap\Ss_2\subset B$ and $B\cap\Ss_i=\Ss_1\cap\Ss_2$ for $i=1,2$.

We are concerned here about hyperplanes that separate two adjacent facets of a convex polyhedron.

\begin{lem}\label{sep}
Let $\Ff_1$ and $\Ff_2$ be two non-parallel facets of a convex polyhedron $\pol\subset\R^n$. Let $H_i:=\{h_i=0\}$ be the hyperplane spanned by $\Ff_i$ and assume $\pol\subset\{h_1\geq0,h_2\geq0\}$. For each $\lambda>0$ denote $B_\lambda:=\{b_\lambda:=h_1-\lambda h_2=0\}$. Then $B_\lambda$ separates $\Ff_1$ and $\Ff_2$ and meets $\Int(\pol)$.
\end{lem}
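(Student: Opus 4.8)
The plan is to verify the three assertions in the definition of ``$B_\lambda$ separates $\Ff_1$ and $\Ff_2$'' (namely that $\Ff_1$ and $\Ff_2$ lie in opposite closed half-spaces of $B_\lambda$, and that $B_\lambda\cap\Ff_i\subset\Ff_1\cap\Ff_2$ for $i=1,2$), and then to exhibit a point of $B_\lambda\cap\Int(\pol)$. First I would observe that on $\Ff_1$ we have $h_1=0$ and $h_2\geq0$, hence $b_\lambda=-\lambda h_2\leq0$, so $\Ff_1\subset\{b_\lambda\leq0\}$; symmetrically $b_\lambda=h_1\geq0$ on $\Ff_2$, so $\Ff_2\subset\{b_\lambda\geq0\}$. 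Moreover $B_\lambda\cap\Ff_1=\{h_1=0,\ h_2=0\}\cap\pol=H_1\cap H_2\cap\pol$, and likewise $B_\lambda\cap\Ff_2=H_1\cap H_2\cap\pol$; since $\Ff_1\cap\Ff_2=H_1\cap H_2\cap\pol$ as well, both intersections are exactly $\Ff_1\cap\Ff_2$, which gives the required containment.

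For the second claim, that $B_\lambda$ meets $\Int(\pol)$, I would argue that one can pick points witnessing the strict inequalities $h_1>0$ and $h_2>0$ separately and then interpolate. Concretely, because $\Ff_1=H_1\cap\pol$ is a proper face, $\pol\not\subset H_1$, so there is a point $p\in\pol$ with $h_1(p)>0$; since $\Ff_1$ is a facet (codimension one), we may in fact take $p\in\Int(\pol)$, so that $h_2(p)>0$ too, i.e. $b_\lambda(p)=h_1(p)-\lambda h_2(p)$ could have either sign. Similarly choose $q\in\Int(\pol)$. If for some choice $b_\lambda$ already vanishes we are done; otherwise $\Int(\pol)$ being connected (indeed convex and open in $\Span(\pol)$) forces $b_\lambda$ to take both signs on $\Int(\pol)$ unless it has constant sign there. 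So the real content is to rule out that $b_\lambda>0$ on all of $\Int(\pol)$ (or $b_\lambda<0$ on all of $\Int(\pol)$): if $b_\lambda>0$ on $\Int(\pol)$, then $\pol=\cl(\Int(\pol))\subset\{b_\lambda\geq0\}$, and then $B_\lambda$ would be a supporting hyperplane whose face $B_\lambda\cap\pol\supset\Ff_1\cap\Ff_2$ would have to be a genuine face of $\pol$ — but $B_\lambda\cap\pol$ contains points of both $\Ff_1$ and $\Ff_2$ arbitrarily close to $\Ff_1\cap\Ff_2$ moving \emph{into} the interior relative to each facet, contradicting that a face cannot meet the relative interior of a strictly larger face. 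Dually for $b_\lambda<0$.

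A cleaner way to produce the interior point, which I would prefer to write down, is the following direct interpolation: take $p_1\in\Int(\Ff_1)$ and $p_2\in\Int(\Ff_2)$ (the relative interiors of the two facets). Then $h_1(p_1)=0$, $h_2(p_1)>0$ (else $p_1$ lies on the facet $\Ff_2$ too, impossible in $\Int(\Ff_1)$ since $\Ff_1\not\subset\Ff_2$), so $b_\lambda(p_1)=-\lambda h_2(p_1)<0$; symmetrically $b_\lambda(p_2)=h_1(p_2)>0$. Hence the segment $\overline{p_1p_2}\subset\pol$ (convexity) contains a point $p_0$ with $b_\lambda(p_0)=0$, and since $b_\lambda$ changes sign, $p_0$ lies strictly between $p_1$ and $p_2$; as $p_1\in\Int(\Ff_1)$ and $p_2\in\Int(\Ff_2)\not\subset H_1$, standard convex-geometry facts (a point in the relative interior of a segment between a point of a face and a point outside that face lies in $\Int(\pol)$) give $p_0\in\Int(\pol)\cap B_\lambda$.

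\textbf{Expected main obstacle.} The only delicate point is justifying rigorously that the interpolating point $p_0$ lands in the relative interior $\Int(\pol)$ rather than merely on $\partial\pol$; this requires the observation that $p_2\notin H_1$ (so the segment leaves $\Ff_1$ transversally) and a lemma on relative interiors of convex sets — everything else is a one-line sign computation. I would handle this by invoking the standard fact that if $x\in\Int(C)$ (relative interior) and $y\in\cl(C)$, then the half-open segment $[x,y)\subset\Int(C)$, applied after first checking that a suitable small perturbation of $p_1$ into $\Int(\pol)$ keeps the sign of $b_\lambda$ negative.
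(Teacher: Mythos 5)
Your preferred argument is essentially the paper's: both pick $x_i\in\Int(\Ff_i)$, note that $\Int(\overline{x_1x_2})\subset\Int(\pol)$, and locate the zero of $b_\lambda$ in the open segment via the sign change $b_\lambda(x_1)<0<b_\lambda(x_2)$ (the paper computes the interpolation parameter $\mu$ explicitly rather than quoting the intermediate value theorem, but this is cosmetic). The alternative sign-rigidity argument you sketch first, and the worry about landing on $\partial\pol$, are superfluous once one observes — as both you and the paper do — that a proper interior point of a segment joining relative interiors of two distinct facets lies in $\Int(\pol)$.
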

\begin{proof}
Observe that $\Ff_1\subset\{b_\lambda\leq0\}$, $\Ff_2\subset\{b_\lambda\geq0\}$ and $B_\lambda\cap\Ff_i=\{h_1=0,h_2=0\}\cap\pol=\Ff_1\cap\Ff_2$ for $i=1,2$, so $B_\lambda$ separates $\Ff_1$ and $\Ff_2$. Let us check: $B_\lambda\cap\Int(\pol)\neq\varnothing$.

Pick $x_i\in\Int(\Ff_i)$. As $\Int(\ol{x_1x_2})\subset\Int(\pol)$, it is enough to check: $B_\lambda\cap\Int(\ol{x_1x_2})\neq\varnothing$. 

Set $\vec{v}=\overrightarrow{x_1x_2}$ and write each point $z\in\Int(\ol{x_1x_2})$ as 
$$
z=z_\mu:=x_1+\mu\vec{v}=x_2-(1-\mu)\vec{v}\in\Int(\ol{x_1x_2}).
$$
for some $0<\mu<1$. Observe that $h_1(x_1)=0$, $h_2(x_2)=0$, $\vec{h}_1(\vec{v})>0$ and $\vec{h}_2(\vec{v})<0$. All reduces to find a value $0<\mu<1$ such that $z_\mu\in B_\lambda$. To that end,
\begin{multline*}
0=b_{\lambda}(z_{\mu})=h_1(x_1+\mu\vec{v})-\lambda h_2(x_2-(1-\mu)\vec{v})=\mu\vec{h}_1(\vec{v})+\lambda(1-\mu)\vec{h}_2(\vec{v})\\
\leadsto\quad\mu:=\frac{-\lambda\vec{h}_2(\vec{v})}{\vec{h}_1(\vec{v})-\lambda\vec{h}_2(\vec{v})}.
\end{multline*}
As $0<\mu<1$, we have $z_\mu\in B_\lambda\cap\Int(\ol{x_1x_2})$, as required.
\end{proof}

We have denoted $\pi_n:\R^n\to\R^n,\ x:=(x_1,\ldots,x_n)\to(x_1,\ldots,x_{n-1},0)$ the orthogonal projection onto the hyperplane $\{\x_n=0\}$. Let us assume that a convex polyhedron $\pol$ is placed so that one of its facets $\Ff$ is vertical and $\ven\in\conv{\pol}{}$. The following result relates the projection of $\Int(\pol)$ under $\pi_n$ with the union of the projections under $\pi_n$ of the intersections of $\Int(\pol)$ with a family of separating hyperplanes between $\Ff$ and its adjacent facets. 
\begin{figure}[!ht]
\begin{center}
\begin{tikzpicture}[yscale=0.8]
\draw[draw=none,fill=gray!20,opacity=1]
(0,0)--(8,4)--(8,14)--(0,10);

\draw[draw=none,fill=gray!20,opacity=0.6]
(0,0)--(2,0)--(10,4)--(8,4);
\draw[thick,->](8,4)--(0,0);
\draw[thick,->](10,4)--(2,0);
\draw[thick, dashed] (8,4)--(10,4);
\draw[thick](4,2)--(6,2);
\draw[dashed](7,2.5)--(4,2)--(5,1.5);
\draw[draw=none,fill=gray!40,opacity=0.9]
(0,0.2)--(2,0.2)--(7,2.7)--(4,2.2)--cycle;
\draw[thin,dashed,<->]
(2,0.2)--(7,2.7)--(4,2.2)--(0,0.2);
\draw[thin,dashed,fill=gray!40,opacity=0.9]
(4,2.4)--(5,1.9)--(10,4.4)--(8,4.4)--cycle;
\draw (6.8,3.3) node[rotate=22]{$\pi_n(B_2\cap\Int(\pol))$};
\draw (3,1.1) node[rotate=22]{$\pi_n(B_1\cap\Int(\pol))$};

\draw[draw=none,fill=gray!40,opacity=0.6]
(8,8)--(10,8)--(10,14)--(8,14)--cycle;
\draw[thick,->](10,8)--(10,14);
\draw[thick,->,dashed](10,8)--(8,8)--(8,14);

\draw[draw=none,fill=gray!50,opacity=0.6]
(8,8)--(8,14)--(0,10)--(4,6)--cycle;
\draw[thick, dashed](4,6)--(8,8);
\draw[thick,->](4,6)--(0,10);

\draw[draw=none,fill=gray!50,opacity=0.6]
(4,6)--(6,6)--(10,8)--(8,8)--cycle;
\draw[thick] (4,6)--(6,6)--(10,8);
\draw[thick,dashed] (4,6)--(8,8)--(10,8);

\draw[draw=none,fill=black!100,opacity=0.6]
(4,6)--(8,8)--(10,9.5)--(5,7)--cycle;
\draw[gray](4,6)--(5,7)--(10,9.5);
\draw[gray,dashed](8,8)--(10,9.5);

\draw[draw=none,fill=black!100,opacity=0.6]
(4,6)--(7,6.5)--(3,10.5)--(0,10)--cycle;
\draw[gray](3,10.5)--(7,6.5);
\draw[gray,dashed](4,6)--(7,6.5);
\draw[dashed](4,6)--(6,7.5);

\draw[draw=none,fill=gray!80,opacity=0.6]
(0,10)--(4,6)--(6,6)--(2,10)--cycle;
\draw[thick,<->] (0,10)--(4,6)--(6,6)--(2,10);

\draw[draw=none,fill=gray!50,opacity=0.6]
(2,10)--(6,6)--(10,8)--(10,14)--cycle;
\draw[thick,<->](2,10)--(6,6)--(10,8)--(10,14);

\draw[dashed](4,6)--(5,7)(4,6)--(7,6.5);

\draw[very thick,->] (5,5.2)--(5,3.7);
\draw(5.4,4.45) node{$\pi_3$};
\draw(6,10.5) node{{\Large$\pol$}};
\draw(3,8) node{$\Ff_1$};
\draw(5,12) node{$\Ff$};
\draw(7.5,7.2) node{$\Ff_2$};
\draw(5,9) node{$B_1$};
\draw(7.3,8.6) node{$B_2$};
\end{tikzpicture}
\end{center}
\caption{$\pi_n(\Int(\pol))=\pi_n(B_1\cap
\Int(\pol))\cup\pi_n(B_2\cap\Int(\pol))$}\label{fig0}
\end{figure}

\begin{lem}\label{allplanos0}
Let $\pol\subset\R^n$ be an unbounded convex polyhedron and $\Ff$ one of its facets. Assume that $\Ff$ lies in the hyperplane $\{\x_{n-1}=0\}$ and the vector $\ven\in\conv{\pol}{}$. Let $\Ff_1,\ldots,\Ff_r$ be the non-vertical facets of $\pol$ and assume that all of them meet $\Ff$. Let $B_i$ be a hyperplane of $\R^n$ that separates $\Ff$ and $\Ff_i$ and meets $\Int(\pol)$. Then $\pi_n(\Int(\pol))=\bigcup_{i=1}^r\pi_n(B_i\cap\Int(\pol))$. Consequently, $\Int(\pol)\ven=\bigcup_{i=1}^r(B_i\cap\Int(\pol))\ven$.
\end{lem}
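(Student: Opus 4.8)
The plan is to prove the two set equalities by a direct geometric argument, reducing the second statement to the first via the identity $\pi_n^{-1}(\pi_n(A)) = A\ven$ recorded in the preamble. The inclusion $\bigcup_{i=1}^r\pi_n(B_i\cap\Int(\pol))\subset\pi_n(\Int(\pol))$ is immediate since each $B_i\cap\Int(\pol)\subset\Int(\pol)$. So the content is the reverse inclusion: given $x'=(x',0)\in\pi_n(\Int(\pol))$, I must produce an index $i$ and a point $z\in B_i\cap\Int(\pol)$ with $\pi_n(z)=x'$.

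First I would fix $p\in\Int(\pol)$ with $\pi_n(p)=x'$ and consider the vertical line $L:=\vspan{p}{\ven}$. Since $\ven\in\conv{\pol}{}$, the ray $\vspanp{p}{\ven}$ stays in $\pol$; since $p\in\Int(\pol)$, moving slightly in the $-\ven$ direction keeps us in $\Int(\pol)$, so $L\cap\Int(\pol)$ is a half-line $\vspanp{q}{\ven}$ (open at $q$ or a full line, but the latter is excluded because $\pol$, having a facet, is not the whole space in the relevant direction — more carefully, $L\cap\pol$ is a closed ray or a line; if it were a line then $\pol$ would contain lines in direction $\ven$, which is compatible with $\ven\in\conv{\pol}{}$, so I must handle this). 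The key point: as $x_n\to-\infty$ along $L$, the point eventually leaves $\pol$ because $x'\in\pi_n(\Int(\pol))$ forces $L$ to meet $\partial\pol$ — here I use that $\Ff\subset\{\x_{n-1}=0\}$ is a genuine facet and $\pol\subset\{\x_{n-1}\geq0\}$ (or $\leq 0$), together with the fact that the non-vertical facets $\Ff_1,\dots,\Ff_r$ all meet $\Ff$, to argue $L$ crosses the boundary stratum spanned by the $\Ff_i$'s. Thus $L$ meets some non-vertical facet $\Ff_i$ at a boundary point $w$; by Lemma~\ref{sep}, $B_i$ separates $\Ff$ and $\Ff_i$, and since $p\in\Int(\pol)$ lies strictly on the $\Ff_i$-side of nothing in particular, I apply the separation property: the segment from $p$ (on or near the $\Ff$-side) to $w\in\Ff_i$ must cross $B_i$, and because $B_i\cap\Int(\pol)\neq\varnothing$ the crossing point $z$ can be taken in $\Int(\pol)$. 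Finally $\pi_n(z)=x'$ provided the segment is vertical — which it is, since both $p$ and $w$ lie on $L$.

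Let me restructure that crossing argument more cleanly, since the vertical segment is the right object. Inside $\Int(\pol)$ the vertical line restricts to $\vspanp{q}{\ven}$ with $q\in\partial\pol$; the point $q$ lies in some facet, and it lies in a \emph{non-vertical} one because the vertical facets (being parallel to $\ven\in\conv{\pol}{}$) cannot be crossed transversally by $L$ from $\Int(\pol)$, so $L$ exits $\pol$ downward only through a non-vertical facet — call it $\Ff_i$, so $q\in\Ff_i$. The function $t\mapsto b_i(q+t\ven)$ is affine and non-constant (as $\ven\notin\vec{B}_i$, which holds because $B_i$ is a separating hyperplane transverse to the direction joining interiors of $\Ff$ and $\Ff_i$, and one can check $\ven\notin\vec B_i$ from $h_{n-1}(q)=h_{n-1}$-value and the construction $b_i = h_{n-1}-\lambda h_i$: since $\ven\in\conv\pol{}\subset\vec H_{n-1}^+$ with $\Ff\subset\{\x_{n-1}=0\}$ vertical we get $\vec h_{n-1}(\ven)=0$, while $\vec h_i(\ven)\neq 0$ as $\Ff_i$ is non-vertical, hence $\vec b_i(\ven)\neq0$). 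Therefore $b_i$ takes both signs on $L$, and by convexity of $\pol\cap L=\vspanp{q}{\ven}$ the hyperplane $B_i$ meets this ray at a single point $z$; since $q\in\partial\pol$ and $B_i\cap\Int(\pol)\neq\varnothing$ one sees $z\in\Int(\pol)$ (the ray starts at the boundary and immediately enters $\Int(\pol)$, and the $B_i$-crossing happens after $q$). Then $z\in B_i\cap\Int(\pol)$ and $\pi_n(z)=\pi_n(q)=x'$, as required.

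The main obstacle I anticipate is the bookkeeping of \emph{which} facet the vertical ray exits through and why it must be one of the $\Ff_i$ that meets $\Ff$ — this is where the hypotheses ``$\Ff\subset\{\x_{n-1}=0\}$'', ``$\ven\in\conv{\pol}{}$'', and ``all $\Ff_i$ meet $\Ff$'' must all be used in concert, and one has to rule out the ray exiting through a vertical facet or escaping to infinity. A clean way to organize it: since $\ven\in\conv{\pol}{}$, $\pol=\pi_n(\pol)\ven\cap(\text{downward part})$, i.e. $\pol$ is the region above its ``lower boundary'', which is the graph over $\pi_n(\pol)$ of a convex piecewise-linear function whose linear pieces are exactly the non-vertical facets; the vertical ray through any interior point hits this lower boundary in a unique point $q$, lying on a unique non-vertical facet $\Ff_i$, and since $q$ projects into $\pi_n(\Int\pol)\subset\pi_n(\Int\Ff)$-neighborhood... one checks $\Ff_i$ is adjacent to $\Ff$ using that the vertical facets through $\Ff$ bound $\pi_n(\pol)$ and $x'$ is interior. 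Once that structural picture is in place the rest is the short affine computation above. The final sentence $\Int(\pol)\ven=\bigcup_i(B_i\cap\Int(\pol))\ven$ then follows by applying $\pi_n^{-1}(\cdot)$ to both sides of the first equality and invoking $\pi_n^{-1}(\pi_n(A))=A\ven$.
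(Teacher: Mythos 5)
Your approach is the paper's: drop the vertical line from an interior point to its lowest point $q\in\partial\pol$, note that $q$ lies on some non-vertical facet $\Ff_i$ (all of which meet $\Ff$ by hypothesis), and argue that the upward vertical ray from $q$ crosses $B_i$ at a point of $\Int(\pol)$. Two places in your cleaner second paragraph need filling in, because the conclusions could fail as stated without further argument.

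First, you derive $\vec{b}_i(\ven)\neq 0$ from the explicit construction $b_i=h_{n-1}-\lambda h_i$ in Lemma~\ref{sep}, but the hypothesis of the present lemma only takes $B_i$ to be \emph{some} hyperplane separating $\Ff$ and $\Ff_i$ and meeting $\Int(\pol)$; its equation is not prescribed. The paper gets non-verticality from the separation property alone: if $B_i$ were vertical, a ray $\vspanp{q'}{\ven}$ starting at a point $q'\in\Ff\cap\Ff_i\subset B_i$ would lie in $B_i\cap\Ff=\Ff\cap\Ff_i\subset\Ff_i$, forcing $\Ff_i$ to be vertical — a contradiction. Second, "the $B_i$-crossing happens after $q$" is asserted but not established: you must rule out $q\in B_i$ (which would put the crossing on the boundary) and pin down the sign of $\vec b_i(\ven)$ relative to $b_i(q)$. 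The paper argues $q\notin B_i$ because $B_i\cap\Ff_i=\Ff\cap\Ff_i$ projects into $\pi_n(\Ff)\subset\partial\pi_n(\pol)$, whereas $\pi_n(q)=x'\notin\partial\pi_n(\pol)$; it then orients $b_i$ so that $\vec b_i(\ven)>0$, uses a vertical ray inside $\Ff$ starting on $B_i$ to get $\Ff\subset\{b_i\geq 0\}$, hence $\Ff_i\subset\{b_i\leq 0\}$ and $b_i(q)<0$, so that $z=q+\lambda\ven$ satisfies $\lambda>0$ and therefore lies in $\Int(\vspanp{q}{\ven})\subset\Int(\pol)$.
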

\begin{proof}
We prove first: 
\em\begin{equation}\label{planos}
\pi_n(\Ff_i)\setminus\partial\pi_n(\pol)
\subset\pi_n(B_i\cap\Int(\pol))
\end{equation}
for $i=1,\ldots,r$\em.

Take $x\in\pi_n(\Ff_i)\setminus\partial\pi_n(\pol)$. As $\Ff_i$ is non-vertical, $\vspan{x}{\ven}\cap\pol=\vspanp{p}{\ven}$ for some $p\in\Ff_i$. We claim: \em $\vspan{x}{\ven}\cap\partial\pol=\{p\}$\em.

Otherwise, $\vspanp{p}{\ven}\subset\partial\pol$ and $\{x\}=\pi_n(\vspan{x}{\ven})\subset\partial\pi_n(\pol)$. The latter inclusion follows because all the facets that contain $\vspanp{p}{\ven}$ are vertical, so their projections are contained in $\partial\pi_n(\pol)$, which is a contradiction. 

Let us check: \em $B_i$ is non-vertical\em. 

Otherwise, pick $q\in\Ff\cap\Ff_i\subset B_i$. As $\Ff\subset\{\x_{n-1}=0\}$, the ray $\vspanp{q}{\ven}\subset B_i\cap\Ff$. As $B_i$ separates $\Ff$ and $\Ff_i$, we have $\vspanp{q}{\ven}\subset B_i\cap\Ff\subset\Ff\cap\Ff_i$, so $\Ff_i$ should be vertical, which is a contradiction. 

The line $\vspan{x}{\ven}$ meets $B_i$ in a point $z$. We claim: \em $z\in\Int(\vspanp{p}{\ven})\subset\Int(\pol)$, so $x=\pi_n(z)\in\pi_n(B_i\cap\Int(\pol))$\em. 

As $\pi_n(\Ff)\subset\partial\pi_n(\pol)$ because $\Ff$ is vertical, $x\not\in\pi_n(\Ff)$. Consequently, $p\not\in B_i$ because otherwise $p\in B_i\cap\Ff_i\subset\Ff\cap\Ff_i$ and $x=\pi_n(p)\in\pi_n(\Ff)$, which is a contradiction.

Let $q\in\Ff\cap\Ff_i\subset B_i$ and let $b_i=0$ be a linear equation of $B_i$. As $B_i$ is non-vertical, we may assume $\vec{b}_i(\ven)>0$, so $\Int(\vspanp{q}{\ven})\subset\{b_i>0\}$ because $b_i(q)=0$. As $\vspanp{q}{\ven}\subset\Ff$, we deduce $\Ff\subset\{b_i\geq0\}$, so $\Ff_i\subset\{b_i\leq0\}$. As $p\in\Ff_i\setminus B_i$, we have $b_i(p)<0$. Write $z=p+\lambda\ven$, so
$$
0=b_i(z)=b_i(p)+\lambda\vec{b}_i(\ven)
\quad\leadsto\quad0<-b_i(p)=\lambda\vec{b}_i(\ven)
$$
and $\lambda>0$. Thus, $z\in\Int(\vspanp{p}{\ven})$, as claimed.

Notice that $\pi_n(\pol)=\bigcup_{i=1}^r\pi_n(\Ff_i)$. By \eqref{planos}
\begin{align*}
\pi_n(\Int(\pol))&=\pi_n(\pol)\setminus\partial\pi_n(\pol)
=\Big(\bigcup_{i=1}^r\pi_n(\Ff_i)\Big)
\setminus\partial\pi_n(\pol)\\
&=\bigcup_{i=1}^r(\pi_n(\Ff_i)\setminus\partial\pi_n(\pol))\subset
\bigcup_{i=1}^r\pi_n(B_i\cap\Int(\pol))\subset\pi_n(\Int(\pol)),
\end{align*}
so $\pi_n(\Int(\pol))=\bigcup_{i=1}^r\pi_n(B_i\cap\Int(\pol))$, as required.
\end{proof}

To illustrate the meaning of Lemma~\ref{allplanos0}, Figure~\ref{fig0} shows how the projection $\pi_3:\R^3\to\R^3$ acts on a polyhedron $\pol$ with two non-vertical facets $\Ff_1$, $\Ff_2$. These facets are separated from $\Ff$ by the hyperplanes $B_1$, $B_2$.

\subsection{Nonvertical hyperplanes and polynomial functions}

In many of our arguments non-vertical affine subspaces play a special role because of the way we place our polyhedra in space. If we consider a finite collection of non-vertical hyperplanes, it is intuitively clear that we can find a polynomial function $G\in\R[\x']:=\R[\x_1,\dots,\x_{n-1}]$ whose graph $\{\x_n=G\}$ lies `above' all these hyperplanes. In fact, we can say more.

\begin{prop}\label{polq2}
Let $\{H_i\}_{i=1}^k$ be a finite family of (non-vertical) hyperplanes with linear equations $H_i:=\{h_i=0\}$ oriented so that $\vec{h}_i(\ven)>0$. Then there exists $G\in\R[\x']$ such that $G>1$ on $\R^{n-1}$ and its graph $\Lambda:=\{\x_n=G\}\subset\R^n$ satisfies 
\begin{equation}\label{lven}
\vspanp{\Lambda}{\ven}=\{\x_n\geq G\}\subset\bigcap_{i=1}^k\{h_i>1\}.
\end{equation}
In particular, $H_i\cap\vspanp{\Lambda}{\ven}=\varnothing$ for $i=1,\dots,k$ and $\vspanp{\Lambda}{\ven}\subset\{\prod_{i=1}^kh_i>1\}$.
\end{prop}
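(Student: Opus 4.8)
The plan is to construct $G$ explicitly as an affine function of the squared norm $\|\x'\|^2$, adjusting the slope so that the resulting paraboloid (or, as it turns out, a hyperplane suffices since each $h_i$ is affine) dominates each $h_i$ on all of $\R^{n-1}$. Since each $h_i$ is non-vertical with $\vec h_i(\ven)>0$, on the hyperplane $\{\x_n=0\}$ we may solve $h_i=0$ for $\x_n$, writing $H_i=\{\x_n=\ell_i(\x')\}$ for an affine function $\ell_i\in\R[\x']$; moreover the orientation $\vec h_i(\ven)>0$ says precisely that $\{h_i>1\}$, intersected with a vertical line $\vspan{x'}{\ven}$, is the upward ray $\{\x_n>\ell_i(\x')+c_i\}$ for some constant $c_i>0$ (namely $c_i=1/\vec h_i(\ven)$). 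So the condition $\{\x_n\ge G\}\subset\bigcap_i\{h_i>1\}$ amounts to the pointwise inequality $G(\x')>\ell_i(\x')+c_i$ on $\R^{n-1}$ for every $i$.

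First I would record the reduction above carefully: for each $i$, $h_i(\x',\x_n)=\vec h_i(\ven)\,\x_n+(\text{affine in }\x')$, so $h_i>1$ is equivalent to $\x_n>\ell_i(\x')+1/\vec h_i(\ven)$ where $\ell_i(\x'):=\big(1-h_i(\x',0)\big)/\vec h_i(\ven)$... more simply, $\x_n>\mu_i(\x')$ for a suitable affine $\mu_i$. Then I would set $M(\x'):=\max_{1\le i\le k}\mu_i(\x')$, which is a piecewise-affine convex function, hence bounded above on any bounded set by an affine function but not globally; however each $\mu_i$ is affine, so $M$ is dominated by $\sum_i|\mu_i| + 1$, and in turn each $|\mu_i(\x')|$ is dominated by $a_i\|\x'\|^2+b_i$ for suitable constants (indeed by $a_i(1+\|\x'\|^2)$). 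Concretely I would take
\begin{equation*}
G(\x'):=2+\sum_{i=1}^k\big(\alpha_i+\beta_i\|\x'\|^2\big)
\end{equation*}
with $\alpha_i,\beta_i>0$ chosen large enough that $\alpha_i+\beta_i\|\x'\|^2>\mu_i(\x')$ for all $\x'\in\R^{n-1}$ (possible because an affine function is dominated by any positive-definite quadratic plus a large constant — this is the only genuinely quantitative step, and it is elementary). This $G$ visibly satisfies $G>1$ on $\R^{n-1}$.

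The remaining steps are bookkeeping: the graph $\Lambda=\{\x_n=G\}$ has $\vspanp{\Lambda}{\ven}=\{x:\x_n\ge G(\x')\}$ by definition of the positive cylinder, and for a point in this set and any $i$ we get $\x_n\ge G(\x')>\mu_i(\x')$, hence $h_i(\x)>1$; this gives \eqref{lven}. From $h_i>1$ on $\vspanp{\Lambda}{\ven}$ we immediately get $H_i=\{h_i=0\}$ disjoint from $\vspanp{\Lambda}{\ven}$, and multiplying the inequalities $h_i>1$ (all factors positive there) yields $\prod_{i=1}^k h_i>1$ on $\vspanp{\Lambda}{\ven}$. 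The only obstacle worth flagging is the sign/orientation reduction at the start — one must use $\vec h_i(\ven)>0$ both to express $h_i>1$ as an upward ray condition and to ensure the constants $c_i=1/\vec h_i(\ven)$ are positive — but once that normalization is in place the construction of $G$ is routine.
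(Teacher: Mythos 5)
Your proof is correct and takes essentially the same approach as the paper: reduce $\{\x_n\geq G\}\subset\{h_i>1\}$ to pointwise domination of an affine function of $\x'$, then build $G$ by dominating each such affine function with an explicit polynomial and combining (the paper dominates $|\rho_i|$ by $\tfrac{\rho_i^2+1}{2}$ and takes a \emph{product} of the resulting $G_i$'s, while you dominate by $\alpha_i+\beta_i\|\x'\|^2$ and take a \emph{sum}; both work because every factor/summand is forced to exceed $1$). The only spot worth tightening is the parenthetical defining $\ell_i$, where the formula is off by a constant, but you already flag and sidestep this by passing to a generic affine $\mu_i$.
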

\begin{proof}
Write $h_i(\x',\x_n)=\qq{a_i'}{\x'}+a_{in}\x_n+b_i$ where $a_i'\in\R^{n-1}$, $a_{in},b_i\in\R$ and $a_{in}=\vec{h}_i(\ven)>0$. Denote
$$
\rho_i(\x'):=-\frac{1}{a_{in}}(\qq{a_i'}{\x'}+b_i)
$$
and observe that $H_i=\{\x_n-\rho_i(\x')=0\}$. For each $i=1,\dots,k$ consider the polynomial
$$
G_i:=1+\frac{1}{a_{in}}+\frac{\rho^2_i+1}{2}\in\R[\x'].
$$
We have $G_i(x')\geq1+\frac{1}{a_{in}}+|\rho_i(x')|>1$ for each $x'\in\R^{n-1}$. Define $G:=\prod_{i=1}^kG_i\in\R[\x']$. It holds $G(x')\ge G_i(x')>1$ for each $i=1,\dots,k$ and $x'\in\R^{n-1}$. Let us check \eqref{lven}.

Pick $(x',x_n)\in\R^n$ such that $x_n\ge G(x)$. Then
\begin{align*}
h_i(x',x_n)&=\qq{a_i'}{x'}+a_{in}x_n+b_i\ge a_{in}x_n-|\qq{a_i'}{x'}+b_i|\ge a_{in}G(x)-|\qq{a_i'}{x'}+b_i|\\
&\ge a_{in}G_i(x)-|\qq{a_i'}{x'}+b_i|\geq a_{in}\big(1+\tfrac{1}{a_{in}}+|\rho_i(x')|\big)-|\qq{a_i'}{x'}+b_i|>1.
\end{align*}
Consequently, $\vspanp{\Lambda}{\ven}\subset\bigcap_{i=1}^k\{h_i>1\}\subset\{\prod_{i=1}^kh_i>1\}$ and $H_i\cap\vspanp{\Lambda}{\ven}=\varnothing$ for $i=1,\ldots,k$, as required.
\end{proof}
\begin{remark}
By including an extra hyperplane $H_0$ of equation $\x_n-b=0$ where $b\in\R$ we can find a corresponding polynomial $G(\x')$ satisfying the previous statement and such that $\{\x_n\ge G\}$ lies in $\{\x_n>b+1\}$.
\end{remark}

In order to construct polynomial maps $f:\R^n\to\R^n$ with polyhedral images we will resort to maps fixing pointwise finite collections of hyperplanes in $\R^n$. These maps will often leave vertical lines invariant. Under these hypotheses the following immediate but useful application of Bolzano's Theorem applies. Given a function $\psi:\R\to\R$ we write $\psi(\pm\infty):=\lim_{t\to\pm\infty}\psi(t)$ whenever the previous limit either exists or is equal to $\pm\infty$. 

\begin{lem}\label{fun}
Let $\psi:\R\to\R$ be a continuous function and let $-\infty< a< b\leq +\infty$ be such that $\psi(a)=a$ and $\psi(b)=b$. Then ${]a,b[}\subset\psi({]a,b[})$.
\end{lem}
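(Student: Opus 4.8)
The statement is a one-variable application of the intermediate value theorem. The plan is to fix an arbitrary $c\in{]a,b[}$ and produce some $t_0\in{]a,b[}$ with $\psi(t_0)=c$. First I would pick a point $a<\alpha<b$ and introduce the auxiliary continuous function $\varphi(t):=\psi(t)-t$, which satisfies $\varphi(a)=0$. The sign of $\varphi$ on ${]a,b[}$ need not be constant, so a direct comparison of $\psi(a)$ and $\psi(b)$ with $c$ is not quite enough; instead I would work locally near the two endpoints and use that $\psi(a)=a<c<b=\psi(b)$ (interpreting the right-hand equality as $\psi(t)\to b$ as $t\to b^-$ when $b=+\infty$).

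The key steps, in order, are as follows. Fix $c\in{]a,b[}$. Since $\psi$ is continuous at $a$ and $\psi(a)=a<c$, there is $a_1\in{]a,b[}$ with $\psi(a_1)<c$ (take $a_1$ close to $a$). Since $\psi(b)=b>c$ — either because $\psi$ is continuous at a finite $b$ with $\psi(b)=b>c$, or because $\psi(t)\to+\infty$ as $t\to b^-$ when $b=+\infty$ — there is $b_1\in{]a,b[}$ with $\psi(b_1)>c$; moreover we may take $a_1<b_1$, shrinking the neighbourhoods if necessary. Now $\psi$ restricted to the compact interval $[a_1,b_1]\subset{]a,b[}$ is continuous and satisfies $\psi(a_1)<c<\psi(b_1)$, so by Bolzano's theorem there exists $t_0\in{]a_1,b_1[}\subset{]a,b[}$ with $\psi(t_0)=c$. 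Hence $c\in\psi({]a,b[})$, and since $c$ was arbitrary, ${]a,b[}\subset\psi({]a,b[})$.

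The only mild subtlety — and the place where a careless argument could go wrong — is the treatment of the endpoint $b$ when $b=+\infty$: here $\psi(b)=b$ must be read via the convention $\psi(+\infty):=\lim_{t\to+\infty}\psi(t)=+\infty$ fixed just before the statement, and one needs that this forces $\psi(t)>c$ for all sufficiently large $t$, which is exactly the definition of the limit being $+\infty$. Once that is observed, the rest is a verbatim application of the intermediate value theorem on a compact subinterval, so there is no real obstacle beyond bookkeeping with the two cases $b<+\infty$ and $b=+\infty$.
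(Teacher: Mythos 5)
Your proof is correct. The paper supplies no argument of its own for this lemma, flagging it only as an ``immediate but useful application of Bolzano's Theorem,'' and that is exactly what you give; the only cosmetic remark is that the auxiliary point $a_1$ near $a$ is unnecessary, since $\psi(a)=a<c<\psi(b_1)$ already lets you apply the intermediate value theorem on $[a,b_1]$ directly, the resulting root lying strictly between $a$ and $b_1$ and hence in $]a,b[$. Your handling of the case $b=+\infty$ via the limit convention is the one genuine subtlety and you treat it correctly.
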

\begin{cor}\label{fun2}
Let $f:=(f',f_n):\R^n\to\R^n$ be a continuous map and let $x:=(x',x_n)\in\R^n$ be such that $f(\vspan{x}{\ven})\subset\vspan{x}{\ven}$. Then
\begin{itemize}
\item[(i)] For each pair of points $p_1,p_2\in \vspan{x}{\ven}$ with $f(p_i)=p_i$, it holds $\Int(\overline{p_1p_2})\subset f(\Int(\overline{p_1p_2}))$.
\item[(ii)] Assume that $\psi_{x'}(t):=f_n(x',t)$ satisfies $\psi_{x'}(+\infty)=+\infty$. For each $p\in\vspan{x}{\ven}$ such that $f(p)=p$ we have $\Int(\vspanp{p}{\ven})\subset f(\Int(\vspanp{p}{\ven}))$.
\end{itemize}
\end{cor}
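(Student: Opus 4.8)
The statement to prove is Corollary~\ref{fun2}, which has two parts, both immediate consequences of Lemma~\ref{fun} (a Bolzano-type statement) once we restrict attention to the invariant vertical line $\vspan{x}{\ven}$.

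\textbf{Setup and reduction to one variable.} The plan is to fix $x:=(x',x_n)$ with $f(\vspan{x}{\ven})\subset\vspan{x}{\ven}$ and parametrize the vertical line by $t\mapsto(x',t)$. Since $f$ maps this line into itself, the last coordinate function $\psi_{x'}(t):=f_n(x',t)$ is a continuous self-map of $\R$, and the dynamics of $f$ along $\vspan{x}{\ven}$ is completely governed by $\psi:=\psi_{x'}$. A point $p=(x',t_0)\in\vspan{x}{\ven}$ is fixed by $f$ exactly when $\psi(t_0)=t_0$. This reduces both parts to a statement about the scalar continuous function $\psi$.

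\textbf{Part (i).} Given $p_1=(x',a)$ and $p_2=(x',b)$ with $f(p_i)=p_i$, we may assume $a<b$ (if $a=b$ there is nothing to prove). Then $\psi(a)=a$ and $\psi(b)=b$, so Lemma~\ref{fun} (with finite $b$) gives ${]a,b[}\subset\psi({]a,b[})$. Translating back: for each $c\in{]a,b[}$ there is $t\in{]a,b[}$ with $\psi(t)=c$, i.e.\ $f(x',t)=(x',c)$, and $(x',t)\in\Int(\overline{p_1p_2})$ while $(x',c)$ ranges over $\Int(\overline{p_1p_2})$. Hence $\Int(\overline{p_1p_2})\subset f(\Int(\overline{p_1p_2}))$.

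\textbf{Part (ii).} Here $p=(x',a)$ with $\psi(a)=a$, and the extra hypothesis is $\psi(+\infty)=+\infty$. Pick any $c>a$; since $\psi(t)\to+\infty$ as $t\to+\infty$, choose $b>a$ large enough that $\psi(b)>c$. On the interval $[a,b]$ we have $\psi(a)=a\le c<\psi(b)$, so by the intermediate value theorem (equivalently, Lemma~\ref{fun} applied on ${]a,b[}$, noting $c\in{]a,\psi(b)[}\subset\psi({]a,b[})$ is not quite the literal statement, so I would instead invoke Bolzano directly on $\psi(t)-c$ on $[a,b]$) there is $t\in{]a,b]}$ with $\psi(t)=c$; since $\psi(a)=a\ne c$ we in fact get $t\in{]a,b[}$, so $(x',t)\in\Int(\vspanp{p}{\ven})$ and $f(x',t)=(x',c)$. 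As $c>a$ was arbitrary, $\Int(\vspanp{p}{\ven})\subset f(\Int(\vspanp{p}{\ven}))$.

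\textbf{Main obstacle.} There is essentially no obstacle here; this is a bookkeeping corollary. The only point requiring a little care is matching the hypotheses of Lemma~\ref{fun} exactly: in part (ii) the convenient move is to apply Lemma~\ref{fun} to the auxiliary function or, more transparently, to apply Bolzano's theorem to $t\mapsto\psi(t)-c$ on a compact interval $[a,b]$ produced from the hypothesis $\psi(+\infty)=+\infty$, and then to note that the preimage point we obtain lies strictly inside because $a$ itself is not a solution. Everything else is a direct translation between points of the vertical line and values of the scalar function $\psi_{x'}$.
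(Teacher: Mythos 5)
Your proof is correct and follows the intended approach: the paper gives no explicit proof of Corollary~\ref{fun2}, treating it as an immediate consequence of Lemma~\ref{fun} once one restricts $f$ to the invariant vertical line and passes to the scalar function $\psi_{x'}$. Your part~(i) is exactly this. In part~(ii), however, you took an unnecessary detour: Lemma~\ref{fun} is stated for $-\infty<a<b\leq+\infty$ with the convention (introduced just before the lemma) that $\psi(+\infty):=\lim_{t\to+\infty}\psi(t)$, so the hypothesis $\psi_{x'}(+\infty)=+\infty$ is literally $\psi(b)=b$ with $b=+\infty$, and Lemma~\ref{fun} applies verbatim to give ${]a,+\infty[}\subset\psi_{x'}({]a,+\infty[})$. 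Your separate Bolzano argument on a truncated interval is correct but reproves what the lemma already covers; the "not quite the literal statement" worry you flag disappears once $b=+\infty$ is used directly.
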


\section{Variations on Pecker's polynomials}\label{s2}

One main result in this section and the key to prove the main results of this article is Lemma~\ref{polq1}. In \ref{s3d} we present some of its consequences that will help us to establish a link between Pecker's results and Theorems \ref{main1} and \ref{main2}. Denote $x':=(x_1,\ldots,x_{n-1})$ so that each point $x:=(x_1,\dots,x_n)\in\R^n\equiv\R^{n-1}\times\R$ will be written in this section as $x:=(x',x_n)$. As before, $\R[\x']:=\R[\x_1,\ldots,\x_{n-1}]$, $\R[\x]:=\R[\x_1,\ldots,\x_n]$ and $\pi_n:\R^n\to\R^n,\ x:=(x',x_n)\mapsto(x',0)$.

\begin{defines}\label{adtup}
A tuple $\setg{g}:=(g_1,\ldots,g_m, g_{m+1})\in\R[\x']^{m+1}$ such that $g_{m+1}>1$ on $\R^{n-1}$ is called an \em admissible tuple of polynomials of length $m+1$\em. We associate to $\setg{g}$ the semialgebraic set $\seta{\setg{g}}:=\{g_1>0,\ldots,g_m>0,\x_n=0\}\subset\R^n$, which does not depend on $g_{m+1}$.
\end{defines}

\begin{lem}\label{polq1}
Let $\setg{g}:=(g_1,\ldots,g_m, g_{m+1})\in\R[\x']^{m+1}$ be an admissible tuple of length $m+1$. Then there exists a polynomial $\polq{\setg{g}}\in\R[\x]$ such that:
\begin{itemize}
\item[(i)] $\{\polq{\setg{g}}\leq0\}\subset
\vspan{\seta{\setg{g}}}{\ven}\cap\{|\x_n|>\max\{g_{m+1},\frac{g_{m+1}}{\sqrt{g_1\cdots g_m}}\}\}$.
\item[(ii)] For each $(x',0)\in\seta{\setg{g}}$ there exist a positive root $r\geq g_{m+1}(x')$ of the univariate polynomial $\polq{\setg{g}}(x',\t)$ and a value $t\geq r$ such that $\polq{\setg{g}}(x',t)=-1$.
\item[(iii)] The set $\sets{\setg{g}}:=\vspan{\{\polq{\setg{g}}\le 0, \x_n>0\}}{\ven}$ satisfies $\pi_n(\sets{\setg{g}})=\seta{\setg{g}}$. In addition, for each $(x',x_n)\in\sets{\setg{g}}$ there exist $r_n>0$ and $t_n\geq0$ such that $x_n=r_n+t_n$ and $\polq{\setg{g}}(x',r_n)=0$.
\end{itemize}
\end{lem}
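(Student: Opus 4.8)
The plan is to construct $\polq{\setg{g}}$ explicitly as a variant of Pecker's polynomial, fitting it to the admissible tuple $\setg{g}=(g_1,\dots,g_m,g_{m+1})$. The classical idea for eliminating the inequalities $g_1>0,\dots,g_m>0$ via one extra variable is to use a polynomial of the form $\x_n\cdot(\text{something}) - (\text{correction})$, whose sign on a vertical line forces the $g_i$ to be positive. Concretely, I would start from the product $g:=g_1\cdots g_m$ and consider a polynomial roughly of the shape
\begin{equation*}
\polq{\setg{g}}(\x',\x_n):=g_{m+1}^2\,\Psi\!\Big(\tfrac{\x_n}{g_{m+1}}\Big)-\x_n^2\,g_1\cdots g_m
\end{equation*}
for a suitable auxiliary univariate polynomial $\Psi$ (even, with controlled growth), adjusting exponents so that everything clears denominators and becomes a genuine element of $\R[\x]$. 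The point of dividing by $g_{m+1}$ (which is $>1$ everywhere) is precisely to push the relevant roots $r$ past the threshold $g_{m+1}(x')$, which is what (i) and (ii) demand.

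First I would pin down $\Psi$ so that, for the rescaled variable $s:=\x_n/g_{m+1}$, the inequality $\polq{\setg{g}}\le 0$ becomes equivalent to $g_1\cdots g_m\ge \Psi(s)/s^2$ (away from $\x_n=0$), where $\Psi(s)/s^2$ is a positive function of $s$ that tends to $0$ as $|s|\to\infty$ and blows up near $s=0$; an explicit choice such as $\Psi(s)=(1+s^2)$ or a higher even power works, and I would use the elementary inequalities collected in the appendix to get the clean bounds. Then (i) follows: if $\polq{\setg{g}}(x',x_n)\le 0$ with $x_n\ne 0$ then $g_1\cdots g_m\ge \Psi(x_n/g_{m+1})/(x_n/g_{m+1})^2>0$, forcing each $g_i(x')>0$, i.e. $(x',0)\in\seta{\setg{g}}$ and hence the point lies in $\vspan{\seta{\setg{g}}}{\ven}$; the same inequality, solved for $|x_n|$, gives $|x_n|>g_{m+1}$ and $|x_n|>g_{m+1}/\sqrt{g_1\cdots g_m}$ once $\Psi$ is chosen with the right constant. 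The case $x_n=0$ is excluded directly by checking $\polq{\setg{g}}(x',0)>0$.

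Next, for (ii) I would fix $(x',0)\in\seta{\setg{g}}$, so $c:=g_1(x')\cdots g_m(x')>0$ and $d:=g_{m+1}(x')>1$, and look at the univariate polynomial $t\mapsto\polq{\setg{g}}(x',t)$. At $t=d$ one computes $\polq{\setg{g}}(x',d)=d^2\Psi(1)-d^2 c = d^2(\Psi(1)-c)$, and for $t\to+\infty$ the dominant term is $-t^2 c<0$, so $\polq{\setg{g}}(x',t)\to-\infty$; also $\polq{\setg{g}}(x',0)=d^2\Psi(0)>0$. Bolzano's theorem (Lemma~\ref{fun} is the same mechanism) then produces a root $r$, and I must arrange the construction so that this root satisfies $r\ge d=g_{m+1}(x')$ — this is where the choice of $\Psi$ and the normalization by $g_{m+1}$ must be tuned: I want $\polq{\setg{g}}(x',t)>0$ for $0\le t< d$ uniformly, which amounts to $\Psi(s)>s^2 c$ for $0\le s<1$, and this can be forced by taking $\Psi$ with a large enough constant term relative to how $c$ can grow — but $c$ is a fixed number once $x'$ is fixed, so really I need the inequality $\Psi(s)/s^2>\sup c$ to fail gracefully; the honest fix is to build $g_{m+1}$-dependence into $\Psi$ as well, replacing $\Psi(s)$ by something like $g_{m+1}^{2N}(1+s^2)$ inside, so that near $s=1$ the value dominates $c$ because $g_{m+1}>1$ raised to a high power beats $g_1\cdots g_m$; this is the delicate bookkeeping step. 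Once $r\ge d$ is secured, continuity from $\polq{\setg{g}}(x',r)=0$ down to $-\infty$ gives some $t\ge r$ with $\polq{\setg{g}}(x',t)=-1$.

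Finally (iii) is largely formal given (i) and (ii). The inclusion $\pi_n(\sets{\setg{g}})\subset\seta{\setg{g}}$ follows from (i) since every point of $\{\polq{\setg{g}}\le0,\x_n>0\}$ projects into $\seta{\setg{g}}$, and translating vertically does not change $\pi_n$. For the reverse inclusion, given $(x',0)\in\seta{\setg{g}}$, part (ii) gives a root $r\ge g_{m+1}(x')>0$ of $\polq{\setg{g}}(x',\cdot)$, so $(x',r)\in\{\polq{\setg{g}}\le0,\x_n>0\}$, whence $(x',0)=\pi_n(x',r)\in\pi_n(\sets{\setg{g}})$; moreover any $(x',x_n)\in\sets{\setg{g}}$ is by definition $(x',r)+t\ven$ with $\polq{\setg{g}}(x',r)\le 0$ and $x_n>0$, and shifting $r$ down to the largest root $r_n\le x_n$ of $\polq{\setg{g}}(x',\cdot)$ (which exists and is positive by the intermediate-value argument, since $\polq{\setg{g}}(x',0)>0$) writes $x_n=r_n+t_n$ with $t_n\ge0$ and $\polq{\setg{g}}(x',r_n)=0$.

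The main obstacle I anticipate is precisely the coupling in (ii): making a \emph{single} polynomial $\polq{\setg{g}}\in\R[\x]$ whose roots along each vertical line automatically exceed the value $g_{m+1}(x')$, uniformly over all $x'$ in the (unbounded, noncompact) set $\seta{\setg{g}}$, while $g_1\cdots g_m$ may itself be large and growing. The resolution is to let $g_{m+1}$ enter $\polq{\setg{g}}$ with a high enough power that $g_{m+1}^{2N}\ge g_1\cdots g_m$ cannot be assumed — rather, one exploits that on $\{0\le \x_n\le g_{m+1}\}$ the term $g_{m+1}^2\Psi(\x_n/g_{m+1})$ contains a summand like $g_{m+1}^{2}$ which, combined with extra factors, dominates $\x_n^2 g_1\cdots g_m\le g_{m+1}^2 g_1\cdots g_m$ only after multiplying $\Psi$ by $g_1\cdots g_m$ as well; so a cleaner ansatz is
\begin{equation*}
\polq{\setg{g}}:=g_{m+1}^2\,(g_1\cdots g_m)\,\Phi\!\Big(\tfrac{\x_n}{g_{m+1}}\Big)\;-\;(\x_n^2-g_{m+1}^2)\,(g_1\cdots g_m)\;-\;\x_n^2
\end{equation*}
with $\Phi\ge 1$, so that for $|\x_n|\le g_{m+1}$ the polynomial is manifestly positive and all roots land beyond $g_{m+1}$, while for large $|\x_n|$ the $-\x_n^2$ term has the wrong sign only when $g_1\cdots g_m$ is bounded away from $0$, i.e. on $\seta{\setg{g}}$. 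I would settle the exact form of $\Phi$ and the exponents by back-substituting into (i)–(iii) and invoking the appendix inequalities; this is routine once the structure above is fixed.
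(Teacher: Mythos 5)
Your ansatz uses the polynomials $g_1,\dots,g_m$ only through their product $g_1\cdots g_m$, and this cannot work for $m\ge 2$: the inclusion in (i) requires that $\polq{\setg{g}}(x',x_n)\le 0$ force \emph{each} $g_i(x')>0$, but $g_1\cdots g_m>0$ is satisfied whenever an even number of the $g_i$ are negative. Concretely, both of your displayed candidates (the $\Psi$-ansatz and the $\Phi$-ansatz) are invariant under flipping the signs of any two of the $g_i$, so the set $\{\polq{\setg{g}}\le 0\}$ would necessarily spill into regions where $(x',0)\notin\seta{\setg{g}}$. The step ``$g_1\cdots g_m>0$, forcing each $g_i(x')>0$'' is simply false, and no tuning of $\Psi$, $\Phi$, or the exponents can repair it without introducing a mechanism that sees the individual signs of the $g_i$.

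This is precisely the nontrivial content that Pecker's iterated polynomials $P_m$ supply and that the paper exploits. Property \ref{pecker}(iv) — \emph{if $P_m(y_1,\dots,y_m,\t^2)$ has a real root then all the $y_i$ are non-negative} — is not a formal consequence of positivity of the product; it comes from the recursive substitution $y_i\mapsto a_i(y_1,\dots,y_{i+1})=y_{i+1}(y_1+\cdots+y_i)$, which encodes the sign pattern of the whole tuple, not just its product. The paper's $\polq{\setg{g}}$ is built by evaluating the modified Pecker polynomial $Q_{m+1}$ at $(g_1,\dots,g_m,1,\x_n^2C_{m+1}/g_{m+1}^2)$ and clearing denominators with a high power of $g_{m+1}$; part (i) then follows from \ref{peck3}(i), and the root/value estimates in (ii) from \ref{peck1}(i) and \ref{peck3}(ii). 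So the true obstacle is not (as you anticipated) tuning exponents to push the roots past $g_{m+1}(x')$ — that part of your plan is fine in outline — but rather that a single scalar combination of the $g_i$ cannot certify the conjunction of $m$ strict inequalities; an essentially Pecker-type (or Motzkin-type) multivariate construction is unavoidable here.
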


\begin{figure}[!ht]
\begin{center}
\begin{tikzpicture}[xscale=2]

\draw[draw=none,fill=gray!20,opacity=1] (2,1) -- (5,1) -- (5,7) -- (2,7) -- (2,1);
\draw[line width=1pt] (2,1) -- (5,1);
\draw[line width=0.75pt] (2.01,7) .. controls (2.5,1) and (4.5,1) .. (4.99,7);
\draw[line width=1pt,fill=gray!80,opacity=0.5] (2.02,7) .. controls (2.5,3) and (4.5,3.5) .. (4.98,7);
\draw[line width=1pt] (2.05,7) .. controls (2.5,4.5) and (4.5,4.5) .. (4.95,7);
\draw[line width=0.75pt,fill=gray!20] (2.10,7) .. controls (2.5,6.5) and (4.5,6.5) .. (4.90,7);
\draw[line width=0.75pt] (1,4) parabola bend (5,3) (6,3.1);

\draw[dashed] (2,1) -- (2,7);
\draw[dashed] (5,1) -- (5,7);
\draw[dashed] (4.375,1) -- (4.375,7);
\draw[line width=0.75pt] (1,2) -- (6,2);

\draw[fill=white,draw,xscale=0.5] (4,1) circle (0.75mm);
\draw[fill=white,draw,xscale=0.5] (10,1) circle (0.75mm);
\draw[fill=black,draw,xscale=0.5] (8.75,1) circle (0.5mm);
\draw[fill=black,draw,xscale=0.5] (8.75,5) circle (0.5mm);
\draw[fill=black,draw,xscale=0.5] (8.75,5.65) circle (0.5mm);
\draw(3.5,0.5) node{\small$\seta{\setg{g}}$};
\draw(5.45,1.7) node{\small${\tt x}_n=1$};
\draw(5.6,2.7) node{\small${\tt x}_n=g_{m+1}$};
\draw(5.3,4.7) node{\small${\tt x}_n=\frac{g_{m+1}}{\sqrt{g_1\cdots g_m}}$};
\draw(3.5,6.1) node{\small$\polq{\setg{g}}\leq0$};
\draw(3.5,5.4) node{\tiny$\polq{\setg{g}}=-1$};
\draw(3.5,4.4) node{\tiny$\polq{\setg{g}}=0$};
\draw(4.15,5.1) node{\tiny$(x',r)$};
\draw(4.15,5.75) node{\tiny$(x',t)$};
\draw(4.15,0.75) node{\tiny$(x',0)$};

\end{tikzpicture}
\end{center}
\caption{$\{\polq{\setg{g}}\leq0\}\subset\vspan{\seta{\setg{g}}}{\ven}\cap\{{\tt x}_n>\max\{g_{m+1},\frac{g_{m+1}}{\sqrt{g_1\cdots g_m}}\}\}$}\label{fig1}
\end{figure}

Figure~\ref{fig1} sketches the graphical meaning of Lemma~\ref{polq1}. Its proof relies on Pecker's construction \cite[\S2]{p} that we recall next.

\subsection{Pecker's construction} 
Define:
\begin{equation}\label{ak}
a_k(\y_1,\dots,\y_{k+1}):=\y_{k+1}(\y_1+\cdots+\y_k)\in\Z[\y_1,\ldots,\y_{k+1}].
\end{equation}
If $y_i\geq0$ for $i=1,\ldots,k+1$, it holds $a_k(y_1,\ldots,y_{k+1})\geq0$. Consider Pecker's polynomials defined as follows:
\begin{align*}
P_1(\y_1,{\tt t})&:={\tt t}-\y_1,\\
P_{m+1}(\y_1,\dots,\y_{m+1},{\tt t})&:=P_m(a_1(\y_1,\y_2),\dots,a_m(\y_1,\dots,\y_{m+1}),({\tt t}-(\y_1+\cdots+\y_{m+1}))^2).
\end{align*}

\subsubsection{Basic properties of polynomials $P_m$}\label{pecker}
The previous polynomials satisfy the following properties \cite[Thm.1]{p}:
\begin{itemize}\em
\item[(i)] $P_m\in\Z[\y_1,\ldots,\y_m,\t]$ is a homogeneous polynomial of degree $2^{m-1}$.
\item[(ii)] If each $y_i\geq0$ and $P_m(y_1,\dots,y_m,t)=0$, then $0\le t\le 2\sum_{i=1}^m y_i$.
\item[(iii)] If all the $y_i$ are non-negative, the polynomial $P_m(y_1,\dots,y_m,{\tt t}^2)$ in the variable ${\tt t}$ has only real roots. 
\item[(iv)] If $P_m(y_1,\dots,y_m,{\tt t}^2)$ has a real root, then all the $y_i$ are non-negative.
\item[(v)] $P_m(\y_1,\dots,\y_{j-1},0,\y_{j+1},\dots,\y_m,{\tt t})=(P_{m-1}(\y_1,\dots,\y_{j-1},\y_{j+1},\dots,\y_m,{\tt t}))^2$.
\item[(vi)] $P_m(\y_1,\dots,\y_m,{\tt t}^2)$ is irreducible in $\R[\y_1,\ldots,\y_m,\t]$ and monic in each variable.
\end{itemize}

\subsubsection{Further properties of polynomials $P_m$}\label{peck1}
The polynomials $P_m$ satisfy in addition the following properties:
\begin{itemize}\em
\item[(i)] If each $y_i\geq0$ and $P_m(y_1,\dots,y_m,t)=0$, then 
$$
\Big(1-\sqrt{\tfrac{m-1}{m}}\Big)\sum_{i=1}^m y_i\le t\le\Big(1+\sqrt{\tfrac{m-1}{m}}\Big)\sum_{i=1}^m y_i.
$$
\item[(ii)] If $m\geq2$ and each $y_i>0$, then $P_m(y_1,\dots,y_m,0)>0$.
\item[(iii)] Define $A_1(\y_1):=\y_1$ and 
$$
A_{k+1}(\y_1,\ldots,\y_{k+1}):=A_k(a_1(\y_1,\y_2),\ldots,a_k(\y_1,\ldots,\y_{k+1}))\in\Z[\y_1,\ldots,\y_{k+1}].
$$
Then $A_m$ is a homogeneous polynomial of degree $2^{m-1}$ and there exists a homogeneous polynomial $B_{m-1}\in\Z[\y_1,\ldots,\y_{m-1}]$ of degree $2^{m-1}-m$ with non-negative coefficients such that $A_m=B_{m-1}\prod_{i=1}^m\y_i$. 
\item[(iv)] Given values $y_i\geq0$ for $i=1,\ldots,m$, there exists
$$
t_m\begin{cases}
=0&\text{if $m=1$,}\\
\ge\sum_{i=1}^my_i&\text{if $m\geq2$,}
\end{cases}
$$
such that $P_m(y_1,\ldots,y_m,t_m)=-A_m(y_1,\ldots,y_m)<0$.
\end{itemize}
\begin{proof}
(i) It $m=1$ the result is clearly true, so let us assume $m\geq2$. Denote $s_m:=\sum_{i=1}^my_i$, $r_m:=\sum_{1\le j<k\le m}y_jy_k$ and $q_m:=\sum_{i=1}^my^2_i$. As $s_m^2=q_m+2r_m$, notice that
\begin{align*}
q_m=\sum_{i=1}^m y_i^2=\frac{2}{m-1}\sum_{1\le j<k\le m}\frac{y_j^2+y_k^2}{2}\ge\frac{2}{m-1}\sum_{1\le j<k\le m}y_j y_k=\frac{2r_m}{m-1}.
\end{align*}
Consequently,
\begin{equation}\label{eq1}
s_m^2=q_m+2r_m\ge\frac{2m}{m-1}r_m\quad\leadsto\quad 2r_m\le\frac{m-1}{m}s_m^2.
\end{equation}
Let $t_0\in\R$ be such that $P_m(y_1,\dots,y_m,t_0)=0$. By definition
$$
P_m(y_1,\dots,y_m,\t)=P_{m-1}(a_1(y_1,y_2),\dots,a_{m-1}(y_1,\cdots,y_m),(\t-(y_1+\cdots+y_m))^2),
$$
so $u_0:=(t_0-s_m)^2$ is a root of the polynomial $P_{m-1}(a_1(y_1,y_2),\dots,a_{m-1}(y_1,\dots,y_m),{\tt u})$. As each $a_i(y_1,\dots,y_{i+1})\ge 0$ (see \eqref{ak}), we deduce from \ref{pecker}(ii)
\begin{equation*}
u_0=(t_0-s_m)^2\le 2\sum_{i=1}^{m-1}a_i(y_1,\dots,y_{i+1})=2\sum_{i=1}^{m-1}y_{i+1}(y_1+\cdots+y_i)=2\sum_{1\le j< k\le m} y_jy_k=2 r_m,
\end{equation*}
or equivalently,
$$
t_0^2-2s_mt_0+q_m\le 0.
$$
The previous condition is equivalent to
\begin{equation}\label{br}
s_m-\sqrt{2r_m}=s_m-\sqrt{s_m^2-q_m}\leq t_0\le s_m+\sqrt{s_m^2-q_m}=s_m+\sqrt{2r_m}.
\end{equation}
By \eqref{eq1} we have $\sqrt{2r_m}\leq\sqrt{\tfrac{m-1}{m}}s_m$ and by \eqref{br}
$$
\Big(1-\sqrt{\frac{m-1}{m}}\Big)s_m\leq s_m-\sqrt{2r_m}\leq t_0\le s_m+\sqrt{2r_m}\le\Big(1+\sqrt{\frac{m-1}{m}}\Big)s_m, 
$$
so the statement follows.

(ii) By (i) the polynomial $P_m(y_1,\dots,y_m,\t)\in\R[\t]$ has no real roots in the interval ${]-\infty,0]}$. By \ref{pecker}(i) \& (vi) $P_m(y_1,\dots,y_m,\t)$ is a monic polynomial of even degree, so
$$
\lim_{t\to-\infty}P_m(y_1,\dots,y_m,t)=+\infty.
$$
Consequently, $P_m(y_1,\dots,y_m,0)>0$.

(iii) We proceed by induction on $m$. For $A_1(\y_1)=\y_1$ and $A_2(\y_1,\y_2)=\y_1\y_2$ the statement is true by setting $B_0=B_1=1$. Assume the statement true for $m$. Then
\begin{equation*}
\begin{split}
A_{m+1}&(\y_1,\dots,\y_{m+1})=A_m(a_1(\y_1,\y_2),\dots,a_{m-1}(\y_1,\dots,\y_m),a_m(\y_1,\dots,\y_{m+1}))\\
&=\Big(\prod_{k=1}^ma_k(\y_1,\ldots,\y_{k+1})\Big)B_{m-1}(a_1(\y_1,\y_2),\dots,a_{m-1}(\y_1,\dots,\y_m))\\
&=\Big(\prod_{k=1}^{m+1}\y_k\Big)\Big(\prod_{k=2}^m(\y_1+\cdots+\y_k)\Big)
B_{m-1}(a_1(\y_1,\y_2),\dots,a_{m-1}(\y_1,\dots, \y_m))\\
&=\Big(\prod_{k=1}^{m+1}\y_k\Big)B_m(\y_1,\dots,\y_m)
\end{split}
\end{equation*}
where 
$$
B_m(\y_1,\dots,\y_m):=\Big(\prod_{k=2}^m(\y_1+\cdots+\y_k)\Big)B_{m-1}(a_1(\y_1,\y_2),\dots,a_{m-1}(\y_1,\dots,\y_m)).
$$
In addition, $A_{m+1}$ is by induction a homogeneous polynomial of degree $2^m$ (because $A_m$ is a homogeneous polynomial of degree $2^{m-1}$ and each $a_i$ is a homogeneous polynomial of degree $2$) and the equality 
$$
A_{m+1}(\y_1,\dots,\y_{m+1})=\y_1\cdots \y_{m+1}B_m(\y_1,\dots,\y_m)
$$
shows that $B_m(\y_1,\dots,\y_m)$ is a homogeneous polynomial of degree $2^m-(m+1)$. Besides $B_m\in\Z[\y_1,\ldots,\y_m]$ has non-negative coefficients by induction hypothesis because each $a_i\in\Z[\y_1,\ldots,\y_{i+1}]$ has non-negative coefficients.

(iv) We work by induction on $m$. For $m=1$ the polynomial $P_1(y_1,\t)=\t-y_1$ achieves the value $-A_1(y_1)=-y_1$ for $t_1:=0$. For $m=2$
$$
P_2(y_1,y_2,\t)=(\t-y_1-y_2)^2-y_1 y_2,
$$
and this polynomial attains the value $-A_2(y_1,y_2)=-A_1(a_1(y_1,y_2))=-y_1y_2$ for $t_2:=y_1+y_2$. 

Given $y_i\geq0$ for $i=1,\ldots,m$, consider the non-negative values 
$$
a_1(y_1,y_2),\ldots,a_m(y_1,\ldots,y_{m+1}). 
$$
Suppose by induction that there exists a real number 
$$
t_m'\geq\sum_{i=1}^ma_i(y_1,\dots,y_{i+1})
$$
such that 
\begin{equation}\label{claro}
P_m(a_1(y_1,y_2),\dots,a_m(y_1,\dots,y_{m+1}),t_m')=-A_m(a_1(y_1,y_2),\dots,a_m(y_1,\dots,y_{m+1})).
\end{equation} 
In particular, $t_m'\geq0$ and
$$
t_{m+1}:=\sqrt{t_m'}+\sum_{i=1}^{m+1}y_i\geq\sum_{i=1}^{m+1}y_i.
$$
Using the definition of $P_{m+1}$ and \eqref{claro} we have 
\begin{equation*}
\begin{split}
P_{m+1}(y_1,\dots,y_m,y_{m+1},t_{m+1})&
=P_m\big(a_1(y_1,y_2),\dots,a_m(y_1,\dots,y_{m+1}),\big(t_{m+1}-\textstyle\sum_{i=1}^{m+1}y_i\big)^2\big)\\
&=P_m\big(a_1(y_1,y_2),\dots,a_m(y_1,\dots,y_{m+1}),t_m'\big)\\
&=-A_m(a_1(y_1,y_2),\dots,a_m(y_1,\dots,y_{m+1}))\\
&=-A_{m+1}(y_1,\dots,y_m),
\end{split}
\end{equation*}
as required.
\end{proof}

\subsection{Modified Pecker's polynomials}\label{peck3}
Fix $m\ge2$ and denote 
$$
C_m:=\Big(1-\sqrt{\frac{m-1}{m}}\Big)\quad\text{and}\quad\expn(m):=(m+1)2^{m-1}-m^2+m. 
$$
Consider the polynomial 
\begin{equation}\label{qm}
Q_m(\y_1,\dots,\y_m,{\tt t}):=\Big(\frac{{\tt t}}{C_m}+\y_m\Big)^{\expn(m)} \hspace{-0.2cm}(\y_1\cdots \y_m)^{2^{m-1}}P_m\Big(\y_1,\dots,\y_{m-1},\frac{1}{\y_1\cdots \y_m},{\tt t}\Big).
\end{equation}
Then:
\begin{itemize}\em
\item[(i)] $Q_m(y_1,\dots,y_m,t^2)\geq0$ if some $y_i\le 0$. In addition, 
$$
\{Q_m(\y,\t^2)\leq0,\y_m=1\}\subset\{\y_1>0,\dots,\y_{m-1}>0,\y_m=1\}.
$$
\item[(ii)] If each $y_i>0$, the polynomial $Q_m(y_1,\dots,y_m,{\tt t})$ achieves the value $-1$ at some
$$
t\ge y_1+\cdots+y_{m-1}+\frac{1}{y_1\cdots y_m}.
$$
\end{itemize}
\begin{proof}
(i) As $\expn(m)$ is an even positive integer, the first factor of $Q_m$ is non-negative. By \ref{pecker} (i) and (vi)
$$
F_m(\y_1,\dots,\y_m,\t)=(\y_1\cdots \y_m)^{2^{m-1}}P_m\Big(\y_1,\dots,\y_{m-1},\frac{1}{\y_1\cdots \y_m},\t\Big)
$$
is a polynomial of degree $(m+1)2^{m-1}$. Consider the projection $\pi_{m+1}:\R^{m+1}\to\R^m,\ (y,t)\to y$. By \cite[Cor.1, p.308]{p} the hypersurface $\{F_m(\y,\t^2)=0\}\subset\R^{m+1}$ projects under $\pi_{m+1}$ onto the open orthant $\Qq:=\{\y_1>0,\dots,\y_m>0\}$. Thus, for each $t\in\R$ the polynomial $F_m(\y,t^2)$ has empty zero-set on $\R^{m}\setminus\Qq$. As $\R^m\setminus \Qq$ is connected and the origin $0\in\R^m\setminus\Qq$, we deduce $F_m(y,t^2)\cdot F_m(0,t^2)>0$ for every $y\in\R^m\setminus \Qq$. By \ref{pecker}(i) \& (vi) the polynomial $P_m$ is homogeneous and monic in each variable, so $F_m(0,t^2)=1>0$. Thus, $F_m(\y,t^2)>0$ on $\R^m\setminus\Qq$ and the first part of the statement follows.

If $y_m=1$, the first factor of $Q_m(\y,\t^2)$ is strictly positive. We have proved above that the factor $F_m$ is strictly positive on $(\R^m\setminus\Qq)\times\R$. Consequently,
$$
\{Q_m(\y,\t^2)\leq0,\y_m=1\}\subset\{F_m(\y,\t^2)\leq0,\y_m=1\}\subset\{\y_1>0,\dots,\y_{m-1}>0,\y_m=1\}.
$$

(ii) Fix $y:=(y_1,\ldots,y_m)\in\Qq:=\{\y_1>0,\ldots,\y_m>0\}$. By \ref{peck1}(iv) there exists
\begin{equation}\label{tm}
t_m\ge y_1+\cdots+y_{m-1}+\frac{1}{y_1\cdots y_m}
\end{equation}
such that
\begin{equation}\label{am}
P_m(y_1,\ldots,y_{m-1},\tfrac{1}{y_1\cdots y_m},t_m)=-A_m(y_1,\dots,y_{m-1},\tfrac{1}{y_1\cdots y_m}).
\end{equation}
By \ref{peck1}(iii) we can write
\begin{equation}\label{bm}
A_m(\y_1,\dots,\y_{m-1},\tfrac{1}{\y_1\cdots\y_m})=\frac{B_{m-1}(\y_1,\dots,\y_{m-1})}{\y_m}
\end{equation}
where $B_{m-1}\in\Z[\y_1,\ldots,\y_{m-1}]$ is a homogeneous polynomial of degree $2^{m-1}-m$ with non-negative coefficients. Consider the rational functions
\begin{align*}
&Q_{m,1}(\y_1,\dots,\y_m):=\Big(\y_1+\cdots+\y_m+\dfrac{1}{\y_1\cdots\y_m}\Big)^{2^{m-1}}(\y_1\cdots \y_m)^{2^{m-1}}\\
&Q_{m,2}(\y_1,\dots,\y_m):=\Big(\y_1+\cdots+\y_m+\dfrac{1}{\y_1\cdots\y_m}\Big)^{m(2^{m-1}-m)}B_{m-1}(\y_1,\dots,\y_{m-1})\\
&Q_{m,3}(\y_1,\dots,\y_m):=\Big(\y_1+\cdots+\y_m+\dfrac{1}{\y_1\cdots\y_m}\Big)^m{\dfrac{1}{\y_m}}
\end{align*}
We claim: \em $Q_{m,1}(y)>1$, $Q_{m,2}(y)\geq1$ and $Q_{m,3}(y)>1$\em.

The inequalities $Q_{m,1}(y)>1$ and $Q_{m,3}(y)>1$ are straightforward. We proceed with $Q_{m,2}(y)\geq1$. As $B_{m-1}$ is a homogeneous polynomial of degree $2^{m-1}-m$ whose coefficients are non-negative integers, we write
$$
B_{m-1}(\y_1,\ldots,\y_{m-1})=\sum_{|\nu|=2^{m-1}-m}a_\nu\y_1^{\nu_1}\cdots\y_{m-1}^{\nu_{m-1}}
$$
where $\nu:=(\nu_1,\ldots,\nu_{m-1})\in(\N\cup\{\orig\})^{m-1}$, $|\nu|=\nu_1+\cdots+\nu_{m-1}$ and $a_\nu\in\N\cup\{\orig\}$.

Fix $a_\nu\neq0$. By Lemma~\ref{ineq}(iii) and since $\nu_1+\cdots+\nu_{m-1}=2^{m-1}-m$, we have
\begin{align*}
\Big(y_1+\cdots+y_m+\dfrac{1}{y_1\cdots y_m}&\Big)^{m(2^{m-1}-m)}a_\nu y_1^{\nu_1}\cdots y_{m-1}^{\nu_{m-1}}\\
&=a_\nu\prod_{i=1}^{m-1}
\Big(\Big(y_1+\cdots+y_m+\dfrac{1}{y_1\cdots y_m}\Big)^m y_i\Big)^{\nu_i}\ge a_\nu\ge 1.
\end{align*}
Consequently, $Q_{m,2}(y)\ge 1$, as claimed.

By \eqref{tm} we have
$$
\frac{t_m}{C_m}+y_m\ge t_m+y_m\ge y_1+\cdots+y_m+\frac{1}{y_1\cdots y_m}.
$$
Therefore, by \eqref{am} and \eqref{bm}
\begin{align*}
Q_m(y,t_m)&=\Big(\frac{t_m}{C_m}+y_m\Big)^{\expn(m)}(y_1\cdots y_m)^{2^{m-1}}
P_m\big(y_1,\dots,y_{m-1},\tfrac{1}{y_1\cdots y_m},t_m\big)\\ 
&\le -Q_{m,1}(y) Q_{m,2}(y) Q_{m,3}(y)\le -1.
\end{align*}
By \ref{pecker}(vi) $\lim_{t\to+\infty}Q_m(y,t)\to+\infty$. Thus, there exists $t \ge t_m$ for which $Q_m(y,t)=-1$, as required.
\vspace{1mm}
\end{proof}

\subsection{Proof of Lemma~\ref{polq1}}
Consider the polynomial 
\begin{equation}\label{qg}
\polq{\setg{g}}(\x',\x_n):=g_{m+1}^{\expn_0}(\x')Q_{m+1}\Big(g_1(\x'),\dots,g_m(\x'),1,\frac{\x_n^2C_{m+1}}{g_{m+1}^2(\x')}\Big)
\end{equation}
where $Q_{m+1}$ is the polynomial constructed in \ref{peck3}, $C_{m+1}:=1-\sqrt{\frac{m}{m+1}}$ and $\expn_0:=2\expn(m+1)+2^{m+1}$ is large enough to guarantee that $\polq{\setg{g}}$ is a polynomial. 

(i) We have to show 
$$
\{\polq{\setg{g}}\leq0\}\subset\vspan{\seta{\setg{g}}}{\ven}\cap\Big\{|\x_n|>\max\Big\{g_{m+1},\frac{g_{m+1}}{\sqrt{g_1\cdots g_m}}\Big\}\Big\}.
$$

By \ref{peck3}(i) 
$$
\{\polq{\setg{g}}\leq0\}=\Big\{Q_{m+1}\Big(g_1,\ldots,g_m,1,\frac{\x_n^2C_{m+1}}{g_{m+1}^2}\Big)\leq0\Big\}\subset\{g_1>0,\ldots,g_m>0\}=\vspan{\seta{\setg{g}}}{\ven}. 
$$
We check now 
\begin{equation}\label{req1}
\{\polq{\setg{g}}\leq0\}\subset\Big\{|\x_n|>\max\Big\{g_{m+1},\frac{g_{m+1}}{\sqrt{g_1\cdots g_m}}\Big\}\Big\}.
\end{equation}

Fix $(x',0)\in\seta{\setg{g}}$. As $\polq{\setg{g}}(x',\x_n)=\polq{\setg{g}}(x',-\x_n)$ and the leading coefficient of $\polq{\setg{g}}(x',\x_n)$ with respect to $\x_n$ is positive, $\lim_{x_n\to\pm\infty}\polq{\setg{g}}(x',x_n)=+\infty$. 

By \ref{pecker}(iii) the univariate polynomial $\polq{\setg{g}}(x',\x_n)$ has $2^m$ real roots. As it defines an even polynomial function and by \ref{peck1}(i) none of its roots is zero, $2^{m-1}$ of them are positive and $2^{m-1}$ are negative. Let $r>0$ be the smallest of the positive roots of $\polq{\setg{g}}(x',\x_n)$. We have 
$$
P_{m+1}\Big(g_1(x'),\dots,g_m(x'),\frac{1}{g_1(x')\cdots g_m(x')},\frac{r^2C_{m+1}}{g_{m+1}^2(x')}\Big)=0
$$
and each $g_i(x')>0$. By \ref{peck1}(i) and Lemma~\ref{ineq}(i)
$$
\frac{r^2C_{m+1}}{g_{m+1}^2(x')}\ge C_{m+1}\Big(\sum_{i=1}^m g_i(x')+\frac{1}{g_1(x')\cdots g_m(x')}\Big)\ge C_{m+1}>0.
$$
Thus, $r^2\ge g_{m+1}^2(x')$, so $r\ge g_{m+1}(x')$. In addition,
$$
\frac{r^2C_{m+1}}{g_{m+1}^2(x')}\ge\frac{C_{m+1}}{g_1(x')\cdots g_m(x')}\quad\leadsto\quad r\ge\frac{g_{m+1}(x')}{\sqrt{g_1(x')\cdots g_m(x')}}.
$$
Consequently,
\begin{equation}\label{r}
r\ge\max\Big\{g_{m+1}(x'),\frac{g_{m+1}(x')}{\sqrt{g_1(x')\cdots g_m(x')}}\Big\}.
\end{equation}

\subsubsection{}\label{refi1}We claim: \em if $\polq{\setg{g}}(x',x_n)\leq0$ and $x_n\ge 0$, then $x_n\geq r$\em. As $r$ is the smallest positive real root of the univariate polynomial $\polq{\setg{g}}(x',\x_n)$, it is enough to show: $\polq{\setg{g}}(x',0)>0$. 

This follows from \ref{peck1}(ii) because $g_i(x')>0$ for $i=1,\dots,m$ and 
\begin{equation*}
\begin{split}
\polq{\setg{g}}(x',0)&=g_{m+1}^{\expn_0}(x')Q_{m+1}(g_1(x'),\dots,g_m(x'),1,0)\\
&=g_{m+1}^{\expn_0}(x')(g_1(x')\cdots g_m(x'))^{2^m}P_{m+1}\Big(g_1(x'),\dots,g_m(x'),\frac{1}{g_1(x')\cdots g_m(x')},0\Big)>0.
\end{split}
\end{equation*}
By \eqref{r} and \ref{refi1} the inclussion \eqref{req1} holds.

(ii) If $(x',0)\in\seta{\setg{g}}$, we have $g_1(x')>0,\ldots,g_m(x')>0$. By \ref{peck3}(ii) there exists 
$$
t_0\geq g_1(x')+\cdots+g_m(x')+\frac{1}{g_1(x')\cdots g_m(x')}>0
$$
such that $Q_{m+1}(g_1(x'),\dots,g_m(x'),1,t_0)=-1$. Define $t_1:=\sqrt{\frac{t_0}{C_{m+1}}}g_{m+1}(x')$ and observe 
$$
\polq{\setg{g}}(x',t_1)=g_{m+1}^{\expn_0}(x')Q_{m+1}(g_1(x'),\dots,g_m(x'),1,t_0)=-g_{m+1}^{\expn_0}(x')<-1.
$$
As $\polq{\setg{g}}(x',t_1)<-1$, we know by \ref{refi1} that $t_1>r$. We have $\polq{\setg{g}}(x',t_1)<-1<\polq{\setg{g}}(x',r)=0$, so there exists $r\leq t\leq t_1$ such that $\polq{\setg{g}}(x',t)=-1$.

(iii) Statements (i) and (ii) provide the first part of (iii) whereas \ref{refi1} issues the second part of (iii), as required
\qed

\subsection{Consequences of Lemma~\ref{polq1}}\label{s3d}\setcounter{paragraph}{0}

Let $\setg{g}:=(g_1,\ldots,g_{m+1})\in\R[\x']^{m+1}$ be an admissible tuple of polynomials and let us consider the corresponding polynomial $\polq{\setg{g}}\in\R[\x]$ introduced in Lemma~\ref{polq1} and the associated semialgebraic sets $\seta{\setg{g}}$ and $\sets{\setg{g}}$. The latter semialgebraic set was introduced in Lemma~\ref{polq1}, where we also proved some key properties of $\sets{\setg{g}}$.

\begin{thm}\label{atico2b}
Let $h\in\R[\x']$ be positive semidefinite on $\seta{\setg{g}}$ and let $P\in\R[\x]$ be strictly greater than $1$ on $\sets{\setg{g}}$. Assume in addition $g_{m+1}>h$ on $\seta{\setg{g}}$. Define 
$$
f:=(f_1,\ldots,f_n):\R^n\to\R^n,\ x:=(x',x_n)\mapsto(x',x_n(1+P(x)\polq{\setg{g}}(x))^2+h(x')(P(x)\polq{\setg{g}}(x))^2).
$$
We have:
\begin{itemize}
\item[(i)] $\vspan{\seta{\setg{g}}}{\ven}\cap\{\x_n\geq h(\x')\}\subset f(\sets{\setg{g}})\subset\vspan{\seta{\setg{g}}}{\ven}\cap\{2\x_n\geq h(\x')\}$. In particular, if $h=0$, we have $f(\sets{\setg{g}})=\vspanp{\seta{\setg{g}}}{\ven}$.
\item[(ii)] Whenever $x_n\ge0$, $P(x)\polq{\setg{g}}(x)\ge 0$ and $h(x')\ge0$, the inequality $f_n(x)\ge x_n$ holds. In particular, this happens if $x\in(\vspanp{\seta{\setg{g}}}{\ven}\setminus\sets{\setg{g}})\cap\{P\geq0\}$.
\item[(iii)] If $h(x')\geq0$ and $P(x',x_n)\geq0$ for $x_n$ large enough, then $\lim_{x_n\to+\infty}f_n(x',x_n)=+\infty$.
\end{itemize}
\end{thm}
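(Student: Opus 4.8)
\emph{Plan.} Write $\Phi:=P\polq{\setg{g}}\in\R[\x]$, so that $f=(\x_1,\dots,\x_{n-1},\,\x_n(1+\Phi)^2+h\Phi^2)$; since $f$ fixes the first $n-1$ coordinates and $\pi_n(\sets{\setg{g}})=\seta{\setg{g}}$ (Lemma~\ref{polq1}(iii)), the inclusion $f(\sets{\setg{g}})\subset\vspan{\seta{\setg{g}}}{\ven}$ is automatic. Parts (ii) and (iii) come straight from the shape of $f_n$. If $x_n\geq0$, $\Phi(x)\geq0$ and $h(x')\geq0$, then $(1+\Phi(x))^2\geq1$, hence $f_n(x)=x_n(1+\Phi(x))^2+h(x')\Phi(x)^2\geq x_n$; this is (ii). For its ``in particular'' clause one checks that a point of $\vspanp{\seta{\setg{g}}}{\ven}\setminus\sets{\setg{g}}$ lies off $\{\polq{\setg{g}}\leq0,\x_n>0\}\subset\sets{\setg{g}}$, so $\polq{\setg{g}}(x)>0$ (immediate when $x_n>0$, and when $x_n=0$ it is the computation $\polq{\setg{g}}(x',0)>0$ from \ref{refi1}, valid because $g_1(x'),\dots,g_m(x')>0$); with $P(x)\geq0$ and $h(x')\geq0$ (as $h$ is positive semidefinite on $\seta{\setg{g}}$), (ii) applies. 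For (iii), $\polq{\setg{g}}(x',\x_n)$ is even in $\x_n$ and, by \ref{peck3}(i) together with the positive leading coefficient of $\polq{\setg{g}}(x',\cdot)$, satisfies $\polq{\setg{g}}(x',x_n)\geq0$ for $x_n$ large; so $P(x',x_n)\geq0$ (for $x_n$ large) forces $\Phi(x',x_n)\geq0$ and therefore $f_n(x',x_n)\geq x_n\to+\infty$.

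For the inclusion $f(\sets{\setg{g}})\subset\vspan{\seta{\setg{g}}}{\ven}\cap\{2\x_n\geq h(\x')\}$, fix $x\in\sets{\setg{g}}$; then $(x',0)\in\seta{\setg{g}}$, and by Lemma~\ref{polq1}(iii) we may write $x_n=r_n+t_n$ with $t_n\geq0$ and $\polq{\setg{g}}(x',r_n)=0$. As $(x',r_n)\in\{\polq{\setg{g}}\leq0\}$, Lemma~\ref{polq1}(i) gives $r_n>g_{m+1}(x')>h(x')\geq0$, so $x_n>h(x')$. Put $t:=x_n$, $\gamma:=\Phi(x)$ and $a:=h(x')\in[0,t)$; the desired bound $2f_n(x)\geq h(x')$ is exactly the non-negativity of the quadratic $2(t+a)\gamma^2+4t\gamma+(2t-a)$ in $\gamma$, whose leading coefficient is positive and whose discriminant equals $8a(a-t)\leq0$. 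Hence $f(x)\in\vspan{\seta{\setg{g}}}{\ven}\cap\{2\x_n\geq h(\x')\}$.

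The main step is the reverse inclusion $\vspan{\seta{\setg{g}}}{\ven}\cap\{\x_n\geq h(\x')\}\subset f(\sets{\setg{g}})$. Fix $y=(y',y_n)$ with $g_1(y'),\dots,g_m(y')>0$ and $y_n\geq h(y')$, and study the continuous function $\psi(t):=f_n(y',t)$. Let $r>0$ be the smallest positive root of $\polq{\setg{g}}(y',\cdot)$, which exists by Lemma~\ref{polq1}(ii); by Lemma~\ref{polq1}(i), $r>g_{m+1}(y')>h(y')\geq0$, and $(y',t)\in\sets{\setg{g}}$ for every $t\geq r$ because $(y',r)\in\{\polq{\setg{g}}\leq0,\x_n>0\}$. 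I bracket $y_n$ between two values of $\psi$ on $[r,\infty)$. First, $\Phi(y',r)=0$, so $\psi(r)=r$. Second, Lemma~\ref{polq1}(ii) provides $t_*\geq r$ with $\polq{\setg{g}}(y',t_*)=-1$; then $(y',t_*)\in\sets{\setg{g}}$, so $P(y',t_*)>1$ and $\Phi(y',t_*)=-P(y',t_*)<-1$, and by the intermediate value theorem there is $t_0\in(r,t_*)$ with $\Phi(y',t_0)=-1$, whence $\psi(t_0)=t_0\cdot0+h(y')\cdot1=h(y')\leq y_n$. Third, let $r_{\max}$ be the largest root of $\polq{\setg{g}}(y',\cdot)$ (all its roots are real, \ref{pecker}(iii)); for $t>r_{\max}$ we have $\polq{\setg{g}}(y',t)>0$ and $(y',t)\in\sets{\setg{g}}$, so $\Phi(y',t)>0$ and $\psi(t)\geq t$, giving $\psi(t_1)\geq y_n$ with $t_1:=\max\{y_n,r_{\max}\}\geq r$. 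Since $\psi$ is continuous on the connected set $[r,\infty)$ and takes a value $\leq y_n$ at $t_0$ and a value $\geq y_n$ at $t_1$, Bolzano's theorem yields $t^*\in[r,\infty)$ with $\psi(t^*)=y_n$, and then $(y',t^*)\in\sets{\setg{g}}$ maps under $f$ to $(y',y_n)=y$. (For the range $y_n\geq r$ one may instead apply Corollary~\ref{fun2}(ii) at the fixed point $(y',r)$, noting $\psi(+\infty)=+\infty$.) Finally, when $h=0$ the two inclusions collapse to $f(\sets{\setg{g}})=\vspanp{\seta{\setg{g}}}{\ven}$, since then $\vspan{\seta{\setg{g}}}{\ven}\cap\{\x_n\geq0\}=\vspan{\seta{\setg{g}}}{\ven}\cap\{2\x_n\geq0\}=\vspanp{\seta{\setg{g}}}{\ven}$.

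\emph{Main obstacle.} The substance is entirely in the last inclusion, and its key point is the observation that the fibre function $\psi=f_n(y',\cdot)$ dips to exactly $h(y')$ precisely where $\Phi=-1$ --- there $(1+\Phi)^2$ kills the $\x_n$-term and leaves $h\Phi^2=h$ --- together with the fact that such a $t_0$ is trapped between the root $r$ of $\polq{\setg{g}}(y',\cdot)$, where $\Phi=0$, and the point $t_*$ where $\polq{\setg{g}}(y',\cdot)=-1$, where the hypothesis $P>1$ on $\sets{\setg{g}}$ pushes $\Phi$ below $-1$. The remaining bookkeeping --- that $(y',t_0)$, $(y',t_1)$ and the interpolant $(y',t^*)$ genuinely lie in $\sets{\setg{g}}$, and that $\psi(t)\geq t$ once $t$ is past the last root of $\polq{\setg{g}}(y',\cdot)$ --- is routine, as are the discriminant check for the $\subset$ inclusion and the limit in (iii).
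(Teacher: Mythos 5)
Your proof is correct and follows essentially the same strategy as the paper's: for the hard inclusion of (i) you locate the point $t_0$ on the fibre where $\Phi=-1$ (so $\psi(t_0)=h(y')$), a point $t_1$ past the largest real root where $\psi\geq y_n$, and apply Bolzano, while your discriminant check for the $\subset$-inclusion is the same nonnegativity fact the paper establishes by completing the square. One incidental slip: not all roots of $\polq{\setg{g}}(y',\cdot)$ are real---the factor $\big(\x_n^2/g_{m+1}^2(y')+1\big)^{\expn(m+1)}$ contributes purely imaginary ones---but since $\polq{\setg{g}}(y',t)\to+\infty$ as $t\to+\infty$ it is still positive past its largest \emph{real} root, which is all your argument actually uses.
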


\begin{figure}[!ht]
\begin{center}
\begin{tikzpicture}[xscale=1]

\draw[dashed] (-5,1) -- (-5,7) (-2,7) -- (-2,1); 
\draw[draw=none,fill=gray!50,opacity=1] (-4.8,7) parabola bend (-3.5,5) (-2.2,7);
\draw[line width=1pt](-4.8,7) parabola bend (-3.5,5) (-2.2,7);
\draw(-3.5,6) node{\small$\sets{\setg{g}}$};
\draw(-0.5,4.3) node{\small$f$};
\draw[line width=1pt] (-5,1)--(-2,1);
\draw[fill=white,draw] (-5,1) circle (0.75mm);
\draw[fill=white,draw] (-2,1) circle (0.75mm);
\draw(-3.5,1.3) node{\small$\seta{\setg{g}}$};

\draw[line width=1pt] (1,1)--(4,1);

\draw[line width=1pt,->](-1,4)--(0,4);
\draw[draw=none,fill=gray!50,opacity=1] (1,2.25) parabola bend (1,2.25) (2.5,2.75) parabola bend (4,3.25) (4,3.25) -- (4,7) -- (1,7) -- (1,2.25);

\draw[draw=none,fill=gray!50,opacity=1] (6,1) -- (9,1) -- (9,7) -- (6,7) -- (6,1);
\draw[line width=1pt] (1,2.25) parabola bend (1,2.25) (2.5,2.75) parabola bend (4,3.25) (4,3.25);

\draw[line width=1pt] (6,1) -- (9,1);

\draw[line width=0.75pt] (0.5,1.8) -- (4.5,2.4);
\draw[line width=0.75pt] (0.5,2.5) -- (4.5,4);

\draw[dashed] (1,1) -- (1,7);
\draw[dashed] (4,1) -- (4,7);

\draw[dashed] (6,1) -- (6,7);
\draw[dashed] (9,1) -- (9,7);

\draw[fill=white,draw] (1,2.25) circle (0.75mm);
\draw[fill=white,draw] (4,3.25) circle (0.75mm);

\draw[fill=white,draw] (6,1) circle (0.75mm);
\draw[fill=white,draw] (9,1) circle (0.75mm);
\draw[fill=white,draw] (1,1) circle (0.75mm);
\draw[fill=white,draw] (4,1) circle (0.75mm);

\draw(2.5,0.5) node{(a)};
\draw(7.5,0.5) node{(b)};

\draw(2.5,1.3) node{\small$\seta{\setg{g}}$};
\draw(7.5,1.3) node{\small$\seta{\setg{g}}$};

\draw(2.5,6) node{\small$f(\sets{\setg{g}})$};
\draw(7.5,6) node{\small$f(\sets{\setg{g}})$};

\draw(2.5,3.5) node[rotate=20]{\small${\tt x}_n=h$};
\draw(3,2.4) node[rotate=8]{\small$2{\tt x}_n=h$};

\end{tikzpicture}
\end{center}
\caption{Theorem~\ref{atico2b}: (a) Case $h\neq0$.\quad (b) Case $h=0$.}\label{fig2}
\end{figure}
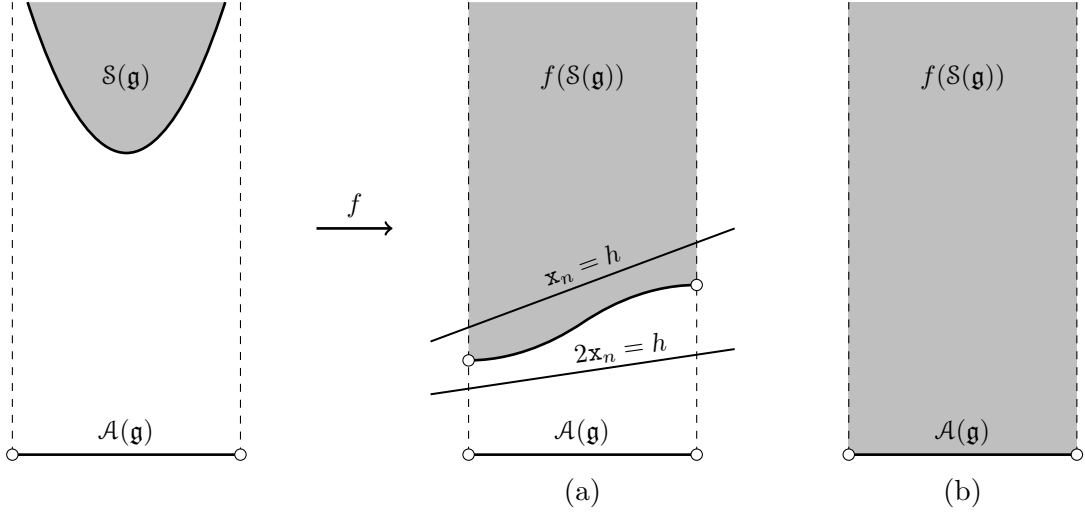

\begin{proof}
(i) The polynomial map $f$ preserves vertical lines. 

\subsubsection{}We prove first: $\vspan{\seta{\setg{g}}}{\ven}\cap\{\x_n\geq h(\x')\}\subset f(\sets{\setg{g}})$.

\subsubsection{}\label{degqg}Pick a point $(x',0)\in\seta{\setg{g}}$. We claim: \em the polynomial $\polq{\setg{g}}(x',\x_n)$ has degree $\ell_0:=2\ell(m+1)+2^{m+1}$ and its leading coefficient is strictly positive\em.

By \ref{pecker}(i) and (vi), \eqref{qm} and \eqref{qg} the degree of $\polq{\setg{g}}(x',\x_n)$ is $\ell_0$ and its leading coefficient is $(C_{m+1}g_1\cdots g_m)^{2^m}>0$.

\subsubsection{}By Lemma~\ref{polq1}(ii) there exist points $p:=(x',t)$ and $q:=(x',r)$ such that $t>r\geq g_{m+1}(x')>1$, $\polq{\setg{g}}(x',t)=-1$ and $\polq{\setg{g}}(x',r)=0$. In particular, $\vspanp{p}{\ven}\subset\sets{\setg{g}}$, so $P$ is strictly greater than $1$ on $\vspanp{p}{\ven}$. Consider the polynomial $\phi_{x'}(\x_n):=1+P(x',\x_n)\polq{\setg{g}}(x',\x_n)$. We have
$$
\phi_{x'}(r)=1,\quad\phi_{x'}(t)=1+P(x',t)Q_\setg{g}(x',t)<0\quad\text{and}\quad\lim_{x_n\to+\infty}\phi_{x'}(x_n)=+\infty.
$$
Consequently, there exists $s\in{]r,t[}$ such that $\phi_{x'}(s)=0$, so ${[0,+\infty[}\subset\phi_{x'}({[s,+\infty[})$. Consider also the polynomial 
$$
\varphi_{x'}(\x_n):=f_n(x',\x_n)=\x_n\phi_{x'}^2(\x_n)+h(x')(\phi_{x'}(\x_n)-1)^2
$$ 
and observe that 
$$
\varphi_{x'}(s)=h(x')\quad\text{and}\quad\lim_{x_n\to+\infty}\varphi_{x'}(x_n)=+\infty.
$$ 
Thus, ${[h(x'),+\infty[}\subset\varphi_{x'}({[s,+\infty[})$ and
$$
\{x'\}\times\{\x_n\geq h(x')\}\subset f(\vspanp{p}{\ven})\subset f(\sets{\setg{g}}).
$$
We conclude $\vspan{\seta{\setg{g}}}{\ven}\cap\{\x_n\geq h(\x')\}\subset f(\sets{\setg{g}})$.

\subsubsection{}Let us check next: $f(\sets{\setg{g}})\subset\vspan{\seta{\setg{g}}}
{\ven}\cap\{2\x_n\geq h(\x')\}$. 

Pick a point $(x',x_n)\in\sets{\setg{g}}$. By Lemma~\ref{polq1}(iii) $(x',0)\in\seta{\setg{g}}$ and $x_n>g_{m+1}(x')>h(x')$. Consider the polynomial $\psi_{x'}(\x_n):=P(x',\x_n)\polq{\setg{g}}(x',\x_n)$, so the last component of $f$ can be rewritten as $f_n(x',x_n)=\x_n(1+\psi_{x'}(\x_n))^2+h(x')\psi^2_{x'}(\x_n)$. As $x_n>h(x')\geq0$,
\begin{align*}
f_n(x',x_n)&=x_n+\psi_{x'}^2(x_n)(x_n+h(x'))+2\psi_{x'}(x_n) x_n\\
&=\Big(\sqrt{x_n+h(x')}\psi_{x'}(x_n)+\frac{x_n}{\sqrt{x_n+h(x')}}\Big)^2+x_n-\Big(
\frac{x_n}{\sqrt{x_n+h(x')}}\Big)^2\\
&\geq x_n-\Big(
\frac{x_n}{\sqrt{x_n+h(x')}}\Big)^2=\frac{x_nh(x')}{x_n+h(x')}=\frac{h(x')}{1+\frac{h(x')}{x_n}}\ge\frac{h(x')}{2}.
\end{align*}
Consequently, $f(\sets{\setg{g}})\subset\vspan{\seta{\setg{g}}}{\ven}\cap\{2\x_n\geq h(\x')\}$.

(ii) The statement follows from the required inequalities and the definition of the coordinate function $f_n(x)$.

(iii) Pick $x'\in\R^{n-1}$ such that $h(x')\ge 0$ and $P(x',x_n)\geq0$ for $x_n$ large enough. If $(x',0)\notin\seta{\setg{g}}$, then $\polq{\setg{g}}(x',\x_n)$ is positive on $\{\x_n>0\}$. By (ii) $f_n(x'x_n)\geq x_n$ if $x_n$ is large enough, hence
$$
\lim_{x_n\to+\infty}f_n(x',x_n)=+\infty.
$$
If $(x',0)\in\seta{\setg{g}}$, the polynomial $\polq{\setg{g}}(x',\x_n)$ has degree $\ell_0:=2\ell(m+1)+2^{m+1}$ and its leading coefficient is strictly positive (see \ref{degqg}). Consequently, $\lim_{x_n\to+\infty}f_n(x',x_n)=+\infty$, as required.
\end{proof}

Figure~\ref{fig2} illustrates the action of the polynomial map $f$ in Theorem~\ref{atico2b} on the semialgebraic set $\sets{\setg{g}}$.

\subsection{Lower dimensional semialgebraic sets}\label{lower}
Fix $1\leq d\leq n-2$ and write $\y:=(\x_1,\ldots,\x_d)$, $\z:=(\x_{d+1},\ldots,\x_{n-1})$ and $\x':=(\y,\z)$, so that $\x:=(\x_1,\ldots,\x_n)=(\x',\x_n)=(\y,\z,\x_n)$ and we identify $\R^n\equiv\R^d\times\R^{n-1-d}\times\R$. Let $g_1,\ldots,g_r\in\R[\y]$ and let $\veps>0$. Denote $I_\veps:={]{-\veps},\veps[}$ and set $m:=r+2(n-1-d)$. Given $g_{m+1}\in\R[\x']$ such that $g_{m+1}>1$ on $\R^{n-1}$, consider the admissible tuple
$$
\hat{\setg{g}}_\veps:=(g_1,\ldots,g_r,\x_{d+1}+\veps,\ldots,\x_{n-1}+\veps,\veps-\x_{d+1},\ldots,\veps-\x_{n-1}, g_{m+1})
$$
and the polynomial $\polq{\hat{\setg{g}}_\veps}$ constructed in Lemma~\ref{polq1}. Consider also the associated semialgebraic sets $\seta{\hat{\setg{g}}_\veps}$, $\sets{\hat{\setg{g}}_\veps}$ and 
$$
\setd{\hat{\setg{g}}_\veps}:=\seta{\hat{\setg{g}}_\veps}\cap\{\x_{d+1}=0,\ldots,\x_n=0\}=\{g_1>0,\ldots,g_r>0,\x_{d+1}=0,\ldots,\x_n=0\}\subset\R^n.
$$
Recall that $\vcon{\delta}:=\{(v',v_n)\in\R^n:\ \|v'\|\leq\delta v_n\}$ is the vertical cone of radius $\delta>0$ and given a set $T\subset\R^n$ the set $\acon{\delta}{T}:=T+\vcon{\delta}$ is the vertical cone of radius $\delta>0$ over $T$.

\begin{thm}\label{coneS}
Let $\veps,\delta>0$ and assume $g_{m+1}\geq 1+\veps\frac{\sqrt{n-d-1}}{\delta}$. Let $P\in\R[\x]$ be $>1$ on $\sets{\hat{\setg{g}}_\veps}$. For each $k\geq1$ consider the polynomial map
$$
f_k:\R^n\to\R^n,\ x:=(x',x_n)=(y,z,x_n)\mapsto
(y,A(x)z,B_k(x)x_n),
$$
where $A:=(1+P^2\polq{{\hat{\setg{g}}_\veps}})^2$ and $B_k:=\frac{A+A^k}{2}$. We have: 
\begin{itemize}
\item[(i)] $\lim_{x_n\to+\infty}B_k(x',x_n)x_n=+\infty$ for each $x'\in\R^{n-1}$.
\item[(ii)] $\sets{\hat{\setg{g}}_\veps}\subset\acon{\delta}{\setd{\hat{\setg{g}}_\veps}}.$
\item[(iii)] $\vspanp{\setd{\hat{\setg{g}}_\veps}}{\ven}\subset f_k(\sets{\hat{\setg{g}}_\veps})\subset\acon{2\delta}{\setd{\hat{\setg{g}}_\veps}}$.
\item[(iv)] For each $\Delta>2\delta$ there exists $k_0\geq1$ such that if $k\geq k_0$ and $x\in\acon{\Delta}{\setd{\hat{\setg{g}}_\veps}}\setminus\sets{\hat{\setg{g}}_\veps}$, then $f_k(x)\in\acon{2\delta}{\{x\}}$. 
\end{itemize}
\end{thm}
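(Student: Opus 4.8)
\emph{Proof plan.} The plan is to establish the four assertions in turn, exploiting two structural features of $f_k$: it leaves invariant every vertical line $\{(y_0,0)\}\times\R$ over a point $(y_0,0)\in\R^d\times\origs$, acting there by $(y_0,0,x_n)\mapsto(y_0,0,\psi(x_n))$ with $\psi(x_n):=B_k(y_0,0,x_n)x_n$ (the middle block $A(x)z$ vanishes identically on such lines); and on $\{\x_n\geq0\}\setminus\sets{\hat{\setg{g}}_\veps}$ the polynomial $\polq{\hat{\setg{g}}_\veps}$ stays strictly positive, so there $A=(1+P^2\polq{\hat{\setg{g}}_\veps})^2\geq1$ and hence $B_k=\tfrac12(A+A^k)\geq1$. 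For item (i), fixing $x'$, I would first note $\polq{\hat{\setg{g}}_\veps}(x',x_n)\geq0$ once $x_n$ is large: if $(x',0)\notin\seta{\hat{\setg{g}}_\veps}$ this holds for all $x_n$ by Lemma~\ref{polq1}(i), and otherwise $\polq{\hat{\setg{g}}_\veps}(x',\x_n)$ is an even univariate polynomial of positive degree with positive leading coefficient by Lemma~\ref{polq1}, so it tends to $+\infty$. In either case $A(x',x_n)\geq1$, whence $B_k(x',x_n)x_n\geq x_n\to+\infty$.

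For item (ii) and the right-hand inclusion of (iii), take $(y,z,x_n)\in\sets{\hat{\setg{g}}_\veps}$. By Lemma~\ref{polq1}(iii), $(y,z,0)\in\seta{\hat{\setg{g}}_\veps}$, so $g_1(y)>0,\dots,g_r(y)>0$ (whence $(y,0,0)\in\setd{\hat{\setg{g}}_\veps}$) and $|x_{d+j}|<\veps$ for $j=1,\dots,n-1-d$, giving $\|z\|<\veps\sqrt{n-d-1}$; combining Lemma~\ref{polq1}(i) and (iii) with the hypothesis $g_{m+1}\geq1+\veps\tfrac{\sqrt{n-d-1}}{\delta}$ yields $x_n>g_{m+1}(x')\geq1+\veps\tfrac{\sqrt{n-d-1}}{\delta}$, hence $\delta x_n>\veps\sqrt{n-d-1}>\|z\|$. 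This is exactly the cone condition placing $(y,z,x_n)$ in $(y,0,0)+\vcon{\delta}\subset\acon{\delta}{\setd{\hat{\setg{g}}_\veps}}$, proving (ii). Writing $A:=A(y,z,x_n)\geq0$ and using $B_k\geq\tfrac{A}{2}$, the same estimate gives $2\delta B_kx_n\geq\delta Ax_n>A\|z\|=\|Az\|$, so $f_k(y,z,x_n)=(y,Az,B_kx_n)\in(y,0,0)+\vcon{2\delta}\subset\acon{2\delta}{\setd{\hat{\setg{g}}_\veps}}$, which is the right-hand inclusion of (iii).

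The left-hand inclusion of (iii) is the one requiring a genuine Bolzano argument. Fix $y_0$ with all $g_i(y_0)>0$; by Lemma~\ref{polq1}(ii) pick a positive root $r$ of $\polq{\hat{\setg{g}}_\veps}(y_0,0,\cdot)$ and a point $t>r$ with $\polq{\hat{\setg{g}}_\veps}(y_0,0,t)=-1$. At $x_n=r$ one has $A(y_0,0,r)=1$, hence $\psi(r)=r$; moreover $(y_0,0,r)$ and $(y_0,0,t)$ lie in $\sets{\hat{\setg{g}}_\veps}$ (both have $\x_n>0$ and $\polq{\hat{\setg{g}}_\veps}\leq0$), so $P(y_0,0,t)>1$ and therefore $P(y_0,0,t)^2\polq{\hat{\setg{g}}_\veps}(y_0,0,t)<-1<0=P(y_0,0,r)^2\polq{\hat{\setg{g}}_\veps}(y_0,0,r)$. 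Bolzano's theorem then produces $s\in(r,t)$ with $A(y_0,0,s)=0$, i.e.\ $\psi(s)=0$. From $\psi(r)=r$, $\psi(s)=0$ and $\psi(+\infty)=+\infty$ (item (i)), Bolzano's theorem on $[r,s]$ and on $[s,+\infty)$ gives $[0,+\infty)\subset\psi([r,+\infty))$; since $\{(y_0,0)\}\times[r,+\infty)\subset\sets{\hat{\setg{g}}_\veps}$, letting $y_0$ range over the locus $\{g_1>0,\dots,g_r>0\}$ yields $\vspanp{\setd{\hat{\setg{g}}_\veps}}{\ven}\subset f_k(\sets{\hat{\setg{g}}_\veps})$.

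Finally, for item (iv), take $x=(y,z,x_n)\in\acon{\Delta}{\setd{\hat{\setg{g}}_\veps}}\setminus\sets{\hat{\setg{g}}_\veps}$. Since $\vcon{\Delta}\subset\{\x_n\geq0\}$ one has $x_n\geq0$; writing $x=(y_0,0,0)+(v',v_n)$ with $\|v'\|\leq\Delta v_n$ gives $\|z\|\leq\Delta x_n$, and if $x_n=0$ then $v'=0$, so $x=(y_0,0,0)\in\setd{\hat{\setg{g}}_\veps}$ and $f_k(x)=x$. For $x_n>0$, $x\notin\sets{\hat{\setg{g}}_\veps}\supset\{\polq{\hat{\setg{g}}_\veps}\leq0,\x_n>0\}$ forces $\polq{\hat{\setg{g}}_\veps}(x)>0$, hence $A(x)\geq1$, and the elementary bound $A^k-1\geq k(A-1)$ for $A\geq1$ gives $B_k(x)-1\geq\tfrac{k+1}{2}(A(x)-1)\geq0$. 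Since $f_k(x)-x=(0,(A(x)-1)z,(B_k(x)-1)x_n)$, using $\|z\|\leq\Delta x_n$ and $A(x)\geq1$ one obtains
\[
2\delta(B_k(x)-1)x_n\geq\delta(k+1)(A(x)-1)x_n\geq\Delta(A(x)-1)x_n\geq(A(x)-1)\|z\|
\]
as soon as $\delta(k+1)\geq\Delta$, so $k_0:=\lceil\Delta/\delta\rceil$ works and then $f_k(x)-x\in\vcon{2\delta}$, i.e.\ $f_k(x)\in\acon{2\delta}{\{x\}}$. The steps I expect to demand the most care are the positivity $\polq{\hat{\setg{g}}_\veps}>0$ on $\{\x_n>0\}\setminus\sets{\hat{\setg{g}}_\veps}$ feeding (iv) --- that is, that $\sets{\hat{\setg{g}}_\veps}$ genuinely captures the whole negativity locus of $\polq{\hat{\setg{g}}_\veps}$ there --- together with the bookkeeping that lets the exponent $k$ absorb the ratio $\Delta/\delta$, and the location of the zero of $A$ on the invariant vertical line used in the left-hand inclusion of (iii).
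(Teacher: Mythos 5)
Your proposal is correct and follows essentially the same route as the paper: the identification of the vertical lines over $\setd{\hat{\setg{g}}_\veps}$ as invariant, Lemma~\ref{polq1} to locate the root $r$ and the value $t$ where $\polq{\hat{\setg{g}}_\veps}=-1$, the Bolzano argument producing a zero of $A$, and the cone-preservation estimates in (ii)--(iv). The one genuine simplification is in (iv): you replace the paper's binomial expansion of $(1+P^2\polq{\hat{\setg{g}}_\veps})^{2k}$ and term-by-term estimate of $(B_k-1)/(A-1)$ with the cleaner elementary bound $A^k-1\geq k(A-1)$ for $A\geq1$, which yields $B_k-1\geq\tfrac{k+1}{2}(A-1)$ directly; both deliver the same threshold $\tfrac{k+1}{2}\geq\tfrac{\Delta}{2\delta}$. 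Two cosmetic remarks. First, when you write ``Bolzano's theorem then produces $s$ with $A(y_0,0,s)=0$'', Bolzano is being applied to the inner factor $1+P^2\polq{\hat{\setg{g}}_\veps}$ (which goes from $1$ at $r$ to $<0$ at $t$), not to $A\geq0$ itself; your displayed inequalities make this clear, but it deserves an explicit word since $A$ never changes sign. Second, in the chain $2\delta B_kx_n\geq\delta Ax_n>A\|z\|$ the middle inequality should be $\geq$, not $>$, to cover the degenerate case $A=0$ (where both sides vanish); the paper keeps all inequalities non-strict for exactly this reason.
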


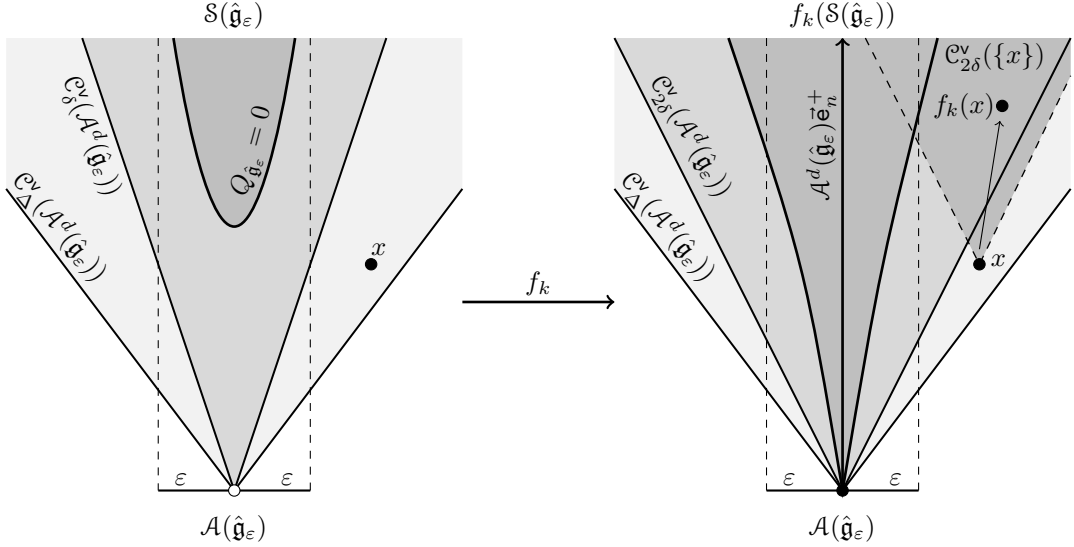
\begin{figure}[!ht]
\begin{center}
\begin{tikzpicture}[scale=1]

\draw[draw=none,fill=gray!10,opacity=1] (0,7) -- (0,5) -- (3,1) -- (6,5) -- (6,7) -- (0,7);
\draw[draw=none,fill=gray!30,opacity=1] (1,7) -- (3,1) -- (5,7) -- (1,7);
\draw[draw=none,fill=gray!50,opacity=1] (2.2,7) parabola bend (3,4.5) (3.8,7) -- (0,7);

\draw[dashed] (2,7) --(2,1) (4,1)--(4,7);
\draw[thick](2,1)--(4,1);

\draw[line width=0.75pt] (1,7) -- (3,1) -- (5,7);
\draw[line width=1pt] (2.2,7) parabola bend (3,4.5) (3.8,7);
\draw[line width=0.75pt] (0,5) -- (3,1) -- (6,5);

\draw[fill=white] (3,1) circle (0.75mm);
\draw[fill=black] (4.8,4) circle (0.75mm);
\draw[line width=1pt,->] (6,3.5) -- (8,3.5);

\draw[draw=none,fill=gray!10,opacity=1] (8,7) -- (8,5) -- (11,1) -- (14,5) -- (14,7) -- (8,7);
\draw[draw=none,fill=gray!30,opacity=1] (8,7) -- (11,1) -- (14,7) -- (8,7);
\draw[draw=none,fill=gray!50,opacity=1] (9.5,7) ..controls(10.5,4).. (11,1) ..controls(11.5,4).. (12.25,7) (11.8,7) -- (10.2,7);
\draw[draw=none,fill=gray!50,opacity=0.5] (11.3,7) -- (12.8,4) -- (14,6.5) -- (14,7) -- (11.3,7);

\draw[line width=0.75pt] (8,7) -- (11,1) -- (14,7);
\draw[line width=1pt] (9.5,7) ..controls(10.5,4).. (11,1) ..controls(11.5,4).. (12.25,7) (11.8,7);
\draw[line width=0.75pt] (8,5) -- (11,1) -- (14,5);
\draw[line width=1pt,->] (11,1) -- (11,7);
\draw[line width=0.5pt, dashed] (11.3,7) -- (12.8,4) -- (14,6.5);

\draw[fill=black] (11,1) circle (0.75mm);
\draw[fill=black] (12.8,4) circle (0.75mm);
\draw[fill=black] (13.1,6.1) circle (0.75mm);
\draw[->](12.8,4.2)--(13.07,5.9);
\draw[dashed] (10,7) --(10,1) (12,1)--(12,7);
\draw[thick](10,1)--(12,1);

\draw(3.7,1.15) node{\small$\varepsilon$} (2.3,1.15) node{\small$\varepsilon$};
\draw(11.7,1.15) node{\small$\varepsilon$} (10.3,1.15) node{\small$\varepsilon$};

\draw(3,0.5) node{\small$\seta{\hat{\setg{g}}_\veps}$};
\draw(1.125,5.7) node[rotate=-70]{\small$\acon{\delta}{\setd{\hat{\setg{g}}_\veps}}$};
\draw(0.7,4.5) node[rotate=-51]{\small$\acon{\Delta}{\setd{\hat{\setg{g}}_\veps}}$};
\draw(3,7.3) node{\small$\sets{\hat{\setg{g}}_\veps}$};
\draw(3.25,5.5) node[rotate=75]{\small$\polq{\hat{\setg{g}}_\veps}=0$};
\draw(4.9,4.2) node{\small${x}$};

\draw(11,0.5) node{\small$\seta{\hat{\setg{g}}_\veps}$};
\draw(11,7.3) node{\small$f_k(\sets{\hat{\setg{g}}_\veps})$};
\draw(10.72,5.6) node[rotate=90]{\small$\vspanp{\setd{\hat{\setg{g}}_\veps}}{\ven}$};
\draw(13.05,4.05) node{\small${x}$};
\draw(12.6,6.1) node{\small$f_k({x})$};
\draw(13,6.75) node{\small$\acon{2\delta}{\{x\}}$};
\draw(9,5.7) node[rotate=-60]{\small$\acon{2\delta}{\setd{\hat{\setg{g}}_\veps}}$};
\draw(8.7,4.5) node[rotate=-51]{\small$\acon{\Delta}{\setd{\hat{\setg{g}}_\veps}}$};

\draw(7,3.75) node{\small$f_k$};

\end{tikzpicture}
\end{center}
\caption{Behavior of the polynomial map $f_k$ (Theorem~\ref{coneS}).}\label{fig3}
\end{figure}

\begin{proof}
(i) Pick $x'\in\R^{n-1}$. If $(x',0)\notin\seta{\hat{\setg{g}}_\veps}$, then by Lemma \ref{polq1}(i) $\polq{\hat{\setg{g}}_\veps}(x',\x_n)$ is positive on $\{\x_n>0\}$ and $\lim_{x_n\to+\infty}B_k(x',x_n)x_n=+\infty$. If $(x',0)\in\seta{\hat{\setg{g}}_\veps}$, the polynomial $\polq{\setg{g}}(x',\x_n)$ has positive degree and its leading coefficient is strictly positive (see \ref{degqg}). In addition, by Lemma~\ref{polq1}(iii) $\pi_n(\sets{\hat{\setg{g}}_\veps})=\seta{\hat{\setg{g}}_\veps}$, so $\lim_{x_n\to+\infty}B_k(x',x_n)x_n=+\infty$.

(ii) Pick a point $x:=(x',x_n):=(y,z,x_n)\in\sets{\hat{\setg{g}}_\veps}$. By Lemma~\ref{polq1}(iii) we may write $x=(y,z,r_n+t_n)$ where $r_n>0$, $t_n\geq0$ and $\polq{\hat{\setg{g}}_\veps}(y,z,r_n)=0$. By Lemma~\ref{polq1}(i) $(y,z,r_n)\in\vspan{\seta{\hat{\setg{g}}_\veps}}{\ven}\cap\{\x_n\geq g_{m+1}\}$, so $(y,0,0)\in\setd{\hat{\setg{g}}_\veps}$ and $z\in I_\veps^{n-d-1}$. Thus, $\|z\|\le\veps\sqrt{n-d-1}$. We claim: $(0,z,x_n)\in\vcon{\delta}$.

As $(y,z,r_n)\in\{\x_n\geq g_{m+1}\}$, we deduce
$$
\delta x_n\ge\delta r_n\ge\delta g_{m+1}(y,z)\ge\delta\frac{\veps\sqrt{n-d-1}}{\delta}=\veps\sqrt{n-d-1}\ge\|z\|=\|(0,z)\|,
$$
so $(0,z,x_n)\in\vcon{\delta}$ and
$$
(y,z,x_n)=(y,0,0)+(0,z,x_n)\in\acon{\delta}{\setd{\hat{\setg{g}}_\veps}}.
$$

(iii) We show first: $\vspanp{\setd{\hat{\setg{g}}_\veps}}{\ven}\subset f_k(\sets{\hat{\setg{g}}_\veps})$.

Pick a point $p:=(y,0,0)\in\setd{\hat{\setg{g}}_\veps}$. By Lemma~\ref{polq1}(ii) there exist values $0<g_{m+1}(x')\leq r<t$ such that $\polq{\hat{\setg{g}}_\veps}(y,0,r)=0$ and $\polq{\hat{\setg{g}}_\veps}(y,0,t)=-1$. As $(y,0,r)\in\{\polq{\hat{\setg{g}}_\veps}=0,\x_n>0\}$, we deduce $(y,0,r),(y,0,t)\in\sets{\hat{\setg{g}}_\veps}$. Define $\phi_y(\x_n):=A(y,0,\x_n)=1+P^2(y,0,\x_n)\polq{\hat{\setg{g}}_\veps}(y,0,\x_n)$ and observe
\begin{equation}\label{nonzero}
\phi_y(r)=1\quad\text{and}\quad\phi_y(t)<0.
\end{equation}
Thus, there exists $s\in{]r,t[}$ such that $\phi_y(s)=0$. If we set $q:=(y,0,s)\in\sets{\hat{\setg{g}}_\veps}$, then $A(q)=0$ and $B_k(q)=0$, so $f_k(q)=p$. In addition, $\vspanp{q}{\ven}\subset\sets{\hat{\setg{g}}_\veps}$. As $f_k(q)=p$, the polynomial map $f_k$ preserve vertical lines and by (i) $\lim_{x_n\to+\infty}B_k(y,0,x_n)x_n=+\infty$, we deduce 
$$
\vspanp{p}{\ven}\subset f_k(\vspanp{q}{\ven})\subset f_k(\sets{\hat{\setg{g}}_\veps}),
$$ 
hence $\vspanp{\setd{\hat{\setg{g}}_\veps}}{\ven}\subset f_k(\sets{\hat{\setg{g}}_\veps})$.

\subsubsection{}\label{pick}We prove next: $f_k(\sets{\hat{\setg{g}}_\veps})\subset\acon{2\delta}{\setd{\hat{\setg{g}}_\veps}}$. 
Pick a point $x:=(y,z,x_n)\in\sets{\hat{\setg{g}}_\veps}$ and let us check: $(0,z,x_n)\in\vcon{\delta}$. 

By (ii) $(y,z,x_n)\in\sets{\hat{\setg{g}}_\veps}\subset\acon{\delta}{\setd{\hat{\setg{g}}_\veps}}$, so we write $(y,z,x_n)=(y_0,0,0)+(y_1,z,x_n)$ where $(y_0,0,0)\in\setd{\hat{\setg{g}}_\veps}$ and $(y_1,z,x_n)\in\vcon{\delta}$. Consequently, 
$$
\delta x_n\ge\|(y_1,z)\|\ge\|z\|=\|(0,z)\| 
$$
and $(0,z,x_n)\in\vcon{\delta}$. 

\subsubsection{}We show next: $(0,A(x)z,B_k(x)x_n)\in\vcon{2\delta}$. 

Observe that
$$
\frac{A(x)}{2B_k(x)}=\frac{A(x)}{A(x)+A^k(x)}=\frac{1}{1+A^{k-1}(x)}\leq1.
$$
As $(0,z,x_n)\in\vcon{\delta}$,
$$
2\delta B_k(x)x_n\geq 2B_k(x)\|(0,z)\|\geq A(x)\|(0,z)\|=\|(0,A(x)z)\|,
$$
hence $(0,A(x)z,B_k(x)x_n)\in\vcon{2\delta}$.

As $(y,0,0)\in\setd{\hat{\setg{g}}_\veps}$, we conclude 
$$
f_k(x)=f_k(y,z,x_n)=(y,0,0)+(0,A(x)z,B_k(x)x_n)\in\acon{2\delta}{\setd{\hat{\setg{g}}_\veps}}.
$$

(iv) Take a point $x:=(x',x_n):=(y,z,x_n)\in\acon{\Delta}{\setd{\hat{\setg{g}}_\veps}}\setminus\sets{\hat{\setg{g}}_\veps}$. We claim: $(0,z,x_n)\in\vcon{\Delta}$. 

Write $x=(y_1+y_2,z,x_n)$ where $(y_1,0,0)\in\setd{\hat{\setg{g}}_\veps}$ and $(y_2,z,x_n)\in\vcon{\Delta}$. This implies that $\Delta x_n\ge\|(y_2,z)\|\ge\|z\|=\|(0,z)\|$, hence $(0,z,x_n)\in\vcon{\Delta}$. 

As $x_n\ge 0$ and $x\notin\sets{\hat{\setg{g}}_\veps}$, we deduce $Q_{\hat{\setg{g}}_\veps}(x)>0$. We have
$$
f_k(x)-x=(0,(A(x)-1)z,(B_k(x)-1)x_n).
$$
Let us write
\begin{align*}
&A-1=2P^2\polq{\hat{\setg{g}}_\veps}+P^4\polq{\hat{\setg{g}}_\veps}^2,\\
&B_k-1=\frac{1}{2}\Big(2P^2\polq{\hat{\setg{g}}_\veps}+P^4\polq{\hat{\setg{g}}_\veps}^2
+\sum_{\ell=1}^{2k}\binom{2k}{\ell}(P^2\polq{\hat{\setg{g}}_\veps})^\ell\Big).
\end{align*}
Consequently, on $\{\polq{\hat{\setg{g}}_\veps}>0\}$
\begin{multline*}
\frac{B_k-1}{A-1}=\frac{2+2k+(1+\binom{2k}{2})P^2\polq{\hat{\setg{g}}_\veps}
+\sum_{\ell=3}^{2k}\binom{2k}{\ell}(P^2\polq{\hat{\setg{g}}_\veps})^{\ell-1}}{4+2P^2\polq{\hat{\setg{g}}_\veps}}\\
\geq\frac{2+2k+(1+k(2k-1))P^2\polq{\hat{\setg{g}}_\veps}}{4+2P^2\polq{\hat{\setg{g}}_\veps}}\geq\frac{k+1}{2}.
\end{multline*}
Let $k_0\geq1$ be such that $k_0+1\geq\frac{\Delta}{\delta}$. For $k\geq k_0$
$$
\frac{B_k-1}{A-1}\geq\frac{k+1}{2}\geq\frac{\Delta}{2\delta}.
$$
By \ref{pick} $(0,z,x_n)\in\vcon{\delta}\subset\vcon{\Delta}$. Thus, $\Delta x_n\ge\|(0,z)\|$, so for $k\geq k_0$
$$
2\delta(B_k(x)-1)x_n\ge(B_k(x)-1)\frac{2\delta}{\Delta}\|(0,z)\|\ge(A(x)-1)\|(0,z)\|=\|(0,(A(x)-1)z)\|,
$$
because $A(x)-1\geq0$ (recall that $Q_{\hat{\setg{g}}_\veps}(x)>0$). Therefore, $(0,(A(x)-1)z,(B_k(x)-1)x_n)\in\vcon{2\delta}$, so $f_k(x)-x\in\vcon{2\delta}$, as required.
\end{proof}

Figure~\ref{fig3} illustrates the behavior of the polynomial map $f_k$ for $k$ large enough.

\section{Convex polyhedra as polynomial images of $\R^n$}\label{s3}

The purpose of this section is to prove Theorem~\ref{main1}. We prove first this result for \em pointed cones\em, that is, unbounded convex polyhedra $\pol\subset\R^n$ with only one vertex $p$. In such case $\pol=\{p\}+\conv{\pol}{}$.

\begin{proof}[Proof of Theorem \em \ref{main1} \em for pointed cones]
Assume $\pol$ is a pointed cone with vertex $p$ and denote $\Cc_p:=\pol$ for the sake of clearness. Let $H$ be a hyperplane such that $\Cc_p\cap H=\{p\}$. Let $H'$ be a hyperplane parallel to $H$ such that $\p:=H'\cap\Cc_p$ is a bounded convex polyhedron of dimension $n-1$ (see \cite[Lem. 3.2]{fu3}). Assume $p$ is the origin, $H:=\{\x_n=0\}$ and $H':=\{\x_n=1\}$. By \cite[Thm. 1.2]{fgu1} there exists a regular map $g:=(g_1,\ldots,g_{n-1},1):\R^{n-1}\to\R^{n-1}\times\{1\}$ such that $g(\R^{n-1})=\Pp$. Write $g_i:=\frac{h_i}{h_0}$ where $h_0,h_i\in\R[\x']$ and $h_0$ is strictly positive on $\R^{n-1}$. Consider the polynomial map
$$
f:\R^n\to\R^n,\ (x',x_n)\mapsto x_n^2h(x')
$$
where $h:=(h_1,\ldots,h_{n-1},h_0)$. We claim: $f(\R^n)=\Cc_p$.

Pick a point $y\in\Cc_p$ and consider the vector $\vec{v}:=\overrightarrow{\orig y}$ and the ray $\vspanp{\orig}{\vec{v}}$. Observe that $\vspanp{\orig}{\vec{v}}\subset\Cc_p$ and the intersetion $H'\cap \vspanp{\orig}{\vec{v}}=:\{z\}\subset\p$ is a singleton. Thus, there exist $x'\in\R^{n-1}$ such that $g(x')=z$ and $\lambda\geq0$ such that $y=\lambda z$. Denote $x_n:=\sqrt{\frac{\lambda}{h_0(x')}}$ and observe that
$$
f(x',x_n)=\frac{\lambda}{h_0(x')}h(x')=\lambda g(x')=\lambda z=y.
$$ 
Consequently, $\Cc_p\subset f(\R^n)$. Conversely, if $x\in\R^n$, then 
$$
f(x)=x_n^2h(x')=x_n^2h_0(x')\frac{h(x')}{h_0(x')}=x_n^2h_0(x')g(x').
$$
As $x_n^2h_0(x')\geq0$ and $g(x')\in\Cc_p$, we conclude $f(x)\in\Cc_p$ because $\Cc_p$ is a cone with vertex the origin. Thus, $f(\R^n)=\Cc_p$, as required.
\end{proof}

We divide the proof of Theorem~\ref{main1} for the general case into three parts. The rest of the section is devoted to prove them. As a degenerate convex polyhedron $\pol\subset\R^n$ can be written in suitable coordinates as $\pol=\p\times\R^k$ where $\p$ is a non-degenerate convex polyhedron, it is enough to approach the non-degenerate case. If $k\geq n-1$, then $\pol$ is either $\R^n$ or a half-space, so it is trivially a polynomial image of $\R^n$. Thus, we assume in addition $n\geq2$. Let $\pol\subset\R^n$ be an $n$-dimensional non-degenerate convex polyhedron whose recession cone has dimension $n$. Let $X$ be the union of the affine subspaces of $\R^n$ spanned by the faces of $\pol$ of dimension $n-2$. We will prove the following statements.

\begin{prop}\label{main11}
There exists a polynomial map $h:\R^n\to\R^n$ such that $h(\R^n)=\pol\setminus X$.
\end{prop}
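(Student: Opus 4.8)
The plan is to realize $\pol\setminus X$ as the image of a composition of finitely many ``folding'' polynomial maps of the type constructed in Theorem~\ref{atico2b}, proceeding by induction on the number $m$ of facets of $\pol$. First I would normalize: since $\conv{\pol}{}$ has dimension $n$, a linear change of coordinates lets us assume $\ven\in\Int(\conv{\pol}{})$, so $\pol$ has no vertical facets and, after a translation in the $\x_n$-direction, $\pol=\{(\x',\x_n):\x_n\geq\varphi(\x')\}$ with $\varphi=\max_{i=1}^m\ell_i$ convex piecewise-linear and $H_i=\{\x_n=\ell_i(\x')\}$ the facet hyperplanes oriented so that $\pol\subset\{\x_n\geq\ell_i\}$. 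By Proposition~\ref{proj}, $\pi_n|_{\partial\pol}$ is a semialgebraic homeomorphism onto $\R^{n-1}$, and $\pi_n(X)$ is a finite union of affine hyperplanes of $\R^{n-1}$ (the projections of the affine hulls of the $(n-2)$-faces); on each chamber of this arrangement $\varphi$ agrees with a single $\ell_i$, whence
$$
(\pol\setminus X)\cap(\{x'\}\times\R)=\begin{cases}[\varphi(x'),+\infty)&\text{if }x'\notin\pi_n(X),\\ (\varphi(x'),+\infty)&\text{if }x'\in\pi_n(X),\end{cases}
$$
equivalently $\pol\setminus X=\Int(\pol)\sqcup\bigsqcup_k\Int(\Ff_k)$. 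The essential feature is that the ``floor'' $\partial\pol$ must be kept exactly over the open set $\R^{n-1}\setminus\pi_n(X)$ and discarded over $\pi_n(X)$.

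For the induction, the case $m=1$ is a half-space with $X=\varnothing$, trivially a polynomial image of $\R^n$. For the step, a combinatorial lemma on convex polyhedra provides a facet $\Ff_m$ whose removal leaves $\pol':=\bigcap_{i<m}H_i^+$ an $n$-dimensional non-degenerate polyhedron with $n$-dimensional recession cone, still normalized; by induction $\pol'\setminus X'=g(\R^n)$ for some polynomial $g$. Then I would post-compose $g$ with a map $f$ of the form in Theorem~\ref{atico2b}: take $h\in\R[\x']$ an affine function with graph $H_m$ (after translating so that $h$ is positive semidefinite and below the $g_{r+1}$ of the tuple on the region involved) and an admissible tuple $\setg{g}=(g_1,\dots,g_r,g_{r+1})$ whose conditions $\seta{\setg{g}}=\{g_1>0,\dots,g_r>0,\x_n=0\}$ cut out exactly the part of $\R^{n-1}$ over which $\Ff_m$ and the new ridges are to be produced, and choose the auxiliary $P$ so that $f$ is essentially the identity on the part of $\pol'\setminus X'$ already lying in $\{\x_n\geq\ell_m\}$. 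By Theorem~\ref{atico2b}(i) the image $f(\sets{\setg{g}})$ is the vertical cylinder over $\seta{\setg{g}}$ truncated by $\{\x_n\geq h\}$, with the floor $\{\x_n=h\}$ retained precisely over the \emph{open} set $\seta{\setg{g}}$, while by \ref{atico2b}(ii) $f$ pushes the lower part of $\pol'\setminus X'$ upward without leaving $\pol$; combining these should give $f(\pol'\setminus X')=\pol\setminus X$, and the desired map is the resulting composition of folds.

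The hard part is the global control of the composed image: knowing $g(\R^n)=\pol'\setminus X'$ is not enough, and one must verify \emph{simultaneously} that $f$ does not spill outside $\pol$, that $f$ covers all of $\pol\setminus X$ (the relative interiors of $\Ff_m$ and of the old facets surviving in $H_m^+$ being the delicate part), and that $f$ realizes the deletion of the new part of $X$, so that the affine hulls of the ridges $H_m\cap H_i$ end up missing while $\Int(\Ff_m)$ is hit. Reconciling the last two points is exactly what forces the tuple $\setg{g}$ to be built from linear forms vanishing on $\pi_n(X')$ and on the projections of the new ridges, so that the ``open floor'' dichotomy of Theorem~\ref{atico2b}(i) matches the chamber/wall structure of $\pol$; the factor-of-two slack in \ref{atico2b}(i) and the exact choice of $h$ (or a reduction to the case $h=0$) also need care. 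Note that since $\pi_n(X)$ has codimension one in $\R^{n-1}$, Proposition~\ref{proj2} cannot be applied inside $\R^{n-1}$ to write $X$ as an intersection of cylinders, so the removal of $X$ must be produced ``vertically'', through the precise zero set of $\polq{\setg{g}}$ given by Lemma~\ref{polq1}, rather than by a projection argument. Finally one checks that the normalization ($\ven\in\Int(\conv{\pol'}{})$, non-degeneracy, full dimension and full-dimensional recession cone) survives the removal of a facet, again an elementary lemma on faces of convex polyhedra.
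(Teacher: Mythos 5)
Your plan is a genuinely different route from the paper's. The paper does not induct on facets: it first expresses the pointed cone $\Cc_p=\{p\}+\conv{\pol}{}\subset\Int(\pol)$ as a polynomial image of $\R^n$, and then post-composes with a finite chain of maps built from Theorem~\ref{atico2b} (with $h=0$) which make $\Cc_p$ ``grow'' outward facet by facet, so that the image fills $\pol$ while exactly missing $X$. Your proposal goes in the opposite direction, removing facets one at a time and folding a larger polyhedron $\pol'$ down onto $\pol$; the base case is a half-space. This is a reasonable-sounding alternative, but the proof is not carried out and has at least one genuine gap.

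The gap you yourself flag is the central one: the inductive step never actually constructs the folding map $f$ with $f(\pol'\setminus X')=\pol\setminus X$, and the bookkeeping you list (no spill outside $\pol$, covering $\Int(\Ff_m)$ and the surviving facet interiors, deleting precisely the new components of $X$ while \emph{adding back} the boundary points over $\pi_n(X')\setminus\pi_n(X)$ for ridges of $\pol'$ that are no longer ridges of $\pol$) is nontrivial. In particular, a single Theorem~\ref{atico2b}-type fold preserves vertical lines, so on each fiber it can delete or keep the floor point, but the set of fibers where the floor must be open is exactly $\pi_n(X)$ and the set where it must be closed is its complement; nothing in your sketch shows that a single admissible tuple $\setg{g}$ can encode both the removal of the new ridges $H_m\cap H_i$ and the \emph{restoration} of the floor over $\pi_n(X')\setminus\pi_n(X)$. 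Your remark that Proposition~\ref{proj2} ``cannot be applied'' is based on a misreading: the paper applies it to $X\subset\R^n$ with $\dim X\le n-2$, not to $\pi_n(X)\subset\R^{n-1}$, and precisely this lets the paper write $X=\bigcap_k\vspan{X}{\vec v_k}$ over several directions $\vec v_k\in\Int(\conv{\pol}{})$; each chain of folds then removes $\vspan{X}{\vec v_k}$, and the intersection over $k$ yields exactly $X$. Without this device a single vertical direction cannot, in general, reconcile what must be deleted with what must be kept. Finally, your closing claim that non-degeneracy ``survives the removal of a facet'' is false: for $\pol=\{\x_1\geq0,\x_2\geq0,\x_3\geq0,\x_1+\x_2\geq1\}\subset\R^3$, dropping $\{\x_3\geq0\}$ yields a polyhedron containing the $\x_3$-axis. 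This is not fatal (degenerate polyhedra reduce to lower dimension), but it is one more case your sketch does not handle.
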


\begin{prop}\label{main12}
There exists a polynomial map $g:\R^n\to\R^n$ such that $g(\pol\setminus X)=\pol$.
\end{prop}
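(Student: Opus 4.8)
\emph{Proof plan.} The set $X\cap\pol$ is exactly the union of the faces of $\pol$ of dimension $\le n-2$; it is a closed semialgebraic subset of $\pol$ of dimension $\le n-2$, so $\pol\setminus X$ is dense in $\pol$ and $\pol\setminus X\supset\Int(\pol)$ together with the relative interior of every facet. By Proposition~\ref{main11} we already have a polynomial map $h$ with $h(\R^n)=\pol\setminus X$, so the present statement is what remains in order to get $(g\circ h)(\R^n)=\pol$: we must enlarge the image $\pol\setminus X$ back to $\pol$ by recovering the skeleton $X\cap\pol$, while staying inside $\pol$ and without losing any point already present. The plan is to produce $g$ as a finite composition $g:=\tau_s\circ\cdots\circ\tau_1$ of polynomial self-maps of $\R^n$, each attached to one direction of a carefully chosen finite family inside $\conv{\pol}{}$, and each built from the modified Pecker polynomials of Section~\ref{s2}.

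First I would normalize the geometry. Since $\conv{\pol}{}$ is $n$-dimensional, $\Int(\conv{\pol}{})$ is a non-empty open subset of $\R^n\setminus\origs$, and after an affine change of coordinates one may assume $\ven\in\Int(\conv{\pol}{})$, so that by Proposition~\ref{proj} the boundary $\partial\pol$ is the graph of a convex piecewise-linear function over $\R^{n-1}$ and $\pi_n(X\cap\pol)$ is its non-smooth locus. Then I would apply Proposition~\ref{proj2} to the finite union $X=\bigcup_j X_j$ of affine subspaces of dimension $\le n-2$, with $\Omega:=\Int(\conv{\pol}{})$, to obtain vectors $\vec v_1,\dots,\vec v_s\in\Int(\conv{\pol}{})$ with $\bigcap_{i=1}^s\vspan{X}{\vec v_i}=X$ and $\vec v_i\notin\bigcup_{j<i}\vec X\vec v_j$. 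The basic geometric fact used throughout is that, since each $\vec v_i\in\Int(\conv{\pol}{})$, one has $x+t\vec v_i\in\Int(\pol)$ for every $x\in\pol$ and $t>0$ (cf.\ Lemma~\ref{prop:conpol} and Proposition~\ref{conpols}); hence every point of $X\cap\pol$ is approached along $\vec v_i$ by points of $\pol\setminus X$, and pushing points of $\pol$ in the direction $\vec v_i$ never leaves $\pol$.

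For each $i$, after the linear change carrying $\vec v_i$ to $\ven$ (which preserves $\ven\in\Int(\conv{\pol}{})$ and the graph property, again by Proposition~\ref{proj}), I would build $\tau_i$ in the shape of the maps of Section~\ref{s2}: choose an admissible tuple $\hat{\setg g}^{(i)}_\veps$ whose polynomials are the linear equations of the facets involved at this stage, padded as in~\ref{lower}, together with a dominating polynomial as in Proposition~\ref{polq2} and the polynomial $\polq{\hat{\setg g}^{(i)}_\veps}$ of Lemma~\ref{polq1}, so that $\setd{\hat{\setg g}^{(i)}_\veps}$ encodes the stratum of the skeleton to be recovered at step $i$ and $\sets{\hat{\setg g}^{(i)}_\veps}$ lies in $\pol\setminus X$; then take $\tau_i$ to be the corresponding map of Theorem~\ref{atico2b} (with $h=0$) or of Theorem~\ref{coneS}. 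By Theorem~\ref{atico2b}(i), resp.\ Theorem~\ref{coneS}(iii), $\tau_i$ maps $\sets{\hat{\setg g}^{(i)}_\veps}$ onto (at least) the positive cylinder $\vspanp{\setd{\hat{\setg g}^{(i)}_\veps}}{\ven}$, which contains the stratum itself (the slice $t=0$); by Lemma~\ref{fun} and Corollary~\ref{fun2} it is a Bolzano-type surjection along the vertical lines it fixes; and by Theorem~\ref{atico2b}(ii)--(iii), resp.\ Theorem~\ref{coneS}(iv), it moves every other point of $\pol$ only upward and within a narrow cone, hence never out of $\pol$. The identity $\bigcap_i\vspan{X}{\vec v_i}=X$, together with the strict drop $\dim\big((\bigcap_{i\le k}\vspan{X}{\vec v_i})\setminus X\big)<\dim\big((\bigcap_{i\le k-1}\vspan{X}{\vec v_i})\setminus X\big)$ established inside the proof of Proposition~\ref{proj2}, is what guarantees that the strata filled by $\tau_1,\dots,\tau_s$ exhaust $X\cap\pol$, so that $g(\pol\setminus X)=\pol$.

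I expect the main obstacle to be the exact bookkeeping of the composed image. Each $\tau_i$ must simultaneously map $\pol$ into $\pol$, not destroy the part of $X\cap\pol$ already produced by $\tau_1,\dots,\tau_{i-1}$, and leave uncovered only points lying in the cylinders $\vspan{X}{\vec v_j}$ with $j>i$ (handled later). Because a non-constant polynomial map cannot be the identity on an open set, these last two requirements cannot be met by a local modification: one must instead tune the auxiliary data (the polynomials $P_i$, the tuples $\hat{\setg g}^{(i)}_\veps$, the exponents, the radii $\delta<\Delta$ and the index $k$ in Theorem~\ref{coneS}) so that each $\tau_i$ is globally surjective from $\pol\setminus X$ onto a set containing $\pol\setminus X$ and the new stratum, which is where the quantitative estimates of Section~\ref{s2} and of the appendix are used. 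Treating uniformly the faces of all dimensions $\le n-2$, and controlling the behaviour under the successive linear changes of coordinates attached to the $\vec v_i$, accounts for most of the technical length.
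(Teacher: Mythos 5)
Your plan has two genuine gaps, and the second one is fatal.

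\textbf{The bookkeeping is borrowed from the wrong proposition.} You iterate over the directions $\vec v_1,\dots,\vec v_s$ furnished by Proposition~\ref{proj2} and invoke the strict drop of $\dim\bigl((\bigcap_{i\le k}\vspan{X}{\vec v_i})\setminus X\bigr)$ as the reason the process terminates. That is exactly the mechanism of the paper's proof of Proposition~\ref{main11}, where the image is being \emph{enlarged from a cone $\Cc_p$ up to} $\pol\setminus X$ and each direction $\vec v_k$ shaves off the cylinder $X\vec v_k$: the dimension drop concerns $(\bigcap_j X\vec v_j)\setminus X$, i.e.\ what still needs to be removed \emph{outside} $X$. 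It says nothing about how to recover $X\cap\pol$. For Proposition~\ref{main12} the iteration must run over the \emph{faces} $\Ee_1,\dots,\Ee_m$ of $\pol$ of dimension $\le n-2$: for each such face one produces a map that adds $\Int(\Ee_\ell)$ to the image while leaving everything else inside $\pol$, and then composes them. The directions of Proposition~\ref{proj2} play no role in this step.

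\textbf{The surjectivity onto $\Int(\pol)$ is not a vertical-line Bolzano argument.} For a face $\Ee$ of dimension $d\le n-2$ one is forced into the setup of \ref{lower} and Theorem~\ref{coneS}, whose map has the form $f_k(y,z,x_n)=(y,A(x)z,B_k(x)x_n)$. This map does \emph{not} preserve vertical lines once $z\ne 0$: both the $\z$ and $\x_n$ coordinates change. So the assertion that $\tau_i$ ``is a Bolzano-type surjection along the vertical lines it fixes'' only covers the slice $z=0$, and what the proof actually needs --- that $\Int(\pol)\subset f_k(\Int(\pol))$, ``Fact 2'' in the paper's \ref{clue} --- is precisely the hard point. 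The paper has to pass to a $2$-dimensional plane $\Pi$ through $(y_0,0,0)$, $(y_0,0,x_{0n})$ and $x_0$, consider the algebraic curve $Y_a=\{\alpha(\u,\vv)\u=a\}$, and invoke Janiszewski's Theorem to produce an unbounded connected component of $Y_a\cap\p_0$ starting at $\partial\p_0$, along which a one-variable Bolzano argument finally gives a preimage of $x_0$. Without this topological step your composition may fail to cover points $x_0\in\Int(\pol)$ with $z_0\ne 0$, so the conclusion $g(\pol\setminus X)=\pol$ does not follow from what you have written. (Theorem~\ref{atico2b} with $h=0$ does preserve vertical lines, but its $\seta{\setg g}$ is always $(n-1)$-dimensional, so it cannot be used directly for the faces of dimension $\le n-2$ that this proposition is about.)

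A correct version of your plan would: (a) position each face $\Ee$ of dimension $d\le n-2$ as in the paper, construct the admissible tuple $\hat{\setg g}_\veps$, the cone radii $\delta<\Delta$ and the index $k_0$ from Theorem~\ref{coneS}, and take $\tau$ to be $f_{k}$ with $k\ge k_0$; (b) prove Fact~1 (that $f_k(\Tt)\setminus\Int(\Tt)=(\Tt\setminus\Int(\Tt))\cup\Int(\Ee)$) exactly as you outline, using Theorem~\ref{coneS}(ii)--(iv); and (c) replace the vertical-line Bolzano claim by the two-dimensional Janiszewski argument. Then iterate over all faces of dimension $\le n-2$.
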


\begin{cor}\label{main13}
There exists a polynomial map $f:\R^{n+1}\to\R^n$ such that $f(\R^{n+1})=\Int(\pol)$.
\end{cor}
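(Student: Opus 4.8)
The plan is to derive Corollary~\ref{main13} from Propositions~\ref{main11} and~\ref{main12}, using the extra dimension to ``open up'' $\pol$ along a direction lying in the interior of its recession cone. Composing the maps of Propositions~\ref{main11} and~\ref{main12} produces a polynomial map $\Phi:=g\circ h:\R^n\to\R^n$ with $\Phi(\R^n)=\pol$. Since $\dim(\conv{\pol}{})=n$, the set $\Int(\conv{\pol}{})$ is non-empty, so fix $\vec{w}\in\Int(\conv{\pol}{})$. The geometric heart of the argument is the identity
\[
\Int(\pol)=\{\Phi(x)+t\vec{w}:\ x\in\R^n,\ t>0\}.
\]
For the inclusion ``$\supseteq$'': given $x\in\R^n$ and $t>0$ we have $\Phi(x)\in\pol$, and picking $\delta>0$ with $\Bb(\vec{w},\delta)\subset\conv{\pol}{}$ we get $\Bb(\Phi(x)+t\vec{w},t\delta)=\Phi(x)+t\Bb(\vec{w},\delta)\subset\Phi(x)+\conv{\pol}{}\subset\pol$, the last inclusion holding because $\conv{\pol}{}$ is a cone and by the defining property of the recession cone based at the point $\Phi(x)\in\pol$; hence $\Phi(x)+t\vec{w}\in\Int(\pol)$. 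For ``$\subseteq$'': given $y\in\Int(\pol)$, openness of $\Int(\pol)$ lets us choose $t>0$ small enough that $y-t\vec{w}\in\pol=\Phi(\R^n)$, so $y=\Phi(x)+t\vec{w}$ for a suitable $x\in\R^n$.

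Next I would realize the parameter space $\R^n\times{]0,+\infty[}$ as a polynomial image of $\R^{n+1}$. We may write $\R^n\times{]0,+\infty[}=\R^{n-1}\times\Hh$, where $\Hh=\R\times{]0,+\infty[}=\{\y>0\}\subset\R^2$ is the open upper half-plane, and $n-1\geq1$ since we are in the case $n\geq2$. By \cite[Ex. 1.4, (iv)]{fg1}, $\Hh$ is a polynomial image of $\R^2$, so the cartesian product of a surjective polynomial map $\R^2\to\Hh$ with $\id_{\R^{n-1}}$ is a surjective polynomial map $\Psi=(\xi,\tau):\R^{n+1}=\R^{n-1}\times\R^2\to\R^{n-1}\times\Hh=\R^n\times{]0,+\infty[}$, with components $\xi:\R^{n+1}\to\R^n$ and $\tau:\R^{n+1}\to{]0,+\infty[}$.

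Finally, define the polynomial map $f:\R^{n+1}\to\R^n$ by $f(u):=\Phi(\xi(u))+\tau(u)\vec{w}$. As $u$ runs over $\R^{n+1}$ the pair $(\xi(u),\tau(u))$ runs over all of $\R^n\times{]0,+\infty[}$, so by the displayed identity $f(\R^{n+1})=\Int(\pol)$, which finishes the proof. I do not foresee a serious obstacle once Propositions~\ref{main11} and~\ref{main12} are in hand: this is essentially a formal manipulation, and the only delicate points are the two inclusions above (which use that $\Int(\pol)$ is open, that $\pol$ is closed, and that $\conv{\pol}{}$ has non-empty interior) together with the invocation of the classical fact that the non-closed set ${]0,+\infty[}$, packaged as the open half-plane, is attainable as a polynomial image at the cost of exactly one extra variable.
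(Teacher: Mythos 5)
Your proposal is correct and follows essentially the same route as the paper: both compose the polynomial surjection $\R^n\to\pol$ coming from Propositions~\ref{main11} and \ref{main12} with a shift by a positive multiple of a direction $\vec{w}$ in $\Int(\conv{\pol}{})$, and both obtain the positivity of the shift parameter by invoking the half-plane $\{\y>0\}\subset\R^2$ as a polynomial image of $\R^2$ from \cite[Ex.~1.4 (iv)]{fg1}. The only difference is that you spell out the verification that $\{\Phi(x)+t\vec{w}:x\in\R^n,\ t>0\}=\Int(\pol)$, which the paper leaves implicit.
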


\subsection{Proof of Proposition~\ref{main11}}
Take a point $p\in\Int(\pol)$. Consider the pointed cone $\Cc_p:=\{p\}+\conv{\pol}{}\subset\Int(\pol)$. We have already proved that $\Cc_p$ is a polynomial image of $\R^n$, so it is enough to show that $\pol\setminus X$ is a polynomial image of $\Cc_p$. The idea here is to use $\Cc_p$ as a seed to fill the polyhedron $\pol$ by means of a sequence of polynomial maps whose images make $\Cc_p$ grow until we obtain $\pol\setminus X$. We start by placing the polyhedron $\pol$ in a convenient position (using affine changes of coordinates) in order to make our arguments clearer.

Denote the facets of $\pol$ with $\Ff_1$,\dots, $\Ff_r$. By Lemma~\ref{proj2} there exist $\vec{v}_1,\dots,\vec{v}_s\in\R^n$ such that 
\begin{equation}\label{x}
\bigcap_{k=1}^s\vspan{X}{\vec{v}_k}=X\quad\text{and}\quad\vec{v}_l
\in\Int(\conv{\pol}{})\setminus\bigcup_{k=1}^{l-1}
\vspan{\vec{X}}{\vec{v_k}}.
\end{equation}

\subsubsection{}\label{paso1}
Assume $\pol$ is placed in $\R^n$ so that $\pol\subset\{\x_n\ge 0\}$ and $\vec{v}_1=\ven\in\Int(\conv{\pol}{})$. Thus, $\pol$ has no vertical facets. Let $Z\subset\R^n$ denote a finite union of non-vertical hyperplanes $W_\ell:=\{w_\ell=0\}$, where $w_\ell$ denotes a linear equation of $W_\ell$ such that $w_\ell(\vec{\tt e}_n)>0$ for each $\ell$. This type of sets will be useful for the inductive process. Choose a facet $\Ff_i$ of $\pol$ and let $h_i(\x',\x_n)=h_i(\x',0)+\x_n=0$ be a non-zero linear equation for the (non-vertical) hyperplane spanned by $\Ff_i$. The affine change of coordinates
$$
\phi_i:\R^n\to\R^n,\ (x',x)\mapsto(x',h_i(x',0)+x_n)
$$ 
maps $\Ff_i$ onto $\pi_n(\Ff_i)\subset\{\x_n=0\}$ and keeps the vector $\ven$ invariant. To lighten the presentation we preserve the notations for all our geometric objects after applying the affine change of coordinates $\phi_i$. Write
$$
\Ff_i:=\{g_{i,1}\ge0,\dots,g_{i,m}\ge0,\x_n=0\}
$$
where each $g_{i,j}$ is a non-zero linear polynomial. Let $P_i:=P_{i,Z}$ be the square of the product of non-zero linear equations of the hyperplanes containing the facets of $\pol$, the hyperplanes containing the facets of $\Cc_p$ and the hyperplanes $W_\ell$. By Proposition~\ref{polq2} there exists $g_{i,m+1}\in\R[\x']$ strictly greater than 1 on $\R^{n-1}$ such that
$$
\{\x_n\ge g_{i,m+1}\}\subset\Cc_p\cap\{P_i>1\}.
$$
Let $\setg{g}_i:=(g_{i,1},\dots,g_{i,m},g_{i,m+1})$. By Lemma~\ref{polq1} there exist a polynomial $\polq{\setg{g}_i}$ such that the semialgebraic sets $\seta{\setg{g}_i}=\Int(\Ff_i)$ and $\sets{\setg{g}_i}=\vspan{\{\polq{\setg{g}}\le 0, \x_n>0\}}{\ven}$ satisfy 
\begin{equation}\label{inclus}
\sets{\setg{g}_i}\subset\vspan{\seta{\setg{g}_i}}{\ven}\cap\{\x_n\geq g_{i,m+1}\}\subset\vspan{\seta{\setg{g}_i}}{\ven}\cap\Cc_p\cap\{P_i>1\}. 
\end{equation}
Consider now the polynomial map
$$
f_i:=f_{i,Z}:=(f_{i1},\ldots,f_{in}):\R^n\to\R^n,\ x:=(x',x_n)\mapsto(x',x_n(1+P_i(x)
\polq{\setg{g}_i}(x))^2).
$$
\subsubsection{}\label{paso1m}We claim:
\begin{itemize}
\item[(i)] $f_i(\vspanp{\Ff_j}{\ven}\setminus X)
=\vspanp{\Ff_j}{\ven}\setminus X$ and $f_i(\vspanp{\Int(\Ff_j)}{\ven})=\vspanp{\Int(\Ff_j)}{\ven}$ for $j=1,\ldots,r$. In addition, $f_i(\pol\setminus X)=\pol\setminus X$.
\item[(ii)] $f_i(\Cc_p)=\vspanp{\Int(\Ff_i)}{\ven}\cup\Cc_p$. 
\item[(iii)] $f_i|_{Z}=\id_{Z}$.
\end{itemize}

Let us prove the previous statements:

(i) By Proposition~\ref{proj} $\pi_n|_{\partial\pol}:\partial\pol\to\R^{n-1}\times\{0\}$ is a semialgebraic homeomorphism, so $\pol=\bigcup_{j=1}^r\vspanp{\Ff_j}{\ven}$ and $\pol\setminus X=\bigcup_{j=1}^r\vspanp{\Ff_j}{\ven}\setminus X$. Thus, once we prove the first part of the statement we will have in addition the second.

Pick a point $x:=(x',x_n)\in\Ff_j\subset\partial\pol$ for some $j=1,\ldots,r$. If $j\neq i$, then $\pi_n(x)\notin\Int(\Ff_i)=\seta{\setg{g}_i}$, so $\polq{\setg{g}_i}$ is by Lemma~\ref{polq1} strictly positive on $\vspanp{x}{\ven}$. Thus, for each $(x',t)\in\vspanp{x}{\ven}$ we have
$$
f_{in}(x',t)=t(1+P_i(x',t)\polq{\setg{g}_i}(x',t))^2\ge t\ge x_n.
$$
As $f_i(x)=x$, we have $f_i(\vspanp{x}{\ven})=\vspanp{x}{\ven}$ and $f_i(\Int(\vspanp{x}{\ven}))=\Int(\vspanp{x}{\ven})$. Consequently, 
$$
f_i(\vspanp{\Ff_j}{\ven}\setminus X)
=\vspanp{\Ff_j}{\ven}\setminus X\quad\text{and}\quad f_i(\vspanp{\Int(\Ff_j)}{\ven})=\vspanp{\Int(\Ff_j)}{\ven}.
$$

Assume now $j=i$. As $\partial\Ff_i=\Ff_i\cap\bigcup_{j\neq i}\Ff_j$, we have $\vspanp{(\partial\Ff_i)}{\ven}\setminus X=\bigcup_{j\neq i}\vspanp{(\Ff_i\cap\Ff_j)}{\ven}\setminus X$. As $f_i$ preserves vertical lines, 
$$
\vspanp{(\partial\Ff_i)}{\ven}\setminus X=\bigcup_{j\neq i}f_i(\vspanp{(\Ff_i\cap\Ff_j)}{\ven}\setminus X)=f_i\Big(\bigcup_{j\neq i}\vspanp{(\Ff_i\cap\Ff_j)}{\ven}\setminus X\Big)=f_i(\vspanp{(\partial\Ff_i)}{\ven}\setminus X).
$$
To finish it is enough to check $f_i(\vspanp{\Int(\Ff_i)}{\ven})=\vspanp{\Int(\Ff_i)}{\ven}$. By Theorem~\ref{atico2b}(i) and \eqref{inclus}
$$
\vspanp{\Int(\Ff_i)}{\ven}=\vspanp{\seta{\setg{g}_i}}{\ven}=f_i(\sets{\setg{g}_i})\subset f_i(\vspanp{\seta{\setg{g}_i}}{\ven})=f_i(\vspanp{\Int(\Ff_i)}{\ven})\subset\vspanp{\Int(\Ff_i)}{\ven}.
$$
The latter inclusion follows because $f_i$ preserves vertical lines and $f_i(\{\x_n\geq0\})\subset\{\x_n\geq0\}$.

(ii) As $\ven\in\Int(\conv{\pol}{})=\Int(\conv{\Cc_p}{})$, the restriction map $\pi_n|_{\partial\Cc_p}:\partial\Cc_p\to\R^{n-1}\times\{0\}$ is by Proposition~\ref{proj} a semialgebraic homeomorphism. Consequently, $\Cc_p=\vspanp{\partial\Cc_p}{\ven}$. Pick $x:=(x',x_n)\in\partial\Cc_p$. If $y:=\pi_n(x)\notin\seta{\setg{g}_i}$, then $\polq{\setg{g}_i}(x',t)>0$ for $t\geq x_n$ by Lemma \ref{polq1}, so
$$
f_{in}(x',t)=t(1+P_i(x',t)\polq{\setg{g}_i}(x',t))^2\ge t\ge x_n
$$
for $(x',t)\in\vspanp{x}{\ven}$. As $f_i(x)=x$, we deduce $f_i(\vspanp{x}{\ven})=\vspanp{x}{\ven}$. Thus, 
$$
f_i(\Cc_p\setminus\vspanp{\seta{\setg{g}_i}}{\ven})=\Cc_p\setminus\vspanp{\seta{\setg{g}_i}}{\ven}=\Cc_p\setminus\vspanp{\Int(\Ff_i)}{\ven}. 
$$
By Theorem~\ref{atico2b}(i), \eqref{inclus} and (i)
$$
\vspanp{\Int(\Ff_i)}{\ven}=\vspanp{\seta{\setg{g}_i}}{\ven}=f_i(\sets{\setg{g}_i})\subset f_i(\vspanp{\seta{\setg{g}_i}}{\ven}\cap\Cc_p)=f_i(\vspanp{\Int(\Ff_i)}{\ven}\cap\Cc_p)\subset\vspanp{\Int(\Ff_i)}{\ven}.
$$
Consequently,
\begin{multline*}
f_i(\Cc_p)=f_i(\Cc_p\setminus\vspanp{\seta{\setg{g}_i}}{\ven})\cup f_i(\vspanp{\seta{\setg{g}_i}}{\ven}\cap\Cc_p)\\
=(\Cc_p\setminus\vspanp{\Int(\Ff_i)}{\ven})\cup\vspanp{\Int(\Ff_i)}{\ven}=\Cc_p\cup\vspanp{\Int(\Ff_i)}{\ven}.
\end{multline*}

(iii) This is immediate because $P_i$ vanishes identically on the hyperplanes contained in $Z$.

Figure~\ref{fig4} illustrates how the polynomial map $f_1$ acts on the cone $\Cc_p$.

\begin{figure}[!ht]
\begin{center}
\begin{tikzpicture}[scale=1]

\draw[draw=none,fill=gray!20,opacity=1] (0,4) -- (1,1) -- (2.5,0) -- (4,0) -- (7,4);
\draw[draw=none,fill=gray!20,opacity=1] (8,4) -- (9,1) -- (10.5,0) -- (12,0) -- (15,4);
\draw[draw=none,fill=gray!50,opacity=1] (10.3333,4) -- (10.5,3.3333) -- (10.5,0) -- (12,0) -- (12,2.9) -- (12.87,4) -- (10.5,4);

\draw[draw=none,fill=gray!50,opacity=1] (2.3333,4) -- (3,1.5) -- (4.87,4);

\draw[line width=0.75pt,dashed] (10.5,0) -- (10.5,3.3333);
\draw[line width=0.75pt,dashed] (12,0) -- (12,2.9);

\draw[line width=1pt,dashed,gray] (0,4) -- (1,1) -- (2.5,0) -- (4,0) --(7,4);
\draw[line width=1pt,dashed,gray] (8,4) -- (9,1) -- (10.5,0)(12,0) -- (15,4);

\draw[line width=1pt] (2.3333,4) -- (3,1.5) -- (4.87,4);
\draw[line width=1pt] (10.3333,4) -- (11,1.5) -- (12.87,4);
\draw[draw=none,fill=gray!50,opacity=1] (10.5,3.5) --(10.5,0)--(12,0) -- (12,4);\draw[line width=1pt](10.5,0) -- (12,0);
\draw[dashed] (10.5,3.5) --(10.5,0)(12,0) -- (12,2.9);

\draw[fill=white] (1,1) circle (0.75mm);
\draw[fill=white] (2.5,0) circle (0.75mm);
\draw[fill=white] (4,0) circle (0.75mm);
\draw[fill=gray,draw=gray] (3,1.5) circle (0.75mm);
\draw[fill=white] (9,1) circle (0.75mm);
\draw[fill=white,draw] (10.5,0) circle (0.75mm);
\draw[fill=white,draw] (12,0) circle (0.75mm);

\draw(3.5,3.5) node{\small$\Cc_p$};
\draw[line width=1pt,->] (6,2) -- (8,2);
\draw(7,2.25) node{\small$f_1$};
\draw(0.8,3.5) node{\small$\pol$};
\draw(8.8,3.5) node{\small$\pol$};
\draw(11.4,3.5) node{\small$f_1(\Cc_p)$};
\draw(3.25,1.5) node{\small$p$};
\draw(3.25,0.25) node{\small$\seta{\setg{g}_1}$};
\draw(11.25,0.25) node{\small$\seta{\setg{g}_1}$};

\end{tikzpicture}
\end{center}
\caption{Behavior of the polynomial map $f_1$ over $\Cc_p$.}\label{fig4}
\end{figure}

\subsubsection{}\label{paso2m} Fix $1\leq k\leq s$ and consider $\pol$ placed in $\R^n$ (by means of an affine change of coordinates $\psi_k$) so that $\pol\subset\{\x_n\ge 0\}$ and $\vec{v}_k=\ven$. We preserve the names for all our geometric objects after applying the change of coordinates $\psi_k$. Set $X_l:=\vspan{X}{\vec{v}_l}$ and $Z_{k-1}:=\bigcup_{l=1}^{k-1}X_l$, which are unions of hyperplanes because each $(n-2)$-affine subspace in $X$ is parallel to none of the vectors $\vec{v}_j$. In addition, each hyperplane in $Z_{k-1}$ is not parallel to $\vec{v}_k$ (see \ref{x}). For each $i=1,\dots,r$ consider the affine change of coordinates $\phi_i$ described in \ref{paso1} and the polynomial map $f_{i,k}:=f_{i,Z}$ introduced in \ref{paso1m} taking $Z:=\phi_i(Z_{k-1})$ and $Z_0=\varnothing$.

Define the polynomial map
$$
F_k:=\hat{f}_{r,k}\circ\cdots\circ\hat{f}_{1,k}\quad\text{where $\hat{f}_{i,k}:=\phi_i^{-1}\circ f_{i,k}\circ\phi_i$.}
$$
We claim: 
\begin{itemize}
\item[(i)] $\Cc_p\cup(\pol\setminus X_k)= F_k(\Cc_p)$ for $1\le k\le s$;
\item[(ii)] $\Cc_p\cup(\pol\setminus\bigcap_{j=1}^kX_j)\subset F_k(\Cc_p\cup(\pol\setminus\bigcap_{j=1}^{k-1}X_j))$ and $F_k(\pol\setminus X)=\pol\setminus X$ for $1\le k\le s$.
\end{itemize}
To prove (i) we use recursively \ref{paso1m}. Indeed,
\begin{align*}
F_k(\Cc_p)&=(\hat{f}_{r,k}\circ\cdots\circ
\hat{f}_{2,k})(\hat{f}_{1,k}(\Cc_p))=(\hat{f}_{r,k}\circ\cdots\circ
\hat{f}_{2,k})(\Cc_p\cup\vspanp{\Int(\Ff_1)}{\ven})\\
&=\cdots=\Cc_p\cup
\vspanp{\Int(\Ff_r)}{\ven}\cup\cdots\cup\vspanp{\Int(\Ff_1)}{\ven}=\Cc_p\cup(\pol\setminus\vspanp{X}{\ven})\\
&=\Cc_p\cup(\pol\setminus X_k).
\end{align*}
Figure~\ref{fig5} shows the action of polynomial map $F_1$ on the cone $\Cc_p$.
\begin{figure}[!ht]
\begin{center}
\begin{tikzpicture}[scale=1]

\draw[draw=none,fill=gray!20,opacity=1] (0,4) -- (1,1) -- (2.5,0) -- (4,0) -- (7,4);
\draw[draw=none,fill=gray!50,opacity=1] (8,4) -- (9,1) -- (10.5,0) -- (12,0) -- (15,4);
\draw[draw=none,fill=gray!50,opacity=1] (2.3333,4) -- (3,1.5) -- (4.87,4);

\draw[dashed,line width=1.5pt,color=white] (9,1) -- (9,4);
\draw[dashed,line width=1.5pt,color=white] (10.5,0) -- (10.5,3.3333);
\draw[dashed,line width=1.5pt,color=white] (12,0) -- (12,4);

\draw[line width=1pt,dashed,gray] (0,4) -- (1,1) -- (2.5,0) -- (4,0) -- (7,4);
\draw[line width=1pt] (8,4) -- (9,1) -- (10.5,0) -- (12,0) -- (15,4);
\draw[line width=1pt] (2.3333,4) -- (3,1.5) -- (4.87,4);
\draw[color=gray,line width=1pt] (10.3333,4) -- (11,1.5) -- (12.87,4);

\draw[draw=none,fill=gray!50,opacity=1] (10.3333,4) -- (11,1.5) -- (12.87,4);

\draw[fill=white] (1,1) circle (0.75mm);
\draw[fill=white] (2.5,0) circle (0.75mm);
\draw[fill=white] (4,0) circle (0.75mm);
\draw[fill=gray] (3,1.5) circle (0.75mm);
\draw[fill=gray,draw=gray] (11,1.5) circle (0.75mm);
\draw[fill=white,draw] (9,1) circle (0.75mm);
\draw[fill=white,draw] (10.5,0) circle (0.75mm);
\draw[fill=white,draw] (12,0) circle (0.75mm);

\draw(3.3,3.5) node{\small$\Cc_p$};
\draw(11.4,3.5) node{\small$F_1(\Cc_p)$};
\draw[line width=1pt,->] (6,2) -- (8,2);
\draw(7,2.25) node{\small$F_1$};
\draw(0.6,3.5) node{\small$\pol$};
\draw(8.6,3.5) node{\small$\pol$};
\draw(3.25,1.5) node{\small$p$};
\draw(11.25,1.5) node{\small$p$};

\end{tikzpicture}
\end{center}
\caption{Behavior of the polynomial map $F_1$ over $\Cc_p$.}\label{fig5}
\end{figure}

We show now (ii). As each polynomial map $\hat{f}_i$ appearing in the definition of $F_k$ satisfies by \ref{paso1m} (i) $\hat{f}_i(\pol\setminus X)=\pol\setminus X$, we have $F_k(\pol\setminus X)=\pol\setminus X$. By \ref{paso1m}(iii) $\hat{f}_{i,k}|_{Z_{k-1}}=\id_{Z_{k-1}}$ for $i=1,\ldots,r$. As $X_i\subset Z_{k-1}$ for $i=1,\ldots,k-1$, we have $\hat{f}_{i,k}|_{\bigcap_{j=1}^{k-1}X_j}=\id_{\bigcap_{j=1}^{k-1}X_j}$, so $F_k(Y)\setminus\bigcap_{j=1}^{k-1}X_j\subset F_k(Y\setminus\bigcap_{j=1}^{k-1}X_j)$ for each $Y\subset\R^n$. As $X\subset\bigcap_{j=1}^{k-1}X_j$, we deduce by \ref{paso1m} 
\begin{align*}
F_k\Big(\Cc_p\cup\Big(\pol\setminus\bigcap_{j=1}^{k-1}X_j\Big)\Big)&=F_k(\Cc_p)\cup F_k\Big(\Big((\pol\setminus X)\setminus\bigcap_{j=1}^{k-1}X_j\Big)\Big)\\
&\supset(\Cc_p\cup(\pol\setminus X_k))\cup\Big(F_k(\pol\setminus X)\setminus\bigcap_{j=1}^{k-1}X_j\Big)\\
&=(\Cc_p\cup(\pol\setminus X_k))\cup\Big((\pol\setminus X)\setminus\bigcap_{j=1}^{k-1}X_j\Big)\\
&=\Cc_p\cup\Big(\pol\setminus\bigcap_{j=1}^kX_j\Big).
\end{align*}

\subsubsection{}\label{paso3m}
Let us finish the proof of Proposition~\ref{main11}. Define for $k=1,\dots,s$ the polynomial map
$$
\hat{F}_k:=\psi_k^{-1}\circ F_k\circ\psi_k:\R^n\to\R^n.
$$
As $\Cc_p$ is a pointed cone, we have already constructed a polynomial map $h_0:\R^n\to \R^n$ such that $h_0(\R^n)=\Cc_p$. We claim: \em the polynomial map
$$
h:=\hat{F}_s\circ\cdots\circ\hat{F}_1\circ h_0
$$
satisfies $h(\R^n)=\pol\setminus X$\em. It is enough to show: $(\hat{F}_s\circ\cdots\circ\hat{F}_1)(\Cc_p)=\pol\setminus X$. 

Using recursively \ref{paso2m} we deduce
\begin{align*}
&\Cc_p\cup(\pol\setminus X_1)= \hat{F}_1(\Cc_p)\subset\pol\setminus X\\
&\Cc_p\cup(\pol\setminus (X_1\cap X_2))\subset(\hat{F}_2\circ\hat{F}_1)(\Cc_p)\subset\pol\setminus X\\
&\hskip 0.55cm\vdots\\
&\Cc_p\cup\Big(\pol\setminus\bigcap_{j=1}^sX_j\Big)\subset(\hat{F}_s\circ\cdots\circ\hat{F}_1)(\Cc_p)
\subset\pol\setminus X.
\end{align*}
To illustrate this process Figure~\ref{fig6} shows how $\hat{F}_2$ acts on $\hat{F}_1(\Cc_p)$. As $\pol\setminus(\bigcap_{j=1}^sX_j)=\pol\setminus X$ and $\Cc_p\subset\pol\setminus X$,
$$
\pol\setminus X=\Cc_p\cup\Big(\pol\setminus\Big(\bigcap_{j=1}^sX_j\Big)\Big)\subset(\hat{F}_s\circ\cdots\circ\hat{F}_1)(\Cc_p)\subset\pol\setminus X,
$$
so $h(\pol\setminus X)=\pol\setminus X$, as required.\qed

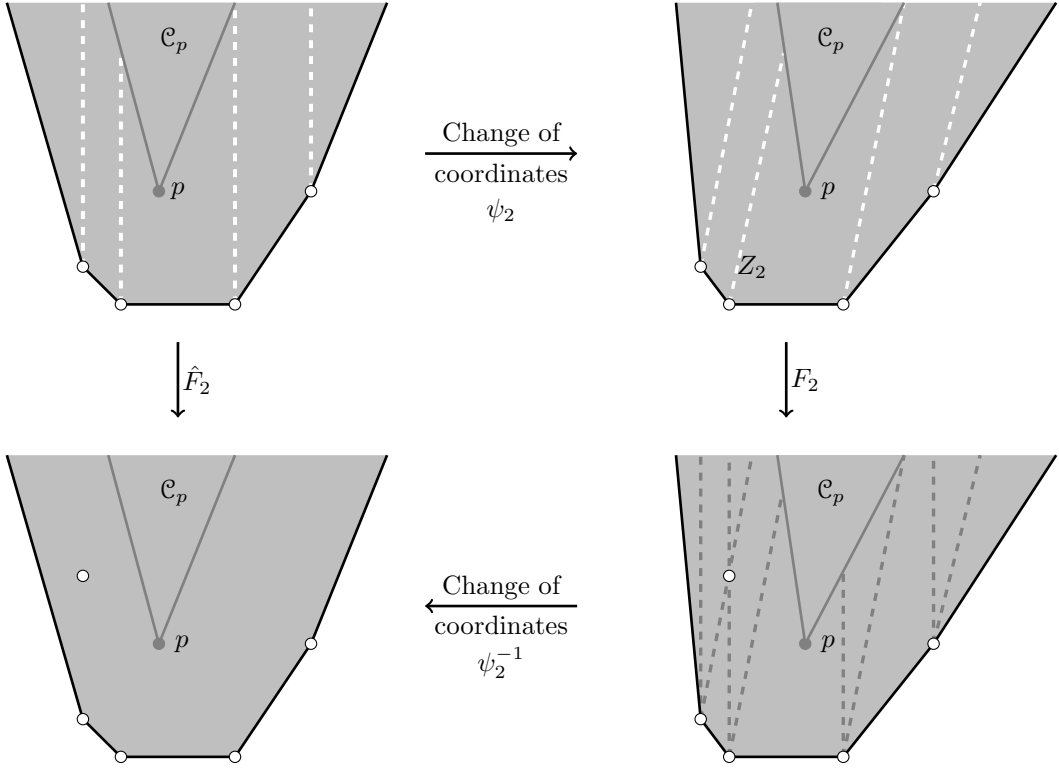
\begin{figure}[!ht]
\begin{center}
\begin{tikzpicture}[scale=1]


\draw[draw=none,fill=gray!50,opacity=1] (1,10) -- (2,6.5) -- (2.5,6) -- (4,6) -- (5,7.5) -- (6,10);
\draw[draw=none,fill=gray!50,opacity=1] (2.3333,10) -- (3,7.5) -- (4,10);

\draw[dashed,line width=1.5pt,color=white] (2,6.5) -- (2,10);
\draw[dashed,line width=1.5pt,color=white] (2.5,6) -- (2.5,9.3333);
\draw[dashed,line width=1.5pt,color=white] (4,6) -- (4,10);
\draw[dashed,line width=1.5pt,color=white] (5,7.5) -- (5,10);

\draw[line width=1pt] (1,10) -- (2,6.5) -- (2.5,6) -- (4,6) -- (5,7.5) -- (6,10);
\draw[line width=1pt, draw=gray] (2.3333,10) -- (3,7.5) -- (4,10);

\draw[fill=white,draw] (2,6.5) circle (0.75mm);
\draw[fill=white,draw] (2.5,6) circle (0.75mm);
\draw[fill=white,draw] (4,6) circle (0.75mm);
\draw[fill=white,draw] (5,7.5) circle (0.75mm);
\draw[fill=gray,draw=gray] (3,7.5) circle (0.75mm);

\draw(3.2,9.5) node{\small$\Cc_p$};
\draw[line width=1pt,->] (6.5,8) -- (8.5,8);
\draw(7.5,8.25) node{\small{Change of}};
\draw(7.5,7.75) node{\small{coordinates}};
\draw(7.5,7.25) node{\small{$\psi_2$}};

\draw(3.25,7.5) node{\small$p$};

\draw[draw=none,fill=gray!50,opacity=1] (9.8,10) -- (10.125,6.5) -- (10.5,6) -- (12,6) -- (13.1875,7.5) -- (14.8,10);
\draw[draw=none,fill=gray!50,opacity=1] (10.8333,10) -- (11.1875,7.5) -- (12.5,10);

\draw[dashed,line width=1.25pt,color=white] (10.125,6.5) -- (10.8,10);
\draw[dashed,line width=1.25pt,color=white] (10.5,6) -- (11.216666,9.3333);
\draw[dashed,line width=1.25pt,color=white] (12,6) -- (12.8,10);
\draw[dashed,line width=1.25pt,color=white] (13.1875,7.5) -- (13.8,10);

\draw[line width=1pt] (9.8,10) -- (10.125,6.5) -- (10.5,6) -- (12,6) -- (13.1875,7.5) -- (14.8,10);

\draw[fill=gray,draw=gray] (11.5,7.5) circle (0.75mm);
\draw[fill=white,draw] (10.125,6.5) circle (0.75mm);
\draw[fill=white,draw] (10.5,6) circle (0.75mm);
\draw[fill=white,draw] (12,6) circle (0.75mm);
\draw[fill=white,draw] (13.1875,7.5) circle (0.75mm);

\draw[draw=gray,line width=1pt,fill=gray!50] (11.1333,10) -- (11.5,7.5) -- (12.8,10);

\draw(11.8375,9.5) node{\small$\Cc_p$};
\draw(11.8,7.5) node{\small$p$};
\draw(10.8,6.5) node{\small$Z_2$};

\draw[line width=1pt,->] (11.25,5.5) -- (11.25,4.5);
\draw(11.5,5) node{\small{$F_2$}};
\draw(3.5,5) node{\small{$\hat{F}_2$}};
\draw[line width=1pt,->] (3.25,5.5) -- (3.25,4.5);


\draw[draw=none,fill=gray!50,opacity=1] (1,4) -- (2,0.5) -- (2.5,0) -- (4,0) -- (5,1.5) -- (6,4);

\draw[line width=1pt] (1,4) -- (2,0.5) -- (2.5,0) -- (4,0) -- (5,1.5) -- (6,4);
\draw[line width=1pt, draw=gray] (2.3333,4) -- (3,1.5) -- (4,4);

\draw[fill=white,draw] (2,0.5) circle (0.75mm);
\draw[fill=white,draw] (2.5,0) circle (0.75mm);
\draw[fill=white,draw] (4,0) circle (0.75mm);
\draw[fill=white,draw] (5,1.5) circle (0.75mm);

\draw[fill=white,draw] (2,2.4) circle (0.75mm);

\draw[line width=1pt,<-] (6.5,2) -- (8.5,2);
\draw(7.5,2.25) node{\small{Change of}};
\draw(7.5,1.75) node{\small{coordinates}};
\draw(7.5,1.25) node{\small{$\psi_2^{-1}$}};

\draw[draw=none,fill=gray!50,opacity=1] (9.8,4) -- (10.125,0.5) -- (10.5,0) -- (12,0) -- (13.1875,1.5) -- (14.8,4);

\draw[draw=gray,line width=1.25pt,dashed] (10.125,0.5) -- (10.125,4);
\draw[draw=gray,line width=1.25pt,dashed] (10.5,0) -- (10.5,4);
\draw[draw=gray,line width=1.25pt,dashed] (12,0) -- (12,3);
\draw[draw=gray,line width=1.25pt,dashed] (13.1875,1.5) -- (13.1875,4);

\draw[draw=gray,line width=1.25pt,dashed] (10.125,0.5) -- (10.8,4);
\draw[draw=gray,line width=1.25pt,dashed] (10.5,0) -- (11.2,3.5);
\draw[draw=gray,line width=1.25pt,dashed] (12,0) -- (12.8,4);
\draw[draw=gray,line width=1.25pt,dashed] (13.1875,1.5) -- (13.8,4);

\draw[line width=1pt] (9.8,4) -- (10.125,0.5) -- (10.5,0) -- (12,0) -- (13.1875,1.5) -- (14.8,4);

\draw[fill=gray,draw=gray] (11.5,1.5) circle (0.75mm);
\draw[fill=gray,draw=gray] (3,1.5) circle (0.75mm);
\draw[fill=white,draw] (10.125,0.5) circle (0.75mm);
\draw[fill=white,draw] (10.5,0) circle (0.75mm);
\draw[fill=white,draw] (12,0) circle (0.75mm);
\draw[fill=white,draw] (13.1875,1.5) circle (0.75mm);

\draw[fill=white,draw] (10.5,2.4) circle (0.75mm);

\draw[draw=none,fill=gray!50,opacity=1] (11.1333,4) -- (11.5,1.5) -- (12.8,4);
\draw[line width=1pt, draw=gray] (11.1333,4) -- (11.5,1.5) -- (12.8,4);

\draw(11.8375,3.5) node{\small$\Cc_p$};
\draw(11.8,1.5) node{\small$p$};
\draw(3.3,1.5) node{\small$p$};
\draw(3.2,3.5) node{\small$\Cc_p$};

\end{tikzpicture}
\end{center}
\caption{Behavior of the polynomial map $\hat{F}_2$.}\label{fig6}
\end{figure}

\subsection{Proof of Proposition~\ref{main12}}
Let $\Ee$ be a face of $\pol$ of dimension $d\le n-2$. We write $x:=(x',x_n):=(y,z,x_n)\in\R^d\times\R^{n-d-1}\times\R$. Assume $\pol\cap\{\x_n=0\}=\Ee$, $\pol\subset\{\x_n\ge0\}$ and $\ven\in\Int(\conv{\pol}{})$. Write $\Int(\Ee):=\{g_1>0,\ldots,g_r>0\}\times\{0\}\subset\R^{d}\times\origs$ where each $g_i\in\R[\y]:=\R[\x_1,\ldots,\x_d]$. By Proposition~\ref{conpols} there exist positive numbers $\delta,\Delta$ such that $\vcon{2\delta}\setminus\origs\subset\Int(\conv{\pol}{})$ and $\acon{2\delta}{\Ee}\subset\pol\subset\acon{\Delta}{\Ee}$. As $\ven\in\Int(\conv{\pol}{})$, the hyperplanes spanned by the facets of $\pol$ are non-vertical. Let $P\in\R[\x]$ be the product of linear equations of these hyperplanes, so $\partial\pol\subset\{P=0\}$. Fix $\veps>0$. By Proposition~\ref{polq2} there exists $g\in\R[\x']$ such that $g>1+\veps\frac{\sqrt{n-d-1}}{\delta}$ on $\R^{n-1}$ and $\{\x_n\geq g\}\subset\{P>1\}$. Denote $I_\veps:={]{-\veps},+\veps[}$ and consider the admissible tuple of polynomials
\begin{align*}
\hat{\setg{g}}_\veps&=(g_1,\dots,g_r,\x_{d+1}+\veps,\dots,\x_{n-1}+\veps,\veps-\x_{d+1},\dots,\veps-\x_{n-1},g).
\end{align*}
If we write $m:=r+2(n-1-d)$, then $\hat{\setg{g}}_\veps$ consists of $m+1$ polynomials. Rename $g_{m+1}:=g$. The admisible tuple $\hat{\setg{g}}_\veps$ has associated a polynomial $Q_{\hat{\setg{g}}_\veps}$ constructed in Lemma~\ref{polq1} and semialgebraic sets $\seta{\hat{\setg{g}}_\veps}$, $\sets{\hat{\setg{g}}_\veps}$ and $\setd{\hat{\setg{g}}_\veps}$ provided in \ref{lower}. Observe that $\Int(\Ee)=\setd{\hat{\setg{g}}_\veps}$. By Lemma~\ref{polq1} $\sets{\hat{\setg{g}}_\veps}\subset\{\x_n\geq g_{m+1}\}\subset\{P>1\}$. For each $k\geq1$ consider the polynomial map
$$
f_k:\R^n\to\R^n,\ x:=(x',x_n)=(y,z,x_n)\mapsto(y,A(x)z,B_k(x)x_n)
$$
where $A:=(1+P^2\polq{{\hat{\setg{g}}_\veps}})^2$ and $B_k:=\frac{A+A^k}{2}$. Note that $\veps,\delta,\Delta>0$, $g_{m+1}$, $\hat{\setg{g}}_\veps$ and $P$ satisfy the hypotheses of Theorem~\ref{coneS}. Let $k_0\geq1$ be the positive integer constructed in Theorem~\ref{coneS}(iv).

\subsubsection{Main claim:}\label{clue}
\em Let $\Tt$ be a semialgebraic set such that $\pol\setminus X\subset\Tt\subset\pol$. For $k\geq k_0$
\begin{equation}\label{fkt}
f_k(\Tt)=\Tt\cup\Int(\Ee)\subset\pol.
\end{equation}
\em 
 
To show \eqref{fkt} we prove the following facts for $k\geq k_0$:
\begin{description}
\item[Fact 1] $f_k(\Tt)\setminus\Int(\Tt)=(\Tt\setminus\Int(\Tt))\cup\Int(\Ee)$.
\item[Fact 2] $\Int(\Tt)\subset f_k(\Int(\Tt))$.
\end{description}
Once they are proved and since $\Int(\Tt)=\Int(\pol)$, we conclude
$$
f_k(\Tt)=(f_k(\Tt)\setminus\Int(\pol))\cup(f_k(\Tt)\cap\Int(\pol))=(\Tt\setminus\Int(\pol))\cup\Int(\Ee)\cup\Int(\pol)=\Tt\cup\Int(\Ee)
$$
and equality $f_k(\Tt)=\Tt\cup\Int(\Ee)$ follows.

\subsubsection{\bf Proof of Fact 1}\label{3d2}
We show first: $(\Tt\setminus\Int(\Tt))\cup\Int(\Ee)\subset f_k(\Tt)\setminus\Int(\Tt)$.

As $\vcon{\delta}\setminus\origs\subset\Int(\conv{\pol}{})$, Theorem~\ref{coneS}(ii) provides
$$
\sets{\hat{\setg{g}}_\veps}\subset\acon{\delta}{\setd{\hat{\setg{g}}_\veps}}\cap\{\x_n>0\}\subset\Int(\Ee)+\Int(\conv{\pol}{})\subset\Int(\pol)=\Int(\Tt)\subset\Tt.
$$
By Theorem~\ref{coneS}(iii) the inclusion $\Int(\Ee)\subset f_k(\sets{\hat{\setg{g}}_\veps})\subset f_k(\Tt)$ holds for $k\geq1$, hence 
\begin{equation}\label{inte1}
\Int(\Ee)=\Int(\Ee)\setminus\Int(\Tt)\subset f_k(\Tt)\setminus\Int(\Tt).
\end{equation}
As $f_k|_{\Tt\setminus\Int(\Tt)}=\id_{\Tt\setminus\Int(\Tt)}$ because $\partial\Tt\subset\{P=0\}$,
\begin{equation}\label{inte2}
\Tt\setminus\Int(\Tt)=f_k(\Tt\setminus\Int(\Tt))\subset f_k(\Tt)\quad\leadsto\quad\Tt\setminus\Int(\Tt)\subset f_k(\Tt)\setminus\Int(\Tt)
\end{equation}
and the inclusion $(\Tt\setminus\Int(\Tt))\cup\Int(\Ee)\subset f_k(\Tt)\setminus\Int(\Tt)$ follows from \eqref{inte1} and \eqref{inte2}.

To prove $f_k(\Tt)\setminus\Int(\Tt)\subset(\Tt\setminus\Int(\Tt))\cup\Int(\Ee)$, pick a point $x\in\Tt$ such that $f_k(x)\not\in\Int(\Tt)$. If $x\in\Tt\setminus\Int(\Tt)$, then $f_k(x)=x\in\Tt\setminus\Int(\Tt)$ because $f_k|_{\Tt\setminus\Int(\Tt)}=\id_{\Tt\setminus\Int(\Tt)}$. If $x\in\Int(\Tt)$, then $x\in\sets{\hat{\setg{g}}_\veps}$. 

Otherwise, as $\Int(\Tt)\subset\acon{\Delta}{\Int(\Ee)}$, we deduce by Theorem~\ref{coneS}(iv) that $f_k(x)\in\acon{2\delta}{\{x\}}\subset\{x\}+\Int(\conv{\pol}{})\subset\Int(\pol)=\Int(\Tt)$, which is a contradiction. 

As $x\in\sets{\hat{\setg{g}}_\veps}$, we have by Theorem~\ref{coneS}(iii)
$$
f_k(x)\in f_k(\sets{\hat{\setg{g}}_\veps})\subset\acon{2\delta}{\Int(\Ee)}\subset\Int(\Ee)+\Int(\conv{\pol}{})\subset\Int(\Ee)\cup\Int(\Tt),
$$
so $f_k(x)\in\Int(\Ee)$. Therefore, the inclusion $f_k(\Tt)\setminus\Int(\Tt)\subset(\Tt\setminus\Int(\Tt))\cup\Int(\Ee)$ holds, as required.\qed

\subsubsection{\bf Proof of Fact 2}\label{3d5}
As $\Int(\Tt)=\Int(\pol)$, we have to check: $\Int(\pol)\subset f_k(\Int(\pol))$. Its proof is long and requires a topological argument based on a result by Janiszewski \cite{ja}. 

Pick a point $x_0:=(x'_0,x_{0n}):=(y_0,z_0,x_{0n})\in\Int(\pol)$, so $x_{0n}>0$. If $z_0=0$, consider the intersection $\vspan{x_0}{\ven}\cap\pol=\vspanp{x_1}{\ven}$, where the point $x_1:=(y_0,0,r)\in\partial\pol$ must satisfy $0\le r< x_{0n}$ and $\vspan{x_0}{\ven}\cap\Int(\pol)=\Int(\vspanp{x_1}{\ven})$. As $A|_{\partial\pol}=1$, we have $B_k(y_0,0,r)r=\frac{A(y_0,0,r)+A^k(y_0,0,r)}{2}r=r$. By Theorem~\ref{coneS}(i) $\lim_{x_n\to+\infty}B_k(y_0,0,x_n)x_n=+\infty$. As $r<x_{0n}$, there exists $s>r$ such that $B_k(y_0,0,s)s=x_{0n}$, so $f_k(y_0,0,s)=(y_0,0,x_{0n})=x_0$. Note that $(y_0,0,s)\in\Int(\pol)$, so $x_0\in f_k(\Int(\pol))$. 

\paragraph{}\label{3d3} By Fact 1 for $\Tt=\pol$ we have $f_k(\pol)\setminus\Int(\pol)=(\pol\setminus\Int(\pol))\cup\Ee\subset\pol$, so $f_k(\pol)\subset\pol$.

\paragraph{}\label{claim4} We assume next that $z_0\neq0$ and let us prove: \em there exists $x_1\in\Int(\pol)$ such that $f_k(x_1)=x_0$ for each $k\geq k_0$\em. 

The proof of \ref{claim4} is conducted in several steps.

\paragraph{} Consider the 2-dimensional plane $\Pi$ determined by the points $(y_0,0,0)$, $(y_0,0,x_{0n})$ and $x_0$. Let us show: \em $f_k(\p)\subset\p$ where $\p:=\pol\cap\Pi$\em.

As $f_k(x)=f_k(y,z,x_n)=(y,A(x)z,B_k(x)x_n)$ for $x:=(y,z,x_n)$, we have $f_k(\Pi)\subset\Pi$. Since $f_k(\pol)\subset\pol$, it holds
$$
f_k(\p)=f_k(\pol\cap\Pi)\subset f_k(\pol)\cap f_k(\Pi)\subset\pol\cap\Pi=\p.
$$ 

\paragraph{}Set coordinates $(u,v)$ in $\Pi$ with respect to the affine reference 
$$
\Rr:=\{O:=(y_0,0,0);\vec{w}_1=(0,\tfrac{z_0}{\|z_0\|},0),\vec{w}_2=(0,0,1)\}. 
$$
Observe that $O+u\vec{w}_1+v\vec{w}_2=(y_0,\tfrac{z_0}{\|z_0\|}u,v)$ and 
\begin{align*}
f_k(O+u\vec{w}_1+v\vec{w}_2)&=\big(y_0,A(y_0,\tfrac{z_0}{\|z_0\|}u,v)\tfrac{z_0}{\|z_0\|}u,B_k(y_0,\tfrac{z_0}{\|z_0\|}u,v)v\big)\\
&\equiv\Big(\alpha(u,v)u,\frac{\alpha(u,v)+\alpha(u,v)^k}{2}v\Big)_{\Rr},
\end{align*}
where $\alpha(\u,\vv):=A(y_0,\tfrac{z_0}{\|z_0\|}\u,\vv)\in\R[\u,\vv]$. Consider the polynomial map
$$
G_k:=(G_{k1},G_{k2}):\R^2\to\R^2,\ (u,v)\mapsto\Big(\alpha(u,v)u,\frac{\alpha(u,v)+\alpha(u,v)^k}{2}v\Big).
$$
Note that $x_0\equiv(\|z_0\|,x_{0n})_{\Rr}=:(a,b)_{\Rr}$, so $a,b>0$. Consider the algebraic curve 
$$
Y_a:=\{\alpha(\u,\vv)\u-a=0\}=G_{k1}^{-1}(a)\subset\{\u>0\}. 
$$

\paragraph{}We claim: \em $r:=\max\{\veps\sqrt{n-d-1},a\}\geq u_0$ for each $(u_0,v_0)\in Y_a$\em. 

If $u_0>a$, then $\alpha(u_0,v_0)<1$. As $P^2$ is the square of a polynomial, $\polq{\hat{\setg{g}}_\veps}(y_0,\tfrac{z_0}{\|z_0\|}u_0,v_0)<0$. By Lemma~\ref{polq1} we have $(y_0,\tfrac{z_0}{\|z_0\|}u_0,0)\in\seta{\hat{\setg{g}}_\veps}$. In particular, $\tfrac{z_0}{\|z_0\|}u_0\in I_\veps^{n-d-1}$, so 
$$
u_0\leq\|(\veps,\overset{(n-d-1)}{\ldots},\veps)\|=\veps\sqrt{n-d-1}\leq r.
$$ 

\paragraph{}Consider the convex polygon $\p_0:=\p\cap\{0\le\u\le r+1\}$ and the singleton $\{q\}:=\partial\p_0\cap\{\u=a\}$. Write $q:=(a,c)_{\Rr}$. Let us check: $Y_a\cap\partial\Pp_0=\{q\}$.

As $\Pi$ meets $\Int(\pol)$, we have $\partial\Pp=\partial\pol\cap\Pi$. As $Y_a\cap(\{\u=0\}\cup\{\u=r+1\})=\varnothing$, then 
$$
Y_a\cap\partial\Pp_0\subset Y_a\cap\partial\Pp\subset Y_a\cap\partial\pol\subset Y_a\cap\{P=0\}\subset Y_a\cap\{\alpha=1\}=Y_a\cap\{\u=a\}.
$$
Thus, $Y_a\cap\partial\Pp_0=Y_a\cap\partial\Pp_0\cap\{\u=a\}\subset\{q\}$. As $q\in\partial\Pp_0\setminus(\{\u=0\}\cup\{\u=r+1\})\subset\partial\pol$, we have $\alpha(a,c)=A(q)=1$. As $q\in\{\u=a\}\cap\{\alpha(\u,\vv)=1\}$, we conclude $q\in Y_a$, so $Y_a\cap\partial\Pp_0=\{q\}$.

\paragraph{}\label{4b37} Given a connected topological space $T$ and different points $p,q\in T$, we say that $K\subset T$ separates $p$ and $q$ if these points belong to different connected components of $T\setminus K$. Given $S\subset\R^2$, we say that $S$ is `upperly unbounded' to refer that it is unbounded in the direction of the second coordinate. We claim: \em There exists an upperly unbounded connected component $Z$ of $Y_a\cap\p_0$ passing through $q$ such that $Z\setminus\{q\}\subset\Int(\pol)$\em. 

To prove this claim we will make use of Janiszewski's Theorem (see \cite{ja} or \cite[Thm. A]{bing}): \em If $K_1$ and $K_2$ are compact subsets of the plane $\R^2$ whose intersection is connected, a pair of points that is separated by neither $K_1$ nor $K_2$ is neither separated by their union $K_1\cup K_2$\em. The proof of our claim is conducted in several steps:

\begin{figure}[!ht]
\begin{center}
\begin{tikzpicture}[scale=1]

\draw[draw=none,fill=gray!30,opacity=0.5] (0,6.5) -- (1,0.75) -- (2.5,0) -- (4,0) -- (5.75,1.16666666666) -- (6,6.5);
\draw[draw=none,fill=gray!80,opacity=0.5] (1.5,6) -- (1.5,0.5) -- (2.5,0) -- (4,0) -- (5.5,1) -- (5.5,4) -- (5.5,6);
\draw[line width=1pt] (0,6.5) -- (1,0.75) -- (2.5,0) -- (4,0) -- (5.75,1.16666666666) -- (6,6.5);
\draw[line width=1pt] (0.1,6) -- (5.98,6);
\draw[line width=1pt] (1.5,0.5) -- (1.5,6);
\draw[line width=1pt] (5.5,1) -- (5.5,6);

\draw[line width=0.75pt,dashed] (-0.5,6) -- (6.5,6);
\draw[line width=0.75pt,dashed] (-0.5,1.5) -- (6.5,1.5);
\draw[line width=0.75pt,dashed] (3.25,0) -- (3.25,6.5);
\draw[line width=0.75pt,dashed] (1.5,0) -- (1.5,6.5);
\draw[line width=0.75pt,dashed] (5.5,0) -- (5.5,6.5);

\draw[line width=1.5pt] (2,6.5) .. controls (2,0) and (2.5,0) .. (4.5,6.5);
\draw[line width=1.5pt] (3.25,0) .. controls (3.2,0.4) and (4,0.6) .. (4.8,1.1) .. controls (5.1,1.3) and (4.9,1.332) .. (4.8,1.35) .. controls (4.3,1.44) and (3.3,1.3) .. (3.25,0);

\draw[->] (0,-0.1) -- (0,6.75);
\draw[->] (-0.1,0) -- (6.5,0);
\draw (-0.1,2) -- (0.1,2);
\draw (-0.1,3) -- (0.1,3);
\draw (-0.1,4) -- (0.1,4);

\draw[fill=black] (1,0.75) circle (0.75mm);
\draw[fill=black] (2.5,0) circle (0.75mm);
\draw[fill=gray,draw] (3.25,0) circle (0.75mm);
\draw[fill=black] (4,0) circle (0.75mm);
\draw[fill=black] (5.75,1.16666666666) circle (0.75mm);
\draw[fill=gray,draw] (1.5,0.5) circle (0.75mm);
\draw[fill=gray,draw] (5.5,1) circle (0.75mm);
\draw[fill=gray,draw] (1.5,6) circle (0.75mm);
\draw[fill=gray,draw] (5.5,6) circle (0.75mm);

\draw(7.2,1.5) node{\small${\tt v}=\frac{M}{4}$}; 
\draw(7.2,6) node{\small${\tt v}=M$};
\draw(3.25,6.6) node{\small${\tt u}=a$};
\draw(3.5,-0.25) node{\small$q$};
\draw(-0.3,0) node{\small$0$};
\draw(-0.3,2) node{\small$\frac{M}{3}$};
\draw(-0.3,3) node{\small$\frac{M}{2}$};
\draw(-0.35,4) node{\small$\frac{2M}{3}$};
\draw(1.5,-0.25) node{\small${\tt u}=0$};
\draw(5.5,-0.25) node{\small${\tt u}=r+1$};
\draw(4.1,1) node{\small$Z_1$};
\draw(2.5,5) node{\small$Z_\ell$};
\draw(1,5.5) node{\small$\Pp$};
\draw(5,5) node{\small$\Pp_0'$};
\draw(4.5,4) node{\small$Y_a\cap\Pp_0$};

\end{tikzpicture}
\end{center}
\caption{Description of the fake situation.}\label{fig7}
\end{figure}
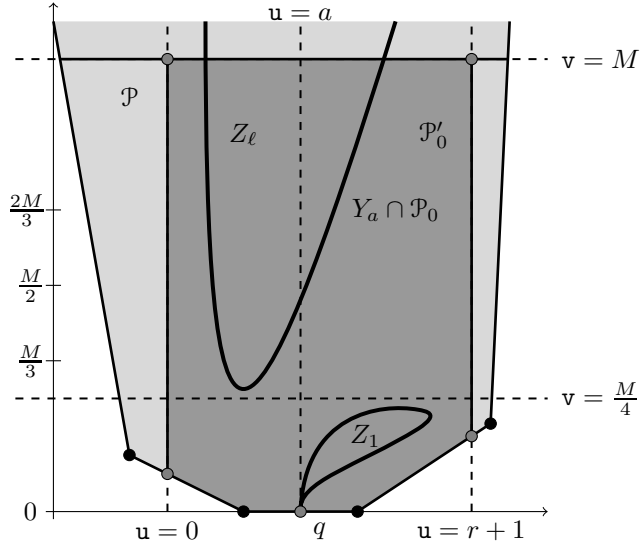

\noindent\em Step \em1. \em The line $\{\u=0\}\subset\{\alpha(\u,\vv)\u-a<0\}$ and the line $\{\u=r+1\}\subset\{\alpha(\u,\vv)\u-a>0\}$\em. 

The first inclusion is clear. To prove the second denote $\zeta(\u,\vv):=\alpha(\u,\vv)\u-a$ and observe that if $\zeta(r+1,v)\leq0$, then $\alpha(r+1,v)<1$. As $P^2$ is the square of a polynomial, we deduce $\polq{\hat{\setg{g}}_\veps}(y_0,\tfrac{z_0}{\|z_0\|}(r+1),v)<0$. By Lemma~\ref{polq1} we have 
$$
\big(y_0,\tfrac{z_0}{\|z_0\|}(r+1),0\big)\in\seta{\hat{\setg{g}}_\veps}. 
$$
In particular, $\tfrac{z_0}{\|z_0\|}(r+1)\in I_\veps^{n-d-1}$ and 
$$
r+1\leq\|(\veps,\overset{(n-d-1)}{\ldots},\veps)\|=\veps\sqrt{n-d-1}\leq r,
$$ 
a contradiction. Consequently, $\{\u=r+1\}\subset\{\alpha(\u,\vv)\u-a>0\}$.

\noindent\em Step \em2. Let $M>0$ be such that all the vertices of $\p_0$ and all the upperly bounded connected components of $Y_a\cap\Pp_0$ are contained in $\{\vv<\tfrac{M}{4}\}$. Consider the compact convex polygon $\p_0':=\p_0\cap\{\vv\leq M\}\subset[0,r+1]\times[0,M]$. Let $Z_1,\ldots,Z_\ell$ be the connected components of $Y_a\cap\Pp_0$. Suppose that none of them meets both $\partial\p_0$ and $\{\vv=M\}$ (see Figure~\ref{fig7}). Assume $Z_1,\ldots,Z_s$ are the upperly bounded connected components of $Y_a\cap\Pp_0$ and $Z_{s+1},\ldots,Z_\ell$ are the remaining ones, so they meet $\{\vv=M\}$ but they do not meet $\partial\p_0$. Define $K_1:=(\partial\p_0\cap\{\vv\leq \tfrac{M}{4}\})\cup\bigcup_{i=1}^s(Z_i\cap\p_0')$ and $K_2:=\partial\p_0'\cup
\bigcup_{i=s+1}^\ell(Z_i\cap\p_0')$ (see Figures~\ref{fig:pos1} and \ref{fig:pos2}). Observe that $K_1\cap K_2=\partial\p_0\cap\{\vv\leq \tfrac{M}{4}\}$ is connected. Consider the positive real number
$$
\epsilon:=\min\big\{\dist\big(K_1,\partial\p_0'\cap\big\{\vv\geq \tfrac{M}{3}\big\}\big),\dist\big(\textstyle\bigcup_{i=s+1}^\ell(Z_i\cap\p_0'),\partial\p_0'\cap\big\{\vv\leq\tfrac{2M}{3}\big\}\big)
\big\}.
$$
Let $0<\rho<\tfrac{\epsilon}{2}$ be such that $q_1:=(\rho,\tfrac{M}{2})\in\{\zeta<0\}$ and $q_2:=(r+1-\rho,\tfrac{M}{2})\in\{\zeta>0\}$. 

We have $\zeta(q_1)<0$ and $\zeta(q_2)>0$. Consequently, \em $K_1\cup K_2=(Y_a\cap\p_0')\cup\partial\p_0'$ separates the points $q_1$ and $q_2$\em.

\noindent
\begin{figure}[ht]
\begin{minipage}[t]{0.49\textwidth}
\begin{tikzpicture}[scale=1]

\draw[draw=none,fill=gray!30,opacity=0.5] (0,6.5) -- (1,0.75) -- (2.5,0) -- (4,0) -- (5.75,1.16666666666) -- (6,6.5);
\draw[draw=none,fill=gray!80,opacity=0.5] (1.5,6) -- (1.5,0.5) -- (2.5,0) -- (4,0) -- (5.5,1) -- (5.5,4) -- (5.5,6);
\draw[draw=none,fill=gray!90,opacity=0.75] (1.5,2) -- (1.85,2) -- (1.85,5.65) -- (5.15,5.65) -- (5.15,2) -- (5.5,2) -- (5.5,6) -- (1.5,6) -- (1.5,2);
\draw[line width=1pt] (0,6.5) -- (1,0.75) -- (2.5,0) -- (4,0) -- (5.75,1.16666666666) -- (6,6.5);
\draw[line width=1pt] (0.1,6) -- (5.98,6);
\draw[line width=1pt] (1.5,0.5) -- (1.5,6);
\draw[line width=1pt] (5.5,1) -- (5.5,6);

\draw[line width=0.75pt,dashed] (-0.5,6) -- (6.5,6);
\draw[line width=0.75pt,dashed] (-0.5,1.5) -- (6.5,1.5);
\draw[line width=0.75pt,dashed] (3.25,0) -- (3.25,6.5);
\draw[line width=0.75pt,dashed] (1.5,0) -- (1.5,6.5);
\draw[line width=0.75pt,dashed] (5.5,0) -- (5.5,6.5);

\draw[line width=1.5pt] (3.25,0) .. controls (3.2,0.4) and (4,0.6) .. (4.8,1.1) .. controls (5.1,1.3) and (4.9,1.332) .. (4.8,1.35) .. controls (4.3,1.44) and (3.3,1.3) .. (3.25,0);
\draw[line width=1.5pt] (1.5,1.5) -- (1.5,0.5) -- (2.5,0) -- (4,0) -- (5.5,1) -- (5.5,1.5);

\draw[->] (0,-0.1) -- (0,6.75);
\draw[->] (-0.1,0) -- (6.5,0);
\draw (-0.1,2) -- (0.1,2);
\draw (-0.1,3) -- (0.1,3);
\draw (-0.1,4) -- (0.1,4);

\draw[fill=black] (1,0.75) circle (0.75mm);
\draw[fill=black] (2.5,0) circle (0.75mm);
\draw[fill=gray,draw] (3.25,0) circle (0.75mm);
\draw[fill=black] (4,0) circle (0.75mm);
\draw[fill=black] (5.75,1.16666666666) circle (0.75mm);
\draw[fill=black] (1.675,3) circle (0.5mm);
\draw[fill=black] (5.325,3) circle (0.5mm);
\draw[fill=gray,draw] (1.5,0.5) circle (0.75mm);
\draw[fill=gray,draw] (5.5,1) circle (0.75mm);
\draw[fill=gray,draw] (1.5,6) circle (0.75mm);
\draw[fill=gray,draw] (5.5,6) circle (0.75mm);

\draw(3.25,6.6) node{\small${\tt u}=a$};
\draw(3.5,-0.25) node{\small$q$};
\draw(-0.3,0) node{\small$0$};
\draw(-0.3,2) node{\small$\frac{M}{3}$};
\draw(-0.3,3) node{\small$\frac{M}{2}$};
\draw(-0.35,4) node{\small$\frac{2M}{3}$};
\draw(1.5,-0.25) node{\small${\tt u}=0$};
\draw(5.5,-0.25) node{\small${\tt u}=r+1$};
\draw(4.1,1) node{\small$Z_1$};
\draw(1,5.5) node{\small$\Pp$};
\draw(4.75,5) node{\small$\Pp_0'$};
\draw(2.1,1) node{\small$K_1$};
\draw(4.1,5.625) node{\small$W_2$};
\draw(1.25,3) node{\small$q_1$};
\draw(5,3) node{\small$q_2$};

\end{tikzpicture}
\captionof{figure}{Positions of $K_1$ and $W_2$}\label{fig:pos1}

\end{minipage}
\hfill
\begin{minipage}[t]{0.49 \textwidth}
\begin{tikzpicture}[scale=1]

\draw[draw=none,fill=gray!30,opacity=0.5] (0,6.5) -- (1,0.75) -- (2.5,0) -- (4,0) -- (5.75,1.16666666666) -- (6,6.5);
\draw[draw=none,fill=gray!80,opacity=0.5] (1.5,6) -- (1.5,0.5) -- (2.5,0) -- (4,0) -- (5.5,1) -- (5.5,4) -- (5.5,6);
\draw[draw=none,fill=gray!90,opacity=0.75] (1.5,4) -- (1.85,4) -- (1.85,0.7) -- (2.5,0.35) -- (4,0.35) -- (5.15,1.1) -- (5.15,4) -- (5.5,4) -- (5.5,1) -- (4,0) -- (2.5,0) -- (1.5,0.5) -- (1.5,4);
\draw[line width=1pt] (0,6.5) -- (1,0.75) -- (2.5,0) -- (4,0) -- (5.75,1.16666666666) -- (6,6.5);
\draw[line width=1pt] (0.1,6) -- (5.98,6);
\draw[line width=1pt] (1.5,0.5) -- (1.5,6);
\draw[line width=1pt] (5.5,1) -- (5.5,6);

\draw[line width=0.75pt,dashed] (-0.5,6) -- (6.5,6);
\draw[line width=0.75pt,dashed] (-0.5,1.5) -- (6.5,1.5);
\draw[line width=0.75pt,dashed] (3.25,0) -- (3.25,6.5);
\draw[line width=0.75pt,dashed] (1.5,0) -- (1.5,6.5);
\draw[line width=0.75pt,dashed] (5.5,0) -- (5.5,6.5);

\draw[line width=1.5pt] (2,6.5) .. controls (2,0) and (3,0) .. (4.5,6.5);
\draw[line width=1.5pt] (1.5,6) -- (1.5,0.5) -- (2.5,0) -- (4,0) -- (5.5,1) -- (5.5,6) -- (1.5,6);

\draw[->] (0,-0.1) -- (0,6.75);
\draw[->] (-0.1,0) -- (6.5,0);
\draw (-0.1,2) -- (0.1,2);
\draw (-0.1,3) -- (0.1,3);
\draw (-0.1,4) -- (0.1,4);

\draw[fill=black] (1,0.75) circle (0.75mm);
\draw[fill=black] (2.5,0) circle (0.75mm);
\draw[fill=gray,draw] (3.25,0) circle (0.75mm);
\draw[fill=black] (4,0) circle (0.75mm);
\draw[fill=black] (5.75,1.16666666666) circle (0.75mm);
\draw[fill=black] (1.675,3) circle (0.5mm);
\draw[fill=black] (5.325,3) circle (0.5mm);
\draw[fill=gray,draw] (1.5,0.5) circle (0.75mm);
\draw[fill=gray,draw] (5.5,1) circle (0.75mm);
\draw[fill=gray,draw] (1.5,6) circle (0.75mm);
\draw[fill=gray,draw] (5.5,6) circle (0.75mm);

\draw(3.25,6.6) node{\small${\tt u}=a$};
\draw(3.5,-0.25) node{\small$q$};
\draw(-0.3,0) node{\small$0$};
\draw(-0.3,2) node{\small$\frac{M}{3}$};
\draw(-0.3,3) node{\small$\frac{M}{2}$};
\draw(-0.35,4) node{\small$\frac{2M}{3}$};
\draw(1.5,-0.25) node{\small${\tt u}=0$};
\draw(5.5,-0.25) node{\small${\tt u}=r+1$};
\draw(2.5,5) node{\small$Z_\ell$};
\draw(1,5.5) node{\small$\Pp$};
\draw(5,5) node{\small$\Pp_0'$};
\draw(4.1,3.75) node{\small$K_2$};
\draw(2.75,0.4) node{\small$W_1$};
\draw(1.25,3) node{\small$q_1$};
\draw(4.9,3) node{\small$q_2$};

\end{tikzpicture}
\captionof{figure}{Positions of $K_2$ and $W_1$}\label{fig:pos2}

\end{minipage}
\end{figure}

\noindent\em Step \em3. Let us check: \em neither $K_1$ nor $K_2$ separates the points $q_1$ and $q_2$\em. 

The points $q_1,q_2$ belong to both open connected subsets
\begin{align*}
&W_1:=\big\{p\in\Int(\p_0'):\ 0<\dist\big(p,\big(\partial\p_0'\cap\big\{\vv\leq\tfrac{2M}{3}\big\}\big)\big)<\tfrac{\epsilon}{2}\big\},\\
&W_2:=\big\{p\in\Int(\p_0'):\ 0<\dist\big(p,\big(\partial\p_0'\cap\big\{\vv\geq\tfrac{M}{3}\big\}\big)\big)<\tfrac{\epsilon}{2}\big\}
\end{align*}
of $\Int(\p_0')$ whereas $K_1\cap W_2=\varnothing$ and $K_2\cap W_1=\varnothing$. Thus, $q_1,q_2$ are separated neither by $K_1$ nor by $K_2$, which contradicts Janiszewski's Theorem. 

\noindent\em Step \em4. Consequently, \em there exists a connected component $Z_j$ of $Y_a\cap\p_0$ that meets both $\partial\p_0$ and the line $\{\vv=M\}$\em, as shown in Figure~\ref{fig8}. As all the upperly bounded connected components are contained in $\{\vv<\tfrac{M}{4}\}$, we deduce $Z_j$ is upperly unbounded. In addition, $Y_a\cap\partial\Pp_0=\{q\}$, so $q\in Z_j$ and 
$$
Z_j\setminus\{q\}\subset Z_j\setminus\partial\p_0\subset\p\setminus\partial\p=\Int(\p)\subset\Int(\pol), 
$$
as claimed in \ref{4b37}. 

\paragraph{} We are ready to finish the proof of \ref{claim4}. By \cite[2.9.10]{bcr} $Z_j$ is the union of a finite set ${\mathfrak F}$ and finitely many Nash paths $\Gamma_i$ that are Nash diffeomorphic to ${]0,1[}$. We may assume that $\Gamma_1$ is upperly unbounded. Let $q'\in\cl(\Gamma_1)\setminus\Gamma_1$ and let $\gamma_0:[0,1]\to Z_j$ be a semialgebraic path such that $\gamma_0(0)=q$ and $\gamma_0(1)=q'$. Let $\gamma_1:{[1,+\infty[}\to\Gamma_1\cup\{q'\}$ be a semialgebraic parameterization such that $\gamma_1(1)=q'$ and define
$$
\beta:=(\beta_1,\beta_2):{[0,+\infty[}\to Z_j,\ t\mapsto
\begin{cases}
\gamma_0(t)&\text{if $t\in[0,1]$,}\\
\gamma_1(t)&\text{if $t\in{[1,+\infty[}$.}
\end{cases}
$$
As $\Gamma_1$ is upperly unbounded, $\lim_{t\to\infty}\beta_2(t)=+\infty$. We have
$$
G_k\circ\beta(t)=\Big(a,\Big(\Big(\frac{a}{\beta_1(t)}\Big)+\Big(\frac{a}{\beta_1(t)}\Big)^k\Big)\frac{\beta_2(t)}{2}\Big)
$$
As $0\leq \beta_1(t)\leq r+1$ for $t\in{[0,+\infty[}$, we have 
$$
0<\frac{a}{r+1}+\frac{a^k}{(r+1)^k}\leq\Big(\frac{a}{\beta_1(t)}\Big)+\Big(\frac{a}{\beta_1(t)}\Big)^k
$$
for $t\in{[0,+\infty[}$. Consequently,
\begin{equation}\label{limit}
\lim_{t\to\infty}\Big(\Big(\Big(\frac{a}{\beta_1(t)}\Big)+\Big(\frac{a}{\beta_1(t)}\Big)^k\Big)\frac{\beta_2(t)}{2}\Big)=+\infty.
\end{equation}
As $x_0=(a,b)_\Rr\in\Int(\p_0)\cap \vspanp{q}{\ven}$, $q=(a,c)_\Rr\in\partial\p_0$ and $\vec{\tt e}_n\in\conv{\p_0}{}$, we have $c<b$. As $q=G_k(q)=(G_k\circ\beta)(0)$ and using \eqref{limit}, there exists $t_0\in{]0,+\infty[}$ such that $(G_k\circ\beta)(t_0)=(a,b)_{\Rr}=x_0$, so there exists $x_1:=\beta(t_0)\in Z_j\setminus\{q\}\subset\Int(\pol)$ such that $G_k(x_1)=x_0$ and \ref{claim4} holds.

\begin{figure}[!ht]
\begin{center}
\begin{tikzpicture}[scale=1]

\draw[draw=none,fill=gray!30,opacity=0.5] (0,6.5) -- (1,0.75) -- (2.5,0) -- (4,0) -- (5.75,1.16666666666) -- (6,6.5);
\draw[draw=none,fill=gray!80,opacity=0.5] (1.5,6) -- (1.5,0.5) -- (2.5,0) -- (4,0) -- (5.5,1) -- (5.5,4) -- (5.5,6);
\draw[line width=1pt] (0,6.5) -- (1,0.75) -- (2.5,0) -- (4,0) -- (5.75,1.16666666666) -- (6,6.5);
\draw[line width=1pt] (0.1,6) -- (5.98,6);
\draw[line width=1pt] (1.5,0.5) -- (1.5,6);
\draw[line width=1pt] (5.5,1) -- (5.5,6);

\draw[line width=0.75pt,dashed] (-0.5,6) -- (6.5,6);
\draw[line width=0.75pt,dashed] (3.25,0) -- (3.25,6.5);
\draw[line width=0.75pt,dashed] (1.5,0) -- (1.5,6.5);
\draw[line width=0.75pt,dashed] (5.5,0) -- (5.5,6.5);

\draw[line width=1.5pt] (3.25,0) .. controls (3.2,1) and (4,2) .. (4.8,3.25) .. controls (4.95,3.5) and (4.9,3.7) .. (4.8,3.75) .. controls (4.3,4) and (3.3,3) .. (5,6.5);

\draw[->] (-0.1,0) -- (6.5,0);

\draw[fill=black] (1,0.75) circle (0.75mm);
\draw[fill=black] (2.5,0) circle (0.75mm);
\draw[fill=gray,draw] (3.25,0) circle (0.75mm);
\draw[fill=black] (4,0) circle (0.75mm);
\draw[fill=black] (5.75,1.16666666666) circle (0.75mm);
\draw[fill=gray,draw] (1.5,0.5) circle (0.75mm);
\draw[fill=gray,draw] (5.5,1) circle (0.75mm);
\draw[fill=gray,draw] (1.5,6) circle (0.75mm);
\draw[fill=gray,draw] (5.5,6) circle (0.75mm);

\draw(7.1,6) node{\small${\tt v}=M$};
\draw(3.25,6.6) node{\small${\tt u}=a$};
\draw(3.5,-0.25) node{\small$q$};
\draw(1.5,-0.25) node{\small${\tt u}=0$};
\draw(5.5,-0.25) node{\small${\tt u}=r+1$};
\draw(4.1,2.75) node{\small$Z_1$};
\draw(1,5.5) node{\small$\Pp$};
\draw(5,5) node{\small$\Pp_0'$};

\end{tikzpicture}
\end{center}
\caption{Description of the authentic situation.}\label{fig8}
\end{figure}
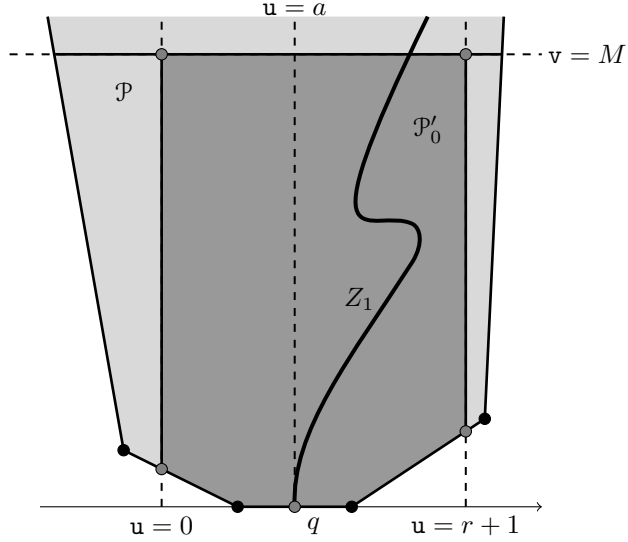

\subsubsection{Conclusion of proof for Proposition~\em\ref{main12}} Let $\Ee_1,\ldots,\Ee_m$ be all the faces of $\pol$ of dimension $\leq n-2$. We use the symbol $\sqcup$ to stress unions that involve only pairwise disjoint sets. We have $\pol\cap X=\bigsqcup_{i=1}^m\Int(\Ee_i)$. By \ref{clue} there exist polynomial maps $g_\ell:\R^n\to\R^n$ such that
$$
g_\ell\Big((\pol\setminus X)\sqcup\bigsqcup_{i=1}^{\ell-1}\Int(\Ee_i)
\Big)=(\pol\setminus X)\sqcup\bigsqcup_{i=1}^\ell\Int(\Ee_i)
$$
for $\ell=1,\ldots,m$. Consequently, $g:=(g_m\circ\cdots\circ g_1):\R^n\to\R^n$ satisfies $g(\pol\setminus X)=(\pol\setminus X)\sqcup(\pol\cap X)=\pol$, as required.
\qed

\subsection{Proof of Corollary \ref{main13}}
By Propositions \ref{main11} and \ref{main12} there exists a polynomial map $f_0:\R^n\to\R^n$ such that $f_0(\R^n)=\pol$. Let $x':=(x_1,\ldots,x_{n-1})$ and consider the polynomial map
\begin{align*}
f_1:\R^{n+1}&\to\R^{n+1},\\
x:=(x',x_n,x_{n+1})&\mapsto(x',x_{n+1}(x_nx_{n+1}-1),(x_nx_{n+1}-1)^2+x_n^2),
\end{align*}
whose image is $\{\x_{n+1}>0\}$ (see \cite[Ex. 1.4 (iv)]{fg1}). Assume that $\ven\in\conv{\pol}{}$ and let
$$
f_2:\R^{n+1}\to\R^n,\ (x_1,\ldots,x_n,x_{n+1})\mapsto f_0(x_1,\ldots,x_n)+x_{n+1}\ven.
$$
We have $f_2(\{\x_{n+1}>0\})=\Int(\pol)$, so $\Int(\pol)$ is the image of the polynomial map $f:=(f_2\circ f_1):\R^{n+1}\to\R^n$, as required.
\qed

\section{Interiors of convex polyhedra as polynomial images of $\R^n$}\label{s4}

In this section we prove Theorem~\ref{main2}. Each degenerate convex polyhedron $\pol\subset\R^n$ can be written in suitable coordinates as $\pol=\p\times\R^k$ where $\p$ is a non-degenerate convex polyhedron and $\Int(\pol)=\Int(\p)\times\R^k$. If $k\geq n-1$, then $\Int(\pol)$ is either $\R^n$ or an open half-space. The second case is a polynomial image of $\R^n$ by \cite[Ex. 1.4 (iv)]{fg1}. Thus, we will prove Theorem~\ref{main2} assuming in addition that the convex polyhedron $\pol$ is non-degenerate and has dimension $n\geq2$.

The general strategy is the following. By Proposition~\ref{main11} we know that if $\pol\subset\R^n$ is an unbounded non-degenerate convex polyhedron with $n$-dimensional recession cone $\conv{\pol}{}$ and $X$ is the union of the affine subspaces of $\R^n$ spanned by the faces of $\pol$ of dimension $n-2$, then $\pol\setminus X$ is a polynomial image of $\R^n$. For each unbounded facet $\Ff$ of $\pol$ we devise a procedure to `erase' it from $\pol\setminus X$ in two steps: (1) first we `push' $\Int(\Ff)$ `inside' $\Int(\pol)$ to obtain a polynomial image $\Ss$ of $\R^n$ contained in $\pol\setminus (X\cup\Ff)$ and (2) we fill the gap left between $\Ss$ and $\Ff$ to express $\pol\setminus(X\cup \Ff)$ as a polynomial image of $\R^n$. We `erase' all the facets of the initial image $\pol\setminus X$ to obtain $\Int(\pol)$ as a polynomial image of $\R^n$. To lighten the proof of Theorem~\ref{main2} we develop next some preliminary work. In the following we write $x'':=(x_1,\ldots,x_{n-2})$, $x':=(x'',x_{n-1})$ and $x:=(x',x_n)$.

\subsection{Preliminary construction}
We first introduce the type of polynomial maps that will allow us to push the interior of a given facet $\Ff$ of a convex polyhedron $\pol$ inside $\Int(\pol)$.

\begin{lem}\label{allplanos2}
Let $\pol\subset\R^n$ be a convex polyhedron of dimension $n$ and let $\Ff$ be a facet of $\pol$. Assume $\Ff\subset\{\x_{n-1}=0\}$, $\pol\subset\{\x_{n-1}\geq0\}$ and $\vem\in\conv{\pol}{}$. Let $\Tt$ be a semialgebraic set obtained by removing the interior of some facets of $\pol$ from $\pol\setminus X$ and let $F\in\R[\x]$ be a polynomial such that $\{F=0\}\cap\Int(\Ff)=\varnothing$ and $F$ is identically zero on the facets of $\pol$ different from $\Ff$. Consider the semialgebraic set $\Rr:=\{F=0\}\cap\{\x_{n-1}>0\}$ and the polynomial map $f_0:\R^n\to\R^n,\ x\mapsto x+F^2(x)\vem$. Then
\begin{itemize}
\item[(i)] $f_0(\Int(\vspanp{x_0}{\vem}))=\Int(\vspanp{x_0}{\vem})$ and $f_0(x_0)=x_0$ for each $x_0\in\{F=0\}$.
\item[(ii)] $\Tt\cap\vspanp{\Rr}{\vem}\subset f_0(\Tt)\subset\Tt\setminus\Ff$.
\end{itemize}
\end{lem}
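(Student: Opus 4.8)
The plan is to analyze the polynomial map $f_0(x)=x+F^2(x)\vem$ directly, exploiting that it preserves every vertical line in the direction $\vem$ and fixes pointwise the zero set $\{F=0\}$.

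\textbf{Step (i).} Since $F^2(x)\geq0$, the map $f_0$ translates each point upward (in the $\vem$-direction) by the non-negative amount $F^2(x)$; in particular $f_0$ leaves invariant every line $\vspan{x_0}{\vem}$ because adding a multiple of $\vem$ does not change the coordinates transverse to $\vem$. If $x_0\in\{F=0\}$ then $F^2(x_0)=0$, so $f_0(x_0)=x_0$. For the ray $\vspanp{x_0}{\vem}$ with $x_0\in\{F=0\}$, I would apply Corollary~\ref{fun2}(ii): the restriction of $f_0$ to the line $\vspan{x_0}{\vem}$ is of the required form, its last-coordinate-in-$\vem$ function $\psi(t)=t+F^2(x_0+t\vem)$ satisfies $\psi(+\infty)=+\infty$ (the leading term in $t$ of $F^2$ has positive coefficient, or is $0$ and then $\psi(t)=t$), and $f_0(x_0)=x_0$. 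Hence $\Int(\vspanp{x_0}{\vem})\subset f_0(\Int(\vspanp{x_0}{\vem}))$. The reverse inclusion is immediate because $f_0$ moves points only forward along the ray. This gives (i); one should also record that $f_0|_{\{F=0\}}=\id$, which is used in (ii).

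\textbf{Step (ii).} For the second inclusion $f_0(\Tt)\subset\Tt\setminus\Ff$: first note $F$ vanishes on every facet of $\pol$ other than $\Ff$, so $\partial\pol\setminus\Int(\Ff)\subset\{F=0\}$, and since $\vem\in\conv{\pol}{}$ and $\Ff\subset\{\x_{n-1}=0\}$, $\pol\subset\{\x_{n-1}\ge0\}$, the polyhedron $\pol$ is `vertically convex' in the $\vem$-direction, i.e. $\pol=\vspanp{\partial'\pol}{\vem}$ for the appropriate portion of the boundary, analogous to Proposition~\ref{proj}. Thus a point $x\in\Tt\subset\pol$ either lies on $\{F=0\}$ (so $f_0(x)=x\in\Tt$; moreover such $x$ is not in $\Int(\Ff)$) or satisfies $F(x)\neq0$, in which case $F^2(x)>0$ and $f_0(x)=x+F^2(x)\vem$ lies strictly inside $\pol$ along the ray from $x$; it stays in $\pol$ because $\pol$ is closed under adding positive multiples of $\vem\in\conv{\pol}{}$, and it lands in $\Int(\pol)$ (hence not on $\Ff$) since moving strictly off the bottom facet $\Ff\subset\{\x_{n-1}=0\}$ increases $\x_{n-1}$. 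Also $f_0(x)$ remains in $\Tt$ because $\Int(\pol)\subset\Tt$ (removing interiors of some facets from $\pol\setminus X$ never removes interior points). In all cases $f_0(x)\in\Tt\setminus\Ff$. For the first inclusion $\Tt\cap\vspanp{\Rr}{\vem}\subset f_0(\Tt)$: take $y\in\Tt\cap\vspanp{\Rr}{\vem}$, so $y=x_0+\lambda\vem$ with $x_0\in\Rr=\{F=0\}\cap\{\x_{n-1}>0\}$ and $\lambda\ge0$; then $y\in\Int(\vspanp{x_0}{\vem})$ when $\lambda>0$ (and $y=x_0$ when $\lambda=0$), and $\vspanp{x_0}{\vem}\subset\pol$ with interior in $\Int(\pol)\subset\Tt$, so by part (i) $y\in f_0(\Int(\vspanp{x_0}{\vem}))\subset f_0(\Tt)$ (the case $\lambda=0$ being $f_0(x_0)=x_0$).

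\textbf{Main obstacle.} The delicate point is the verification, in the second inclusion of (ii), that $f_0$ does not push any point of $\Tt$ out of $\pol$ and that it genuinely lands off $\Ff$: this requires carefully using $\vem\in\conv{\pol}{}$ together with $\Ff\subset\{\x_{n-1}=0\}$, $\pol\subset\{\x_{n-1}\ge0\}$, and the hypothesis that $F$ vanishes on all other facets (so that points of $\Tt$ not on $\{F=0\}$ are interior points of $\pol$ whose whole forward $\vem$-ray stays in $\Int(\pol)$). One must also keep track of $X$ and the removed facets to see $f_0(\Tt)\subset\Tt$ rather than merely $\subset\pol$; this is where $\Int(\pol)\subset\Tt$ and $f_0|_{\{F=0\}}=\id$ combine. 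The rest is routine once the geometry of the $\vem$-direction relative to $\pol$ is set up.
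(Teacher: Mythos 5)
Your Step~(i) and your argument for $f_0(\Tt)\subset\Tt\setminus\Ff$ are essentially correct, if a bit informal. The paper argues the latter more crisply: if $f_0(x)=x+F^2(x)\vem$ were in $\Ff\subset\{\x_{n-1}=0\}$, then $x_{n-1}+F^2(x)=0$ would force $x_{n-1}=0$ and $F(x)=0$, so $x\in(\pol\setminus X)\cap\{\x_{n-1}=0\}\cap\{F=0\}\subset\Int(\Ff)\cap\{F=0\}=\varnothing$; note this also handles $\partial\Ff$, which your write-up glosses over (you only observe $x\notin\Int(\Ff)$, and should add that $\Tt\cap X=\varnothing$ rules out $x\in\partial\Ff$).

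There is, however, a genuine gap in your proof of the first inclusion $\Tt\cap\vspanp{\Rr}{\vem}\subset f_0(\Tt)$. You write $y=x_0+\lambda\vem$ with $x_0\in\Rr$ and $\lambda\ge0$ and then assert that $\vspanp{x_0}{\vem}\subset\pol$ with interior in $\Int(\pol)\subset\Tt$. This fails: $\Rr=\{F=0\}\cap\{\x_{n-1}>0\}$ is \emph{not} contained in $\pol$. In the application (Subsection~\ref{preparat}) $F$ is a product of linear forms whose zero sets are whole hyperplanes reaching far outside $\pol$, together with a factor $R$ whose zero set $\Gamma$ can also leave $\pol$; so $x_0$ may lie outside $\pol$. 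Then $\vspanp{x_0}{\vem}\not\subset\pol$, and although part~(i) gives $y\in f_0(\Int(\vspanp{x_0}{\vem}))$, the preimage it provides can sit strictly between $x_0$ and the line's first entry point into $\pol$, i.e.\ outside $\pol$ and hence outside $\Tt$, so you cannot conclude $y\in f_0(\Tt)$. (A secondary issue: even if $x_0\in\pol$, the ray $\vspanp{x_0}{\vem}$ may run along a vertical facet, so its interior need not lie in $\Int(\pol)$.)

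The paper closes this gap with a case distinction on the bottom endpoint $z$ of $\vspan{y}{\vem}\cap\pol=\vspanp{z}{\vem}$. If $z$ lies on a facet different from $\Ff$, then $F(z)=0$, so $f_0(z)=z$, and one checks directly that $\vspanp{z}{\vem}\cap\Tt=f_0(\vspanp{z}{\vem}\cap\Tt)$ (the ray is either contained in $\partial\pol\setminus\Int(\Ff)\subset\{F=0\}$, where $f_0$ is the identity, or its open part lies in $\Int(\pol)\subset\Tt$ and part~(i) applies at $z$). If instead $z\in\Int(\Ff)$, then $F(z)\ne0$ and $z$ is useless as a fixed point; but then $z_{n-1}=0<(x_0)_{n-1}$ forces $x_0\in\Int(\vspanp{z}{\vem})\subset\Int(\pol)$, and only in this sub-case does your argument with $x_0$ become legitimate. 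Without distinguishing these cases the proof does not go through.
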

\begin{proof}
(i) Write $x_0:=(x_{01},\ldots,x_{0n})$. Consider the continuous function 
$$
\psi:\R\to\R,\ t\mapsto t+F^2(x_0+t\vec{\tt e}_{n-1}).
$$
As $\psi(0)=0$ and $\psi(t)\geq t$ for each $t\geq0$, we have $\psi({]0,+\infty[})={]0,+\infty[}$, so $f_0(\Int(\vspanp{x_0}{\vem}))=\Int(\vspanp{x_0}{\vem})$ and $f_0(x_0)=x_0$.

(ii) Observe that $f_0(\vspanp{x}{\vem})\subset\vspanp{x}{\vem}$ for each $x\in\R^n$. As $\vem\in\conv{\pol}{}$ and $\tt\subset\pol$, we deduce $f_0(\Tt)\subset\Tt$. In addition, if $x:=(x'',x_{n-1},x_n)\in\Tt$ is such that $f_0(x)=x+F^2(x)\vem\in\Ff$, then $x_{n-1}=0$ and $F(x)=0$, so 
$$
x\in(\pol\setminus X)\cap\{\x_{n-1}=0\}\cap\{F=0\}=\Int(\Ff)\cap\{F=0\}=\varnothing,
$$
which is a contradiction. Thus, $f_0(\Tt)\subset\Tt\setminus\Ff$. 

Let us prove now $\Tt\cap\vspanp{\Rr}{\vem}\subset f_0(\Tt)$. Pick a point $x\in\Tt\cap\vspanp{\Rr}{\vem}$ and write $x=y+\lambda\vem$ where $y\in\Rr$ and $\lambda\ge 0$. Observe that $\vspan{x}{\vem}\cap\pol=\vspan{z}{\vem}$ where either $z$ belongs to a facet of $\pol$ different from $\Ff$ or $z:=(x'',0,x_n)\in\Int(\Ff)$. In the first case $F(z)=0$, so by (i) $f_0(z)=z$ and $x\in\vspanp{z}{\vem}\cap\Tt=f_0(\vspanp{z}{\vem}\cap\Tt)$. In the second case, $\vspanp{y}{\vem}\subset\vspanp{z}{\vem}\cap\Tt$. As $F(y)=0$, we have by (i) $f_0(y)=y$ and $x\in\vspanp{y}{\vem}=f_0(\vspanp{y}{\vem})\subset f_0(\Tt)$, as required. 
\end{proof}

In order to take advantage of Lemma~\ref{allplanos2} we need a polynomial $h\in\R[\x]$ with some added specific characteristics, that we proceed to describe below.

\subsection{Pushing an open facet inside the interior of a convex polyhedron}\label{preparat} Assume now that a convex, unbounded polyhedron $\pol$ with $n$-dimensional recession cone is placed in $\R^n$ so that $\vec{\tt e}_{n-1},\ven\in\conv{\pol}{}$, $\Ff=\{\x_{n-1}=0\}\cap\pol\subset\{\x_n>0\}$ and $\pol\subset\{\x_{n-1}\geq0\}$. Observe that $\ven\in\conv{\Ff}{}$. Denote the facets of $\pol$ with $\Ff_1,\ldots,\Ff_e$ and let $h_i=0$ be a non-zero linear equation of the hyperplane $H_i$ spanned by $\Ff_i$. Suppose $\pol=\{h_1\geq0,\ldots,h_e\geq0\}$,
\begin{itemize}
\item $\Ff_1,\ldots,\Ff_s$ are non-vertical and among them $\Ff_1,\ldots,\Ff_r$ are those non-vertical facets of $\pol$ that meet $\Ff$, 
\item $\Ff_{s+1},\ldots,\Ff_e$ are vertical and $\Ff_e=\Ff$.
\end{itemize}
As $\ven\in\conv{\pol}{}$, we may assume $\vec{h}_i(\ven)=1$ for $i=1,\ldots,r$, so that $h_i=h_i(\x',0)+\x_n$. Define
\begin{align}
b_i&:=h_i-\x_{n-1}=h_i(\x',0)-\x_{n-1}+\x_n,\\
b_i'&:=h_i-2\x_{n-1}=h_i(\x',0)-2\x_{n-1}+\x_n.
\end{align}
The hyperplanes $B_i:=\{b_i=0\}$ and $B_i':=\{b_i'=0\}$ separate by Lemma~\ref{sep} the facets $\Ff_i$ and $\Ff$ and meet $\Int(\pol)$. Consider now the affine change of coordinates
\begin{equation}\label{iso}
\phi_i:\R^n\to\R^n,\ x:=(x',x_n)\mapsto(x',x_n+h_i(x',0)),
\end{equation}
which satisfies $B_i^*:=\phi_i(B_i)=\{\x_n-\x_{n-1}=0\}$ and $B_i'^*:=\phi_i(B_i')=\{\x_n-2\x_{n-1}=0\}$.

\subsubsection{}Denote the union of all the facets of $\pol$ that do not meet $\Ff$ with $\Gg$. By Lemma~\ref{dist} and Corollary~\ref{dist2} there exists $\veps_0\in\R$ such that 
\begin{align}
0<\veps_0<\min\{1,
\dist(\{\x_{n-1}=0\},\Gg)\}\quad\text{and}\quad
\pol\cap\{\x_{n-1}\le \veps_0\}\subset\{\x_n>0\}.
 \end{align} 
As each $B_i\cap\pol\cap\{\x_{n-1}\le\veps_0\}\subset\{\x_n>0\}$, there exists by Lemma~\ref{dist} $\delta>0$ such that $B_i\cap\pol\cap\{\x_{n-1}\le\veps_0\}\subset\{\x_n>\delta\}$ for $i=1,\ldots,r$. Set $\veps:=\min\{\veps_0,\frac{\delta}{2}\}>0$. 

\subsubsection{}\label{bibi}
Define $\pol_0:=\pol\cap\{\x_{n-1}\leq\veps\}$ and observe that $\Int(\pol_0)=\Int(\pol)\cap\{\x_{n-1}<\veps\}$. Consider the family of hyperplanes containing the non-vertical facets of $\pol$ together with all hyperplanes $B_i'$. By Proposition~\ref{polq2} there exists a polynomial $G_i$ such that 
\begin{equation}\label{ajustefino3}
\{\x_n\ge G_i\}\subset\bigcap_{j=1}^r\{b_j'\circ\phi_i^{-1}>1\}\cap\bigcap_{k=1}^s\{h_k\circ\phi_i^{-1}>1\}\subset\{b_i'\circ\phi_i^{-1}>1\}=\{\x_n>2\x_{n-1}+1\}.
\end{equation}
Define $\Bb_i:=B_i\cap\Int(\pol_0)$. We claim: 
\begin{equation}\label{bibieq}
\vspan{\Bb_i}{\ven}\cap\Int(\pol)\subset\vspanp{\Bb_i}{\ven}\cup(\{b_i'\leq0\}\cap\Int(\pol_0))\subset\Int(\pol_0).
\end{equation} 

As $\ven\in\conv{\pol}{}$ and $\Bb_i\subset\Int(\pol_0)$, we have 
\begin{equation}\label{bibieq2}
\vspanp{\Bb_i}{\ven}\subset\Int(\pol_0). 
\end{equation}
In addition, $\vspanp{\Bb_i}{(-\ven)}\subset\{b_i\leq0\}\subset\{b_i'\leq0\}$. Consequently,
\begin{equation*}
\begin{split}
\vspan{\Bb_i}{\ven}\cap\Int(\pol)&=(\vspanp{\Bb_i}{\ven}\cup\vspanp{\Bb_i}{(-\ven)})\cap\Int(\pol_0)\\
&=\vspanp{\Bb_i}{\ven}\cup(\vspanp{\Bb_i}{(-\ven)}\cap\Int(\pol_0))\subset\vspanp{\Bb_i}{\ven}\cup(\{b_i'\leq0\}\cap\Int(\pol_0))\subset\Int(\pol_0).
\end{split}
\end{equation*}

\subsubsection{}Write $\pi_n(\Bb_i)=\{g_{i,1}>0,\dots,g_{i,m}>0\}$ where each $g_{i,j}\in\R[\x']$ is a polynomial of degree one. We may assume $g_{i,1}=\x_{n-1}$. Consider the admissible tuple $\setg{g}_i:=(g_{i,1},\dots,g_{i,m},g_{i,m+1})$ where $g_{i,m+1}\in\R[\x']$ is a polynomial satisfying
\begin{equation}\label{ajustefino4}
g_{i,m+1}>\max\Big\{G_i,1+|h_i(\x',0)|\sqrt{|g_{i,1}\cdots g_{i,m}|},\ i=1,\dots,r\Big\}
\end{equation}
and the associated semialgebraic sets $\seta{\setg{g}_i}=\pi_n(\Bb_i)$ and $\sets{\setg{g}_i}\subset\vspanp{\Bb_i}{\ven}$. In addition, by \eqref{ajustefino3} we have $g_{i,m+1}\geq G_i\geq2\x_{n-1}$. 

\subsubsection{}\label{bii}We claim: \em $h_i(\x',0)<0$ on $\seta{\setg{g}_i}\subset\pi_n(B_i\cap\pol_0)$\em.

Pick a point $x:=(x',x_n)\in B_i\cap\pol_0$. Then $h_i(x',0)=x_{n-1}-x_n<\veps-\delta<0$.

\subsubsection{} By the choice of $\veps>0$ the non-vertical facets of $\pol_0$ are $\Ff_{i0}:=\Ff_i\cap\{\x_{n-1}\le\veps\}$ for $i=1,\ldots,r$ and all of them meet the facet $\Ff$ of $\pol$. By Lemma~\ref{allplanos0}
\begin{equation}\label{7688}
\vspan{\Int(\pol)}{\ven}\cap\{\x_{n-1}<\veps\}=\vspan{\Int(\pol_0)}{\ven}=\bigcup_{i=1}^r\vspan{(B_i\cap\Int(\pol_0))}{\ven}=\bigcup_{i=1}^r\vspan{\Bb_i}{\ven}=\bigcup_{i=1}^r\vspan{\seta{\setg{g}_i}}{\ven}.
\end{equation}

\subsubsection{}\label{allplanos}
Denote $\setst{\setg{g_i}}:=\phi_i(\sets{\setg{g}_i})
=\{(x',x_n+h_i(x',0)):\ (x',x_n)\in\sets{\setg{g_i}}\}$ for $i=1,\ldots,r$. \em Then there exists a polynomial $P\in\R[\x'',\x_n]$ with empty zero-set such that the zero-set $\Gamma$ of the polynomial $R(\x):=\x_{n-1}P(\x'',\x_n)-1$
satisfies $\Gamma\subset\{0<\x_{n-1}<\veps\}$, $\vspanp{\Gamma}{\vem}\subset\{R\geq0\}$ and 
\begin{equation}\label{si}
\sets{\setg{g}_i}\subset\setst{\setg{g}_i}\subset\vspanp{\Gamma}{\vem}\cap\{R>1\}.
\end{equation}
\em 

\begin{proof}
The inclusion $\sets{\setg{g}_i}\subset\setst{\setg{g}_i}$ holds because by \ref{bii} $h_i(x',0)\le 0$ on $\seta{\setg{g}_i}$. Write $g_{i,j}:=\qq{\vec{a}_{ij}}{(\x',1)}$ where $\vec{a}_{ij}\in\R^n$. Pick $M_0>1$ such that $\|\vec{a}_{ij}\|\leq M_0$ for each pair $(i,j)$. We have
$$
|g_{i,j}(x')|=|\qq{\vec{a}_{ij}}{(x',1)}|\leq\|\vec{a}_{ij}\|\|(x',1)\|\leq M_0\sqrt{\|x'\|^2+1}.
$$
If $x_{n-1}\leq\veps$ and $M:=M_0\sqrt{1+\frac{1}{\veps^2}}$, then
\begin{equation}\label{cota}
|g_{i,j}(x')|\leq M_0\sqrt{\|x'\|^2+1}\leq M_0\sqrt{\|x''\|^2+\veps^2+1}\leq M\sqrt{\|x''\|^2+\veps^2}\leq\frac{M}{\veps}(\|x''\|^2+\veps^2).
\end{equation}
Pick $x:=(x',x_n)\in\setst{\setg{g}_i}$, then $(x',x_n-h_i(\x',0))\in\sets{\setg{g_i}}\subset\{\x_n>0\}$. By Lemma~\ref{polq1}(iii) we have $(x',0)\in\seta{\setg{g}_i}$, so $h_i(x',0)<0$. By By Lemma~\ref{polq1}(i) and \eqref{ajustefino4}
$$
x_n-h_i(x',0)\geq\frac{g_{m+1}(x')}{\sqrt{x_{n-1}g_{i,2}(x')\cdots g_{i,m}(x')}}\geq\frac{1}{\sqrt{x_{n-1}}}\cdot\frac{1}{\sqrt{g_{i,2}(x')\cdots g_{i,m}(x')}}-h_i(x',0).
$$
As $0<\veps<1$, we deduce by \eqref{cota}
$$
x_n^2+1\geq x_n\geq\frac{1}{\sqrt{x_{n-1}}}\cdot\frac{1}{\sqrt{g_{i,2}(x')\cdots g_{i,m}(x')}}\geq\frac{\veps^{\frac{m}{2}}}{\sqrt{x_{n-1}}(\sqrt{M}\sqrt{\|x''\|^2+\veps^2})^{m-1}}.
$$
Consequently,
\begin{equation}\label{eq:gamma}
x_{n-1}\geq\frac{\veps^{m}}{M^{m-1}(x_n^2+1)^2(\|x''\|^2+\veps^2)^{m-1}}
\end{equation}
for each point $(x',x_n)\in\setst{\setg{g}_i}$. Define
$$
P:=3\frac{M^{m-1}(\x_n^2+1)^2(\|\x''\|^2+\veps^2)^{m-1}}{\veps^{m}}
$$
and observe that by \eqref{eq:gamma} each $\setst{\setg{g}_i}\subset\vspanp{\Gamma}{\vem}$ where
$$
\Gamma:=\Big\{\x_{n-1}=\frac{1}{P}\Big\}.
$$
In addition, $\Gamma\subset\{0<\x_{n-1}<\veps\}$, $\vspanp{\Gamma}{\vem}\subset\{R\geq0\}$ and
$$
\setst{\setg{g}_i}\subset\Big\{\x_{n-1}\geq\frac{3}{P}\Big\}=\{\x_{n-1}P-1\geq2\}\subset\{R>1\},
$$
as claimed.
\end{proof}

\subsubsection{}\label{Fpm} 
Let $F:=R\prod_{j=1}^rb_j'\prod_{k=1}^{e-1}h_k\in\R[\x]$ be the product of the polynomial $R$, the linear equations $b_j'$ of the hyperplanes $B_j'$ and the linear equations $h_k$ of the hyperplanes $H_k$ spanned by the facets of $\pol$ except that of $\Ff$. It holds $\{F=0\}=\Gamma\cup\bigcup_{j=1}^rB_j'\cup\bigcup_{k=1}^{e-1}H_k$. As $B_j'$ is a separating hyperplane for $\Ff$ and $\Ff_j$, we have $\Ff\cap B_j'\subset\Ff\cap\Ff_j\subset\partial\Ff$. In addition, $\Gamma\subset\{0<\x_{n-1}<\veps\}$, so $\{F=0\}\cap\Int(\Ff)=\varnothing$. 

\subsubsection{}\label{p0clue}Let $\Int(\pol)\subset\Tt\subset\pol\setminus X$ be a semialgebraic set obtained by removing the interiors of some facets $\Ff_i$ of $\pol$ from $\pol\setminus X$ such that $\Ff_i\neq\Ff$. Define
\begin{align}
&\Pp:=\Tt\cap\Big(\vspanp{\Gamma}{\vem}\cup\bigcup_{j=1}^r\{b_j'\le 0\}\cup\bigcup_{i=1}^{e-1}\Ff_i\Big),\nonumber\\
&\Tt_0:=\Tt\cap\{\x_{n-1}\leq\veps\}\label{p0clueeq},\\
&\Pp_0:=\Tt_0\cap\Pp\nonumber.
\end{align}
We claim:\em
\begin{itemize}
\item[(i)] $\{\x_{n-1}\geq\veps\}\subset\vspanp{\Gamma}{\vem}$.
\item[(ii)] $\Tt_0\setminus\Ff=\Pp_0\cup\bigcup_{j=1}^r\vspanp{\Bb_j}{\ven}$.
\end{itemize}\em
\begin{proof}
(i) This inclusion follows from the fact that $\Gamma\subset\{0<\x_{n-1}<\veps\}$ can be understood as the graph over the hyperplane $\{\x_{n-1}=0\}$ of the regular function $\frac{1}{P}$, which depends on the variables $(\x'',\x_n)$.

(ii) Observe that $\Int(\Tt_0)=\Int(\pol_0)$. By the choice of $\veps$ the convex polyhedron $\pol_0$ satisfies the hypothesis of Lemma~\ref{allplanos0}, hence $\vspan{\Int(\Tt_0)}{\ven}=\vspan{\Int(\pol_0)}{\ven}=\bigcup_{j=1}^r\vspan{\Bb_j}{\ven}$. By \eqref{bibieq} we have
\begin{multline*}
\Int(\Tt_0)=\Int(\Tt_0)\cap(\vspan{\Int(\Tt_0)}{\ven})=\bigcup_{j=1}^r(\vspan{\Bb_j}{\ven}\cap\Int(\Tt_0))\\
\subset\bigcup_{j=1}^r(\vspanp{\Bb_j}{\ven}\cup(\{b_j'\leq0\}\cap\Int(\Tt_0)))\subset\p_0\cup\bigcup_{j=1}^r\vspanp{\Bb_j}{\ven}.
\end{multline*}
In addition, $\partial\Tt_0\setminus\Ff\subset\Pp\cap\Tt_0=\Pp_0$ (use (i) to guarantee that $\Tt_0\cap\{\x_{n-1}=\veps\}\subset\Pp_0$), so by \eqref{bibieq2}
$$
\Tt_0\setminus\Ff=\Int(\Tt_0)\cup(\partial\Tt_0\setminus\Ff)\subset\p_0\cup\bigcup_{j=1}^r\vspanp{\Bb_j}{\ven}\subset\Tt_0\setminus\Ff,
$$
as required.
\end{proof}

The interest of the semialgebraic set $\Pp$ comes from the following result, which is illustrated in Figure~\ref{fig9}.

\begin{figure}[!ht]
\begin{center}
\begin{tikzpicture}[scale=1]

\draw[draw=none,fill=gray!30,opacity=1] (0,4) -- (0,1) -- (2,0) -- (3,-0.25) -- (5,-0.25) -- (5,4);
\draw[draw=none,fill=gray!30,opacity=1] (8.1,4) .. controls (8.4,2.5) and (8.45,2.25) .. (8,1) -- (10,0) -- (11,-0.25) -- (13,-0.25) -- (13,4);

\draw[draw=none,fill=gray!70,opacity=1] (8,1) -- (10.25,4) -- (13,4) -- (13,-0.25) -- (11,-0.25) -- (10,0) -- (8,1);

\draw[draw=none,fill=gray!70,opacity=1] (8.2,4) .. controls (8.5,2.5) and (8.55,2.25) .. (8.75,2) .. controls (9.25,1.5) and (9.4,1) .. (9.45,0.3) -- (10,0)--(11,-0.25)--(13,-0.25)--(13,4)--
(0.2,4);
\draw[line width=1pt] (0,4) -- (0,1) -- (2,0) -- (3,-0.25);
\draw[line width=0.75pt,dashed] (8,4) -- (8,1);
\draw[line width=1pt] (8.1,4) .. controls (8.4,2.5) and (8.45,2.25) .. (8,1) -- (10,0) -- (11,-0.25);
\draw[line width=0.75pt,dashed] (1.5,0) -- (1.5,4);
\draw[line width=0.75pt,dashed] (9.5,0) -- (9.5,4);

\draw[line width=1pt] (0.2,4) .. controls (0.5,2.5) and (0.55,2.25) .. (0.75,2) .. controls (1.25,1.5) and (1.4,1) .. (1.45,0);
\draw[line width=1pt] (0,1) -- (2.25,4);

\draw[line width=0.75pt,dashed] (8.2,4) .. controls (8.5,2.5) and (8.55,2.25) ..(8.75,2) .. controls (9.25,1.5) and (9.4,1) .. (9.45,0);
\draw[line width=0.75pt,dashed] (8,1) -- (10.25,4); 

\draw[fill=white,draw] (0,1) circle (0.75mm);
\draw[fill=white] (2,0) circle (0.75mm);
\draw[fill=white,draw] (8,1) circle (0.75mm);
\draw[fill=white] (10,0) circle (0.75mm);

\draw[line width=1pt,->] (5.5,2) -- (7.5,2);
\draw(6.5,2.25) node{\small$f_0$};
\draw(4.5,3.7) node{\small$\Tt$};
\draw(-0.25,2.3) node{\small$\Ff$};
\draw(0.7,3) node{\small$\Gamma$};
\draw(8.7,3) node{\small$\Gamma$};
\draw(2.2,3.3) node{\small$B_i'$};
\draw(10.2,3.3) node{\small$B_i'$};
\draw(1.8,1.5) node[rotate=90]{\small${\tt x}_{n-1}=\varepsilon$};
\draw(9.8,1.5) node[rotate=90]{\small${\tt x}_{n-1}=\varepsilon$};
\draw(12,3.7) node{\small$f_0(\Tt)\supset\Pp$};
\end{tikzpicture}
\end{center}
\caption{Behavior of the polynomial map $f_0$ (Lemma~\ref{setp}).}\label{fig9}
\end{figure}
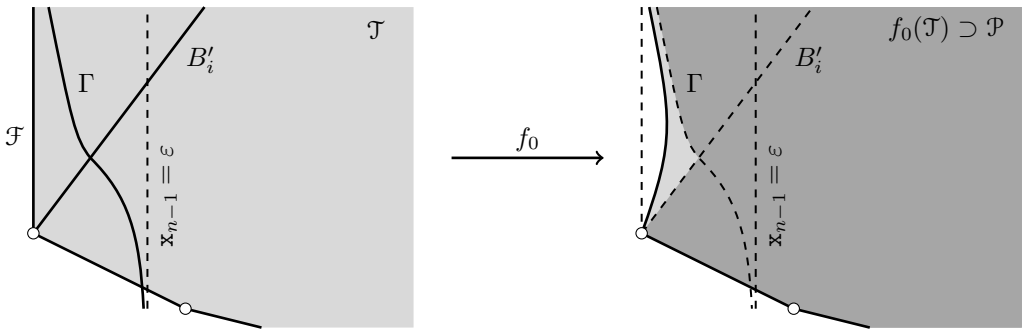

\begin{lem}\label{setp}
The polynomial map $f_0:\R^n\to\R^n,\ x\mapsto x+F^2(x)\vem$ satisfies $\Pp\subset f_0(\Tt)\subset\Tt\setminus\Ff$.
\end{lem}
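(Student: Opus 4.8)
Here is a plan for the proof of Lemma~\ref{setp}.

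The statement splits into two inclusions, and the second, $f_0(\Tt)\subset\Tt\setminus\Ff$, is essentially immediate. By \ref{Fpm} the polynomial $F$ has no zero on $\Int(\Ff)$ and it vanishes identically on every facet of $\pol$ other than $\Ff$ (each such facet lies in some $H_k=\{h_k=0\}$ with $h_k$ a factor of $F$), so $F$ and $\Tt$ fulfil the hypotheses of Lemma~\ref{allplanos2}; part (ii) of that lemma then gives $f_0(\Tt)\subset\Tt\setminus\Ff$ at once.

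For the first inclusion the plan is to invoke Lemma~\ref{allplanos2}(ii) once more, which yields $\Tt\cap\vspanp{\Rr}{\vem}\subset f_0(\Tt)$ where $\Rr:=\{F=0\}\cap\{\x_{n-1}>0\}$ and $\{F=0\}=\Gamma\cup\bigcup_{j=1}^rB_j'\cup\bigcup_{k=1}^{e-1}H_k$. Since $\Pp\subset\Tt$ by construction, it suffices to prove $\Pp\subset\vspanp{\Rr}{\vem}$, which I would check on the three pieces whose $\Tt$-traces make up $\Pp$. For $\vspanp{\Gamma}{\vem}$ this is trivial, because by \ref{allplanos} one has $\Gamma\subset\{0<\x_{n-1}<\veps\}\cap\{R=0\}\subset\Rr$. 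For $\Tt\cap\Ff_i$ with $i\le e-1$: such a point lies in $H_i\subset\{F=0\}$ and cannot have $\x_{n-1}=0$, for otherwise it would lie in $\Ff_i\cap\Ff\subset X$, against $\Tt\subset\pol\setminus X$; hence it lies in $\Rr$. For $\Tt\cap\{b_j'\le0\}$ with $j\le r$ — the only delicate piece — I would push a point $x$ of this set down the line $\vspan{x}{\vem}$: since $\vem\in\conv{\pol}{}$ and $\pol\subset\{\x_{n-1}\ge0\}$, this line meets $\pol$ in a ray $\vspanp{z}{\vem}$ with $z\in\partial\pol$ and $x=z+\mu\vem$, $\mu\ge0$. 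If $z\notin\Ff$, then $z$ lies in a facet $\Ff_k$ with $k\le e-1$, so $z\in H_k\subset\{F=0\}$ and $z_{n-1}>0$, whence $z\in\Rr$ and $x\in\vspanp{\Rr}{\vem}$. If $z\in\Ff$, then $\Ff\subset\{b_j'\ge0\}$ by Lemma~\ref{sep} (applied to $h_j$ and $\x_{n-1}$ with $\lambda=2$), so along the segment $\overline{zx}$ the affine function $b_j'$ passes from a value $\ge0$ to a value $\le0$; an intermediate-value argument then produces a point of $B_j'$ on $\vspanp{z}{\vem}$ lying at positive $\x_{n-1}$-height and not above $x$, which is the required point of $\Rr$ (the degenerate linear subcases, where the intersection point would have zero height, are absorbed by noting that then the relevant portion of the ray lies entirely in $B_j'$ or in $H_j$). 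Throughout, the points with $\x_{n-1}\ge\veps$ are dispatched beforehand using \ref{p0clue}(i), that is $\{\x_{n-1}\ge\veps\}\subset\vspanp{\Gamma}{\vem}\subset\vspanp{\Rr}{\vem}$.

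The bulk of the work, and the main obstacle, is the bookkeeping in the case $\Tt\cap\{b_j'\le0\}$: one must track the linear forms $b_j'$, $h_j$ and their derivatives along $\vem$, rule out the boundary subcases in which $z$ falls into an edge of $\Ff$ and the naive intermediate-value step collapses — exploiting that $x\notin X$, the separation property of $B_j'$, and the specific thresholds $\veps$, $\delta$ fixed in \ref{preparat} — and patch the residual points via the "floor" $\{\x_{n-1}\ge\veps\}\subset\vspanp{\Gamma}{\vem}$ of \ref{p0clue}(i). Once $\Pp\subset\vspanp{\Rr}{\vem}$ is established, the two inclusions assemble to give $\Pp\subset f_0(\Tt)\subset\Tt\setminus\Ff$, which is the assertion of the lemma.
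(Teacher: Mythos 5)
Your treatment of the second inclusion $f_0(\Tt)\subset\Tt\setminus\Ff$ is correct and coincides with the paper's. For the first inclusion, however, your reduction to Lemma~\ref{allplanos2}(ii) rests on the claim $\Pp\subset\vspanp{\Rr}{\vem}$, and this claim fails in general. Take $z$ in the relative interior of $\Ff\cap\Ff_j$ for some $j\le r$ with $0<\vec{h}_j(\vem)<2$; nothing in \ref{preparat} forbids this, since only $\vec{h}_j(\ven)$ is normalized there. For small $\mu>0$ the point $x:=z+\mu\vem$ lies in $\Int(\pol)\subset\Tt$ and satisfies $b_j'(x)=\mu\big(\vec{h}_j(\vem)-2\big)<0$, so $x\in\Tt\cap\{b_j'\le0\}\subset\Pp$ with $x_{n-1}=\mu<\veps$. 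Yet on the line $\vspan{x}{\vem}$ (along which $\x''$ and $\x_n$ are constant), $b_j'$ and $h_j$ vanish only at $z$ (at height $0$) because $\vec{b}_j'(\vem)=\vec{h}_j(\vem)-2\ne0$ and $\vec{h}_j(\vem)\ne0$; $R$ vanishes at the fixed positive height $1/P(z'',z_n)$; and the remaining factors $b_l'$, $h_k$ with $h_l(z),h_k(z)>0$ do not vanish at any height in $(0,t_0]$ for a fixed $t_0>0$ independent of $\mu$. Hence for $\mu$ small there is no zero of $F$ on the line at height in $(0,\mu]$, so $x\notin\vspanp{\Rr}{\vem}$. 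Your proposed patches do not apply here: the ray lies in neither $B_j'$ nor $H_j$ when $\vec{h}_j(\vem)\notin\{0,2\}$, and \ref{p0clue}(i) is inert because $x_{n-1}<\veps$.

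The paper avoids the set $\Rr$ entirely. It observes that $\Pp\cap\vspan{x}{\vem}$ is a finite union of intervals whose finite endpoints lie in $\{F=0\}$ --- the full zero set, including points at height $\x_{n-1}=0$ such as $z\in H_j$ --- and that $f_0$ fixes $\{F=0\}$ pointwise, maps the line $\vspan{x}{\vem}$ into itself, and tends to $+\infty$ along it. Corollary~\ref{fun2} then gives $\Pp\cap\vspan{x}{\vem}\subset f_0\big(\Pp\cap\vspan{x}{\vem}\big)\subset f_0(\Tt)$. The essential point you are missing is that a fixed endpoint need not lie in $\Rr$; it only needs to lie in $\{F=0\}$. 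Replacing your reduction to $\vspanp{\Rr}{\vem}$ by this Bolzano-type argument on each slice $\Pp\cap\vspan{x}{\vem}$ closes the gap.
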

\begin{proof}
The inclusion $f_0(\Tt)\subset\Tt\setminus\Ff$ follows from Lemma~\ref{allplanos2} and \ref{Fpm}. We prove next $\Pp\subset f_0(\Tt)$.

Pick $x:=(x'',x_{n-1},x_n)\in\Pp$ and consider the intersection $\Pp\cap\vspan{x}{\vem}$. This intersection consists of finitely many intervals of the line $\vspan{x}{\vem}$ whose endpoints belong to $\{F=0\}=\Gamma\cup\bigcup_{j=1}^rB_j'\cup\bigcup_{i=1}^{e-1}\Ff_i$, so they are fixed by $f_0$. As $\lim_{x_{n-1}\to+\infty}f_0(x'',x_{n-1},x_n)
=+\infty$, we have by Corollary~\ref{fun2}
$$
x\in\Pp\cap\vspan{x}{\vem}\subset f_0(\Pp\cap\vspan{x}{\vem})\subset f_0(\Tt).
$$
Thus, $\Pp\subset f_0(\Tt)$, as required.
\end{proof}

The image of the polynomial map $f_0$ is contained in $\Tt\setminus\Ff$ and contains $\Pp$. The semialgebraic set $\Pp$ leaves a `gap' inside $\Tt$ in a neighborhood of the facet $\Ff$. Our next goal is to construct another polynomial map to fill the gap that $f_0(\Tt)$ leaves inside $\Tt\setminus\Ff$.

\subsection{Filling the interior gap of the convex polyhedron} 

Let $\pol\subset\R^n$ be an unbounded convex polyhedron with recession cone $\conv{\pol}{}$ of dimension $n$. Let $X$ be the union of the affine subspaces of $\R^n$ spanned by the faces of $\pol$ of dimension $n-2$. 

\begin{prop}\label{detach}
Let $\Ff$ be one of the unbounded facets of $\pol$ and let $\Int(\pol)\subset\Tt\subset\pol\setminus X$ be a semialgebraic set obtained by removing the interiors of some facets $\Ff_i$ of $\pol$ from $\pol\setminus X$ such that $\Ff_i\neq\Ff$. Then there exists a polynomial map $F:\R^n\to\R^n$ such that $F(\Tt)=\Tt\setminus\Ff$.
\end{prop}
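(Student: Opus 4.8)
The plan is to keep the placement and notation of \ref{preparat}: $\Ff=\{\x_{n-1}=0\}\cap\pol\subset\{\x_n>0\}$, $\pol\subset\{\x_{n-1}\ge0\}$, $\vem,\ven\in\conv{\pol}{}$, $\Ff_1,\dots,\Ff_r$ are the non-vertical facets meeting $\Ff$, and at our disposal are the number $\veps>0$, the separating hyperplanes $B_i,B_i'$, the affine isomorphisms $\phi_i$ of \eqref{iso}, the admissible tuples $\setg{g}_i$ with $\seta{\setg{g}_i}=\pi_n(\Bb_i)$ where $\Bb_i:=B_i\cap\Int(\pol_0)$, the polynomial $R$ with $\{R=0\}=\Gamma$, the polynomial $F=R\prod_jb_j'\prod_{k<e}h_k$, and the sets $\Pp,\Tt_0,\Pp_0$ of \ref{p0clue}. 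First I would record the reduction
$$
\Tt\setminus\Ff=\Pp\cup\bigcup_{i=1}^r\vspanp{\Bb_i}{\ven},\qquad\Pp\subset\Tt\setminus\Ff,\qquad\vspanp{\Bb_i}{\ven}\subset\Int(\pol)\setminus\Ff,
$$
which follows from \ref{p0clue}(i)--(ii), from $\Pp_0\subset\Pp$, from $\Tt\cap\{\x_{n-1}\ge\veps\}\subset\Tt\cap\vspanp{\Gamma}{\vem}\subset\Pp$, and from $\Tt\cap X=\varnothing$. Thus it suffices to exhibit a polynomial $F$ with $F(\Tt)=\Pp\cup\bigcup_i\vspanp{\Bb_i}{\ven}$. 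By Lemma \ref{setp}, the map $f_0:x\mapsto x+F^2(x)\vem$ already gives $\Pp\subset f_0(\Tt)\subset\Tt\setminus\Ff$, so what remains is to fill the cylinders $\vspanp{\Bb_i}{\ven}$ one by one.

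For the filling I would use, for each $i=1,\dots,r$, the polynomial map $\tilde g_i$ of Theorem \ref{atico2b} attached to the tuple $\setg{g}_i$, with $h:=\x_{n-1}$ (positive semidefinite on $\seta{\setg{g}_i}$ and smaller than $g_{i,m+1}\ge G_i\ge2\x_{n-1}$ there, by \eqref{ajustefino3}--\eqref{ajustefino4}) and with $P:=F^2\circ\phi_i^{-1}$, which by \eqref{ajustefino3} and \eqref{si} is $>1$ on $\sets{\setg{g}_i}$ (there $h_k\circ\phi_i^{-1}>1$, $b_j'\circ\phi_i^{-1}>1$ and $R\circ\phi_i^{-1}>1$). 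Put $g_i:=\phi_i^{-1}\circ\tilde g_i\circ\phi_i$ and $F:=g_r\circ\cdots\circ g_1\circ f_0$. The effective multiplier of $g_i$ in the original coordinates is $F^2$, so $g_i$ fixes $\{F=0\}\supset\partial\pol\setminus\Int(\Ff)$ pointwise and preserves every $\ven$-line. The core would be the claim: for every semialgebraic set $\Ss$ with $\Pp\subset\Ss\subset\Tt\setminus\Ff$ and every $i$, $g_i(\Ss)=\Ss\cup\vspanp{\Bb_i}{\ven}$, which is compatible with iteration since $\Pp\subset\Ss\cup\vspanp{\Bb_i}{\ven}\subset\Tt\setminus\Ff$. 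Granting this claim, Step~5 is immediate: $g_r\circ\cdots\circ g_1$ carries $f_0(\Tt)$ to $f_0(\Tt)\cup\bigcup_i\vspanp{\Bb_i}{\ven}=\Pp\cup\bigcup_i\vspanp{\Bb_i}{\ven}=\Tt\setminus\Ff$, and $F(\Tt)=\Tt\setminus\Ff$.

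To prove the claim I would decompose $\Ss$ along $\ven$-lines $L$. Since $\ven\in\conv{\pol}{}$, $\pol\cap L$ is an upward $\ven$-ray whose vertex $p$ lies in $\partial\pol\subset\{F=0\}$, and as $\Ss$ is squeezed between $\Pp$ and $\Tt\setminus\Ff$, $\Ss\cap L$ is again an upward ray (relatively open or closed at $p$); moreover $g_i(p)=p$. When $L$ does not project into $\seta{\setg{g}_i}$, the polynomial $\polq{\setg{g}_i}$ is positive along $\phi_i(L)$, $\tilde g_i$ stretches $\phi_i(L)$ upward from $\phi_i(p)$ by Theorem \ref{atico2b}(ii), and Corollary \ref{fun2} gives $g_i(\Ss\cap L)=\Ss\cap L$, while $\vspanp{\Bb_i}{\ven}\cap L=\varnothing$. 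When $L$ projects into $\seta{\setg{g}_i}$, I split $\phi_i(\Ss\cap L)$ into the piece where $\polq{\setg{g}_i}>0$ (mapped into itself as before) and the piece inside $\sets{\setg{g}_i}$; for the latter, Theorem \ref{atico2b}(i) together with $\phi_i(\Bb_i)\subset\{\x_n=\x_{n-1}\}$ and $\pi_n(\phi_i(\Bb_i))=\seta{\setg{g}_i}$ yields, after pulling back by $\phi_i^{-1}$,
$$
\vspanp{\Bb_i}{\ven}\ \subset\ g_i(\sets{\setg{g}_i})\ \subset\ \vspan{\seta{\setg{g}_i}}{\ven}\cap\big\{h_i\ge\tfrac12\x_{n-1}\big\},
$$
and I would check that the extra slab $\vspan{\seta{\setg{g}_i}}{\ven}\cap\{\tfrac12\x_{n-1}\le h_i<\x_{n-1}\}$ lies in $\{b_i'<0\}$ and, by the choice of $\veps$, inside $\pol_0$, hence inside $\Pp\subset\Ss$. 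Combined with $\phi_i^{-1}(\sets{\setg{g}_i})\subset\sets{\setg{g}_i}\subset\Pp\subset\Ss$ (using that $\sets{\setg{g}_i}$ is upward $\ven$-closed over $\seta{\setg{g}_i}$ and $\phi_i^{-1}$ shifts upward there, by \ref{bii}), this gives $\vspanp{\Bb_i}{\ven}\subset g_i(\Ss)$, and the reverse inclusion $g_i(\Ss)\subset\Ss\cup\vspanp{\Bb_i}{\ven}$ reads off line by line from the same descriptions.

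The main obstacle, as throughout Section \ref{s4}, is precisely the containment $g_i(\Ss)\subset\Tt\setminus\Ff$: one must verify that neither $f_0$ (Lemma \ref{setp}) nor any $g_i$ throws a point of $\pol$ out of $\pol$ or back onto $\Ff$, and that the unavoidable slack in Theorem \ref{atico2b}(i) (the image sitting between the cylinders $\{\x_n\ge h\}$ and $\{2\x_n\ge h\}$) is absorbed by $\Pp$ via $\{b_i'\le0\}\cap\Tt\subset\Pp$. This is exactly what the calibrated choices of the separating hyperplanes $B_i,B_i'$, the truncation level $\veps$, the polynomials $G_i$, $g_{i,m+1}$ and $R$ made in \ref{preparat} are designed to guarantee, and carrying out those estimates is the bulk of the work.
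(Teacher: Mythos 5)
Your overall strategy is the paper's: first invoke Lemma~\ref{setp} to show $f_0$ gives $\Pp\subset f_0(\Tt)\subset\Tt\setminus\Ff$, then compose with conjugated maps of the form in Theorem~\ref{atico2b}, one per $i=1,\dots,r$, to fill the cylinders $\vspanp{\Bb_i}{\ven}$ while staying inside $\Tt\setminus\Ff$. However, there is a concrete gap in your choice of the auxiliary polynomial $P$ fed into Theorem~\ref{atico2b}.

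You take $P:=F^2\circ\phi_i^{-1}$ with the $F$ of \ref{Fpm}, namely $F=R\prod_{j=1}^{r}b_j'\prod_{k=1}^{e-1}h_k$, and then assert that it is $>1$ on $\sets{\setg{g}_i}$ because ``there $h_k\circ\phi_i^{-1}>1$, $b_j'\circ\phi_i^{-1}>1$ and $R\circ\phi_i^{-1}>1$''. That assertion is true only for the non-vertical linear forms: \eqref{ajustefino3} is produced by Proposition~\ref{polq2}, which pushes $\x_n$ above a graph and therefore only controls hyperplanes that depend on $\x_n$. The factors $h_k$ with $s<k\le e-1$ correspond to vertical facets of $\pol$ other than $\Ff$; these $h_k$ are independent of $\x_n$, so raising $\x_n$ does not increase them, and on $\seta{\setg{g}_i}=\pi_n(\Bb_i)$ such an $h_k$ can be arbitrarily small (whenever $\Bb_i$ approaches a vertical facet that meets $\Ff$). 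Consequently $F^2\circ\phi_i^{-1}$ is not bounded below by $1$ on $\sets{\setg{g}_i}$, and the key lower inclusion in Theorem~\ref{atico2b}(i) — which is exactly what you need to catch all of $\vspanp{\Bb_i}{\ven}$ in the image — fails to apply. This is why the paper does \emph{not} recycle $F$ but introduces the smaller polynomial $P_i:=(P_{i0}P_{i1})^2$ with $P_{i0}=\prod_{j\le r}(b_j'\circ\phi_i^{-1})\prod_{k\le s}(h_k\circ\phi_i^{-1})$, deliberately omitting the vertical factors, and $P_{i1}=R\circ\phi_i^{-1}$. Replacing your $P$ by $P_i$ (and keeping your $h$) repairs the argument; as written, the hypothesis ``$P>1$ on $\sets{\setg{g}}$'' is simply not met.

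Two smaller remarks. Your choice $h:=\x_{n-1}$ rather than the paper's $2\x_{n-1}$ is a legitimate variant: with $2\x_{n-1}$ the image of $\sets{\setg{g}_i}$ stays inside $\vspanp{\Bb_i}{\ven}$ but only covers the part above $B_i'$, so the paper recoups the strip between $B_i$ and $B_i'$ from $\Pp$; with $\x_{n-1}$ the image covers all of $\vspanp{\Bb_i}{\ven}$ at the cost of spilling into the slab $\{\tfrac12\x_{n-1}\le h_i<\x_{n-1}\}$, which, as you note, lies in $\{b_i'<0\}$ and hence in $\Pp$. Either bookkeeping works. Finally, your central claim is an exact equality $g_i(\Ss)=\Ss\cup\vspanp{\Bb_i}{\ven}$ for every $\Ss$ squeezed between $\Pp$ and $\Tt\setminus\Ff$; this is stronger than necessary and harder to certify than the one-sided inclusions \eqref{induct} actually used in the paper, so I would replace it by the weaker $\Ss\cup\vspanp{\Bb_i}{\ven}\subset g_i(\Ss)\subset\Tt\setminus\Ff$.
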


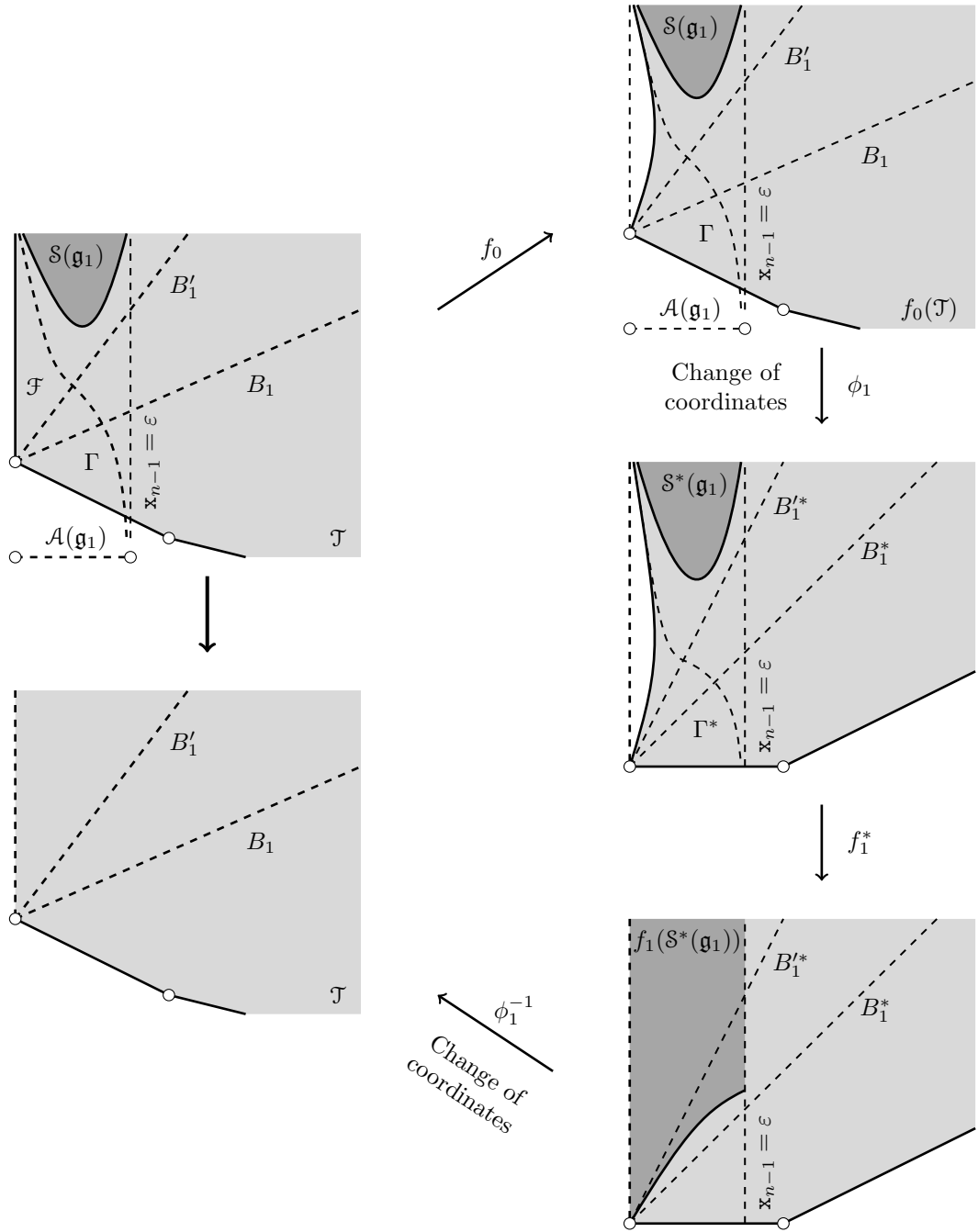
\begin{figure}[!ht]
\begin{center}
\begin{tikzpicture}[scale=1.1]


\draw[draw=none,fill=gray!30,opacity=1] (0,7) -- (0,4) -- (2,3) -- (3,2.75) -- (4.5,2.75) -- (4.5,7);
\draw[draw=none,fill=gray!30,opacity=1] (8.05,10) .. controls (8.4,8.5) and (8.45,8.25) .. (8,7) -- (10,6) -- (11,5.75) -- (12.5,5.75) -- (12.5,10);

\draw[draw=none,fill=gray!70,opacity=1] (0.1,7) .. controls (0.75,5.5) and (1,5.25) .. (1.45,7);
\draw[draw=none,fill=gray!70,opacity=1] (8.1,10) .. controls (8.75,8.5) and (9,8.25) .. (9.45,10);

\draw[line width=1pt] (0,7) -- (0,4) -- (2,3) -- (3,2.75);
\draw[line width=0.75pt,dashed] (8,10) -- (8,7);
\draw[line width=1pt] (8.05,10) .. controls (8.4,8.5) and (8.45,8.25) .. (8,7) -- (10,6) -- (11,5.75);
\draw[line width=0.75pt,dashed] (1.5,3) -- (1.5,7);
\draw[line width=0.75pt,dashed] (9.5,6) -- (9.5,10);

\draw[line width=1pt,dashed] (0.05,7) .. controls (0.4,5.5) and (0.45,5.25) .. (0.75,5) .. controls (1.25,4.5) and (1.4,4) .. (1.45,3);
\draw[line width=1pt,dashed] (0,4) -- (2.25,7);
\draw[line width=1pt,dashed] (0,4) -- (4.5,6);
\draw[line width=1pt,dashed] (0,2.75) -- (1.5,2.75);
\draw[line width=1pt] (0.1,7) .. controls (0.75,5.5) and (1,5.25) .. (1.45,7);
\draw[line width=1pt] (8.1,10) .. controls (8.75,8.5) and (9,8.25) .. (9.45,10);

\draw[line width=0.75pt,dashed] (8.05,10) .. controls (8.4,8.5) and (8.45,8.25) ..(8.75,8) .. controls (9.25,7.5) and (9.4,7) .. (9.45,6);
\draw[line width=0.75pt,dashed] (8,7) -- (10.25,10); 
\draw[line width=0.75pt,dashed] (8,7) -- (12.5,9);
\draw[line width=0.75pt,dashed] (8,5.75) -- (9.5,5.75);

\draw[fill=white,draw] (0,4) circle (0.75mm);
\draw[fill=white,draw] (2,3) circle (0.75mm);
\draw[fill=white,draw] (8,7) circle (0.75mm);
\draw[fill=white,draw] (10,6) circle (0.75mm);
\draw[fill=white,draw] (0,2.75) circle (0.75mm);
\draw[fill=white,draw] (1.5,2.75) circle (0.75mm);
\draw[fill=white,draw] (8,5.75) circle (0.75mm);
\draw[fill=white,draw] (9.5,5.75) circle (0.75mm);

\draw[line width=1pt,->] (5.5,6) -- (7,7);
\draw(6.2,6.8) node{\small$f_0$};
\draw(4.2,3) node{\small$\Tt$};
\draw(0.25,5) node{\small$\Ff$};
\draw(1,4) node{\small$\Gamma$};
\draw(9,7) node{\small$\Gamma$};
\draw(2.2,6.3) node{\small$B_1'$};
\draw(3.2,5) node{\small$B_1$};
\draw(10.2,9.3) node{\small$B_1'$};
\draw(11.2,8) node{\small$B_1$};
\draw(1.8,4) node[rotate=90]{\small${\tt x}_{n-1}=\varepsilon$};
\draw(9.8,7) node[rotate=90]{\small${\tt x}_{n-1}=\varepsilon$};
\draw(11.9,6) node{\small$f_0(\Tt)$};
\draw(0.8,3) node{\small$\seta{\setg{g}_1}$};
\draw(8.8,6) node{\small$\seta{\setg{g}_1}$};
\draw(0.8,6.7) node{\small$\sets{\setg{g_1}}$};
\draw(8.8,9.7) node{\small$\sets{\setg{g_1}}$};

\draw[line width=1pt,->] (10.5,5.5) -- (10.5,4.5);
\draw(11,5) node{\small$\phi_1$};
\draw(9.25,5.15) node{\small Change of};
\draw(9.25,4.8) node{\small coordinates};
\draw[line width=1.5pt,->] (2.5,2.5) -- (2.5,1.5);


\draw[draw=none,fill=gray!30,opacity=1] (8,-2) -- (8,-6) -- (10,-6) -- (12.5,-4.75)-- (12.5,-2);

\draw[draw=none,fill=gray!70,opacity=1] (8,-2) -- (8,-6) .. controls (8.75,-4.75) and (9,-4.5) .. (9.5,-4.25) -- (9.5,-2);

\draw[line width=1pt,dashed] (8,-2) -- (8,-6); 
\draw[line width=1pt] (8,-6) -- (10,-6) -- (12.5,-4.75);
\draw[line width=1pt] (8,-6) .. controls (8.75,-4.75) and (9,-4.5) .. (9.5,-4.25);

\draw[line width=0.75pt,dashed] (8,-6) -- (12,-2); 
\draw[line width=0.75pt,dashed] (8,-6) -- (10,-2); 
\draw[line width=0.75pt,dashed] (9.5,-6) -- (9.5,-2); 

\draw[fill=white,draw] (8,-6) circle (0.75mm);
\draw[fill=white,draw] (10,-6) circle (0.75mm);
\draw(10.1,-2.6) node{\small$B_1'^*$};
\draw(11.2,-3.2) node{\small$B_1^*$};
\draw(9.8,-5.2) node[rotate=90]{\small${\tt x}_{n-1}=\varepsilon$};
\draw(8.75,-2.3) node{\small$f_1(\setst{\setg{g_1}})$};


\draw[line width=1pt,->] (10.5,-0.5) -- (10.5,-1.5);
\draw(11,-1) node{\small$f_1^*$};

\draw[line width=1pt,<-] (5.5,-3) -- (7,-4);
\draw(6.5,-3.2) node{\small$\phi_1^{-1}$};

\draw(6,-4) node[rotate=-33.7]{\small Change of};
\draw(5.8,-4.29) node[rotate=-33.7]{\small coordinates};


\draw[draw=none,fill=gray!30,opacity=1] (8.05,4) .. controls (8.4,1.7) and (8.45,1.475) .. (8,0) -- (10,0) -- (12.5,1.25) -- (12.5,4);

\draw[draw=none,fill=gray!70,opacity=1] (8.1,4) .. controls (8.6,2.2) and (9.1,1.7) .. (9.45,4);

\draw[line width=1pt,dashed] (8,4) -- (8,0); 
\draw[line width=1pt] (8.05,4) .. controls (8.4,1.7) and (8.45,1.475) .. (8,0) -- (10,0) -- (12.5,1.25);
\draw[line width=1pt] (8.1,4) .. controls (8.6,2.2) and (9.1,1.7) .. (9.45,4);

\draw[line width=0.75pt,dashed] (8,0) -- (12,4); 
\draw[line width=0.75pt,dashed] (8,0) -- (10,4); 
\draw[line width=0.75pt,dashed] (9.5,0) -- (9.5,4);

\draw[line width=0.75pt,dashed] (8.05,4) .. controls (8.4,1.7) and (8.45,1.475) .. (8.75,1.375) .. controls (9.25,1.125) and (9.4,0.7) .. (9.45,0);

\draw[fill=white,draw] (8,0) circle (0.75mm);
\draw[fill=white,draw] (10,0) circle (0.75mm);
\draw(9,0.5) node{\small$\Gamma^*$};
\draw(10.1,3.4) node{\small$B_1'^*$};
\draw(11.2,2.8) node{\small$B_1^*$};
\draw(9.8,0.8) node[rotate=90]{\small${\tt x}_{n-1}=\varepsilon$};
\draw(8.85,3.7) node{\small$\setst{\setg{g_1}}$};

\draw[draw=none,fill=gray!30,opacity=1] (0,1) -- (0,-2) -- (2,-3) -- (3,-3.25) -- (4.5,-3.25) -- (4.5,1);

\draw[line width=1pt] (0,-2) -- (2,-3) -- (3,-3.25);
\draw[line width=1pt,dashed] (0,1) -- (0,-2);

\draw(4.2,-3) node{\small$\Tt$};
\draw[line width=1pt,dashed] (0,-2) -- (2.25,1);
\draw[line width=1pt,dashed] (0,-2) -- (4.5,0);
\draw[fill=white,draw] (0,-2) circle (0.75mm);
\draw[fill=white,draw] (2,-3) circle (0.75mm);

\draw(2.2,0.3) node{\small$B_1'$};
\draw(3.2,-1) node{\small$B_1$};

\end{tikzpicture}
\end{center}
\caption{Erasing an unbounded facet of a convex polyhedron.}\label{fig10}
\end{figure}

\begin{proof}
Assume first that $\pol$ is placed as described in \ref{preparat}, take into account all considerations developed thereafter and keep the used notations. We have constructed a polynomial map $f_0:\R^n\to\R^n$ such that $\Pp\subset f_0(\Tt)\subset\Tt\setminus\Ff$ (see Lemma~\ref{setp}). By \ref{p0clue}(i)
$$
\Tt_1:=\Tt\cap\{\x_{n-1}\ge\veps\}=\Pp\cap\{\x_{n-1}\ge\veps\}.
$$
Fix $1\leq i\leq r$ and consider the polynomial 
$$
P_{i0}:=\prod_{j=1}^r(b_j'\circ\phi_i^{-1})\prod_{k=1}^s(h_k\circ\phi_i^{-1}).
$$
By Lemma~\ref{polq1} and equations \eqref{ajustefino3} and \eqref{ajustefino4} we have $\setst{\setg{g_i}}=\phi_i(\sets{\setg{g}_i})\subset\{P_{i0}>1\}$. For each $T\subset\R^n$ we denote $T^*$ the set $\phi_i(T)$. It holds 
$$
\{P_{i0}=0\}=\bigcup_{j=1}^rB_j'^*\cup\bigcup_{k=1}^sH_k^*.
$$
Define $P_{i1}:=R\circ\phi_i^{-1}$. By \eqref{si} $\sets{\setg{g}_i}\subset\{R>1\}$, hence $\setst{\setg{g_i}}\subset\{P_{i1}>1\}$. 
Define $P_i:=(P_{i0}P_{i1})^2$ and note that 
$$
\sets{\setg{g}_i}\subset\setst{\setg{g_i}}\subset\{P_{i0}>1\}\cap\{P_{i1}>1\}\subset\{P_i>1\}. 
$$
Consider the polynomial maps
$$
f_i:=(f_{i1},\ldots,f_{in}):\R^n\to\R^n,\ (x',x_n)\mapsto(x',x_n(1+P_i(x)\polq{\setg{g}_i}(x))^2+2x_{n-1}(P_i(x)\polq{\setg{g}_i}(x))^2)
$$
and
\begin{equation}\label{hatfi}
\hat{f}_i:=\phi_i^{-1}\circ f_i\circ\phi_i.
\end{equation}
Note that $\setg{g}:=\setg{g}_i$, $P:=P_i$, $g_{m+1}:=g_{i,m+1}$ and $h:=2\x_{n-1}$ satisfy the hypotheses of Theorem~\ref{atico2b}.

\subsubsection{}\label{tau1}
We claim: \em each polynomial map $\hat{f}_i$ satisfies $\hat{f}_i(\Tt_1)=\Tt_1$\em. 

To prove that $\hat{f}_i(\Tt_1)=\Tt_1$ it is enough to show: $f_i(\Tt_1^*)=\Tt_1^*$. It holds $\Tt_1^*\subset\{\x_{n-1}\geq\veps,\x_n\geq0\}$.

As $\sets{\setg{g_i}}=\{\polq{\setg{g}_i}\leq0,\x_n\geq0\}\subset\{\x_{n-1}\leq\veps\}$, the polynomial $\polq{\setg{g}_i}$ is positive on $\Tt_1^*$, as well as $P_i$, which is a square, and $\x_{n-1}$. Thus, the inclusion $f_i(\Tt_1^*)\subset\Tt_1^*$ holds by Theorem~\ref{atico2b}(ii) because $\ven\in\conv{\pol}{}$. As the non-vertical facets of $\pol^*$ are contained in $\{P_i=0\}$ and by Theorem~\ref{atico2b}(iii) $\lim_{x_n\to\infty}f_i(x',x_n)=+\infty$ for each $x'\in\{\x_{n-1}\geq0\}$, we deduce by Corollary~\ref{fun2} $\Tt_1^*\subset f_i(\Tt_1^*)$.

\subsubsection{}\label{clpr} 
Let us study the behavior of $\hat{f}_i$ on $\Pp_0\cup\bigcup_{j=1}^{i-1}\vspanp{\Bb_j}{\ven}$. We claim:
\begin{equation}\label{induct}
\Pp_0\cup\bigcup_{j=1}^i\vspanp{\Bb_j}{\ven}\subset\hat{f}_i\Big(\Pp_0\cup\bigcup_{j=1}^{i-1}\vspanp{\Bb_j}{\ven}\Big)\subset\hat{f}_i(\Tt_0\setminus\Ff)\subset\Tt_0\setminus\Ff.
\end{equation}
By \ref{p0clue}(ii) to prove the previous chain of inclusions it is enough to show
\begin{equation}\label{induct2}
\Pp_0^*\cup\bigcup_{j=1}^i\vspanp{\Bb_j^*}{\ven}\subset f_i\Big(\Pp_0^*\cup\bigcup_{j=1}^{i-1}\vspanp{\Bb_j^*}{\ven}\Big)\quad\text{and}\quad f_i(\Tt_0^*\setminus\Ff^*)\subset\Tt_0^*\setminus\Ff^*.
\end{equation}

Pick a point $x:=(x',x_n)\in\Pp_0^*\cup\bigcup_{j=1}^{i-1}\vspanp{\Bb_j^*}{\ven}$ and consider the ray $\Pp_{0,x}^*:=\vspan{x}{\ven}\cap\Pp_0^*\subset\{\x_n\geq0\}$, which is a finite union of intervals inside the ray $\vspan{x}{\ven}\cap\{\x_n>0\}$ whose endpoints belong to 
$$
\Gamma^*\cup\bigcup_{j=1}^rB_j'^*\cup\bigcup_{k=1}^sH_k^*=\{P_i=0\},
$$
so they are fixed by $f_i$. In addition, by Theorem~\ref{atico2b}(iii)
$$
\lim_{t\to\infty}f_{in}(x',t)=+\infty,
$$
because $x'\in\{\x_{n-1}\geq0\}$, hence by Corollary~\ref{fun2} $\Pp_{0x}^*\subset f_i(\Pp_{0x}^*)$. Thus, 
\begin{equation}\label{eq:uno}
\Pp_0^*\cup\bigcup_{j=1}^{i-1}\vspanp{\Bb_j^*}{\ven}\subset f_i\Big(\Pp_0^*\cup\bigcup_{j=1}^{i-1}\vspanp{\Bb_j^*}{\ven}\Big).
\end{equation}

Observe that $\vspan{\Bb_i^*}{\ven}=\vspan{\Bb_i}{\ven}$ because $\pi_n(\Bb_i^*)=\pi_n(\Bb_i)=\seta{\setg{g}_i}$. In addition, by \eqref{iso} $B_i^*=\{\x_n-\x_{n-1}=0\}$ and $B_i'^*=\{\x_n-2\x_{n-1}=0\}$, so $\vspanp{B_i^*}{\ven}=\{\x_n-\x_{n-1}\geq0\}$ and $\vspanp{B_i'^*}{\ven}=\{\x_n-2\x_{n-1}\geq0\}$. By Theorem~\ref{atico2b}(i)
$$
\vspanp{\Bb_i^*}{\ven}\cap\vspanp{B_i'^*}{\ven}=\vspanp{\seta{\setg{g}_i}}{\ven}\cap\{\x_n\ge 2\x_{n-1}\}\subset f_i(\sets{\setg{g}_i}).
$$
As $\vspanp{B_i'}{(-\ven)}\cap\Tt=\{b_i'\leq0\}\cap\Tt\subset\Pp$, we have $\vspanp{\Bb_i^*}{\ven}\cap\vspanp{B_i'^*}{(-\ven)}\subset\Pp_0^*$. By \eqref{si}
$$
\sets{\setg{g}_i}\subset\vspanp{\Gamma^*}{\ven}\cap\vspanp{\seta{\setg{g}_i}}{\ven}\cap\{\x_n>0\}\subset\vspanp{\Gamma^*}{\ven}\cap\Tt^*\cap\{0<\x_{n-1}<\veps\}\subset\Pp_0^*.
$$
Consequently, by \eqref{p0clueeq} and \eqref{eq:uno}
$$
\vspanp{\Bb_i^*}{\ven}=(\vspanp{\Bb_i^*}{\ven}\cap\vspanp{B_i'^*}{(-\ven)})\cup(\vspanp{\Bb_i^*}{\ven}\cap\vspanp{B_i'^*}{\ven})\subset\Pp_0^*\cup f_i(\sets{\setg{g}_i})\subset f_i(\Pp_0^*).
$$
Therefore, $\Pp_0^*\cup\bigcup_{j=1}^i\vspanp{\Bb_j^*}{\ven}\subset f_i\Big(\Pp_0^*\cup\bigcup_{j=1}^{i-1}\vspanp{\Bb_j^*}{\ven}\Big)$.

Let us check next: $f_i(\Tt_0^*\setminus\Ff^*)\subset\Tt_0^*\setminus\Ff^*$.

By Theorem~\ref{atico2b}(i) and \ref{p0clue}(ii)
$$
f_i(\sets{\setg{g}_i})\subset\vspanp{\seta{\setg{g}_i}}{\ven}\cap\{\x_n\ge\x_{n-1}\}=\vspanp{\Bb_i^*}{\ven}\subset\Tt_0^*\setminus\Ff^*. 
$$
The polynomial $\polq{\setg{g}_i}$ is positive on $\{\x_n\geq0,\x_{n-1}\geq0\}\setminus\sets{\setg{g}_i}$, as well as $P_i$, which is a square, and $\x_{n-1}$. Thus, the inclusion $f_i(\Tt_0^*\setminus(\Ff^*\cup\sets{\setg{g}_i}))\subset\Tt_0^*\setminus\Ff^*$ holds by Theorem~\ref{atico2b}(ii) because $\ven\in\conv{\pol}{}$. We conclude $f_i(\Tt_0^*\setminus\Ff^*)\subset\Tt_0^*\setminus\Ff^*$.

\subsubsection{}\label{finproof}
Define $F:=\hat{f}_r\circ\cdots\circ\hat{f}_1\circ f_0$. By Lemma~\ref{setp} 
\begin{equation}\label{tau1eq}
\Pp_0\cup\Tt_1\subset\Pp\subset f_0(\Tt)\subset\Tt\setminus\Ff. 
\end{equation}
By \ref{tau1} and \eqref{induct}
$$
\hat{f}_i(\Tt\setminus\Ff)=\hat{f}_i(\Tt_1\cup(\Tt_0\setminus\Ff))\subset\Tt_1\cup(\Tt_0\setminus\Ff)=\Tt\setminus\Ff
$$
for $i=1,\ldots,r$. Thus, by \eqref{tau1eq}
\begin{equation}\label{tau1eq0}
F(\Tt)\subset(\hat{f}_r\circ\cdots\circ\hat{f}_1)(\Tt\setminus\Ff)\subset\Tt\setminus\Ff. 
\end{equation}

By \ref{p0clue}, \eqref{induct}, \eqref{tau1eq} and \eqref{tau1eq0} we deduce 
$$
\Tt_0\setminus\Ff=\Pp_0\cup\bigcup_{j=1}^r\vspanp{\Bb_j}{\ven}\subset(\hat{f}_r\circ\cdots\circ
\hat{f}_1)(\Pp_0)\subset F(\Tt)\subset\Tt\setminus\Ff.
$$
In addition, by \ref{tau1} and \eqref{tau1eq} we have $\Tt_1\subset F(\Tt)\subset\Tt\setminus\Ff$. Consequently,
$$
\Tt\setminus\Ff=(\Tt_0\setminus\Ff)\cup\Tt_1\subset F(\Tt)\subset\Tt\setminus\Ff,
$$
so $F(\Tt)=\Tt\setminus\Ff$, as required.
\end{proof}

Figure~\ref{fig10} shows the combined action of the polynomial map $f_0:\Tt\to f_0(\Tt)$ appearing in Lemma~\ref{setp} and the polynomial map $\hat{f}_1:f_0(\Tt)\to\Tt\setminus\Ff$ constructed in \eqref{hatfi}.

\subsection{Proof of Theorem~\ref{main2}}
By Theorem~\ref{main11} there exists a polynomial map $f_0:\R^n\to\R^n$ such that $f_0(\R^n)=\pol\setminus X$ where $X$ is the union of the affine subspaces of $\R^n$ spanned by the faces of $\pol$ of dimension $n-2$. Let $\Ff_1,\ldots,\Ff_m$ be the facets of $\pol$. By Proposition~\ref{detach} there exists a polynomial map $F_i:\R^n\to\R^n$ such that
$$
F_i\Big((\pol\setminus X)\setminus\bigcup_{j=1}^{i-1}\Ff_j\Big)=(\pol\setminus X)\setminus\bigcup_{j=1}^i\Ff_j.
$$
for $i=1,\ldots,m$. Consider the polynomial map $f:=(F_m\circ\cdots\circ F_1\circ f_0):\R^n\to\R^n$. Thus,
$$
f(\R^n)=(\pol\setminus X)\setminus\bigcup_{j=1}^r\Ff_j=\Int(\pol),
$$
as required.
\qed

\appendix
\section{Some basic inequalities}

Some useful inequalities concerning finite collections of positive numbers have been used in Section~\ref{s2}. We collect them in the following lemma for easy reference.

\begin{lem}\label{ineq} 
Let $y_1,\dots,y_m$ be positive real numbers and fix $1\leq i\leq m$. Then the following inequalities hold:
\begin{itemize}
\item[(i)] $y_1+\cdots+y_m+\dfrac{1}{y_1\cdots y_m}\ge m+1>1$.
\item[(ii)] $y_1+\cdots+y_m+\dfrac{1}{y_1\cdots y_m}\ge y_i+m\sqrt[m]{\dfrac{1}{y_i}}$.
\item[(iii)] $\Big(y_1+\cdots+y_m+\dfrac{1}{y_1\cdots y_m}\Big)^my_i>m^m\ge1$.
\end{itemize}
\end{lem}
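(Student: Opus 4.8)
The plan is to derive all three inequalities from the classical arithmetic--geometric mean inequality, applied to carefully chosen tuples of positive numbers, and then to obtain (iii) from (ii). For (i), I would apply AM--GM to the $m+1$ positive numbers $y_1,\dots,y_m,\frac{1}{y_1\cdots y_m}$; since their product equals $1$, their arithmetic mean is at least $1$, so $y_1+\cdots+y_m+\frac{1}{y_1\cdots y_m}\ge m+1$, and $m+1\ge2>1$ because $m\ge1$.

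For (ii), the key observation is that
$$
y_1+\cdots+y_m+\frac{1}{y_1\cdots y_m}-y_i=\sum_{j\ne i}y_j+\frac{1}{y_1\cdots y_m}
$$
is a sum of exactly $m$ positive numbers, namely the $m-1$ terms $y_j$ with $j\ne i$ together with $\frac{1}{y_1\cdots y_m}$, and their product is $\bigl(\prod_{j\ne i}y_j\bigr)\cdot\frac{1}{y_1\cdots y_m}=\frac{1}{y_i}$. Applying AM--GM to these $m$ numbers yields $\sum_{j\ne i}y_j+\frac{1}{y_1\cdots y_m}\ge m\sqrt[m]{1/y_i}$, and adding $y_i$ back gives precisely the claimed inequality.

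For (iii), write $S:=y_1+\cdots+y_m+\frac{1}{y_1\cdots y_m}$. From (ii) we have $S\ge y_i+m\sqrt[m]{1/y_i}$, and since $y_i>0$ this yields the strict inequality $S>m\sqrt[m]{1/y_i}$; raising both positive sides to the $m$-th power gives $S^m>m^m/y_i$, and multiplying by $y_i>0$ produces $S^m y_i>m^m$. Finally $m^m\ge1$ because $m\ge1$. No step presents a genuine obstacle here; the only point requiring care is that the strict inequality in (iii) must come from discarding the positive summand $y_i$ in the bound supplied by (ii), so (iii) has to be deduced from (ii) rather than from the weaker bound in (i).
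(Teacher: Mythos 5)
Your proposal is correct and follows essentially the same route as the paper: AM--GM applied to suitable $m$- and $(m+1)$-tuples for (i) and (ii), and then (iii) deduced from (ii). The only cosmetic differences are that the paper proves (i) after rescaling by $z=y_1\cdots y_m$ (AM--GM on $y_iz$ and $1$) rather than applying AM--GM directly to the $m+1$ numbers of product $1$, and in (iii) it gets strictness via $\bigl(y_i+m\sqrt[m]{1/y_i}\bigr)^m y_i\ge y_i^{m+1}+m^m>m^m$ instead of dropping $y_i$ before taking the $m$-th power.
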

\begin{proof} 
(i) Denote $z:=\prod_{i=1}^my_i$. It is enough to show
\begin{equation}\label{ineq1}
z\leq\frac{1+z\sum_{i=1}^my_i}{m+1}.
\end{equation}
Consider the positive real numbers $z_i:=y_iz$ for $i=1,\ldots,m$ and $z_{m+1}=1$. By the arithmetic-geometric inequality
$$
\sqrt[m+1]{\prod_{i=1}^{m+1}z_i}\leq\frac{\sum_{i=1}^{m+1}z_i}{m+1}.
$$
As $\prod_{i=1}^{m+1}z_i=z^{m+1}$ and $\sum_{i=1}^{m+1}z_i=1+z\sum_{i=1}^my_i$, inequality \eqref{ineq1} holds.

(ii) By the arithmetic-geometric inequality
$$
\sqrt[m]{\frac{1}{y_i}}=\sqrt[m]{\frac{1}{y_1\cdots y_m}\prod_{j\neq i}y_j}\leq\frac{{\displaystyle\frac{1}{y_1\cdots y_m}}+\sum_{j\neq i}y_j}{m},
$$
so the statement holds.

(iii) Using (ii) we have
$$
\Big(y_1+\cdots+y_m+\frac{1}{y_1\cdots y_m}\Big)^my_i\ge\Big(y_i+m\sqrt[m]{\frac{1}{y_i}}\Big)^my_i\ge y_i^{m+1}+m^m>m^m,
$$
as required.
\end{proof}

\end{document}